\def\subsection{\@startsection{subsection}{3}\z@{.5\linespacing\@plus.7\linespacing}{.7\linespacing}{\normalfont\bfseries}}
\date{1st version: October 22, 2021; revised May 27, 2022}
\numberwithin{equation}{section}
\newcommand\myshade{85}
\colorlet{mylinkcolor}{violet}
\colorlet{mycitecolor}{YellowOrange}
\colorlet{myurlcolor}{Aquamarine}
\tikzset{cross/.style={cross out, draw=black, minimum size=2*(#1-\pgflinewidth), inner sep=0pt, outer sep=0pt},
	cross/.default={1pt}}
\def\parsept#1#2#3{%
	\def\nospace##1{\zap@space##1 \@empty}%
	\def\rawparsept(##1,##2){%
		\edef#1{\nospace{##1}}%
		\edef#2{\nospace{##2}}%
	}%
	\expandafter\rawparsept#3%
}
\definecolor{ct_black}{HTML}{000000}
\definecolor{ct_orange}{HTML}{ED872D}
\definecolor{ct_purple}{HTML}{7A68A6}
\definecolor{ct_blue}{HTML}{348ABD}
\definecolor{ct_turquoise}{HTML}{188487}
\definecolor{ct_red}{HTML}{E32636}
\definecolor{ct_pink}{HTML}{CF4457}
\definecolor{ct_green}{HTML}{467821}
\definecolor{ct2_green}{HTML}{9FF781}
\definecolor{ct2_green_dark}{HTML}{088A08}
\theoremstyle{plain}
\newtheorem{thm}{\protect\theoremname}[section]
\theoremstyle{plain}
\newtheorem{lem}[thm]{\protect\lemmaname}
\theoremstyle{plain}
\newtheorem{cor}[thm]{\protect\corollaryname}
\theoremstyle{plain}
\newtheorem{prop}[thm]{\protect\propositionname}
\newtheorem*{theorem-non}{Proposition}
\theoremstyle{remark}
\theoremstyle{plain}
\theoremstyle{remark}
\newtheorem{rem}[thm]{\protect\remarkname}           
\theoremstyle{definition}
\newtheorem{defn}[thm]{\protect\definitionname}       
\theoremstyle{plain}
\providecommand{\assumptionname}{Assumption}
\providecommand{\claimname}{Claim}
\providecommand{\corollaryname}{Corollary}
\providecommand{\definitionname}{Definition}
\providecommand{\lemmaname}{Lemma}
\providecommand{\propositionname}{Proposition}
\providecommand{\remarkname}{Remark}
\providecommand{\theoremname}{Theorem}
\providecommand{\examplename}{Example}
\providecommand{\conjecturename}{Conjecture}
\providecommand{\notationname}{Notation}
\crefname{section}{Section}{Sections}
\crefname{appendix}{Appendix}{Appendices}
\crefname{figure}{Figure}{Figures}
\crefname{subfigure}{Figure}{Figures}
\crefname{assumption}{Assumption}{Assumptions}
\crefname{thm}{Theorem}{Theorems}
\crefname{prop}{Proposition}{Propositions}
\crefname{lem}{Lemma}{Lemmas}
\def\be{\begin{equation}}
	\def\ee{\end{equation}}
\def\ba{\begin{align}}
	\def\ea{\end{align}}
\newcommand{\E}{\mathbb{E}}
\newcommand{\Z}{ {\mathbb Z} }
\newcommand{\ZGF}{ {\mathbb Z} \mathrm{GF}}
\newcommand{\ZUF}{ {\mathbb Z} U \mathrm{F}}
\newcommand{\ZBF}{ {\mathbb Z} \mathrm{BF}}
\def\G{{\mathcal G}}
\newcommand{\Int}{\mathrm{Int}}
\newcommand{\R}{\mathbb{R}}
\newtheorem*{lem*}{\protect\lemmaname}
\newcommand{\Oe}{\operatorname{e}}
\newcommand{\ii}{\operatorname{i}}
\newcommand{\Mat}{\operatorname{Mat}}
\newcommand{\ZZ}{\mathbb{Z}}
\newcommand{\re}{\operatorname{e}}
\newcommand{\NN}{\mathbb{N}}
\newcommand{\MM}{\mathbb{M}}
\newcommand{\RR}{\mathbb{R}}
\newcommand{\CC}{\mathbb{C}}
\newcommand{\PP}{\mathbb{P}}
\newcommand{\EE}{\mathbb{E}}
\newcommand{\calB}{\mathcal{B}}
\newcommand{\calM}{\mathcal{M}}
\newcommand{\calL}{\mathcal{L}}    
\newcommand{\calN}{\mathcal{N}}
\newcommand{\calF}{\mathcal{F}}
\newcommand{\calH}{\mathcal{H}}
\newcommand{\calG}{\mathcal{G}}
\newcommand{\calV}{\mathcal{V}}
\newcommand{\calE}{\mathcal{E}}
\newcommand{\1}{\mathbbm{1}}
\newcommand{\norm}[1]{\left\|#1\right\|}
\newcommand{\ip}[2]{\langle #1, #2 \rangle}
\newcommand{\dif}{\mathrm{d}}
\newcommand{\tr}{\operatorname{tr}}
\newcommand{\ve}{\varepsilon}
\newcommand{\vf}{\varphi}
\newcommand{\Id}{\mathds{1}}
\newcommand{\HH}{\mathbb{H}}
\newcommand{\sgn}{\operatorname{sgn}}
\DeclarePairedDelimiter\ceil{\lceil}{\rceil}
\newcommand{\Vill}{\mathrm{Vill}}
\def\deloc/{depinning}
\def\Deloc/{Depinning}
\def\DELOC/{DEPINNING} 
\def\ZGFs/{$\ZGF$}
\def\ZGFl/{integer-restricted Gaussian field} 
\title[The BKT phase and depinning in the $\Z$-valued Gaussian Field]  
{Depinning in integer-restricted Gaussian Fields \\
and  BKT phases of two-component spin models}         
\dedicatory{\hspace{6cm} \it  To Barry Simon, on the occasion of his $75^{\rm{th}}$ birthday
\vspace{-0.5cm}}
\author{Michael Aizenman}
\address{MA: Departments of Physics and  Mathematics, Princeton University, USA \hspace{2cm} \hfill
\\   \mbox{ } \quad
Weston Visiting Professor at the Weizmann Institute of Science, Israel}
\author{Matan Harel}
\address{MH: Department of Mathematics, Northeastern Univ., Boston, USA}
\author{Ron Peled}
\address{RP: School of Mathematical Sciences, Tel Aviv University, Israel}
\author{Jacob Shapiro}
\address{JS: Physics Department, Princeton University, Princeton, NJ 08544, USA}
\begin{document}
	
\begin{abstract}  For a family of integer-valued height functions defined over the faces of planar graphs, we establish a relation between the probability of connection by level sets and the spin-spin correlations of the dual $O(2)$ symmetric spin models formulated over the graphs' vertices.
The relation is used to show that in two dimensions the Villain spin model exhibits non-summable decay of correlations at any temperature at which the dual integer-restricted Gaussian field exhibits depinning.   For the latter, we devise a new monotonicity argument through which the  recent alternative proof by Lammers
of the existence of a depinning transition in two-dimensional graphs of degree three, is extended to all doubly-periodic graphs, in particular to $\mathbb{Z}^2$.
Essential use is made of the inequality of Regev and Stephens-Davidowitz, which allows also an alternative (to absolute-value FKG) proof of convergence of the height-function's distribution in the infinite-volume limit.
Similar results are established for the $XY$ spin model and its dual Bessel random height function.
Taken together these  statements yield a new perspective on the Berezinskii-Kosterlitz-Thouless phase transition in $O(2)$ spin models, and complete a new proof of depinning in two-dimensional integer-valued height functions.
\end{abstract}

	\maketitle

\vspace{-1.5cm}
\tableofcontents
\section{Introduction}
\label[Introduction]{sec:Introduction}
\subsection{An outline}

The subject of this paper is a pair of phenomena, each of which is special to  models formulated over $d=2$  dimensional graphs,  that are linked through a duality relation.  One is the depinning transition in random height functions, such as the integer-restricted Gaussian field ($\ZGF$), and the other is the Berezinskii-Kosterlitz-Thouless (BKT) phase with slowly decaying correlations in $O(2)$-invariant two-component spin systems, such as the Villain and the XY spin models.
Each of  these has been previously analyzed through its relation to a lattice system of integer charges with the two-dimensional Coulomb interaction (of logarithmic strength).
The results presented here employ probabilistic tools which yield another perspective on the existence of the two phenomena, and the link between the two.

To present the results in their simplest context first, we start by considering the dual pair of the Villain two-component spin model on $\Z^2$
and the integer-restricted Gaussian field on this graph's dual, which is also $\Z^2$ in this case.
However, we also explain how the analysis extends to other doubly-periodic graphs, and to other combinations of interest. Those include the $XY$ spin model and its dual, which is an integer-valued field whose Gibbs factor involves a modified Bessel function.

Beyond the example of $\Z^2$, our discussion applies to connected graphs $\G $ which are {\em planar} and {\em doubly-periodic} --  in the sense that there exists an embedding of $\G$ in $\mathbb{R}^2$ so that the natural action of $\mathbb{Z}^2$ by translation is a $\G$-automorphism.
Furthermore, it will be assumed that under the above embedding the graph's vertex set has no accumulation point.  We call graphs with the last property {\em tame}.

Among the graphs meeting these conditions, one finds the standard square, triangular, and hexagonal lattices, and also graphs  which are only quasi-transitive (i.e. of a finite periodicity cell). One may observe that if $\G$ is doubly-periodic and tame, then so is its dual graph $\G^*$.

We denote by $d_{\G}$ the graph distance, by $\Lambda_{v,r}$ the set of vertices $u$ for which $d_{\G}(v,u) \leq r$, and also write $\Lambda_L\equiv\Lambda_{v_0,L}$ with $v_0$ a fixed vertex to which we refer as the origin.

Following are the two leading examples of models and questions we study, and the main results in that context.

\subsection{The $\ZGF$ and its depinning transition}

The \ZGFl/ (\ZGFs/) over a locally finite and connected graph $\G$ of vertex set $\calV$ and edge set $\calE$
has, as its basic variables, the values of the random function $n:\calV\to\Z$.  The $\ZGF$ finite-volume partition function  in $ \calV_L\equiv \calV \cap [-L,L]^2$, which is taken here with the Dirichlet boundary conditions,  is
\begin{align} \label{ZGF_partition}
Z_{\lambda,L}^{\ZGF,\calG} \equiv \sum_{n:\calV _L \to\Z:\left.n\right|_{\partial \calV_L} = 0}
\exp\left(-\frac{\lambda }{2}
\sum_{\{u,v\}\in \calE_L} \left(n_u - n_v\right)^2 \right).
\end{align}
At any given $\lambda$, the probability of $n$ is given by the configuration's normalized contribution to the above sum.

Quite generally, \emph{Gaussian domination} holds in the sense that the fluctuations of the $\ZGF$ are upper bounded by those of the corresponding Gaussian free field (GFF)~\cite{FroPar78} (cf.~\cite{kharash2017fr}).

In two dimensions, such models exhibit a \emph{localization--delocalization}, or \emph{de-pinning}, phase transition, which can also be presented in terms of \emph{symmetry breaking}.

\begin{defn}\label{def:ZGF pinning}
		The $\ZGF$ is said to be \emph{pinned}, or exhibit \emph{localization}, at a specified $\lambda$ iff for every $x\in\calV$,
\be \label{eq_pinning}
		  \lim_{L\to\infty } \EE^{\ZGF,\calG}_{\lambda,L}\left[ n_x^2\right] < \infty \,.
\ee
The model is said to \emph{depin}, or be \emph{delocalized}, if  in the limit $L\to \infty$ the height variance diverges and, furthermore,
the surface  fluctuates away, with
\be\label{deepen}
  \lim_{L\to \infty} \PP^{\ZGF,\calG}_{\lambda,L}(|n_x|\le t)= 0
  \ee
 for each $x\in \mathcal V$ and $t<\infty$.

	\end{defn}
We postpone to Section 2 the more detailed presentation of the known results (and references) on the existence of the limit in \eqref{eq_pinning},  the complementarity of the above two conditions, and the limiting states'  basic properties.  Here, let us just note that
in the pinned phase, the system admits an infinite collection of infinite-volume Gibbs equilibrium states which are limits of the finite states with boundary values set at a common value $k\in\Z$, and hence differ by simple shifts. On doubly-periodic and tame graphs, which is the case of interest here, the limiting states will be ergodic.

The realizability of both phases at different values of $\lambda$ is limited to  two dimensions:
in one dimension, random height functions exhibit Brownian-bridge-type fluctuations, whereas in $d>2$ dimensions, the $\ZGF$ is pinned at all temperatures.

For $d=2$, localization at large enough $\lambda$ is not hard to prove using a Peierls-type argument.  Delocalization has been viewed as the more difficult phase to establish.
The occurrence of delocalization for the $\ZGF$ on $\Z^2$ and other doubly-periodic graphs was first established in \cite{FroSpe81} using the dual Coulomb gas perspective.  Recently, a new proof was presented by Lammers~\cite{Lam21} for doubly-periodic planar graphs of maximum degree $3$.
The proof is based on an \emph{influence-percolation} argument, for which the limitation on the graph's degree plays an important role.  The removal of this limitation is among the first results presented here.

The following summarizes the results presented here concerning the de-pinning phase transition per-se.

\begin{thm}\label{thm:deloc on Z^2}
For any locally finite and connected graph, there exists $\lambda_c(\G) \in [0, \infty]$ such that
the $\ZGF$ is pinned for $\lambda>\lambda_c(\G)$ and depinned for $\lambda<\lambda_c(\G)$. For planar, doubly-periodic graphs with those properties,
 $0 <  \lambda_c(\G) < \infty$.  In the case of
the $\ZGF$ model on $\ZZ^2$ at $\lambda =  \frac{2}{3}\ln2$ is delocalized (and by implication $\lambda_c(\Z^2) \geq \frac{2}{3}\ln2$).
\end{thm}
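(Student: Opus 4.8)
The plan is to split the statement into three parts and treat them in turn: (a) existence of a threshold $\lambda_c(\G)\in[0,\infty]$ with pinning for $\lambda>\lambda_c$ and depinning for $\lambda<\lambda_c$; (b) finiteness and positivity of $\lambda_c(\G)$ on tame doubly-periodic planar graphs; (c) depinning of the $\ZGF$ on $\Z^2$ at $\lambda=\tfrac23\ln 2$. Granting (a), the bound $\lambda_c(\Z^2)\ge\tfrac23\ln2$ is immediate from (c): the value $\tfrac23\ln2$ lies in the depinned regime $\{\lambda<\lambda_c\}$, hence below $\lambda_c(\Z^2)$. For (b), finiteness of $\lambda_c$ is the classical Peierls/contour estimate --- at large $\lambda$ the level sets of $n-k$ about a reference height $k$ admit a convergent low-temperature expansion, giving $\lim_L\EE^{\ZGF,\G}_{\lambda,L}[n_x^2]<\infty$ --- while positivity of $\lambda_c$ follows from (c) with $\Z^2$ replaced by an arbitrary tame doubly-periodic planar graph, which produces depinning at some $\lambda>0$. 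So the substance is in (a) and (c).

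For (a) I would combine the pinned/depinned dichotomy recalled in Section~2 with a monotonicity of the two phases in $\lambda$. The key point is that the finite-volume $\ZGF$ in $\calV_L$ with zero boundary values is the discrete Gaussian on the integer lattice $\{n\colon\calV_L\to\Z,\ n|_{\partial\calV_L}=0\}$ associated with the positive quadratic form $\lambda\,Q_L$, where $Q_L(n)=\tfrac12\sum_{\{u,v\}\in\calE_L}(n_u-n_v)^2$, so that varying $\lambda$ merely rescales the width of this Gaussian on a fixed lattice. The inequality of Regev and Stephens-Davidowitz is exactly a comparison for lattice theta functions under such rescalings; I would use it to show that, for each fixed vertex $x$ and each $t<\infty$, the maps $\lambda\mapsto\EE^{\ZGF,\G}_{\lambda,L}[n_x^2]$ and $\lambda\mapsto\PP^{\ZGF,\G}_{\lambda,L}(|n_x|\le t)$ are respectively non-increasing and non-decreasing, uniformly in $L$; the same inequality also controls the dependence on $L$ and so yields the infinite-volume limit, as an alternative to the absolute-value-FKG route. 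Letting $L\to\infty$, the set of $\lambda$ satisfying \eqref{eq_pinning} is then an up-set and the set satisfying \eqref{deepen} a down-set, and by the dichotomy these partition $[0,\infty)$, which gives the single $\lambda_c(\G)$.

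For the quantitative depinning in (c) I would build on Lammers' trivalent-graph theorem: on a tame doubly-periodic planar graph of maximal degree three his percolation/influence argument yields depinning of the $\ZGF$ whenever the coupling stays below an explicit value, which one should track to be $\ln 2$. To reach $\Z^2$, of degree four, I would resolve each vertex into two degree-three vertices joined by an auxiliary edge --- done so as to keep the result planar and $\Z^2$-periodic --- obtaining a tame doubly-periodic planar trivalent graph $T$, and then invoke the new monotonicity argument to compare the $\ZGF$ on $\Z^2$ at coupling $\lambda$ with the $\ZGF$ on $T$ at a rescaled coupling, in the direction that lets depinning descend from $T$ to $\Z^2$. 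Bookkeeping of the vertex resolution should turn the $\ln2$ threshold on $T$ into $\tfrac23\ln2$ on $\Z^2$; concretely, I expect the coupling to be rescaled by the factor $\tfrac32$. The same scheme with $\Z^2$ replaced by an arbitrary tame doubly-periodic planar graph and $T$ by a corresponding trivalent resolution then gives depinning at some positive coupling, which is the input used in (b).

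The crux, and the reason the Regev--Stephens-Davidowitz inequality is indispensable rather than merely convenient, is the comparison in (c). Depinning is a weak-coupling phenomenon, and the elementary graph monotonicities run the wrong way: adding edges or raising couplings flattens the $\ZGF$, so the trivalent graph $T$, having fewer edges than $\Z^2$, is if anything \emph{more} delocalized than $\Z^2$ at the same coupling --- useless for transferring depinning onto $\Z^2$. What is needed is a genuinely non-obvious stochastic comparison that trades the missing edges for a controlled increase of the coupling while staying tight enough to keep the threshold explicit; producing such a comparison, getting its direction right, and checking that the rescaling factor is the sharp $\tfrac32$, is where the real work lies. By contrast the $\lambda$-monotonicity of (a) is comparatively soft once the same inequality is available.
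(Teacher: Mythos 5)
Your decomposition into (a)/(b)/(c) and your treatment of (a) and (b) match the paper: the threshold $\lambda_c$ comes from RSD matrix-monotonicity in $\lambda$ (Proposition~\ref{prop:monotonicity in covariance matrix}), finiteness from Peierls, and the RSD sub-lattice inequality also replaces absolute-value FKG for the infinite-volume limit. The gap is in (c), and it is not a cosmetic one.

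You propose to ``resolve each vertex into two degree-three vertices joined by an auxiliary edge'' and then invoke monotonicity ``in the direction that lets depinning descend from $T$ to $\Z^2$.'' But think about what a sub-lattice restriction does in this picture. The trivalent graph $T$ has configuration lattice $\Z^{|V(T)|}$; the diagonal sub-lattice $\{n_{v_1}=n_{v_2}\}$ recovers the $\Z^2$ model. By Proposition~\ref{sub_latt_monotonicity} restriction to a sub-lattice can only \emph{decrease} the moment generating function, so this chain proves $\MM^{\Z^2}\le\MM^{T}$ --- i.e.\ the trivalent model fluctuates \emph{more} than the $\Z^2$ model, which is exactly the direction you cannot use: it would let depinning pass from $\Z^2$ to $T$, not the other way. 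You flag this difficulty in the abstract (``the elementary graph monotonicities run the wrong way'') but the construction you offer runs into precisely that wall, and you never produce the comparison that fixes it. The paper's escape is a three-step surgery that you do not reproduce: first it uses Gaussian infinite divisibility (equation~\eqref{eq:Interpolation}) to insert an $\RR$-valued variable at the midpoint of each edge --- an \emph{exact identity} on the original vertices, not a comparison, with couplings $3\lambda/2$ and $3\lambda$ on the two half-edges; then it restricts those midpoints from $\RR$ to $\Z$, a sub-lattice restriction that decreases fluctuations; then it merges the two midpoints north and east of each original vertex, another sub-lattice restriction. Only after this does one have a $\Z$-valued field on a trivalent (hexagonal) graph whose fluctuations are \emph{dominated} by those of the original $\Z^2$ $\ZGF$. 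The exact Gaussian identity in the first step is what supplies the ``free'' degrees of freedom that can then be cut down; the direct vertex split has no analogous starting point.

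Your bookkeeping is also off against the paper, though coincidentally self-cancelling: the paper cites Lammers' cubic-graph threshold as $\lambda\le 2\ln 2$ (not $\ln 2$), and the resulting hexagonal graph carries coupling $3\lambda$ (not $\tfrac32\lambda$); $3\lambda\le 2\ln2$ then gives $\lambda\le\tfrac23\ln2$. Your $\tfrac32\cdot\ln2$ happens to land on the same number, but both factors are wrong, which suggests you worked backwards from the answer rather than from the construction. The essential content of the theorem's last part is precisely this surgery-and-monotonicity chain, and that is what the proposal leaves out.
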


In the proof, we extend Lammers' result's applicability using a series of surgeries, through which the model's graph is changed into one of smaller degrees; this procedure is accompanied by controlled increases in the couplings. The transformations are rather natural, but the key point in the proof is that, in each step of the process, the height variance can only \emph{decrease}.
This is done through a pair of relatively recent inequalities for Gaussian measures on lattices, due to Regev and Stephens-Davidowitz \cite{RegDav17}. In our case, the lattice in question will be the $\ZGF$ configuration space.
These inequalities also imply that  the height variance is monotone in the model's coupling strength $\lambda$ and thus, as expected,  for each graph the depinning transition occurs at a single transitional value $\lambda_c$.
The stated bound on $\lambda_c$  builds on Lammers' proof of delocalization on cubic graphs at $\lambda \leq 2 \ln 2$.

\subsection{From  depinning in \ZGFs/ to  slow decay of correlations in spin models}

The spin models discussed here are systems of two component spins associated with the vertices of a graph.  Their configurations form a random $\mathbb{S}^1$-valued function $\sigma:  \mathcal V \to \mathbb{S}^1$,
with nearest-neighbor interaction and $O(2)$ rotational symmetry.  Their BKT  phase transition is  unique to two dimensions.

The example which we discuss first is the Villain model, with spins
$\sigma_x=\mathrm{e}^{i\theta_x}$,  whose partition function is
\be \label{Villain_Z1}
		Z^{\Vill,\G}_{\beta} =   \int_{\theta\in [-\pi,\pi)^{\calV}  }
		\left\{
		\prod_{\{u,v\}\in\calE} \left[ \sum_{m_{uv}\in\ZZ } \re^{-\frac{\beta}{2}   \left(\theta_u-\theta_v +2\pi m_{uv}\right)^2} \right]
		   \, \right\}
		\prod_{x\in \mathcal V} 	
		\dif{ \theta_x }\, ,
\ee
The corresponding Gibbs equilibrium state is given by the probability distribution
that is obtained by normalizing the measures integrated in \cref{Villain_Z1}.

This model can be given a probabilistically appealing
interpretation, with the Gibbs measure  presented
as the marginal distribution, restricted to the vertices,  of a more extended random spin function defined over the graph's edges.  The distribution of the extended model is formed by linking Wiener processes, each describing Brownian motion on the circle, tied at the vertices through a continuity constraint.
This formulations is used below to derive a  correlation inequality which is new for this model, and is of relevance for our main result (cf. \cref{sec:LR for Villain}).   It also allows to prove the convergence of
the model's correlation function in the  infinite-volume limit through the known $XY$ Ginibre inequality.  Hitherto the latter was not recognized to be applicable to this model.

The model has two phases which differ in the nature of their
spin-spin correlations.  At high temperatures, as is typical in statistical mechanics,  the correlations decay
exponentially fast.   Their low-temperature \emph{BKT phase} is characterized by  the persistence of power-law \emph{slow decay}
 (without  long-range order which is ruled out for the two-dimensional system by the Mermin--Wagner theorem).

Our second result is the following  link between the two phenomena described above.
\begin{thm}  For the Villain model on a doubly-periodic and tame graph  $\G$, at any $\beta$
\be \label{spin_n_relation}
\langle  \sigma_u \cdot   \sigma_v \rangle^{Vill, \G}_{\beta} \geq \Pr{}^{\G^*}_{\lambda = 1/\beta}\left( u \stackrel{n}{\longleftrightarrow} v \right)
\ee
where on the right is  the probability that the two  sites at which the spins are evaluated lie on a common level line   (a concept defined more carefully below) of the   random integer-valued height  function $n$ of the model's dual $\ZGF$.
\end{thm}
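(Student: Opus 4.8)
The plan is to pass from the Villain model on $\G$ to the dual integer height function on $\G^*$ by Poisson summation, and then to extract the inequality from a weight--monotone modification of height configurations supported on their level lines.

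\emph{Step 1: the dual (current) representation.} Poisson summation turns each Villain edge weight $\sum_{m_{uv}\in\Z}\re^{-\frac\beta2(\theta_u-\theta_v+2\pi m_{uv})^2}$ into $c_\beta\sum_{j_{uv}\in\Z}\re^{-j_{uv}^2/(2\beta)}\re^{\ii\,j_{uv}(\theta_u-\theta_v)}$, introducing an integer current $j_e$ on every edge. Integrating out the angular variables $\theta_x$ then forces $\operatorname{div}j=0$ at every vertex in $Z^{\Vill,\G}_\beta$, while in the numerator of $\langle\sigma_u\cdot\sigma_v\rangle$ the insertion $\cos(\theta_u-\theta_v)=\tfrac12(\re^{\ii(\theta_u-\theta_v)}+\re^{-\ii(\theta_u-\theta_v)})$ forces $\operatorname{div}j=\delta_u-\delta_v$ (say), the two exponentials contributing equally by the reflection $\theta\mapsto-\theta$. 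By planarity, divergence--free integer currents on $\G$ are exactly the discrete gradients $j=\dif n$ of integer functions $n$ on the faces of $\G$, i.e.\ on the vertices of $\G^*$, and $\prod_e\re^{-j_e^2/(2\beta)}=\re^{-\frac1{2\beta}\sum_{\{f,g\}\in\calE(\G^*)}(n_f-n_g)^2}$ is the $\ZGF$ weight on $\G^*$ at $\lambda=1/\beta$; the leftover Gaussian ``spin--wave'' integral contributes only a constant, which is the special feature of the Villain interaction. Hence, writing $Z\equiv Z^{\ZGF,\G^*}_{1/\beta}$,
\be
\langle\sigma_u\cdot\sigma_v\rangle^{\Vill,\G}_\beta\ =\ \frac1Z\sum_{j:\ \operatorname{div}j=\delta_u-\delta_v}\ \prod_e\re^{-j_e^2/(2\beta)}\ =\ \frac{Z^{\mathrm{def}(u,v)}}{Z}\,,
\ee
where $Z^{\mathrm{def}(u,v)}$ is the partition function of height configurations on $\G^*$ carrying a unit monodromy defect between $u$ and $v$ (the value being independent of how the defect line is routed). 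Before taking $L\to\infty$ one matches boundary conditions --- the Villain model with free angular boundary conditions on $\Lambda_L$ corresponds to the $\ZGF$ on the dual of $\Lambda_L$ with its outer face pinned, i.e.\ the Dirichlet condition of \eqref{ZGF_partition} --- and the identity survives the infinite--volume limit by the convergence statements invoked above.

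\emph{Step 2: trading the defect for a level line.} The support of the current $\dif n$ is, by definition, the union of the level lines of $n$, so on the event $\{u\stackrel{n}{\longleftrightarrow}v\}$ one may select a level--line arc $\gamma$ from $u$ to $v$, on each edge of which $|(\dif n)_e|\ge1$. Let $\Phi(n)$ be obtained from $\dif n$ by lowering $|(\dif n)_e|$ by $1$ on every $e\in\gamma$: since along a single level--line arc the gradient has a fixed orientation relative to the traversal of $\gamma$, this subtracts a unit flow supported on $\gamma$, so $\operatorname{div}\Phi(n)=\delta_u-\delta_v$ and $\Phi(n)$ is an admissible defect current. Moreover
\be
\prod_e\re^{-\Phi(n)_e^2/(2\beta)}\ =\ \Bigl(\prod_e\re^{-(\dif n)_e^2/(2\beta)}\Bigr)\,\re^{\frac1{2\beta}\sum_{e\in\gamma}(2|(\dif n)_e|-1)}\ \ge\ \prod_e\re^{-(\dif n)_e^2/(2\beta)}\,,
\ee
as $2|(\dif n)_e|-1\ge1$ on $\gamma$. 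If the choice $\gamma=\gamma(n)$ is governed by a deterministic rule making $n\mapsto\Phi(n)$ injective into the set of defect currents, then $Z^{\mathrm{def}(u,v)}\ge\sum_{n:\,u\stackrel{n}{\longleftrightarrow}v}\prod_e\re^{-(\dif n)_e^2/(2\beta)}=Z\cdot\Pr{}^{\G^*}_{\lambda=1/\beta}\bigl(u\stackrel{n}{\longleftrightarrow}v\bigr)$, and dividing by $Z$ yields \eqref{spin_n_relation}.

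\emph{The main obstacle.} The delicate step is the deterministic choice of the arc $\gamma(n)$ that makes $n\mapsto\Phi(n)$ injective, equivalently the recovery of the erased arc from the defect configuration: I would fix a canonical level (say the minimal $\ZGF$ value among the faces adjacent to $u$) and explore the corresponding level line out of $u$ in a prescribed direction, stopping at the first arrival at $v$, and must then check that this exploration can be read off from $\Phi(n)$ alone, while handling the vertices at which a level line turns or where several level lines meet, and edges carrying jumps larger than one. A possible shortcut avoiding an explicit invertible rule is a source--switching argument applied to the pair formed by a defect current and an independent divergence--free current; in either approach, the combinatorics of planar level lines is the real content, the Poisson duality of Step~1 and the infinite--volume passage being routine given the results already at our disposal.
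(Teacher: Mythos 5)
Your Step 1 reproduces the paper's duality identity \eqref{eq:duality_correlations} in current language, and the weight comparison in Step 2 -- that subtracting a unit flow along a level--line arc strictly increases the Gaussian weight because $2|(\dif n)_e|-1\ge 1$ there -- is correct and is essentially the paper's ``gauge transformation'' $n\mapsto n-\chi_{\Int(\kappa^q_{xy}\circ\gamma_{yx})}$ read at the level of gradients. The genuine gap is precisely the injectivity you flag, and it is not merely a technical loose end: if $\Phi(n_1)=\Phi(n_2)$ then $\dif(n_1-n_2)$ is a unit flow around the closed curve $\gamma(n_1)\cup\gamma(n_2)$, which is realized by $n_1-n_2$ being the indicator of the enclosed region. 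Ruling this out uniformly over all pairs, for any canonical rule selecting both the level $q$ and the arc, requires controlling how the canonical level line changes under exactly such shifts, and nothing in your proposal addresses this; without it the map $\Phi$ need not be injective, and the bound $Z^{\mathrm{def}(u,v)}\ge \sum_{n:\,u\leftrightarrow v}\cdots$ does not follow. (There is also a subsidiary point to verify: that the orientation of the unit flow along the chosen arc is the one that \emph{decreases} $|(\dif n)_e|$, which ties the direction of traversal of $\gamma$ to the sign of $q$ and to whether $\operatorname{div}\Phi(n)$ is $\delta_u-\delta_v$ or $\delta_v-\delta_u$.)

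The paper avoids the injectivity problem entirely by not attempting a configuration--wise injection. After fixing a preselected primal path $\gamma_{yx}$ and a level $q$ (or a function $\widetilde q$ of the first two revealed heights), it \emph{conditions} on the $\sigma$-algebra generated by an exploration of the $q$-level line from $x$, so that the explored sets $\calH$ and $\calL$ and the connecting arc $\kappa^q_{xy}$ become deterministic data. The gauge transformation is then applied inside this conditional expectation, turning the defect weight $T^+_{\gamma_{yx}}$ into a ratio of two constrained partition functions $Z_{A,B;F_A-\chi_A,F_B}/Z_{A,B;F_A,F_B}$ with boundary data satisfying $\min F_A-1\ge\max F_B$, and the inequality is reduced to the \emph{positivity of the stiffness modulus} (\cref{prop:StiffnessModulus}), proved by a reflection argument on the metric--graph extension of the $\ZGF$ (a Brownian--bridge Markov property). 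That conditional route replaces the combinatorial injectivity you would need by a comparison of partition functions, which is where the actual content of the proof lies; your proposal as written stops short of either establishing injectivity or substituting an argument of this kind, so it does not yet constitute a proof.
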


This relation  is then used to deduce our main result for the Villain model.
\begin{thm} \label{thm_Villain_LB} Let $\G$ be a planar, doubly-periodic, tame graph. Then for any $\beta$ at which the dual \ZGFs/ model delocalizes at $\lambda=1/\beta$, the correlation function does not decay faster than $\rm{Const.}/d_{\mathcal{G}}(x,y)$, satisfying
\be \label{box_bound}
\forall L<\infty : \qquad \max_{x \in \G} \left( \sum_{\substack{y\in \G \\  d_{\mathcal{G}}(x,y)=L}}  \langle  \sigma_x \cdot   \sigma_y \rangle^{Vill,\G}_{\beta} \right)  \geq  \, 1 \,.
\ee
where $\langle  \sigma_x \cdot   \sigma_y \rangle$ can also be replaced by the smaller correlation function computed for the system's restriction to
$\{ u\in \G: d_{\mathcal{G}}(x,y)\leq L \}$
\end{thm}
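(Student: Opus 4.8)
The plan is to combine the two previous theorems, which already do the substantive work.  The correlation inequality \eqref{spin_n_relation} reduces the spin estimate to a statement about level lines of the dual $\ZGF$, and the depinning hypothesis tells us that the height function delocalizes at $\lambda=1/\beta$.  So the argument is: first invoke \eqref{spin_n_relation} to bound $\sum_{y:\,d_\G(x,y)=L}\langle \sigma_x\cdot\sigma_y\rangle^{Vill,\G}_\beta$ from below by $\sum_{y:\,d_\G(x,y)=L}\Pr^{\G^*}_{\lambda=1/\beta}(x\stackrel n\leftrightarrow y)$; then observe that every level line through $x$ in a planar height function, being a contour separating $x$ from the boundary (or from infinity), must cross \emph{every} sphere $\{y:d_\G(x,y)=L\}$ surrounding $x$, hence hits at least one such $y$.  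Thus the sum over that sphere of connection probabilities is at least the probability that $x$ lies on \emph{some} level line, i.e. that $|n_x|$ is separated from some neighboring value.  Delocalization, via \eqref{deepen}, forces this probability to tend to $1$; in fact any sensible definition of ``lies on a level line'' makes $\Pr(x\text{ on some level line})$ equal to (or at least bounded below by a quantity tending to) the probability that the gradient at $x$ is nonzero, which is $1$ in the delocalized regime.  One then concludes the bound $\ge 1$, taking the max over $x$ to absorb any boundary effects.

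In more detail, the key steps, in order, would be:  (i) Fix a vertex $x$ and a radius $L$, and work in the finite system on $\Lambda_{x,L}$ with Dirichlet (zero) boundary data for the dual $\ZGF$ — this is exactly the restricted correlation function appearing in the ``can also be replaced'' clause, and by monotonicity (or directly by \eqref{spin_n_relation} applied to the finite graph) it suffices to prove the bound there.  (ii) Use planarity: in the dual $\ZGF$ on a disk-like region, the event $\{x\stackrel n\leftrightarrow y\}$ means $x$ and $y$ lie on a common level line of $n$; a level line of $n$ that passes through the interior vertex $x$ is a closed curve (or a curve with both endpoints on $\partial\Lambda_{x,L}$) in the planar dual, and by the Jordan-curve / topological separation argument it must intersect the combinatorial sphere $S_L(x):=\{y:d_\G(x,y)=L\}$.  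Hence $\{x\text{ on a level line}\}\subseteq\bigcup_{y\in S_L(x)}\{x\stackrel n\leftrightarrow y\}$, so by the union bound $\sum_{y\in S_L(x)}\Pr(x\stackrel n\leftrightarrow y)\ge\Pr(x\text{ on a level line of }n)$.  (iii) Identify $\{x\text{ on a level line of }n\}$ with $\{\exists u\sim x: n_u\neq n_x\}$ (or whatever the precise ``level line'' definition in Section 2 gives — some care is needed so that a height difference at $x$ genuinely produces a level line through $x$ in the chosen planar embedding, which is where the dual-graph bookkeeping enters).  (iv) Show $\Pr^{\G^*}_{\lambda=1/\beta,\,\Lambda_{x,L}}(\exists u\sim x: n_u\neq n_x)\to 1$ as the volume grows: if it stayed bounded away from $1$, then with positive probability $n$ would be locally constant around $x$, and iterating / using the diverging variance from \eqref{deepen} one derives a contradiction with delocalization — alternatively one shows directly $\Pr(n_x=0\text{ and all neighbors }0)\le\Pr(|n_x|\le t)\to0$ is not quite enough, so the cleaner route is to note that on the delocalized graph the gradient field is nondegenerate so $\Pr(\nabla n\equiv 0\text{ at }x)<1$ uniformly, and then the level line through $x$ exists with that same probability; since we only need the sum over the $L$-sphere to be $\ge1$ and we are free to take the maximum over $x$, a uniform lower bound bounded away from (but not equal to) $1$ would already suffice after combining spheres — but in fact \eqref{deepen} gives the clean statement $\Pr(x\text{ on no level line})\le\Pr(|n_x|\le t)\to0$ once ``on no level line near level $0$'' is arranged by the boundary conditions.  (v) Assemble: for $L$ large the restricted-system inequality gives $\sum_{y\in S_L(x)}\langle\sigma_x\cdot\sigma_y\rangle\ge 1-o(1)$, and by monotonicity in the domain (larger domains only increase these correlations, or one passes to the infinite-volume limit directly) the bound $\ge1$ holds for the infinite system for all $L$, hence also the restricted one by the Ginibre-type domain monotonicity; taking the max over $x\in\G$ is harmless and handles any residual lattice anisotropy.

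The main obstacle I expect is step (ii)–(iii): making precise that a nonzero height gradient at an \emph{interior} vertex $x$ of $\G^*$ forces a level line through $x$ that genuinely separates $x$ from $\partial\Lambda_{x,L}$ in the planar embedding, and therefore crosses the combinatorial sphere $S_L(x)$ in $\G$.  The subtlety is that ``level line'' for an integer height function is a contour on an auxiliary graph (roughly the interface graph between plaquettes of differing heights), and one must check that the combinatorial distance on $\G$ along which we sum is compatible with the separation property of these contours — i.e. that a contour enclosing $x$ and reaching distance $\ge L$ must pass within the annulus, hitting $S_L(x)$.  Because $\G$ is doubly-periodic and tame there is a uniform comparison between $\G$-graph distance and Euclidean distance in the embedding, so this is a matter of a careful but routine topological lemma; once it is in place, everything else is a direct concatenation of \eqref{spin_n_relation}, the union bound, and the delocalization input \eqref{deepen}.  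The domain-monotonicity remark (that $\langle\sigma_x\cdot\sigma_y\rangle$ only decreases under restriction to $\Lambda_{x,L}$, so the restricted version is the \emph{stronger} statement) follows from the Ginibre inequalities already invoked for the Villain model in the excerpt, so it requires no new work.
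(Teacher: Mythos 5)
Your proposal diverges substantially from the paper's argument, and it contains a gap that I do not think is patchable without essentially importing the paper's second ingredient. The paper proves \cref{thm_Villain_LB} in two genuinely separate steps: (i) it uses the level-line relation (via the quantitative version \cref{thm:2}, the quadrant/loop counting \cref{lem:Loops}, and a H\"older argument) to obtain non-summable decay in the form $\sum_u (\langle\sigma_0\cdot\sigma_u\rangle)^{1-\ve}=\infty$, which rules out exponential decay; and (ii) it invokes the two-point dichotomy \cref{lem_dichotomy}, whose proof relies on the new Lieb--Rivasseau (Simon--Lieb type) inequality for the Villain model proved in \cref{sec:LR for Villain} via the metric-graph / continuum limit, to upgrade ``not exponential'' to the exact form \eqref{box_bound}. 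Your proposal omits step (ii) entirely and attempts to extract \eqref{box_bound} directly from the level-line picture.

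The central gap is your step (ii)--(iii). You assert that a level line through the spin vertex $x$, being a separating contour, ``must cross every sphere surrounding $x$.'' That is false: a level line through $x$ is a closed loop encircling some set of faces, and $x$ lies \emph{on} the loop rather than inside it, so the loop separates nothing from $x$ and may be arbitrarily short (a tiny loop around a single face adjacent to $x$). Consequently the union bound $\sum_{y\in S_L(x)}\PP(x\stackrel{n}{\leftrightarrow}y)\geq\PP(x\text{ lies on a level line reaching distance }L)$ is correct as an inequality, but the right-hand side is not driven to $1$ by delocalization. What \eqref{deepen} actually gives is that $|n_{f_0}|$ diverges for any fixed \emph{face} $f_0$, which by \eqref{eq:LoopsToHeight} means the expected number of nested loops around $f_0$ diverges; it does not say that loops through the specific \emph{vertex} $x$ are long with probability tending to $1$. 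The paper therefore does not run the argument through a single vertex $x$: it counts nested loops around $f_0$ and converts that count (using \cref{lem:Loops} and H\"older) into a lower bound on a sum of powers of two-point functions over all four quadrants. That yields only a power-law non-summability, not the exact $\geq 1$ of \eqref{box_bound}. There is also a minor bookkeeping confusion in your step (iv): you write $\Pr(\exists u\sim x:\,n_u\neq n_x)$, but $n$ is defined on faces of $\G$ (vertices of $\G^*$), not on $x\in\calV$. Finally, even if your per-$L$ bound were attainable for large $L$, \eqref{box_bound} is asserted for \emph{all} $L$, and your sketch does not produce that; the dichotomy handles all $L$ uniformly because the Simon--Lieb iteration is scale-free. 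In short, you have correctly identified that the level lines and delocalization carry the main weight, but the passage from ``level lines proliferate around a face'' to the sharp \eqref{box_bound} is not a topological lemma --- it is exactly the content of the Lieb--Rivasseau dichotomy, which the paper proves separately and which your proposal is missing.
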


Combined with the proven existence of delocalization (i.e. $\lambda_c (\G^*)>0$)  this  yields a new perspective on the BKT phase of the Villain spin model on planar graphs.

The proof of \cref{thm_Villain_LB} is in two steps: i) using the relation \eqref{spin_n_relation}, we show that, if the dual $\ZGF$ random surface depins, then the spin-spin correlations cannot decay exponentially fast and instead are bounded from below by a certain power-law, ii) simplify and improve the lower bound by establishing the following dichotomy for the Villain model, which is known to be valid for a range of other statistical mechanical systems.

\begin{lem}\label{lem_dichotomy}  (The two-point function dichotomy) At any $0<\beta<\infty$, the spin-spin correlation function of the Villain model on a doubly-periodic and tame graph $\G$, decay either exponentially fast, or not faster than $\rm{Const.}/d_\G(x,y)$. More explicitly:  either
\be
\langle  \sigma_0 \cdot   \sigma_x \rangle^{Vill,\G}_{\beta}  \leq  A(\beta)\, \mathrm{e}^{-d_{\G}(x,y) /\xi(\beta)}
\ee
at some finite $A(\beta)$ and $\xi(\beta)$, or else \eqref{box_bound} holds.
 \end{lem}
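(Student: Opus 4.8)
The plan is to exploit the ferromagnetic nature of the Villain model together with a subadditivity (quasi-multiplicativity) argument for the two-point function along geodesics, combined with the Simon–Lieb-type inequality that is available for reflection-positive or Ginibre-class $O(2)$ models. First I would record that the Villain two-point function is monotone in the domain (by the Ginibre/FKG-type correlation inequalities invoked in the preceding discussion, which guarantee $\langle \sigma_x\cdot\sigma_y\rangle_{\Lambda} \le \langle \sigma_x\cdot\sigma_y\rangle_{\Lambda'}$ for $\Lambda\subset\Lambda'$ and hence that the finite-volume correlation is dominated by the infinite-volume one); this is what lets us freely pass between the restricted system on $\{u:d_\G(x,y)\le L\}$ and the full system in the statement. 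Then I would set $f(L):=\max_{d_\G(x,y)=L}\langle\sigma_x\cdot\sigma_y\rangle_\beta$ and $g(L):=\max_x\sum_{d_\G(x,y)=L}\langle\sigma_x\cdot\sigma_y\rangle_\beta$, and observe that the dichotomy is really a statement about $g$: either $g(L)\to 0$ exponentially, or $g(L)\ge 1$ for all $L$.

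The mechanism I would use to force the dichotomy is a \emph{finite-energy/finite-size criterion}: if for some finite $L_0$ one has $g(L_0)<1$, then by inserting a complete set of configurations on the sphere $\{d_\G(x,\cdot)=L_0\}$ and using the ferromagnetic inequality $\langle\sigma_x\cdot\sigma_y\rangle \le \sum_{z:\,d_\G(x,z)=L_0}\langle\sigma_x\cdot\sigma_z\rangle\,\langle\sigma_z\cdot\sigma_y\rangle$ (a Simon–Lieb inequality, valid here because the Villain model admits the Ginibre inequalities referenced in the excerpt), one iterates along the geodesic from $x$ to $y$ in blocks of graph-distance $L_0$. Doubly-periodicity and tameness make the number of vertices on each sphere uniformly controlled, so the iteration yields $\langle\sigma_x\cdot\sigma_y\rangle_\beta \le C\, g(L_0)^{\lfloor d_\G(x,y)/L_0\rfloor}$, i.e. exponential decay with rate $\xi(\beta)^{-1} = -L_0^{-1}\ln g(L_0)>0$ and prefactor $A(\beta)$ absorbing the boundary block and the volume factor. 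Contrapositively, if there is no such $L_0$, then $g(L)\ge 1$ for every $L$, which is exactly \eqref{box_bound}. The passage to the correlation computed in the restricted box $\{u:d_\G(x,y)\le L\}$ follows because the Simon–Lieb splitting used to derive exponential decay only ever refers to correlations inside the outer sphere, so the same bound holds there, while for the lower-bound alternative the finite-volume correlations are smaller and the claim is stated with ``$\ge 1$'' for the full system.

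The step I expect to be the main obstacle is establishing the Simon–Lieb / geodesic-splitting inequality in a form valid for the Villain model on a general doubly-periodic tame graph. The classical Simon–Lieb inequality is usually stated for $O(N)$ models with a specific (e.g.\ nearest-neighbor ferromagnetic) Gibbs factor on $\Z^d$; here the Villain weight is the periodized Gaussian, and one needs the Ginibre-class correlation inequalities (the $XY$ Ginibre inequalities, which the excerpt notes are now seen to apply to the Villain model) to push the argument through, being careful that the "switching" between the two pieces $x\to z$ and $z\to y$ does not generate uncontrolled entropy from the intermediate angular integration. A secondary technical point is uniformity of the spherical-shell sizes and of the prefactors in $L$, which requires invoking tameness (no accumulation of vertices) and double-periodicity to get a uniform bound on $|\{y:d_\G(x,y)=L\}|$ growth and on the ratios $g(L+1)/g(L)$; once those geometric inputs are in place, the iteration and the extraction of a single correlation length $\xi(\beta)$ are routine.
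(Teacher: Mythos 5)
Your proposal takes essentially the same route as the paper: iterate a Simon--Lieb / Lieb--Rivasseau splitting inequality across spheres to get the dichotomy between exponential decay and the box bound, with the only nontrivial new input being the validity of that inequality for the Villain model. One small correction: the paper does not obtain the Villain Lieb--Rivasseau inequality from Ginibre (Ginibre is used only for monotonicity in volume and convergence of the infinite-volume limit); rather it proves \cref{lem_corr_ineq} for Villain by exhibiting the Villain model as the metric-graph scaling limit of refined $XY$ models and passing the known $XY$ Lieb--Rivasseau inequality to that limit by continuity.
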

The proof is based on our extension of a Simon-Lieb type inequality to the Villain model, by which we join it to other known systems for which such a principle applies (references given below).  Instrumental for that is the presentation of the Villain model as the metric graph limit of an $XY$ model. This relation is presented here in \cref{sec:LR for Villain}.

After presenting the above-listed results for the Villain and the $\ZGF$ models, we extend the analysis to other dual pairs including,  in particular,
the  $XY$ spin model and its dual random height function $\ZBF$.

\subsection{Relations with previous works}

The first rigorous proofs of the occurrence in two dimensions of delocalization of random integer-valued height functions,  and the BKT  slow decay of correlations, were accomplished in a groundbreaking work by
 J. Fr\"ohlich and T. Spencer \cite{FroSpe81}.   In it, they dealt with functional integrals in the form of  a two-dimensional Gaussian field modified through the presence of a gas of Coulomb-like point charges with logarithmic interaction.  For such systems, the relevant transition is the condensation at low-temperatures of the 2D Coulomb plasma into a gas of dipoles.
The multi-scale analysis, and the rigorous renormalization group treatments,  that were employed there have inspired a number of other applications, and continue to draw refinements~\cite{GarSep20,DarWu20,BauParRod22A, BauParRod22B}.  Yet, interest continued in alternative approaches to the two phase transitions.

Proofs of delocalization by other methods have been established for a growing collection of random surfaces over two-dimensional graphs.  Among those are:
\begin{itemize}
\item
The dimer model~\cite{MR1872739, MR1933589} and the uniform spanning tree (for certain domains in $\Z^2$~\cite{MR3781449, russkikh2018dominos}, the honeycomb lattice~\cite{MR2415464} and for planar graphs with some natural restrictions~\cite{berestycki2016universality}).
\item
The $F$-model (the six-vertex model with $a=b=1$) at the parameter $c=2$~\cite{MR3592746, glazmanpeled2018}, at $c=1$~\cite{chandgotia2021delocalization, glazmanpeled2018, duminil2019logarithmic} (the height function of square ice), around the free fermion point $c=\sqrt 2$~\cite{dubedat2011exact,MR3637384} and then in the wider range $1\le c\le 2$~\cite{duminil-copin_delocalization_2020}.
\item
The Lipschitz height function model on the triangular lattice (the loop $O(2)$ model) at the parameter $x=1/\sqrt{2}$~\cite{duminil2017macroscopic} and $x=1$~\cite{glazmanManolescu2018}.
\item The above-mentioned work of Lammers~\cite{Lam21}, for a wide range of integer-valued height functions on doubly-periodic planar graphs of maximal degree $3$.
\item For the Solid-On-Solid model on planar graphs~\cite{LamOtt21}.
\end{itemize}

The lattice inequalities of Regev and Stephens-Davidowitz~\cite{RegDav17} were originally motivated by  applications in theoretical computer science.
Here, they are used to show that the height fluctuations of the $\ZGF$ are monotone decreasing  in the coupling constants, and, more generally, in the quadratic form by which the coupling is defined. This notion of monotonicity is conceptually distinct from the FKG inequalities. As far as the authors are aware, this type of monotonicity has not been shown for height functions in the past. On the spin side, analogous statements have been derived through the Ginibre inequalities.

The bound \eqref{spin_n_relation}, which links the models,  is reminiscent of known  relations between the correlation functions of certain  quantum spin systems with the connectivity probabilities of corresponding random loop ensembles \cite{AiNa94,Uel13,AiDCWa20}.   It is then by a common argument that  the a.s. existence of infinite collections of nested loops implies the
  non-summability of the spin correlations $\langle \sigma_0 \sigma_x\rangle_\beta$ .
However, unlike the identities which hold in the quantum examples, the relation \eqref{spin_n_relation}  is only an inequality. Related to that is the unsatisfactory fact that the bounds which are derived here do not yet yield the full temperature dependence of the decay rate.

The dichotomy stated in \cref{lem_dichotomy} is based on the following inequality proven here for the Villain model (cf. \cref{sec:LR for Villain}).
\begin{thm}  For the Villain model on any locally finite graph, let $x,y\in \mathcal V$,  and let $B\subset \G$ be a finite set which separates the two sites (i.e. any path from $x$ to $y$ along the graph's edges intersects $B$).  Then, at any $\beta \in (0, \infty)$,
\be
\langle  \sigma_x \cdot   \sigma_y \rangle^{Vill,\G}_{\beta}     \  \leq  \ \sum_{u\in B}
\langle  \sigma_x \cdot   \sigma_u \rangle^{Vill,\G\cap B}_{\beta}  \, \,
\langle  \sigma_u \cdot   \sigma_y \rangle^{Vill,\G}_{\beta}
  \  \leq  \ \sum_{u\in B}
\langle  \sigma_x \cdot   \sigma_u \rangle^{Vill,\G}_{\beta}  \, \,
\langle  \sigma_u \cdot   \sigma_y \rangle^{Vill,\G}_{\beta}  \,,
\ee
where the superscript $\G\cap B$ indicates that the spin-spin correlation is computed for the finite graph $\G\cap B$ with the free boundary conditions at $\partial B$.
\end{thm}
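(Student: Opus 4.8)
The plan is to reduce the inequality to its counterpart for the $XY$ model and then transfer it to the Villain model through the metric-graph limit; it helps to keep in mind the ``random walk'' picture that underlies the bound. Applying Poisson summation to each Villain edge weight $\sum_m\re^{-\frac{\beta}{2}(\theta_u-\theta_v+2\pi m)^2}$ and integrating out the angles represents, for any locally finite graph $H$ and vertices $a,b$, the correlation as $\langle\sigma_a\cdot\sigma_b\rangle^{Vill,H}_\beta=Z_H[\delta_a-\delta_b]/Z_H[0]$, where $Z_H[\rho]:=\sum\{\re^{-\frac{1}{2\beta}\sum_e k_e^2}: k\in\Z^{E(H)},\ \partial k=\rho\}$ sums over integer edge-flows with prescribed divergence (this is the same manipulation that produces \eqref{spin_n_relation}, since divergence-free $\Z$-flows are gradients of $\Z$-valued height functions on the dual). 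A flow contributing to the numerator carries one net unit of current from $x$ to $y$ and, since $B$ separates them, must pass through $B$; decomposing it at the vertex $u\in B$ where it first reaches $B$ — keeping the piece inside $\G\cap B$ and the piece outside separately — is what should produce, once the residual circulations are controlled, the product structure $\sum_u\langle\sigma_x\cdot\sigma_u\rangle^{\G\cap B}\langle\sigma_u\cdot\sigma_y\rangle^{\G^{\mathrm{out}}}$, with $\G^{\mathrm{out}}$ the complementary subgraph.

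Granting the first inequality, the second one is easy: $\G\cap B$ and $\G^{\mathrm{out}}$ are obtained from $\G$ by deleting edges, i.e. by setting their couplings to $0$, so the Ginibre monotonicity of Villain correlations in the couplings gives $\langle\sigma_x\cdot\sigma_u\rangle^{\G\cap B}\le\langle\sigma_x\cdot\sigma_u\rangle^{\G}$ and $\langle\sigma_u\cdot\sigma_y\rangle^{\G^{\mathrm{out}}}\le\langle\sigma_u\cdot\sigma_y\rangle^{\G}$, and all the correlations involved are nonnegative.

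The main obstacle is exactly the control of the residual circulations in the first-crossing decomposition: a flow may visit $B$ several times, so there is no canonical ``first-crossing path'' whose removal leaves the remainder in product form, and a naive choice of such a path $\gamma$ shifts the Gaussian weight by an a priori uncontrolled factor $\re^{\langle\gamma,\,\cdot\,\rangle/\beta}$ coming from the surplus circulations counted by the spurious $Z_{\G\cap B}[0]$. This is the same difficulty resolved by the switching-lemma machinery for the Ising model, and for the Villain model the way to circumvent it is the route signalled in the paper: realise the Villain model on $\G$ at coupling $\beta$ as the $N\to\infty$ limit of $XY$ models on the graphs obtained by subdividing each edge of $\G$ into $N$ sub-edges carrying coupling $N\beta$ (\cref{sec:LR for Villain}). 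The set $B$, the region it cuts off on the $x$-side, and hence the subgraph $\G\cap B$ are stable under subdivision, and the correlations on both sides of the asserted inequality converge to the Villain ones, so the statement is inherited from the Simon--Lieb inequality for the $XY$ model — which is among the systems for which it is established, via the same first-crossing decomposition applied now to the $XY$ random-walk representation, where the residual loops are absorbed by the standard splitting/switching estimates. I expect the bulk of the remaining work to be the bookkeeping of this limit: checking that the subgraph appearing on the right-hand side for the subdivided graph converges to $\G\cap B$, and that the one-unit-of-current picture of $\langle\sigma_x\cdot\sigma_y\rangle$ is stable along it.
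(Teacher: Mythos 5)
Your proposal ends up taking the paper's own route: realize the Villain model as the $N\to\infty$ limit of $XY$ models on the $N$-fold edge-subdivided graph with couplings $N\beta$ (the metric-graph construction of \cref{sec:LR for Villain}), apply the known Lieb--Rivasseau inequality for $XY$ on the subdivided graph, and pass to the limit; the preliminary flow-decomposition attempt, whose failure you correctly diagnose, is not part of the actual argument. Minor nit: the $XY$ inequality you invoke is the Lieb--Rivasseau inequality \cite{Lie80,Riv80}, not the (Ising) Simon--Lieb inequality, though the intent is clear.
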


Inequalities of this type  have been known for a number of spin systems, for which they were derived in a quick succession, starting with the Ising model's Simon's inequality~\cite{Sim79} and its improvement by Lieb~\cite{Lie80}.
For the two-component XY spin model such an inequality was derived in the combination of works by Lieb  and Rivasseau \cite{Riv80}, the latter proving the relevant combinatorial conjecture of \cite{Lie80}.   A slightly different version was presented in Aizenman-Simon \cite{AizSim80B} (proven for $N\leq 4$ component models through Ward identities and inequalities of Simon \cite{Sim80}).
The present extension of the Lieb-Rivasseau inequality  to the Villain model is one of the benefits
of the model's presentation as the restriction to the graph sites of its continuum metric-graph version.

  Given the constructive role which Barry Simon's works  have  played in this area, our results on the direct link of the BKT slow decay of correlations with the depinning in $\ZGF$ were presented  with a dedication to him at the Caltech Simon-fest of April 2021.
  It may be added  that, contemporaneously with the actual posting of our paper, an alternative derivation of the BKT slow decay of correlations in the $XY$ model  was presented in \cite{vanEngLis21}.

 \bigskip
  	
\section{Depinning in the \ZGFl/}
\label{sec:depinning beyond degree 3}

The main result in this section is the proof of \cref{thm:deloc on Z^2}.

The $\ZGF$ model and its two phases were presented in the introduction.
Its partition function is invariant under uniform shifts of the  boundary conditions by  an integer $k$,  to $n_u =k$ at all $u\in \partial \calV_L$,   in which case the corresponding Gibbs state will be shifted accordingly.  Uniform  shifts therefore represent  potential symmetry of the system's infinite-volume Gibbs states.  In the pinned phase this symmetry is broken.

On any graph, the height fluctuations of the $\ZGF$ are upper bounded by those of the corresponding Gaussian free field (GFF)~\cite{FroPar78} (cf.~\cite{kharash2017fr}) in the sense that for every $x,y\in\calV_L$,
\be\label{eq:Gaussian domination}
\EE^{\ZGF,\calG}_{\lambda,L}\left[(n_x - n_y)^2\right]\le \EE^{\text{GFF},\calG}_{\lambda,L}\left[(\phi_x-\phi_y)^2\right]\,.
\ee
Furthermore, for every $f:\calV_L\to\R$,
\be\label{eq:exponential Gaussian domination}
\EE^{\ZGF,\calG}_{\lambda,L}\left[\exp(\left<f,n\right>)\right]\le \EE^{\text{GFF},\calG}_{\lambda,L}\left[\exp(\left<f,\phi\right>)\right].
\ee

The limit in~\eqref{eq_pinning} exists by the model's absolute-value FKG property, which was established by Lammers--Ott~\cite{LamOtt21}. By~\eqref{eq:Gaussian domination}, if~\eqref{eq_pinning} holds for a single $x$ then it holds for all $x$.

Quite generally, the marginal distributions of $(n_x)$ are log-concave.    This important statement, which was proven by
Sheffield~\cite[Section 8.2]{Sheff05}, implies that when \cref{eq_pinning} is violated not only does the second moment diverge, but the entire support of the probability distribution drifts to infinity, in the sense that for any $t<\infty$,
\be  \label{de_pin}
 \lim_{L\to \infty} \,\,  \PP_{\lambda,L}^{\ZGF,\calG}\left[\left\{|n_x| \leq t\right\}\right]  \ = \ 0   \, .
\ee
In particular,
 for every $x\in\calV$,
\be  \label{1st moment}
  \lim_{L\to \infty} \,\,  \EE^{\ZGF,\calG}_{\lambda,L}\left[ |n_x| \right] \ = \ \infty   \,.
\ee

However, it still the case that, in both the pinned and depined phases, the distribution of the gradients of $n$, or equivalently of each of the difference variables  $\{n_x-n_y\}_{x,y\in \calV}$,  remains tight (though not uniformly in $\|x-y\|$).
In this sense, the system behaves similarly to the lattice
$2D$ Gaussian free field, free of the $\ZZ$-valued constraint.

The GFF does not undergo a phase transition since, by its Hamiltonian's homogeneity,  the field's distributions at different temperatures are related via simple rescaling by  $\sqrt{\lambda}$.  In contrast, for the $\ZGF$ there are two natural scales:  that of the discretization step (here $1$), and that of the typical local fluctuations, $1/\sqrt{\lambda} $.  Alternatively stated: there is an energy gap.  This  sets a  relevant scale for $\lambda$,  opening the possibility for a phase transition.

The delocalized phase for the $\ZGF$ on doubly-periodic and tame graphs was first established by Fr\"ohlich and Spencer \cite{FroSpe81} using the dual Coulomb gas perspective, and a multiscale renormalization analysis.  The argument presented here builds on the more recent proof by Lammers~\cite{Lam21} for shift-invariant planar graphs of maximum degree $3$.
The degree limitation arises since in Lammers' argument the persistence of the boundary influence, that is essential for pinning,  for $\lambda $ small is shown to require  simultaneous percolation of positively and of negatively marked edges.  On planar connected  cubic graphs these two random subgraphs block each other, so that both cannot percolate (see \cref{fig:degree 4 percolation coexistence}).

\begin{figure}[t]
	\begin{subfigure}{.4\textwidth}
		\centering
		\begin{tikzpicture}
			
			\draw[very thick,line width=0.1cm,-,blue] plot [smooth] coordinates { (0,0) (0,-1)};
			\draw[very thick,line width=0.1cm,-,red] plot [smooth] coordinates { (-1,0) (0,0)};
			\draw[very thick,line width=0.1cm,-,black] plot [smooth] coordinates { (0,0) (1,1)};
			\draw[very thick,line width=0.1cm,-,red] plot [smooth] coordinates { (1,1) (2,1)};
			\draw[very thick,line width=0.1cm,-,blue] plot [smooth] coordinates { (1,1) (1,2)};
			\node at (0.8,0.3) {$?$};
			\node at (0.3,-0.5) {$-$};
			\node at (-0.5,0.2) {$+$};
			\node at (1.5,1.2) {$+$};
			\node at (1.3,1.8) {$-$};
		\end{tikzpicture}
		\label{fig:degree 3 percolation no coexistence}
	\end{subfigure}
	\begin{subfigure}{.35\textwidth}
		\centering
		\begin{tikzpicture}[scale=0.7]
			\node at (1.6,0.8) {$-$};
			\node at (1.6,3.4) {$-$};
			\node at (0.4,2.2) {$+$};
			\node at (3.4,2.2) {$+$};
			\draw[very thick,line width=0.1cm,-,red] (2,2) -- (0,2);
			\draw[very thick,line width=0.1cm,-,red] (4,2) -- (2,2);
			\draw[very thick,line width=0.1cm,-,blue] (2,0) -- (2,2);
			\draw[very thick,line width=0.1cm,-,blue] (2,2) -- (2,4);
		\end{tikzpicture}	
	\end{subfigure}
	
\caption{The significance of the vertex degree for edge percolation.  Through a vertex of degree $3$ in any configuration only one of the $\pm$ signs may edge percolate.  That is not the case at vertices of higher degree.  An essential part of Lammers' argument is the reduction  of influence percolation at small $\lambda $ to an edge percolation process.}\label{fig:degree 4 percolation coexistence}
\end{figure}
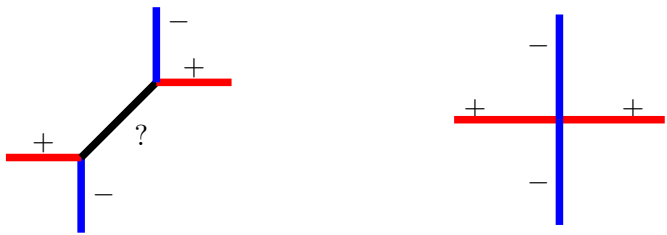

To make Lammers' result applicable to more general graphs,
we transform the given $\ZGF$  model through a sequence of local steps.

To convey the ideas in a notationally-simple context, we concentrate on $\ZZ^2$; the proof presented here extends to the more general cases of doubly-periodic planar graphs, with a tame embedding in $\R^2$.  (The resulting bounds on the critical coupling would, however, depend on the structure of $\G$, in particular its maximal degree).

The first, and simplest, step is based on  the observation that, by the divisibility of its distribution, a Gaussian variable of variance $\frac{1}{\lambda}$ can be presented as the sum of two independent Gaussian variables, of variances $\frac{2}{3\lambda}$ and $\frac{1}{3\lambda}$.  Thus, the $\ZGF$ distribution on any graph remains unaffected if the coupling across any of its edges is replaced by coupling through an intermediate site added in the  midst of the edge.
We  proceed as follows.

\begin{enumerate}[1)]
	\item Enhance the model by adding \emph{real-valued} variables in the midpoints of the edges of $\ZZ^2$, replacing the original interaction by one mediated by mid-edge spins with coupling at strength $3\lambda/2$  across the lower and left parts of the split edges, and $3\lambda$ across the upper and the right parts of the split edges. (This does not affect the distribution of the variables at the original vertices).
	\item Next, constrain the mid-edge variables to take values in $\ZZ$ instead of $\RR$.  It need not be initially  obvious, but is proven below, that this change can  only reduce the fluctuations of $n$.
	\item Collapse the variables pairwise by forcing the pairs of variables north and east from the original lattice sites to assume equal  value (see \cref{fig:local surgery to convert Z^2 to degree-3,fig:resulting degree-3 graph}). This again only reduces fluctuations, and results in a $\ZGF$ model on a cubic planar graph  with uniform coupling constants of $3 \lambda$.
\end{enumerate}

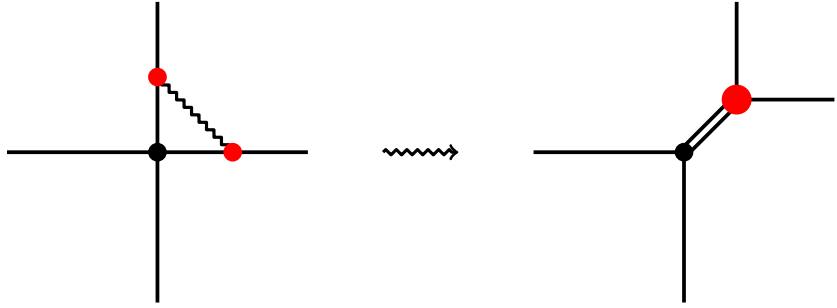
\begin{figure}[t]
	\centering
	\begin{tikzpicture}
		\draw [-,line join=round,very thick, decorate, decoration={
	zigzag,
	segment length=4,
	amplitude=1,post=lineto,
	post length=2pt
}]  (1,0) -- (0,1);

		\draw [->,line join=round,very thick, decorate, decoration={
	zigzag,
	segment length=4,
	amplitude=1,post=lineto,
	post length=2pt
}]  (3,0) -- (4,0);
		
		\node at (0,0)[circle,fill,inner sep=2.5pt]{};
		\draw[very thick,line width=0.05cm,-] (-2,0) -- (2,0);
		\draw[very thick,line width=0.05cm,-] (0,-2) -- (0,2);
		\node at (1,0)[circle,fill,inner sep=2.5pt,red]{};
		\node at (0,1)[circle,fill,inner sep=2.5pt,red]{};

		\node at (7,0)[circle,fill,inner sep=2.5pt]{};
		\draw[very thick,line width=0.05cm,-] (7,-2) -- (7,0);
		\draw[very thick,line width=0.05cm,-] (7,0) -- (5,0);
		\draw[very thick,line width=0.05cm,-] (7,0.08) -- (7.7,0.78);
		\draw[very thick,line width=0.05cm,-] (7,-0.08) -- (7.7,0.62);
		\draw[very thick,line width=0.05cm,-] (7.7,0.7) -- (7.7,2);
		\draw[very thick,line width=0.05cm,-] (7.7,0.7) -- (9,0.7);
		\node at (7.7,0.7)[circle,fill,inner sep=4pt,red]{};
		
	\end{tikzpicture}

	\caption{Local surgery in which a vertex of degree $4$ is replaced by two of degree $3$.  It proceeds through the addition of mediating vertices (at increased coupling strength) in the midst of a pair of adjacent edges, with variables which  are initially $\RR$ and then converted to $\ZZ$-valued.  Subsequently the sites are merged  through  infinite coupling, or equivalently conditioning on having  equal values.  An essential element in our analysis is the proof that in each step the model's fluctuations can only decrease.
	}
	\label{fig:local surgery to convert Z^2 to degree-3}
\end{figure}

Applied to the $\ZGF$ model on $\Z^2$, this construction results in $\ZGF$ on the hexagonal graph, with $\lambda$ increased to $3\lambda$. The scheme for more general graphs is presented in \cref{sec:minorization for general graphs}.

We next establish the monotonicity of fluctuations in these steps, applying for this purpose the monotonicity theory of Regev and Stephens-Davidowitz \cite{RegDav17} which concerns Gaussian probability measures on lattices.
First let us alert the reader that, in the terminology which follows, the relevant lattice $\calL$  is the collection of configurations of  the $\ZGF$  over the given finite graph. Its dimension $k$ is the number of graph vertices.

A lattice $\calL$ in $\RR^k$ is a set of the form $\calL := L\ZZ^k$ for some $L\in\Mat_k(\RR)$. Given such $L$ and a $k\times k$ matrix $A$ which is positive on its span, a random field $\psi\in\calL=L\ZZ^k$ associated with $\left(A,\calL\right)$ is defined via the  partition function
\be
 Z_{A,\calL} := \sum_{\psi\in\calL}\exp\left(-\frac12\ip{\psi}{A\psi}\right)\,.
 \ee
For example, the $\ZGF$ with Dirichlet boundary conditions on a finite graph with the set of vertices $\Lambda$ and coupling $\lambda$ is obtained as the special case of this family of models by choosing $k=|\Lambda|$, $L$ as the orthogonal projection onto vectors $v\in\RR^k$ obeying $\left.v\right|_{\partial\Lambda}=0$ and letting $A = -\lambda \Delta$ where $\Delta$ is the discrete Laplacian on $\ell^2(\Lambda)$ with Dirichlet boundary conditions.

Since, for any $v\in\RR^k$, monotonicity of the moment-generating function implies monotonicity of the second moment of $\ip{v}{\psi}$, let us focus on the former, which we denote:
\begin{align} \MM_{A,\calL}\left[v\right] := \EE_{A,\calL}\left[\exp\left(\ip{v}{\psi}\right)\right]\qquad(v\in\RR^k)\,. \end{align}

In the application we have in mind, $v$ is proportional to $\delta_x-\delta_y$ ($y$ possibly being at the boundary, with $n(y)=0$ fixed by the boundary conditions).

The first of  the two  statements of RSD which are of relevance here is the monotonicity in the lattice:
\begin{prop}[\cite{RegDav17}] \label{sub_latt_monotonicity}
	In the above setup, if $\calM\subseteq\calL$ is a sub-lattice then for any $v\in\RR^k$ $$ \MM_{A,\calM}\left[v\right]\leq \MM_{A,\calL}\left[v\right] \,.$$
\end{prop}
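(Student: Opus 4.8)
The plan is to complete the square in the exponent of the generating functional so that the lattice--dependent part of $\MM_{A,\cdot}[v]$ becomes a \emph{ratio} of Gaussian masses of translates of $\calM$ (resp.\ $\calL$), at which point the assertion becomes exactly the lattice Gaussian inequality of Regev and Stephens--Davidowitz. Concretely, fix $v\in\RR^k$. Since the numbers $\ip{v}{\psi}$ with $\psi\in\calL$ depend only on the orthogonal projection of $v$ onto $S:=\mathrm{span}(\calL)$, I may assume $v\in S$; on $S$ the form $A$ is positive definite, so let $\mu:=A^{-1}v\in S$ be the solution of $A\mu=v$. Then
\[
\ip{v}{\psi}-\tfrac12\ip{\psi}{A\psi}=-\tfrac12\ip{\psi-\mu}{A(\psi-\mu)}+\tfrac12\ip{v}{A^{-1}v}\qquad(\psi\in S),
\]
and hence, writing $\rho_A(\calN+t):=\sum_{\psi\in\calN}\exp\!\bigl(-\tfrac12\ip{\psi+t}{A(\psi+t)}\bigr)$ for the $A$--weighted Gaussian mass of a coset, one gets for every lattice $\calN\subseteq S$
\[
\MM_{A,\calN}[v]=e^{\frac12\ip{v}{A^{-1}v}}\,\frac{\rho_A(\calN-A^{-1}v)}{\rho_A(\calN)}\,.
\]

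The prefactor $e^{\frac12\ip{v}{A^{-1}v}}$ does not depend on the lattice, so it cancels in the comparison of $\calM$ with $\calL$, and $\MM_{A,\calM}[v]\le\MM_{A,\calL}[v]$ is equivalent to $\rho_A(\calM+t)\,\rho_A(\calL)\le\rho_A(\calL+t)\,\rho_A(\calM)$ with $t:=-A^{-1}v$; as $v$ ranges over $S$ so does $t$, so it suffices to prove this for all $t\in S$. Finally, let $T$ be the positive square root of the restriction $A|_S$, so that $T$ is invertible on $S$ and $\ip{\psi+t}{A(\psi+t)}=\|T(\psi+t)\|^2$; then $\rho_A(\calN+t)=\rho(T\calN+Tt)$ with $\rho(X):=\sum_{x\in X}e^{-\|x\|^2/2}$ the standard Gaussian mass, and $T\calM\subseteq T\calL$ is again a sublattice. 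Thus the claim is reduced to
\[
\rho(\calM+t)\,\rho(\calL)\;\le\;\rho(\calL+t)\,\rho(\calM)\qquad\text{for sublattices }\calM\subseteq\calL\subseteq\RR^n\text{ and every }t\in\RR^n,
\]
which is precisely the inequality of \cite{RegDav17}.

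I expect essentially all of the difficulty to be already absorbed into this last inequality, which I take as an external input (its proof in \cite{RegDav17} is the nontrivial ingredient); the steps above are a routine normalization -- complete the square, cancel the lattice--independent prefactor, diagonalize $A$. The only points requiring a little care are: (i) carrying out everything inside $S=\mathrm{span}(\calL)$, so that $A^{-1}$ and the square root $T$ are well defined even though $A$ is merely assumed positive on $S$ (in the $\ZGF$ application $\calL$ is the image of the projection onto configurations vanishing on $\partial\Lambda$, so $S$ is a proper subspace of $\RR^k$); and (ii) the passage -- already recorded in the lines preceding the statement -- from monotonicity of $v\mapsto\MM_{A,\cdot}[v]$ for all $v$ to monotonicity of the second moments $\EE_{A,\cdot}\bigl[\ip{v}{\psi}^2\bigr]$, obtained by expanding both generating functionals to second order in $v$, which is the form actually used for the height variance.
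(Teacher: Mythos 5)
Your reduction is correct, and it is a genuinely different organization from the one the paper gives in \cref{sec:RSD monotonicity theory}. You complete the square, observe that
$$
\MM_{A,\calN}[v]=e^{\frac12\ip{v}{A^{-1}v}}\,\frac{\rho_A(\calN-A^{-1}v)}{\rho_A(\calN)},
$$
and thereby show that the MGF monotonicity is literally equivalent (after diagonalizing $A$ on $\mathrm{span}(\calL)$) to the Gaussian-mass ratio inequality $\rho(\calM+t)\rho(\calL)\le\rho(\calL+t)\rho(\calM)$ for sublattices $\calM\subseteq\calL$, which you then cite from \cite{RegDav17}. The paper instead reproduces the RSD argument in a self-contained way: it first proves the ``Pythagorean'' inequality \cref{Pythagoras_ineq}, $\MM_{A,\calL}[u]\MM_{A,\calL}[v]\le\sqrt{\MM_{A,\calL}[u+v]\MM_{A,\calL}[u-v]}$, via the $T=\begin{bmatrix}\Id&\Id\\\Id&-\Id\end{bmatrix}$ doubling trick and Cauchy--Schwarz, and then derives \cref{sub_latt_monotonicity} from it by decomposing $\calL=\bigsqcup_{w\in\calL/\calM}(\calM+w)$, applying \cref{Pythagoras_ineq} together with $\sqrt{ab}\le\frac12(a+b)$, and invoking the $\calM\mapsto-\calM$ symmetry. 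Your version buys brevity and makes transparent that the MGF form and the $\rho$-ratio form of the RSD monotonicity are the same statement up to completing the square; what it gives up is self-containedness, since the $\rho$ inequality you cite is itself proved in \cite{RegDav17} by essentially the argument that the paper's appendix spells out. Both routes are valid; yours is a clean outsourcing, the paper's a clean reproduction, and the care you take about working inside $S=\mathrm{span}(\calL)$ (so that $A^{-1}$ and $T=\sqrt{A|_S}$ make sense under the weaker hypothesis that $A$ is positive only on $S$) is appropriate and matches the $\ZGF$ application with Dirichlet boundary conditions.
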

The next states that  $A\mapsto\MM_{A,\calL}\left[v\right]$ is matrix-monotone:
\begin{prop}[\cite{RegDav17}] \label{prop:monotonicity in covariance matrix}
	For any lattice $\calL$ and any two matrices $A,B$ such that $A\geq B>0$ (on the span of $L$), $$ \MM_{A,\calL}\left[v\right] \leq \MM_{B,\calL}\left[v\right]\qquad(v\in\RR^k)\,.$$
\end{prop}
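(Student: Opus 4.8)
\emph{Proof strategy.} This statement is proved in \cite{RegDav17}; the route I would take is to \emph{deduce} it from \cref{sub_latt_monotonicity} by a lifting trick, which also exhibits it as the lattice analogue of the trivial fact that the continuous Gaussian generating function $\MM[v]=e^{\frac12\ip{v}{\Sigma v}}$ is monotone in the covariance $\Sigma$ (here $A^{-1}\leq B^{-1}$). First I would reduce to $\calL=\ZZ^{r}$: fixing a $\ZZ$-basis of $\calL$ identifies it with $\ZZ^{r}$, $r=\rank\calL$, and transports $A,B$ to matrices that are positive definite on $\RR^{r}$ and still obey $A\geq B$ (since $\ip{\cdot}{(A-B)\cdot}\geq 0$ on $\operatorname{span}\calL$), while $\MM_{A,\calL}[v]$ becomes $\MM_{A,\ZZ^{r}}[v]$ for the correspondingly transformed $v$. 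Next, replacing $A$ by $A+\ve\Id$ with $\ve>0$ and letting $\ve\downarrow 0$ at the end — the lattice sums defining $\MM_{\cdot,\ZZ^{r}}[v]$ converge, depend continuously on the matrix, and have a tail dominated uniformly in $\ve\in(0,1]$ — we may assume $C:=A-B>0$.

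The heart of the argument is then a one-line construction. Pass to $\RR^{2r}$ with coordinates $(x,y)$, set $\hat A:=\begin{pmatrix}B&0\\0&C\end{pmatrix}>0$ and $\hat v:=(v,0)$, and take $\calL_{0}:=\ZZ^{r}\times\ZZ^{r}$ with its diagonal sublattice $\calM:=\{(x,x):x\in\ZZ^{r}\}\subseteq\calL_{0}$. On $\calM$ one has $\ip{(x,x)}{\hat A(x,x)}=\ip{x}{(B+C)x}=\ip{x}{Ax}$ while $\hat v$ pairs with $(x,x)$ to give $\ip{v}{x}$, so $\MM_{\hat A,\calM}[\hat v]=\MM_{A,\ZZ^{r}}[v]$; and since $\hat A$ is block-diagonal and $\hat v$ is supported on the first block, the $y$-summation factors out of numerator and denominator of $\MM_{\hat A,\calL_{0}}[\hat v]$ alike, leaving $\MM_{\hat A,\calL_{0}}[\hat v]=\MM_{B,\ZZ^{r}}[v]$. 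Applying \cref{sub_latt_monotonicity} to $\calM\subseteq\calL_{0}$ with the matrix $\hat A$ and vector $\hat v$ gives
\[
\MM_{A,\ZZ^{r}}[v]=\MM_{\hat A,\calM}[\hat v]\ \leq\ \MM_{\hat A,\calL_{0}}[\hat v]=\MM_{B,\ZZ^{r}}[v],
\]
and undoing the $\ve$-regularization and the basis identification finishes the proof.

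Given \cref{sub_latt_monotonicity} the lifting is automatic, so the only places requiring care are the two preliminary reductions: when $\calL=L\ZZ^{k}$ with $L$ non-injective one must genuinely parametrize $\calL$ by a $\ZZ$-basis and run the whole argument inside $\operatorname{span}\calL$; and the degenerate case $\det(A-B)=0$ forces the $\ve$-regularization together with the accompanying dominated-convergence step. These are bookkeeping obstacles rather than conceptual ones — the genuine difficulty of the statement is entirely absorbed into \cref{sub_latt_monotonicity}, i.e.\ into the Regev--Stephens-Davidowitz inequality itself, which is where any self-contained treatment would have to do real work.
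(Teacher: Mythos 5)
Your proof is correct but takes a genuinely different route from the paper's. The paper interpolates $A_t=tA+(1-t)B$ and shows $\partial_t\MM_{A_t,\calL}[v]\le 0$ via the correlation inequality $\EE[\exp(\ip{u}{\psi})\ip{\psi}{M\psi}]\ge\MM[u]\,\EE[\ip{\psi}{M\psi}]$ for $M\ge 0$, which is obtained from a Hessian bound that in turn follows from the Pythagoras-type inequality \eqref{Pythagoras_ineq}; the paper thus deduces the matrix monotonicity directly from \eqref{Pythagoras_ineq}, independently of \cref{sub_latt_monotonicity}. Your doubling construction---realizing $\MM_{A,\ZZ^r}[v]=\MM_{\hat A,\calM}[\hat v]$ and $\MM_{B,\ZZ^r}[v]=\MM_{\hat A,\calL_0}[\hat v]$ for $\hat A=\diag(B,A-B)$, $\calL_0=\ZZ^r\times\ZZ^r$, and $\calM$ the diagonal sublattice---reduces the statement to a single application of \cref{sub_latt_monotonicity}; it is shorter and more conceptual, exhibiting matrix monotonicity as a literal shadow of sublattice monotonicity, while both routes ultimately rest on \eqref{Pythagoras_ineq}. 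Two subtleties in your argument deserve explicit mention. First, $\calM$ has rank $r$ inside the rank-$2r$ lattice $\calL_0$, so you are invoking \cref{sub_latt_monotonicity} for a sublattice of infinite index; the coset decomposition $\calL_0=\bigsqcup_{w\in\calL_0/\calM}(\calM+w)$ underlying the proof of that proposition does survive this (all sums converge absolutely since $\hat A>0$), but RSD-type sublattice monotonicity is often stated only for full-rank sublattices, so the extension is worth making explicit rather than tacit. Second, the $\ve$-regularization is genuinely necessary---without it $\hat A$ is merely positive semidefinite and $Z_{\hat A,\calL_0}$ diverges---and your uniform-domination argument for letting $\ve\downarrow 0$ is the correct way to discharge it.
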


For the reader's convenience, a self-contained presentation of the relevant arguments is presented in \cref{sec:RSD monotonicity theory} below. These two propositions will next be applied in the proof of the results stated above.

\begin{proof}[Proof of \cref{thm:deloc on Z^2}] The first assertion is a direct consequence of the monotonicity in $\lambda$ which follows from \cref{prop:monotonicity in covariance matrix}.

That $\lambda_c(\ZZ^2)<\infty$ is a known fact, provable by a Peierls-type argument. To prove the positivity of $\lambda_c(\ZZ^2)$, we follow the above construction, establishing along the way that in each step the fluctuations, $\EE\left[ \ip{v}{n}^2\right] $ for $v\in\RR^k$,
can only decrease. In the following discussion $\calG = (\calV, \calE)$ refers to the finite-volume restrictions of $\ZZ^2$.

 \emph{Step 1:}  As outlined above, using the identity \begin{align}\label{eq:Interpolation} \exp\left(-\frac12\lambda(n_x-n_y)^2\right) = \frac32\sqrt{\frac{\lambda }{\pi}}\int_{n_{xy}\in\RR}\exp\left\{-\frac\lambda2\left[3(n_x-n_{xy})^2/2+3(n_y-n_{xy})^2\right]\right\}\dif{n_{xy}} \end{align}
we may associate to each edge of $\calG$ an additional \emph{real-valued} degree of freedom $n_{xy}$, so that altogether the field has elastic energy which is associated with the discrete Laplacian on the union graph $\tilde{\calG}$ with vertex set $\tilde{\calV}:=\calV\cup\calE$ and the field is a map $n \in \ZZ^\calV \times \RR^\calE$, with the inhomogeneous coupling constants set at $3 \lambda/2$ and $3\lambda$ as explained above.
 We denote the associated probability distribution $\PP_{\ZZ^\calV \times \RR^\calE,3\lambda;\frac32\lambda}^{\tilde{\calG}} $.  One should note that at this point it is of mixed $\RR$-valued and $\ZZ$-valued  variables.

\emph{Step 2:} The $\RR$-valued field can be viewed as a $\ve\to0$ limit of a $\ve\ZZ$-valued one, that is,
	\begin{align} \lim_{\ve\to0} \MM_{\ZZ^\calV \times (\ve\ZZ)^\calE,\bar{\lambda}}^{\tilde{\calG}}\left[v\right] = \MM_{\ZZ^\calV \times \RR^\calE,\bar{\lambda}}^{\tilde{\calG}}\left[v\right]\qquad(v\in\RR^\calV)\,, \end{align}
where $\bar{\lambda}$ indicates the inhomogeneous coupling constants described above. Applying \cref{sub_latt_monotonicity} to the sequence $\ve_k = 2^{-k}$, in which case the corresponding lattices are  nested ($(2\ve\ZZ)^\calE$ being a sub-lattice of $(\ve\ZZ)^\calE$),  we conclude  that
\be \MM_{\ZGF,\bar{\lambda}}^{\tilde{\calG}}\left[v\right] \leq \MM_{\ZZ^\calV \times \RR^\calE,\bar{\lambda}}^{\tilde{\calG}}\left[v\right]\qquad(v\in\RR^{\tilde{\calV}})\,. \ee
It follows that  the conversion of the mixed $\RR$-valued and $\ZZ$-valued Gaussian field to a pure $\ZGF$ on $\tilde{\calG}$ with the same coupling constants  can only lower the variance of $n_0$.

\begin{rem}\label{rem:Gaussian domination bound}We note in passing that the above gives another proof of the Gaussian domination assertions stated in~\eqref{eq:Gaussian domination} and~\eqref{eq:exponential Gaussian domination}. Explicitly,
\begin{align}\label{eq:Gaussian domination bound}\MM_{\lambda}^{\ZGF,\calG}\left[v\right]\equiv\EE_{\lambda}^{\ZGF,\calG}\left[\mathrm{e}^{\langle v,n\rangle}\right]\leq \exp\left(+\frac{1}{2\lambda}\langle v,-\Delta^{-1} v\rangle\right)\qquad(v\in\ker(-\Delta)^\perp)\,.\end{align}
\end{rem}

\emph{Step 3:}
In a repeated application of the monotonicity principle expressed in \cref{sub_latt_monotonicity}  we deduce that also in the third step of the construction,  the restriction to the sub-lattice of configurations in which the new edge variables are pairwise equal,  the fluctuations can only decrease.
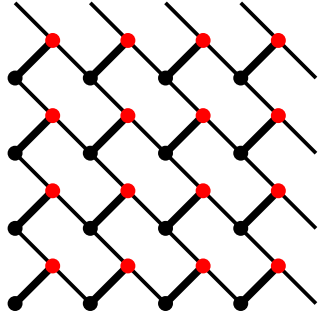
\begin{figure}[t]
	\centering
	\begin{tikzpicture}[scale=1]
		\foreach \x in {0,...,3}
		{
			\foreach \y in {0,...,3}
			{
				\draw[very thick,line width=0.1cm,-] (\x,\y) -- (\x+0.5,\y+0.5);
				\draw[very thick,line width=0.05cm,-] (\x+1,\y) -- (\x,\y+1);
				\node at (\x ,\y )[circle,fill,inner sep=2pt]{};
				\node at (\x + 0.5,\y + 0.5)[circle,fill,inner sep=2pt,red]{};
							}
		}	
	\end{tikzpicture}	
	\caption{The degree-$3$ graph on which the fluctuations of $n_0$, at adjusted couplings, provide a lower bound for those on $\Z^2$.  The  (thick) positively-sloped lines result from the binding of two edges, at the construction's last step.   The splitting ratio of the couplings at the first step is selected so as to equilibrate the resulting model's coupling.  These end up at $3 \lambda$, in terms of the $\ZGF$'s coupling  on the original graph  $\Z^2$. }
	\label{fig:resulting degree-3 graph}
\end{figure}

The assertion that the $\ZGF$ on $\Z^2$ is depinned at   $\lambda\le\frac{2}{3}\ln(2)$  follows by applying  Lammers' theorem to the hexagonal graph which is obtained by the above construction.

An extension of this procedure to more general doubly-periodic graphs is given below in \cref{sec:minorization for general graphs}.
\end{proof}

\section{The Villain and XY spin models}

\subsection{A pair of $O(2)$ symmetric spin models}
	Prototypical examples of   ferromagnetic systems with $O(2)$ symmetry are arrays of two-component unit spins, attached to the sites $\calV$ of a homogeneous graph $\calG$ with, e.g., the XY ferromagnetic  interaction
	\be \label{XY_ferro}
	H^{\mathrm{XY}} ( \sigma) = - \sum_{\{u,v\}\in\calE} J_{u,v} \, \sigma_u \cdot   \sigma_v  =
	- \sum_{\{u,v\}\in\calE} J_{u,v} \, \cos(\theta_u-\theta_v)\,.
	\ee
at $J_{u,v} \geq 0$.
The spins are presented here in two equivalent ways: as two-component unit vectors $ \sigma_x \in \mathbb{S}^1$,  or as complex unitary numbers  $\mathrm{e}^{i \theta_x}\in\mathbb{S}^1\subseteq\CC$.

Since the phenomena discussed here are unique to two dimensions, the graphs $\calG$ of main interest  are doubly-periodic and tame. The graphs may be finite or infinite, the latter case conveniently viewed as the infinite-volume limit of its finite restrictions.   Thus $\Z^2$ is studied as a limit ($L\to \infty$) of its truncated version $\calV_L = [-L,L]^2\cap\ZZ^2$.
By default we take the model with free boundary conditions.

The basic $O(2)$ spin model, commonly referred to  as the $XY$ model,  has as its  partition function (at the nearest-neighbor interaction and free boundary conditions)
	\begin{align}  \label{Z_XY}
	Z^{XY,\G}_{\beta} & =
	  \int_{\left[-\pi,\pi\right)^\calV}
	\prod_{\{u,v\}\in\calE} \re^{\beta    J_{u,v} \cos(\theta_u-\theta_v)} 	
\prod_{x\in \mathcal V}  \dif{ \theta_x}
	\, \notag \\  \\ \notag
	 &= \re^{\beta \|J\|} \int_{(\mathcal S^1)^\calV }
	\prod_{\{u,v\}\in\calE} \re^{-\frac{\beta   J_{u,v}}2 \|\sigma_u-\sigma_v\|^2 }
\prod_{u\in \mathcal V} \dif{ \sigma_u}
 \, .
	\end{align}
with $\|J\| =\sum_{(x,y)\in \mathcal E} J_{x,y}$.

In the Villain version of the $O(2)$ model~\cite{Vil75} each of the interaction factors in  \cref{Z_XY}  is replaced through the following substitution
	\be  \label{Villain_subst}
	\re^{\beta  J_{u,v} \cos(\theta_u-\theta_v)}   \quad \Longrightarrow \quad     \sum_{m_{uv}\in\ZZ } \re^{-\frac{\beta   J_{u,v} }{2}   \left(\theta_u-\theta_v +2\pi m_{uv}\right)^2}\,.
	\ee
The resulting partition function is presented in \eqref{Villain_Z1}.

In each case, the corresponding Gibbs equilibrium states is given by the probability distributions over
 the spin configurations that is obtained by normalizing the measures integrated in \cref{Z_XY}  and \cref{Villain_Z1}   through the  corresponding partition functions.

In what follows, we shall focus our attention on the ferromagnetic spin models with the  nearest-neighbor couplings  $J_{u,v} = \1[d_\G(u,v)=1] $.

The $XY$ and the Villain spin systems are similar in terms of the physics they model:\vspace{-0.06in}
\begin{enumerate}[i)]
\item in each the  systems' symmetries include uniform  spin rotations
\item the weights assigned to the spin configurations have the same periodicity in $\theta$ and same maxima, in the vicinity of which they agree up to the fourth order (after a trivial correction by  factors of  $\re^\beta$).
\end{enumerate}
Of the two, the Villain version has a simpler dual, the duality being accomplished through a Fourier transform aided by the Poisson summation formula.

In addition, as is  shown and applied below,  the Villain model can be presented as a metric-graph continuum limit of an $XY$ spin system.  There is also a mathematical relation in the converse direction (see \cref{lem:ZBF is a mixture of Gaussians}).

Of particular interest is the behavior of the correlation function
\be
\langle  \sigma_x \cdot   \sigma_y \rangle_{\beta}  = \lim_{L\to \infty}
\langle  \sigma_x \cdot   \sigma_y \rangle_{\beta,L}
\ee
where  $\sigma_x \cdot   \sigma_y  =  \cos(\theta_x-\theta_y)$
and $\langle \cdot \rangle_{\beta,L}$ denotes the expectation value with respect to the  finite-volume Gibbs equilibrium state in $\Lambda(L)=[-L,L]^2$ at the inverse temperature $\beta = (k_B T)^{-1}$ with free boundary conditions.  The limit's existence for both the basic $XY$ model and its Villain version follows by the Ginibre correlation inequalities \cite{Gin70, FroPar78}.  These imply that $\langle  \sigma_x \cdot   \sigma_y \rangle_{\beta,L}$ is  monotone increasing in $L$, and more generally in the volume.
	
	By  general arguments~\cite{Dob68,Sim79},
	 or more model-specific bounds~\cite{AizSim80A}, at high enough temperatures  the spin-spin correlations decay exponentially fast.  That is,  for all $\beta<\beta_c$, at some $\beta_c >0$,
		\be  \label{eq:highT}
\langle  \sigma_x \cdot   \sigma_y \rangle_{\beta}    \leq   A(\beta) \Oe^{-m(\beta)\|x-y\| }
	 \qquad \mbox{with $m(\beta)>0$, $A(\beta) < \infty$}\,.
		\ee
	
	   In dimensions  $d>2$, such models  exhibit also a low-temperature phase with continuous symmetry breaking, where $\lim_{|y |\to \infty } \langle  \sigma_x \cdot   \sigma_y \rangle_{\beta}
 >0$~\cite{FrSiSp76}.  However,  by
the Mermin--Wagner theorem, that does not occur in two dimensions~\cite{DobShlo75, Pfi81,McBSpe77}.

Nevertheless, as was pointed out by   Berezinskii~\cite{Ber70} and Kosterlitz and Thouless~\cite{KosThou73}, and proven rigorously in~\cite{FroSpe81}, two-component  spin systems with a rotation-invariant interaction do exhibit a low  temperature phase at which \cref{eq:highT} does not hold.  At that phase, the spin-spin correlations decay slowly, at a temperature-dependent power law, i.e., there is some $\beta_\mathrm{BKT}>0$ such that
	\be
		\langle  \sigma_x \cdot   \sigma_y \rangle_\beta  \approx \frac{\mathrm{Const.}}{\|x-y\|^{\eta(\beta)}} \qquad \mbox{(for all  $ \beta > \beta_\mathrm{BKT}$)}\,.
	\ee

		It is expected that such behavior   does not occur for systems in which the $O(2)$ spin-rotation symmetry is part of a larger  non-Abelian $O(N)$ symmetry at $N>2$, e.g., in the three-component classical  Heisenberg model~\cite{Poly75}.   However, in contrast to the case $N=2$,  still unresolved challenges were raised to the  arguments on which this prediction rests
~\cite{PatSei92}.
		
\subsection{The spin correlation dichotomy}  \label{sec:dichotomy}

The  dichotomy stated in \cref{lem_dichotomy}  is a useful principle which allows to boost initial result of slow decay to a rathe specific lower bound.
As such, it can  be viewed as a simplified version of a general result of Dobrushin and Pecherski~\cite{DobPech79} which for a range of $N$-component ferromagnetic spin models ($N\leq 4$) was simplified through correlation inequalities.
These are typically styled after the Ising prototypes of the Simon inequality~\cite{Sim80}  and  its Lieb-improved version~\cite{Lie80}.

Correlation inequalities from which the dichotomy follows are valid for both the $XY$ and the Villain models.
For the $XY$ model they have been known in two forms, that of Aizenman and Simon~\cite{AizSim80B} (proven for $N\leq 4$), and Lieb and Rivasseau~\cite{Lie80,Riv80} (proven for $N\leq 2$).
 For the Villain model this statement is new, and is proven here in \cref{sec:LR for Villain}.
 Combining the old with the new one has:

\begin{lem}  \label{lem_corr_ineq} For an arbitrary graph $\G$, any finite subset $\Lambda \subset \mathcal V$ and any pair of  sites, $x$ in $\Lambda $ and
$y$ in  its complement $\Lambda^c$, the $XY$ model's correlation functions satisfy
\be \label{XY_AS}
\langle  \sigma_x \cdot   \sigma_y \rangle ^{\G}_{\beta}  \leq
\sum_{u\in \Lambda, v\in \Lambda^c}
\langle \sigma_x \cdot   \sigma_u\rangle ^{\G}_{\beta} \, \beta  J_{u,v}  \,
\langle \sigma_v \cdot   \sigma_y\rangle ^{\G}_{\beta}
\ee
and also
\be \label{XY_LR}
\langle  \sigma_x \cdot   \sigma_y \rangle ^{\G}_{\beta}  \leq
\sum_{u\in \partial_s\Lambda}
\langle \sigma_x \cdot   \sigma_u\rangle ^{\G\cap \Lambda}_{\beta} \,\,
\langle\sigma_u \cdot   \sigma_y\rangle ^{\G}_{\beta}  \, \leq\,
\sum_{u\in \partial_s\Lambda}
\langle\sigma_x \cdot   \sigma_u\rangle ^{\G}_{\beta} \,\,
\langle\sigma_u \cdot   \sigma_y\rangle ^{\G}_{\beta}
\ee
where $\partial_s\Lambda$ is the site (inner) boundary of $\Lambda$.

Furthermore, \eqref{XY_LR} holds also for the Villain model [this being the only new assertion here].
\end{lem}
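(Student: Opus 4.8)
The only assertion needing proof is that the Lieb--Rivasseau inequality \eqref{XY_LR} holds for the Villain model. The plan is to obtain it from the already-established $XY$ case by realizing the Villain model on $\G$ as a limit of $XY$ models on ever finer subdivisions of $\G$ --- its ``metric-graph'' presentation --- and transporting \eqref{XY_LR} through that limit. I would argue first for a finite graph $\G$, and recover the general (infinite-volume) statement afterwards by applying the finite-volume inequality on the truncations $\G_L$ and letting $L\to\infty$, using the Ginibre monotonicity of the Villain correlations in the volume recorded above (each factor on the right-hand side is monotone in $L$, and the first factor is eventually independent of $L$).

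For $N\in\NN$, let $\G^{(N)}$ be the graph obtained from $\G$ by inserting $N-1$ auxiliary vertices in the interior of every edge, and equip it with the $XY$ interaction at coupling $N\beta$ on each of the resulting short edges. Integrating out the auxiliary vertices, the marginal of this $XY$ Gibbs measure on the original vertex set $\calV$ has unnormalized density $\prod_{\{u,v\}\in\calE}W_{N\beta}^{\ast N}(\theta_u-\theta_v)$ with respect to $\prod_{x\in\calV}\dif\theta_x$, where $W_{\beta'}(\phi):=\re^{\beta'\cos\phi}$ and $\ast$ is convolution on $\bS^1=\RR/2\pi\ZZ$. The analytic core of the argument is the circle local central limit theorem
\be\label{eq:lclt-circle}
c_N\,W_{N\beta}^{\ast N}\ \longrightarrow\ V_\beta \quad\text{uniformly on }\bS^1\ \text{ as }N\to\infty,\qquad V_\beta(\phi):=\sum_{m\in\ZZ}\re^{-\frac{\beta}{2}(\phi+2\pi m)^2},
\ee
for a suitable normalizing sequence $c_N>0$; that is, the $XY$ single-edge weight, convolved $N$ times at coupling $N\beta$, converges to the Villain single-edge weight. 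I would prove \eqref{eq:lclt-circle} by Fourier analysis on $\bS^1$: from the Jacobi--Anger expansion $W_{\beta'}=\sum_{k\in\ZZ}I_k(\beta')\re^{ik\phi}$, the $k$-th Fourier coefficient of $c_N W_{N\beta}^{\ast N}$ is $\bigl(I_k(N\beta)/I_0(N\beta)\bigr)^N$, while Poisson summation gives $V_\beta\propto\sum_{k\in\ZZ}\re^{-k^2/(2\beta)}\re^{ik\phi}$; the Bessel asymptotics $I_k(z)/I_0(z)=1-\tfrac{k^2}{2z}+O(z^{-2})$ yield $\bigl(I_k(N\beta)/I_0(N\beta)\bigr)^N\to\re^{-k^2/(2\beta)}$ for every fixed $k$, and a uniform-in-$N$ Gaussian-type domination $\bigl(I_k(N\beta)/I_0(N\beta)\bigr)^N\le C\re^{-c\,k^2}$ for $|k|$ large (obtained by combining $1-I_k/I_0\gtrsim\min(k^2/z,1)$ with the rapid decay of $I_k(z)$ when $|k|\gg z$) licenses the termwise passage to the limit, hence uniform convergence of the Fourier series. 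Equivalently, $V_\beta$ is proportional to the time-$1/\beta$ heat kernel on $\bS^1$, so it is the $N$-fold convolution of the time-$1/(N\beta)$ heat kernel, and one only needs the von Mises law of parameter $N\beta$ to approximate that heat kernel well enough --- a local limit theorem for sums of von Mises variables.

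Granting \eqref{eq:lclt-circle}: since $V_\beta$ is continuous and bounded away from $0$ and $\infty$ on $\bS^1$, the normalized $XY$ vertex-marginal densities on the compact torus $(\bS^1)^{\calV}$ converge uniformly to the Villain density, so every partition function and every correlation $\langle\sigma_a\cdot\sigma_b\rangle$ between original vertices $a,b\in\calV$ of the $XY$ model on $\G^{(N)}$ converges, as $N\to\infty$, to the corresponding Villain quantity on $\G$. The same holds on the restricted graph: taking $\Lambda^{(N)}$ to be $\Lambda$ together with the auxiliary vertices lying on edges internal to $\Lambda$, the induced subgraph $\G^{(N)}[\Lambda^{(N)}]$ is the $N$-subdivision of $\G\cap\Lambda$ with, possibly, some dangling partially-subdivided edges hanging off $\partial_s\Lambda$; a dangling $XY$ path integrates out to a multiplicative constant and does not affect correlations among the remaining vertices, so $\langle\sigma_x\cdot\sigma_u\rangle^{\G^{(N)}[\Lambda^{(N)}]}_{XY}\to\langle\sigma_x\cdot\sigma_u\rangle^{\G\cap\Lambda}_{\beta}$ for $x,u\in\Lambda$. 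Crucially, $\partial_s\Lambda^{(N)}=\partial_s\Lambda\subseteq\calV$ for every $N$ (subdividing edges creates no new path between original vertices), and $x\in\Lambda^{(N)}$, $y\notin\Lambda^{(N)}$. I would then simply apply the known $XY$ inequality \eqref{XY_LR} on the finite graph $\G^{(N)}$ with separating set $\partial_s\Lambda^{(N)}=\partial_s\Lambda$ and let $N\to\infty$: the sum runs over the fixed finite set $\partial_s\Lambda$, so the limit passes through each summand, producing the first inequality of \eqref{XY_LR} for the Villain model. Its second inequality, $\langle\sigma_x\cdot\sigma_u\rangle^{\G\cap\Lambda}_{\beta}\le\langle\sigma_x\cdot\sigma_u\rangle^{\G}_{\beta}$, is the Ginibre monotonicity of Villain correlations under enlargement of the domain.

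The step I expect to be the main obstacle is the circle local limit theorem \eqref{eq:lclt-circle} --- i.e. proving $c_N W_{N\beta}^{\ast N}\to V_\beta$ with enough uniformity to conclude convergence of the full Gibbs measure. The remainder is bookkeeping, the only points requiring a little care being that the dangling-path contributions genuinely drop out of the correlations and that the site boundary $\partial_s\Lambda^{(N)}$ stays equal to the original $\partial_s\Lambda$.
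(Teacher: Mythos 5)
Your proposal follows the paper's own route: realize the Villain model as the $N\to\infty$ limit of $XY$ models on the $N$-fold edge-subdivided graph at rescaled coupling $N\beta$ (Appendix~A, \cref{thm:XY_to_Vill}), and transport the known $XY$ Lieb--Rivasseau inequality \eqref{XY_LR} through that limit using that edge subdivision does not change the separating set $\partial_s\Lambda$ and that each finite sum of correlations converges. The only deviations are technical and cosmetic --- you prove the single-edge weight convergence $c_N W_{N\beta}^{\ast N}\to V_\beta$ via Fourier/Bessel asymptotics on $\bS^1$ rather than the paper's Taylor-expansion-plus-real-line local CLT, and you work directly with the discrete vertex marginals instead of first setting up the metric-graph/white-noise picture --- so this is essentially the same argument.
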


\cref{lem_dichotomy} follows through a simple iteration, of either  \eqref{XY_AS} or  \eqref{XY_LR}.  (The two also open paths for the principle's extensions to two slightly different classes of related systems.)

\section{The known duality relations between  \ZGFs/ and the Villain spin model}

\subsection{Equality of  the partition functions}

Restating \eqref{Villain_Z1},
the Villain model's partition function, on a finite graph $\G$ and with the free boundary conditions,   is
\be \label{Villain_Z1_repeated}
		Z^{\Vill,\G}_{\beta} =   \int_{\theta\in [-\pi,\pi)^{\calV}  }
		\prod_{\{u,v\}\in\calE} \left[ \sum_{m_{uv}\in\ZZ } \re^{-\frac{\beta}{2}   \left(\theta_u-\theta_v +2\pi m_{uv}\right)^2} \right]
		   \,
		\dif{ \theta }\, ,
\ee
and the corresponding Gibbs equilibrium expectation values of  local functions of $\{\theta_x\}_{x\in\calV}$
are given by
\be
\langle f \rangle^{\Vill,\G}_\beta =
		\frac{1}{Z^{\Vill,\G}_{\beta}}\int_{\theta\in[-\pi,\pi)^{\calV} }  f(\theta)   \, \,  \prod_{\{u,v\}\in\calE} \left[ \sum_{m_{uv}\in\ZZ } \re^{-\frac{\beta}{2}   \left(\theta_u-\theta_v +2\pi m_{uv}\right)^2} \right]
	\dif{ \theta }\,.
		\ee

In a known duality relation~(c.f. \cite[Appendix A]{FroSpe81})  for planar $\calG$
the Villain model's partition function  equals that of the $\ZGF$ on the dual graph $\calG^\ast$ at $\lambda = 1/\beta $,  with the Dirichlet boundary conditions ($n_u =0 \,\, \forall u\in \partial \calV^*$):
	\begin{align}(2\pi)^{|\calV^\ast|} Z^{\Vill,\G}_{\beta} = \sum_{\substack{n\in\ZZ^{\calV^\ast}\\  n_{\partial \calG^*} \equiv 0}}
	\exp\left(-\frac{1}{2\beta}\norm{\nabla n}^2\right )
\equiv Z_{\lambda=\frac{1}{\beta}}^{\ZGF,\calG^\ast}
	\label{eq:Villain_ZGF_partition}
	\end{align}
where $  \nabla n $ (the gradient  of $n$)  is a function defined over the edges of $\calG^\ast$, associating to the oriented edge $(x,y)$ the difference $\nabla n(x,y) = n(y)-n(x)$, and
	$$ \norm{\nabla n }^2 \equiv \sum_{\{x,y\}\in\calE^\ast}(n_x-n_y)^2\,. $$

To clarify the notation when discussing the dual models on the dual pair of graphs we adapt the following convention: The vertices of $\G$ will continue to be called vertices, while the vertices of $\G^\ast$, on which the $\ZGF$ model is defined, will be referred to as faces.

\subsection{The dual expression for the spin  correlation function}

Under the above correspondence the spin-spin correlation function is mapped onto what may be viewed as the expectation value of a dislocation-favoring operator.   In presenting it  we shall employ the following notation: given a pair of sites $\{x,y\} \subset \calV$ and an oriented path $\gamma_{yx}$ from $x$ to $y$ along the edges of $\calE$, we denote by  $\Gamma_{yx}$   the  function defined over the oriented edges of $\calG^\ast$ by
	\[
	\Gamma_{yx}(u,v) := \begin{cases} +1\, (\text{resp.} -1) & (u,v) \text{ is a counter-clockwise (resp. clockwise)}\\&\quad\quad\qquad\text{rotation of an edge  traversed by $\gamma_{yx}$} \\
		0 & \mathrm{otherwise} \end{cases}\,.
	\]
The following expression represents a dislocation-favoring deformation of the elastic energy, which favors a jump discontinuity across $\gamma_{yx}$:
\be  \norm{\nabla n+\Gamma_{yx}}^2 \equiv \sum_{\{u,v\}\in\calE^\ast}\big(n_u-n_v+\Gamma_{yx}(u,v)\big)^2 \,.
\ee
		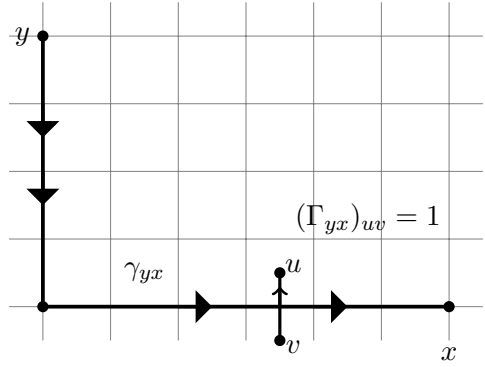
\begin{figure}[t!]
		\centering
		\begin{tikzpicture}[scale=0.9]
			\draw[step=1cm,gray,very thin] (-0.5,-0.5) grid (6.5,4.5);
			\draw[very thick,line width=0.05cm,-] (0,0) -- (6,0);
			\node at (0,4)[circle,fill,inner sep=1.5pt]{};
			\node at (0,0)[circle,fill,inner sep=1.5pt]{};
			\node at (-0.3,4) {$y$};
			\node at (6,-0.7) {$x$};
			
			\node at (1.5,0.5) {$\gamma_{yx}$};
			\draw[very thick,line width=0.05cm,-] (0,0) -- (0,4);
			
			\node at (6,0)[circle,fill,inner sep=1.5pt]{};
			
			\node at (3.5,0.5)[circle,fill,inner sep=1.5pt]{};
			\node at (3.5,-0.5)[circle,fill,inner sep=1.5pt]{};
			\node at (3.7,0.6) {$u$};
			\node at (3.7,-0.6) {$v$};
			\draw [-,very thick] (3.5,0.5) -- (3.5,-0.5);
			\draw [->,very thick] (3.5,0) -- (3.5,0.3);
			\node at (4.8,1.3) {$(\Gamma_{yx})_{uv}=1$};
			
			\draw [->,very thick,-triangle 90] (2,0) -- (2.5,0);
			\draw [->,very thick,-triangle 90] (4,0) -- (4.5,0);
			\draw [->,very thick,-triangle 90] (0,2) -- (0,1.5);
			\draw [->,very thick,-triangle 90] (0,3) -- (0,2.5);
		\end{tikzpicture}		
		
		\caption{A depiction of $\gamma_{yx}$ and $\Gamma_{yx}$.}
		\label{fig:gamma_xy}
	\end{figure}

The duality transformation leading to \cref{eq:Villain_ZGF_partition} yields also the following relation~\cite{FroSpe81}, which holds for any choice of the $\pm$ sign:
\begin{eqnarray}
\langle \sigma_x\cdot\sigma_y  \rangle^{\Vill,\G}_{\beta} & \equiv& \langle\mathrm{e}^{\pm i(\theta_y-\theta_x)}  \rangle^{\Vill,\G}_{\beta}   = \quad \frac{\sum_{\substack{n\in\ZZ^{\calV^\ast}  n_{\partial \calG^*} \equiv 0}}\exp(-\frac{1}{2\beta}\norm{\nabla n  \pm \Gamma_{yx}}^2)}{Z^{\ZGF,{\G^{\ast}}}_{1/\beta}}
 \notag  \\  \label{eq:duality_correlations}
&=&    \EE^{\ZGF,{\G^{\ast}}}_{1/\beta} \left[ T_{\gamma_{yx} }^+\right] =  \EE^{\ZGF,{\G^{\ast}}}_{1/\beta} \left[ T_{\gamma_{yx} }^-\right]
\,,
\end{eqnarray}
where $T_{\gamma_{yx}}^+ $ and $T_{\gamma_{yx}}^- $ are the two functions
\be \label{eq:T}
T_{\gamma_{yx} }^\pm(n) = \exp\left(\frac{1}{2\beta}\left(\norm{\nabla n}^2- \norm{\nabla n \pm \Gamma_{yx}}^2  \right)\right)\,.
\ee
	
	One may note the homotopy invariance of the expression on the right in \cref{eq:duality_correlations}:  as a function of $\gamma$, its value is equal for any pair of paths with coinciding end points.  That is obviously true for the LHS of  \cref{eq:duality_correlations}.  For the RHS this can be explained through a ``gauge transformation'' which consists of an increase of each $n_u$ by the winding number around $u$ by the oriented loop formed by concatenating one of the paths with the inverse of the other.
	
\section{A (new) relation of \ZGFs/ level loops with  spin-spin correlations} \label{sec:level_lines}

The duality relation \cref{eq:duality_correlations} will be used below for a key lower bound on the spin correlation function  in terms of the  probability that the two sites lie on a common level line of the $\ZGF$.  We start by defining the relevant concepts.

\subsection{Level lines of the $\ZGF$}

Viewing the $\ZGF$ process as describing  a random height function on the graph $\calG^*$, it is natural to think of its realization in terms of the corresponding contour maps.  Adapting to the fact that the values of $\{n_u\}_u$ may change discontinuously, we  base the contour description on oriented contours of  heights in $\Z +1/2$.

More specifically: with each realization $n$ of the $\ZGF$ on a planar graph $\calG^\ast$, embedded in $\R^2$, we associate the  family of  non-crossing contours, each either a loop or an infinite line, drawn in $\R^2$ through the following rules:
\begin{enumerate}[i)]
\item  an oriented contour line at level $q\in \ZZ+\frac{1}{2}$  passes through an edge $\{u,v\}\in \calE$ if and only if $n<q$ on the face to its   right and $n>q$ at the face to its left
(thus the number of contour lines crossing the edge $\{u,v\}$ is exactly $|n_u-n_v|$, one for each  value of $q$ meeting these conditions).
\item the right-hand rule:   at each vertex of $\calG$ each incoming $q$-line exits through the first outgoing $q$-line to its right in the counterclockwise order of the edges exiting the given vertex, as depicted in \cref{fig:loop resolution using RH rule}.
\end{enumerate}
The construction is enabled by the fact  that at each vertex
the orientations of the incoming and outgoing $q$ lines alternate in the circular ordering of the edges linked to  the vertex.

Segment  of an oriented $q$ contour would be referred to as  $q$-paths.   For close contours, we adapt the standard terminology of calling counterclockwise (resp. clockwise) loops as positively (resp. negatively) oriented.

 For any face $f \in \calV^*$ and $\eta = \pm$,  the number of loops of orientation $\eta$ that surround $f$ will be denoted  $\mathcal{N}^{\eta}_q(f)$, and with $r=\pm$ we further denote
 by  $\mathcal{N}^{\eta,r}(f)$  the number of $\eta$-oriented loops with $\sgn{q} =r$,  i.e.
 \be \label{N+-}
  \mathcal{N}^{\eta, r}(f)  = \sum_{\substack{q\in \frac{1}{2} +\Z \\ \rm{sgn} \, q =r }}  \mathcal{N}_q^{\eta}(f)
 \ee
 From the definition of the level lines it follows that in any configuration
\be \label{eq:LoopsToHeight}
\mathcal{N}^{+,+}(f) + \mathcal{N}^{-,-}(f) \, \geq \,  |n_f| \, .
\ee

\begin{figure}[t]

		\centering
		\begin{tikzpicture}[scale=0.8]
			\draw[step=2cm,gray,very thin] (0,0) grid (4,4);
			\node at (1,1) {$4$};
			\node at (3,1) {$-5$};
			\node at (1,3) {$-6$};
			\node at (3,3) {$7$};
			\draw[very thick,line width=0.1cm,->,blue] plot [smooth] coordinates { (2,0.2) (2,1.5) (2.5,2) (3.8,2)};
			\draw[very thick,line width=0.1cm,->,blue] plot [smooth] coordinates { (2,3.8) (2,2.5) (1.5,2) (0.2,2)};
		\end{tikzpicture}
	\caption{Example of level loops resolution through the right-hand rule.}\label{fig:loop resolution using RH rule}
\end{figure}

	\subsection{Lower bounds on the spin correlation function}

Our first set of new results relate the Villain model's BKT phase to depinning in the $\ZGF$. A key intermediary step involves the event described below.

\begin{defn}\label{def:A_xy^q}  
Let $\gamma_{yx}$ be an arbitrary oriented, simple path in $\calG$ that begins at $y$ and ends at $x$, and $e =(x, x')$ an oriented edge  such that $x'$ is not in $\gamma_{yx}$. For any $q \in \Z + 1/2$, we define as  $A_{\gamma_{yx},e}^q$  the event that there is $q$-path $\kappa^q_{xy}$ which begins with the oriented edge $e$ and terminates at $y$ such that $\gamma_{yx} \circ \kappa^q_{xy}$ is a simple loop with orientation $\mathrm{sgn}(q)$.
\end{defn}

A particular example of the occurrence of $A_{\gamma_{yx},e}^q$ where $\gamma_{yx}$ is the `L'-shaped path and $q$ is positive  is depicted in \cref{fig:schematic of gamma path}. We also need to demarcate the two faces that border the oriented edge $e$ that begins the path $\kappa^q_{xy}$, as they will play a special role in our analysis. Given an oriented edge $e$, we set $x^+$ and $x^-$ to be the faces to the left and right of the edge, respectively.

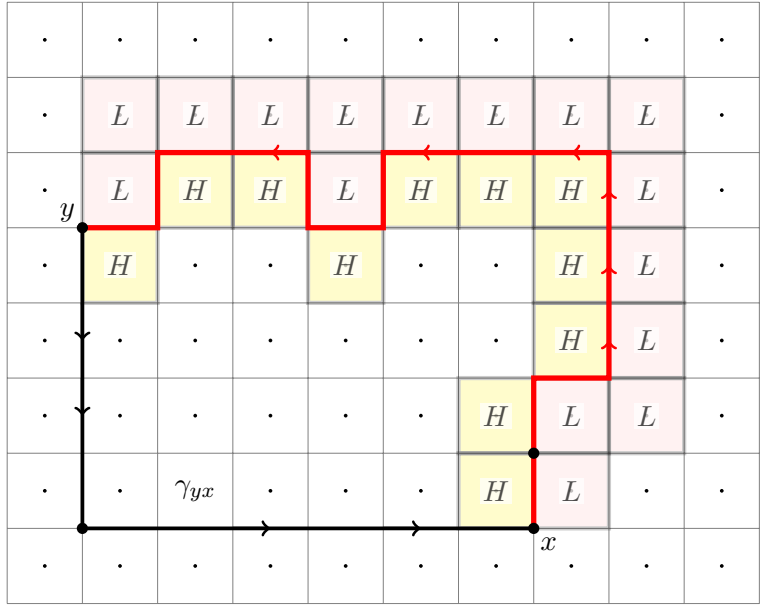
\begin{figure}[t]
	\centering
	\begin{tikzpicture}[scale=1]
		\draw[step=1cm,gray,very thin] (-1,-1) grid (9,7);
		\foreach \x in {-1,...,8}
		{
			\foreach \y in {-1,...,6}
			{
				\node at (\x + 0.5,\y + 0.5)[circle,fill,inner sep=0.5pt]{};
			}
		}
		\foreach \pt in {(5,0),(5,1),(6,2),(6,3),(6,4),(5,4),(4,4),(3,3),(2,4),(1,4),(0,3)}
		{	
			\parsept{\x}{\y}{\pt}
			\draw [ultra thick, draw=black, fill=yellow, opacity=0.2] (\x,\y) rectangle (\x+1,\y+1);
			\node[fill=white,inner sep=1pt,opacity=0.7] at (\x+0.5,\y+0.5) {$H$};
		}
		\foreach \pt in {(6,0),(6,1),(7,1),(7,2),(7,3),(7,4),(7,5),(6,5),(5,5),(4,5),(3,5),(3,4),(2,5),(1,5),(0,5),(0,4)}
		{	
			\parsept{\x}{\y}{\pt}
			\draw [ultra thick, draw=black, fill=pink, opacity=0.2] (\x,\y) rectangle (\x+1,\y+1);
			\node[fill=white,inner sep=1pt,opacity=0.7] at (\x+0.5,\y+0.5) {$L$};
		}
		\draw[very thick,line width=0.05cm,-] (0,0) -- (6,0);
		\draw[very thick,line width=0.05cm,-] (0,0) -- (0,4);
		\draw [->,very thick] (2,0) -- (2.5,0);
		\draw [->,very thick] (4,0) -- (4.5,0);
		\draw [->,very thick] (0,2) -- (0,1.5);
		\draw [->,very thick] (0,3) -- (0,2.5);		
		\draw [very thick, line width=0.07cm,red,-] plot  coordinates {(6,0) (6,1) (6,2) (7,2)(7,5)(4,5)(4,4)(3,4)(3,5)(1,5)(1,4)(0,4)};
		\draw [->,very thick,red] (7,2) -- (7,2.5);
		\draw [->,very thick,red] (7,3) -- (7,3.5);
		\draw [->,very thick,red] (7,4) -- (7,4.5);
		\draw [->,very thick,red] (7,5) -- (6.5,5);
		\draw [->,very thick,red] (5,5) -- (4.5,5);
		\draw [->,very thick,red] (3,5) -- (2.5,5);
		\node at (0,4)[circle,fill,inner sep=1.5pt]{};
		\node at (0,0)[circle,fill,inner sep=1.5pt]{};
		\node at (6,0)[circle,fill,inner sep=1.5pt]{};
		\node at (6,1)[circle,fill,inner sep=1.5pt]{};
		\node[fill=white,inner sep=1pt] at (6.2,-0.2) {$x$};
		\node[fill=white,inner sep=1pt] at (1.5,0.5) {$\gamma_{yx}$};
		\node[fill=white,inner sep=1pt] at (-0.2,4.2) {$y$};
		
	\end{tikzpicture}

	\caption{A schematic depiction of the event $A_{xy}^q$ for a positive value of $q$. The dots indicate the points on which the field $n$ is defined, and
the marks $H, L$ the sites whose $n$ values are uncovered in the $\kappa_{xy}^q$-exploration process.}
	\label{fig:schematic of gamma path}
\end{figure}

We now show that the two point function of the Villain model can be bounded by the probability of the event described above.    It should be emphasized here that, while the path $\gamma_{yx}$ is arbitrary, for the following bound it must be preselected (i.e. not adjusted dynamically to the realization of the field $n$).

\begin{thm}\label{thm:1} Given a dual pair of the Villain model on a finite planar graph $\calG$ and the corresponding $\ZGF$ on $\calG^*$, for any  path $\gamma_{yx}$ and an oriented edge $e= (x,x')$ which does not intersect it,  at any $q\in \Z+\frac{1}{2}$,
\be \label{key_bound}
\langle \sigma_x\cdot\sigma_y \rangle^{\Vill,\calG}_\beta \,  \geq \,
\PP_{1/\beta}^{\ZGF,\calG^\ast}\left[ A_{\gamma_{yx},e}^q \right]\,.
\ee
Furthermore, \cref{key_bound} holds also for $q$ replaced by ${\widetilde q}(n)$ which is  determined by  the values of $n$ on the pair of faces sharing the edge $e$ (i.e. by $\{n(x^+), n(x^-)\}$).
\end{thm}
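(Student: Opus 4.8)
The plan is to start from the duality relation \eqref{eq:duality_correlations}, which writes $\langle\sigma_x\cdot\sigma_y\rangle^{\Vill,\calG}_\beta=\EE^{\ZGF,\calG^\ast}_{1/\beta}[T^{\pm}_{\gamma_{yx}}]$ for either sign, with $T^{\pm}_{\gamma_{yx}}\ge 0$ pointwise; equivalently, it equals $Z^{-1}\sum_{n}\exp(-\tfrac1{2\beta}\|\nabla n+\Gamma_{yx}\|^2)$, the ratio of a ``disorder'' partition function (sum over $n\colon\calV^\ast\to\ZZ$ with $n_{\partial\calG^\ast}\equiv0$) to $Z=Z^{\ZGF,\calG^\ast}_{1/\beta}$. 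Because $T^{\pm}\ge 0$ it suffices to construct, for each $n_0\in A^q_{\gamma_{yx},e}$, an admissible configuration $\Phi(n_0)$ with $\|\nabla\Phi(n_0)+\Gamma_{yx}\|^2\le\|\nabla n_0\|^2$, in such a way that $\Phi$ is injective on $A^q_{\gamma_{yx},e}$; summing the resulting inequalities then gives $\langle\sigma_x\cdot\sigma_y\rangle\ge\PP^{\ZGF,\calG^\ast}_{1/\beta}[A^q_{\gamma_{yx},e}]$.

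The natural $\Phi$ reroutes the disorder line from the preselected path $\gamma_{yx}$ onto the level line supplied by the event. Given $n_0\in A^q_{\gamma_{yx},e}$, let $\kappa=\kappa(n_0)$ be the $q$-contour of $n_0$ issuing from the oriented edge $e$ (unique, since a contour is forced once its starting edge is fixed, by the right-hand rule), which by hypothesis returns to $y$ so that $L:=\gamma_{yx}\circ\kappa$ is a simple loop of orientation $\sgn(q)$; let $R=R(n_0)$ be the set of faces enclosed by $L$, and set $\Phi(n_0):=n_0-\sgn(q)\,\mathbb 1_R$. Two elementary computations then do the work. The first is a gauge identity: the dual edges crossed by $\gamma_{yx}$ and those crossed by $\kappa$ are disjoint and together are exactly the dual edges crossing $\partial R$, and on each of these two families $\Gamma_{yx}$, resp.\ $\Gamma_\kappa$, agrees with $\sgn(q)\,\nabla\mathbb 1_R$; hence $\Gamma_{yx}+\Gamma_\kappa=\sgn(q)\,\nabla\mathbb 1_R$, and therefore
\[
\nabla\Phi(n_0)+\Gamma_{yx}=\nabla n_0-\Gamma_\kappa .
\]
The second is an energy estimate: since $\kappa$ is a genuine $q$-level line of $n_0$, on every dual edge $e^\ast$ it crosses the gradient $(\nabla n_0)_{e^\ast}$ has absolute value $\ge 1$ and the same sign as $(\Gamma_\kappa)_{e^\ast}=\pm1$, so
\[
\|\nabla n_0-\Gamma_\kappa\|^2=\|\nabla n_0\|^2-\sum_{e^\ast\in\kappa}\bigl(2\,|(\nabla n_0)_{e^\ast}|-1\bigr)\;\le\;\|\nabla n_0\|^2-|\kappa|\;\le\;\|\nabla n_0\|^2 .
\]
(One also checks that $R$ does not meet $\partial\calG^\ast$ — boundary faces carry $n_0=0$ and so cannot host a $q$-contour — so that $\Phi(n_0)$ is admissible.)

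The main obstacle is the injectivity of $\Phi$ on $A^q_{\gamma_{yx},e}$. Here the point is that $\kappa(n_0)$ is obtained by a deterministic, non-anticipating exploration from $e$: one traces the $q$-contour edge by edge, resolving each turn by the right-hand rule and revealing only the $n_0$-values of the faces bordering $\kappa$ (high on the left, low on the right). If $\Phi(n_0^{(1)})=\Phi(n_0^{(2)})$ then $n_0^{(1)}-n_0^{(2)}=\sgn(q)(\mathbb 1_{R_1}-\mathbb 1_{R_2})$, so the two configurations coincide off $R_1\triangle R_2$; starting from the common edge $e$ with the same straddling pair $n_0^{(i)}(x^+),n_0^{(i)}(x^-)$, one must show that the two explorations cannot diverge before $L$ closes, whence $\kappa_1=\kappa_2$, $R_1=R_2$, and $n_0^{(1)}=n_0^{(2)}$ — equivalently, that $R(n_0)$ is recoverable from $\Phi(n_0)$. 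Making this propagation rigorous is the delicate part. Finally, the refinement in which $q$ is replaced by a $\{n(x^+),n(x^-)\}$-measurable $\widetilde q(n)$ follows by the same construction applied with the level of the contour that actually emanates from $e$: this is defined exactly when $n(x^-),n(x^+)$ straddle a half-integer, one fixes a measurable selection $\widetilde q(n)$ among those, and $A^{\widetilde q}_{\gamma_{yx},e}$ decomposes as a disjoint union over admissible levels $q$ of the events $\{\widetilde q=q\}\cap A^q_{\gamma_{yx},e}$, to which the injection above applies level by level.
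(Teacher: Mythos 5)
Your gauge identity and the pointwise energy estimate are both correct, but the proof has a genuine gap at the very step you flag as delicate: the map $\Phi$ is in fact \emph{not} injective on $A^q_{\gamma_{yx},e}$, and the ``propagation'' argument you hope for fails. For a counterexample take $q=1/2$ and let $\kappa_1\ne\kappa_2$ be two simple paths in $\calG$ from $x$ to $y$, both starting with the oriented edge $e$, both avoiding $\gamma_{yx}$ except at their endpoints, and each closing $\gamma_{yx}$ into a positively oriented simple loop with interior $R_i$ (such pairs abound on any nontrivial planar graph). The configurations $n_0^{(i)}:=\mathbb 1_{R_i}$ both lie in $A^{1/2}_{\gamma_{yx},e}$: the only $1/2$-contour of $\mathbb 1_{R_i}$ is $\partial R_i$, so the exploration from $e$ traces out precisely $\kappa_i$ and first meets $\gamma_{yx}$ at $y$. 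Yet $\Phi(n_0^{(1)})=\mathbb 1_{R_1}-\mathbb 1_{R_1}=0=\Phi(n_0^{(2)})$. Thus $R(n_0)$ is not recoverable from $\Phi(n_0)$; your attempted injectivity argument cannot be completed because the two explorations read the values $n_0^{(i)}=m+\mathbb 1_{R_i}$, not the common image $m$, and therefore legitimately diverge as soon as a face of $R_1\triangle R_2$ is encountered. Once injectivity fails, the step $\sum_m e^{-\|\nabla m+\Gamma_{yx}\|^2/(2\beta)}\ge\sum_{n_0\in A^q}e^{-\|\nabla\Phi(n_0)+\Gamma_{yx}\|^2/(2\beta)}$ over-counts the target configurations and is simply false.

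The paper circumvents exactly this by \emph{conditioning before shifting}. It conditions on the full information revealed by the exploration (the $\sigma$-algebra $\calF^q_{xy}$, i.e.\ the values of $n$ on the explored sets $\calH$ and $\calL$), which pins down $\kappa$, $R$, and the constrained data $(F_A,F_B)$. Within each conditional slice the gauge shift $n\mapsto n-\mathbb 1_R$ is a genuine bijection, reducing the conditional expectation of $T^{+}_{\gamma_{yx}}$ to the ratio $Z_{A,B;F_A-\chi_A,F_B}/Z_{A,B;F_A,F_B}$. Showing this ratio is $\ge 1$ --- the ``positivity of the stiffness modulus,'' \cref{prop:StiffnessModulus} --- is the real content of the theorem, and it is established not by a term-by-term energy comparison (there is none) but by a measure-preserving reflection on the metric-graph extension via the Markov property. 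In other words, since your $\Phi$ is many-to-one, what is actually required is that each target weight $e^{-\|\nabla m+\Gamma_{yx}\|^2/(2\beta)}$ dominate the \emph{sum} of its preimage weights; this collective domination is precisely what the stiffness-modulus argument delivers, and it is the piece missing from your proposal.
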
 	

The bound stated in \eqref{eq:main theorem relation} is obtained through a summation of the above, in a method which applies quite generally to  doubly-periodic planar graphs.  To convey the argument in a relatively simple way we present it in the context of $\Z^2$.

By \eqref{key_bound}  fast decay of spin correlations implies that in the corresponding $\ZGF$  long level lines occur only rarely.   For a  more quantitative statement, we next combine this with the observation that any loop which encircles a face $f\in (\Z^2)^*$, at distance $R$ from it, implies that the conditions for the event $A^{{\widetilde q}}_{xy,e}$ in \eqref{key_bound} are met for at least four distinct pairs $\{x,y\}$, with $\|x-y\| \geq R$, for which the difference vectors $y-x$ lie each in a different quadrant of $\Z^2$.

For this purpose, consider the standard quadrants of $\Z^2$ shifted so that their intersection is the face  $f_0\in (\Z^2)^*$ centered at $(1/2,1/2)$ and their corners form the four vertices of $f_0$.    Let  $\widetilde Q_j$ be the collection of pairs of sites of $\{x,y\}$  one  on the vertical and
the other on the horizontal part of the boundary of $Q_j$.  As depicted in \cref{fig:PointsOnRays}, each $q$ loop encircling $f$, of orientation $\sgn{q}$  implies the existence of at least one pair of sites in each $Q_j$ ($j=1,...,4$) which are linked by a path meeting the conditions for $\kappa^q_{xy}$, whose first step is into the corresponding quadrant.
This observation leads to the following statement.

\begin{figure}[t]               \centering
     \begin{subfigure}[b]{0.35\textwidth}
         \centering
         \includegraphics[width=\textwidth,page=6]{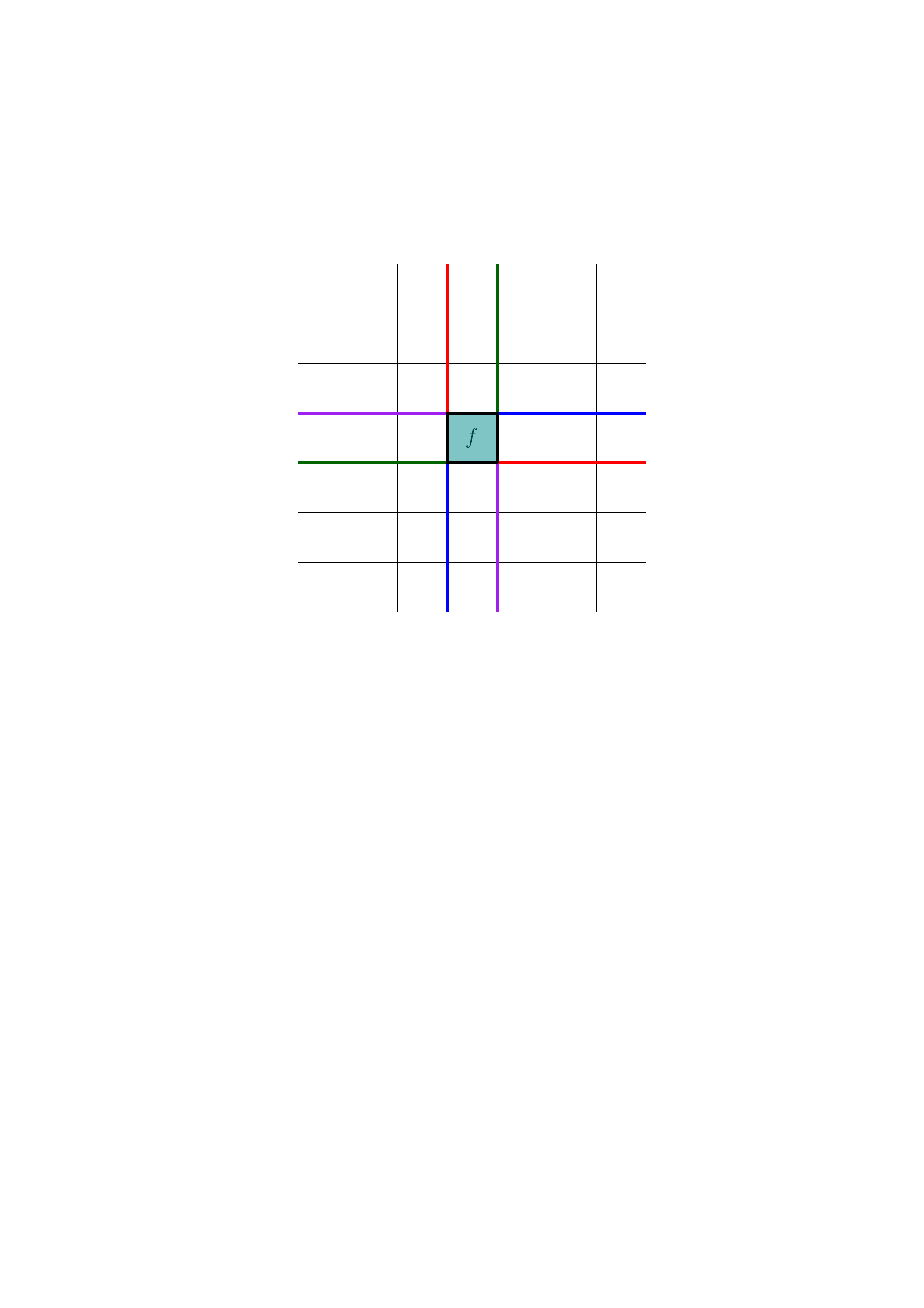}
         \label{fig:RaysFromFace}
     \end{subfigure}
     \quad
     \begin{subfigure}[b]{0.35\textwidth}
         \centering
         \includegraphics[width=\textwidth,page=7]{Z2Loops.pdf}
     \end{subfigure}
              \caption{The quadrants $Q_1$, $Q_2$, $Q_3$ and $Q_4$ (in red, purple, green and orange, respectively), with the realizations of $\kappa^{q}_{xy}$, for $q>0$.}
              \label{fig:PointsOnRays}
\end{figure}

\begin{thm}\label{thm:2} In the notation explained above, for  $\calG_L = \Z^2 \cap [-L,L]^2$,  at  any  $L < \infty$  and  $\ve>0$
	\begin{equation}\label{eq:main theorem relation}
		 \sum_{j=1}^4 \sum_{(x,y)\in \widetilde Q_j} \left(\langle\sigma_x\cdot\sigma_y \rangle^{\Vill,\calG_L}_\beta\right)^{1-\ve} \, \geq \, \frac{4 \cdot C_\ve}{\sqrt{\beta}}\,  \,\EE^{\ZGF,\calG_L^*}_{\lambda}\left[\mathcal{N}^{+,+}(f_0) + \mathcal{N}^{-,-}(f_0)\right]
	\end{equation}
with  $C_\ve>0$ which depends only on $\ve$.
\end{thm}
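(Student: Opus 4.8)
The plan is to run, separately within each of the four quadrants, an encoding — injective up to an explicit multiplicity — of the loops counted by $\mathcal N^{+,+}(f_0)+\mathcal N^{-,-}(f_0)$ by pairs $(x,y)\in\widetilde Q_j$, and then to feed the resulting estimate into \cref{thm:1} together with Gaussian domination. Fix $j$. To each pair $(x,y)\in\widetilde Q_j$ I attach, once and for all (hence independently of the realization $n$, as \cref{thm:1} requires), the L-shaped path $\gamma^{(j)}_{yx}$ that runs from $y$ inward along the vertical boundary ray of $Q_j$ to the apex vertex $w_j$ of $Q_j$ (a vertex of $f_0$) and then outward along the horizontal boundary ray to $x$, together with the oriented edge $e^{(j)}_x=(x,x')$, $x'$ being the neighbour of $x$ in the interior of $Q_j$; write $x^\pm$ for the two faces flanking $e^{(j)}_x$.

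First I would establish the pointwise bound
\[
\mathcal N^{+,+}(f_0)+\mathcal N^{-,-}(f_0)\ \le\ \sum_{(x,y)\in\widetilde Q_j}\big|n(x^+)-n(x^-)\big|\;\mathbf{1}\!\left[A^{\widetilde q(n)}_{\gamma^{(j)}_{yx},\,e^{(j)}_x}\right].
\]
Every loop $\ell$ counted on the left is a $q$-contour loop around $f_0$ of orientation $\sgn(q)$; inside $Q_j$ it crosses both boundary rays, so there is a well-defined innermost vertex $x_\ell$ on the horizontal ray at which $\ell$ enters the interior of $Q_j$ — necessarily along the edge $e^{(j)}_{x_\ell}$ — and a corresponding $y_\ell$ on the vertical ray, with $(x_\ell,y_\ell)\in\widetilde Q_j$. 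Because $x_\ell,y_\ell$ are the innermost crossings, the sub-arc of $\ell$ joining them the long way around $f_0$, concatenated with $\gamma^{(j)}_{y_\ell x_\ell}$, is a simple loop of orientation $\sgn(q)$, so $A^{q}_{\gamma^{(j)}_{y_\ell x_\ell},\,e^{(j)}_{x_\ell}}$ occurs. Any loop assigned to a fixed $(x,y)$ runs along $e^{(j)}_x$, so its level lies in $\big(n(x^-),n(x^+)\big)$: this bounds the number of loops assigned to $(x,y)$ by $|n(x^+)-n(x^-)|$, and when this number is positive one should be able to arrange the single admissible level $\widetilde q(n)$ of \cref{thm:1} — which depends on $\{n(x^+),n(x^-)\}$ alone — so that $A^{\widetilde q(n)}_{\gamma^{(j)}_{yx},\,e^{(j)}_x}$ occurs.

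Next I would take expectations under $\PP^{\ZGF,\calG_L^*}_{\lambda}$ and apply Hölder with conjugate exponents $1/\ve$ and $1/(1-\ve)$ to each summand:
\[
\EE^{\ZGF,\calG_L^*}_{\lambda}\!\Big[|n(x^+)-n(x^-)|\,\mathbf{1}[A^{\widetilde q(n)}_{\gamma^{(j)}_{yx},e^{(j)}_x}]\Big]\le \Big(\EE^{\ZGF,\calG_L^*}_{\lambda}\big[|n(x^+)-n(x^-)|^{1/\ve}\big]\Big)^{\!\ve}\,\PP^{\ZGF,\calG_L^*}_{\lambda}\big[A^{\widetilde q(n)}_{\gamma^{(j)}_{yx},e^{(j)}_x}\big]^{1-\ve}.
\]
The exponential Gaussian domination bound \eqref{eq:Gaussian domination bound} shows that $n(x^+)-n(x^-)$ is stochastically dominated by a centred Gaussian of variance $\tfrac1\lambda\langle\delta_{x^+}-\delta_{x^-},(-\Delta)^{-1}(\delta_{x^+}-\delta_{x^-})\rangle\le c\beta$, the effective resistance between neighbouring faces of $\calG_L^*$ being bounded by an absolute constant $c$; hence the first factor is at most $C_\ve\sqrt\beta$ with $C_\ve$ depending on $\ve$ only (a routine sub-Gaussian moment estimate). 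By \cref{thm:1} in its $\widetilde q(n)$ form the second factor is at most $(\langle\sigma_x\cdot\sigma_y\rangle^{\Vill,\calG_L}_\beta)^{1-\ve}$. Summing over $(x,y)\in\widetilde Q_j$ then gives $\EE^{\ZGF,\calG_L^*}_{\lambda}[\mathcal N^{+,+}(f_0)+\mathcal N^{-,-}(f_0)]\le C_\ve\sqrt\beta\sum_{(x,y)\in\widetilde Q_j}(\langle\sigma_x\cdot\sigma_y\rangle^{\Vill,\calG_L}_\beta)^{1-\ve}$, and since the same holds for each of the four quadrants (every loop around $f_0$ is witnessed in all of them), adding the four inequalities yields \eqref{eq:main theorem relation} after renaming $C_\ve\mapsto 1/C_\ve$.

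The hardest step will be the pointwise combinatorial bound: setting up the innermost-crossing assignment $\ell\mapsto(x_\ell,y_\ell)$ precisely, checking that the concatenation with the preselected L-path is a simple loop of the correct orientation and that each relevant loop is witnessed in every quadrant, and — the most delicate point — matching the level of the witnessing contour to the unique value $\widetilde q(n)$ permitted by \cref{thm:1}, which is exactly what forces the multiplicity factor $|n(x^+)-n(x^-)|$ and hence, via Gaussian domination, the prefactor $1/\sqrt\beta$.
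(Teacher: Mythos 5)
Your overall architecture matches the paper's: assign each level loop around $f_0$ to a pair in each $\widetilde Q_j$ via an innermost-crossing rule, bound the multiplicity by $|n(x^+)-n(x^-)|$, then close with H\"older, Gaussian domination and \cref{thm:1}. But there is a genuine gap at exactly the point you flag as "the most delicate." The claimed pointwise bound
\[
\mathcal N^{+,+}(f_0)+\mathcal N^{-,-}(f_0)\ \le\ \sum_{(x,y)\in\widetilde Q_j}\big|n(x^+)-n(x^-)\big|\,\chi_{A^{\widetilde q(n)}_{\gamma^{(j)}_{yx},\,e^{(j)}_x}}
\]
does not hold realization by realization, and the proposed fix---``arrange the single admissible level $\widetilde q(n)$ so that $A^{\widetilde q(n)}$ occurs''---cannot be carried out. \cref{thm:1} requires $\widetilde q$ to be a function of $\{n(x^+),n(x^-)\}$ \emph{only}, hence fixed once those two values are known. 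But a loop assigned to $(x,y)$ may have a level $q^*$ in the interval $(n(x^-),n(x^+))$ different from $\widetilde q(n)$; then $A^{q^*}$ occurs while $A^{\widetilde q(n)}$ need not, so the corresponding indicator on the right vanishes while the left side is positive. There is no choice of $\widetilde q$ measurable only in $\{n(x^+),n(x^-)\}$ that repairs this.

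The paper never asserts such a pointwise inequality. It first proves (its Lemma 5.4, summed over $q$) the honest pointwise bound
\[
4\big(\mathcal N^{+,+}(f_0)+\mathcal N^{-,-}(f_0)\big)\ \le\ \sum_{j=1}^4\sum_{(x,y)\in\widetilde Q_j}\ \sum_{q\in\ZZ+\frac12}\chi_{A^{q}_{\gamma^{j}_{yx},\,e}},
\]
and only then, \emph{in expectation}, passes to $\widetilde q$: conditioning on $\{n_{x^+},n_{x^-}\}$, the conditional expectation of $\sum_q\chi_{A^q}$ is at most $|n_{x^+}-n_{x^-}|\cdot\max_q\PP\!\left[A^q\mid n_{x^+},n_{x^-}\right]$, and $\widetilde q$ is \emph{defined} as the maximizer. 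Since $|n_{x^+}-n_{x^-}|$ is measurable with respect to the conditioning, this yields $\EE\!\left[\sum_q\chi_{A^q}\right]\le\EE\!\left[|n_{x^+}-n_{x^-}|\,\chi_{A^{\widetilde q}}\right]$---a bound on expectations, not pointwise. After that substitution, your H\"older step, the sub-Gaussian moment bound (with $\lambda=1/\beta$ supplying the $\sqrt\beta$), and \cref{thm:1} apply exactly as you wrote them, and the factor-of-four bookkeeping and the $C_\ve\mapsto 1/C_\ve$ renaming are fine. So the missing ingredient is the conditional-argmax step; the rest of your outline is sound.
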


It may be of interest to note here the similarity of \cref{thm:2} with the relation presented in~\cite[Theorem 6.1]{AiNa94}
between the existence of infinite collections of nested loops in another two-dimensional loop model and the non-summability of the correlation function of a quantum spin system.

The proofs are presented in the next section.    However to  not break the flow
let us note here the link of  the above lower bound with the delocalization proven in \cref{thm:deloc on Z^2}.

By the above mentioned monotonicity of the correlations as function of the volume    \cref{thm:2} implies that in the infinite-volume limit
\be
\sum_{u} \left(\langle\sigma_0\cdot\sigma_u \rangle^{\Vill}_\beta\right)^{1-\ve}  =
\sum_{j=1}^4 \sum_{(x,y)\in \widetilde Q_j}
\left(\langle\sigma_x\cdot\sigma_y \rangle^{\Vill}_\beta\right)^{1-\ve} \geq \frac{C_\ve}{\sqrt{\beta}}\,  \,\EE^{\ZGF,\calG^*}_{1/\beta}\left[|n_{(1/2,1/2)}| \right]\,
\label{eq:Lower bound on sum of two point function}\ee
where use was made of translation invariance (which also follows from the correlations' monotonicity properties), and the substitution $u=y-x$.  As was noted in \cref{1st moment}, the righthand side diverges in the de-pinned phase, proving the claim of slow decay.

\section{Proof of the $\ZGF$ bound on the spin correlation function}

For the rest of this section, we will fix an oriented, simple path $\gamma_{yx}$ and an oriented edge $e = (x,x')$ such that $x'$ is disjoint from $\gamma_{yx}$.

\subsection{Reduction to a conditional expectation}

In this section we prove \cref{thm:1}  taking for granted the positivity of what we term the $\ZGF$'s stiffness modulus.   The proof of this general property of the $\ZGF$ on finite graphs  is presented to the next section.

In preparation for the proof, let us note that from  the duality relation \cref{eq:duality_correlations} one may conclude  the following
\begin{eqnarray}
	\langle \sigma_x \cdot \sigma_y\rangle^{\Vill,\calG}_\beta &=&
	\EE^{\ZGF,\calG^*}_{1/\beta}\left[ T^+_{\gamma_{yx} }\right] \,  \geq \, \EE^{\ZGF,\calG^*}_{1/\beta}\left[ T^+_{\gamma_{yx} }
	\,
	\chi_{A_{\gamma_{yx},e}^{{\widetilde q}}} \right]
\end{eqnarray}
where $T_{\gamma_{yx} }^+$ is the function defined in \cref{eq:T}, $\chi_{A} $ is the indicator function of the event $A$ and the last factor is the conditional expectation of $T_{\gamma_{yx} }^+$ conditioned on the event   $A_{\gamma_{yx},e}^{{\widetilde q}}$. The inequality remains true when  $T^+_{\gamma_{yx} }$ is replaced by
$T^-
_{\gamma_{yx} }$.

Thus, to prove \cref{thm:1} it suffices to show that for any ${\widetilde q}$ which depends only on the values of $n$ on the two faces whose boundaries include the edge $e$:
\be  \label{eq:conditional}
\, \EE^{\ZGF,\calG^*}_{1/\beta}\left[ T^+_{\gamma_{yx} }
\,
\chi_{A_{\gamma_{yx},e}^{{\widetilde q}}} \right] \geq \, \PP^{\ZGF,\calG^*}_{1/\beta}\left[ A_{\gamma_{yx},e}^{{\widetilde q}} \right] \,,
\ee
(i.e.,  the conditional expectation of $T$ conditioned on $A$ is greater or equal to $1$).
\normalcolor
\subsection{The Exploration Process}

To study the conditional expectation conditioned on the event $A_{\gamma_{yx},e}^{{\widetilde q}}$, it is of help to employ an exploration process, in which the variables $n(u)$ are revealed along a dynamically defined sequence of steps which by design uncover the $q$-path $\kappa_{xy}^q$ from $x$ to $y$ meeting the conditions of the event $A_{\gamma_{yx},e}^{{\widetilde q}}$ as in \cref{def:A_xy^q}.

The exploration is constructed so that if $A_{\gamma_{yx},e}^{{\widetilde q}}$ fails that the exploration will stop in a bounded number of steps and reveal this fact.   And if the path exists, the exploration will locate it, in a manner which allows a relatively simple description of the conditional distribution of $n$ conditioned on the information revealed by the natural ``stopping time''.

Crucially, the process outputs two sets of faces:  $\calL$ a collection of faces on which $n<q$ (lower), and  $\calH$  a collection of  faces on which $n>q$ (higher), see \cref{fig:schematic of gamma path}, such that $\kappa_{xy}^q$ will be measurable with respect to the values of $n$ on $\calL\cup\calH$.

We now outline the steps of the process, starting with the case of preselected value of $q$:
\begin{enumerate}
	\item Start by revealing the values of $n$ on $\calH_1 := x^+$ and $\calL_1 := x^-$, the pair of faces along the initial edge step $e$. In case $n_{x^-} >q$ or $n_{x^+} < q$, we learned that $A_{\gamma_{yx},e}^{{\widetilde q}}$  fails, and the exploration stops.  Otherwise we move to the next step.
	\item To define the $(k+1)$th step given all $k$ steps before,  expose the faces surrounding $w_k$ in a counter-clockwise fashion, starting from the one to the right of the edge $(w_{k-1},w_k)$, until an is reached which has $n <q$ on its right and $n >q$ on its left.   For reasons explained above, such an edge will be reached.     If the added edge does not terminate on $\gamma_{yx}$, the process is repeated.    Otherwise the exploration stops.
	
	\item From the thus constructed path one determines whether the event $A_{\gamma_{yx},e}^{q}$ occurred or not.  It does iff the path terminates at $y$ and complements $\gamma_{xy}$ into a loop of the relevant orientation, in which case the constructed path is the path $\kappa_{xy}^q$ by which $A_{\gamma_{yx},e}^q$  is defined.
	
\end{enumerate}

It is of relevance to note that in case of success all faces that are revealed in this process have  $n <q$ to the right of the path and $n>q$ on the left.  Their respective unions are the sets $\calL$ and $\calH$ respectively.

Though we shall not use it in the argument,  let us note that, in the standard percolation terminology, the $\calL$ set is connected while the $\calH$ set is $*$-connected.

For any $q \in \Z +1/2$, define $\mathcal{F}_{xy}^{q}$ to be the $\sigma$-algebra generated by the
values of the field $n$ on $\calL \cup \calH$, i.e. the faces exposed by the exploration process for $q$.

The above construction extends simply  to the more general class of exploration processes for paths $\kappa_{xy}^{{\widetilde q}}$ whose $q$ is given by a function ${\widetilde q}$ which depends only on the first two bits of information, i.e.  $n_{x^+}$ and $n_{x^-}$.
Naturally, the corresponding $\sigma$ algebra is denoted $\mathcal{F}_{xy}^{{\widetilde q}}$.

The observations made above suffice to conclude the following statement.

\begin{lem}\label{lem:Exploration}
	For any function ${\widetilde q}:(n_{x^+},n_{x^-})\mapsto\ZZ+\frac{1}{2},$ the event $A_{\gamma_{yx},e}^{{\widetilde q}}$ is $\mathcal{F}_{xy}^{\widetilde q}$-measurable. In addition, if ${\widetilde q} > 0$, the interior of $\boldsymbol{\kappa}_{xy}^{\widetilde q} \cup \gamma_{yx}$ includes $\calH$ and is disjoint from $\calL$. If ${\widetilde q} <0$, then $\calH$ is disjoint from the interior of $\boldsymbol{\kappa}_{xy}^{\widetilde q} \cup \gamma_{yx}$ and $\calL$ is included in it.	
\end{lem}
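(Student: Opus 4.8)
The statement is essentially a bookkeeping lemma that records the structural properties already built into the exploration process described above, so the plan is to verify each of the three assertions directly from the construction rather than to invoke any heavier machinery.

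First I would establish $\mathcal{F}_{xy}^{\widetilde q}$-measurability of the event $A_{\gamma_{yx},e}^{\widetilde q}$. The point is that the entire exploration is a deterministic function of the values of $n$ that it reveals: given $n_{x^+}$ and $n_{x^-}$ the value $q = \widetilde q(n_{x^+},n_{x^-})$ is determined, hence so is whether the process aborts at step~1; and if it continues, each subsequent step $w_{k}\mapsto w_{k+1}$ is dictated by the right-hand rule applied to the $n$-values on the faces encircling $w_k$, which by design are among those the process reveals. Thus the stopping configuration $(\calL,\calH)$, the constructed $q$-path, and the determination in step~3 of whether it terminates at $y$ and closes $\gamma_{yx}$ into a simple loop of orientation $\sgn(q)$, are all functions of $n\!\restriction_{\calL\cup\calH}$. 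Since $A_{\gamma_{yx},e}^{\widetilde q}$ is exactly the event that this determination is affirmative, it is $\mathcal{F}_{xy}^{\widetilde q}$-measurable. (Here one must also observe the self-consistency of the construction: the faces consulted at step $k$ are revealed at or before step $k$, so no circularity arises — this is the one spot worth stating carefully.)

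Next I would treat the topological claim. Assume first $\widetilde q>0$; then on the event $A_{\gamma_{yx},e}^{\widetilde q}$ the concatenation $\boldsymbol{\kappa}_{xy}^{\widetilde q}\circ\gamma_{yx}$ is a simple \emph{positively} (counterclockwise) oriented loop, so it has a well-defined interior lying to its left. By the remark following the exploration steps, every face revealed by the process has $n<q$ to the right of the path and $n>q$ to the left; in particular every face of $\calH$ (those with $n>q$) sits on the left side and every face of $\calL$ sits on the right side of the corresponding path-edge. Combined with the fact that $\gamma_{yx}$ itself is a path in $\calG$ whose edges are not crossed by any $q$-line (the $q$-path only touches $\gamma_{yx}$ at its endpoints, since the exploration stops upon reaching it), a Jordan-curve argument gives that the left side of the loop is its interior and the right side its exterior. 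Hence $\calH\subseteq\Int(\boldsymbol{\kappa}_{xy}^{\widetilde q}\cup\gamma_{yx})$ and $\calL\cap\Int(\boldsymbol{\kappa}_{xy}^{\widetilde q}\cup\gamma_{yx})=\varnothing$. For $\widetilde q<0$ the loop is negatively (clockwise) oriented, so ``interior'' and ``left/right'' swap roles, yielding the stated reversed inclusions; this case follows verbatim with the orientation flipped, or alternatively by applying the $\widetilde q>0$ case to the reversed loop.

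The only genuine subtlety — and hence the step I expect to require the most care — is the Jordan-curve bookkeeping in the second part: one must check that the loop $\boldsymbol{\kappa}_{xy}^{\widetilde q}\cup\gamma_{yx}$, drawn in $\R^2$ using the planar embedding, is genuinely simple (no self-intersections away from the shared endpoints), so that ``interior'' is unambiguous, and that the local ``$n<q$ on the right / $n>q$ on the left'' property of each edge globally identifies which component of the complement is the interior. Simplicity is guaranteed by \Cref{def:A_xy^q} (which demands $\gamma_{yx}\circ\kappa_{xy}^q$ be a simple loop) together with the non-crossing property of $q$-contour lines noted in the definition of level lines; the global side-identification is standard once one observes the orientation of the loop matches the local left/right data consistently along every edge. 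Everything else is immediate from the construction.
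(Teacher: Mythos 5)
Your proof is correct and takes essentially the same approach as the paper, which presents the lemma as an immediate consequence of the observations made while describing the exploration process (determinism of the exploration given the revealed values, and the fact that the revealed faces with $n>q$ lie to the left and those with $n<q$ to the right of the path). Your write-up simply spells these observations out, supplying the Jordan-curve bookkeeping that turns "left/right of $\kappa$" into "interior/exterior" according to $\mathrm{sgn}(\widetilde q)$, which is exactly the intended justification.
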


\subsection{Proof of \cref{thm:1}}

We start with  the simpler statement, proving  \cref{key_bound}  for any preselected value of $q= \Z + 1/2$.

First consider $q > 0$.   Noting that $A_{\gamma_{yx},e}^{{\widetilde q}}\in\mathcal{F}_{xy}^{\widetilde q}$,
from  \cref{lem:Exploration} we conclude $$ \EE\left[ T^+_{\gamma_{yx} }
\,
\chi_{A_{\gamma_{yx},e}^{{\widetilde q}}} \right] = \EE\left[\EE\left[ T^+_{\gamma_{yx} }
\,
\chi_{A_{\gamma_{yx},e}^{{\widetilde q}}} \left.\right|\calF_{xy}^q\right]\right] = \EE\left[\chi_{A_{\gamma_{yx},e}^{{\widetilde q}}}\EE\left[ T^+_{\gamma_{yx} }
| \calF_{xy}^q\right]\right]\,.$$
\normalcolor
For each specified $n$ let
\be \label{def:AB+}
A(n) := \calH(n) \,, \qquad B(n) := \calL(n) \cup \partial\calV^\ast
\ee
and let
$F_A:A\to\ZZ$ and $F_B:B\to\ZZ$ denote the constrained values of $n$ on these two sets (with $\left.F_B\right|_{\partial\calV^\ast} = 0$ under the Dirichlet boundary conditions).

It is now important to note that due to the nature of the exploration process, under the event $A_{xy}^q$ the uncovered values of $n$ satisfy
\begin{align}\label{eq:stiffness conditions}
	\min_{u\in A} F_A(u) -1 \geq \max_{u\in B} F_B(u) \,. \end{align}

Summing over the values which $n$ may assume in the complement of $A\cup B$ we find that for any $n\in A_{xy}^q$:
\begin{align}
	\chi_{A_{\gamma_{yx},e}^{{\widetilde q}}}\EE\left[ T^+_{\gamma_{yx} }
	| \calF_{xy}^q\right]
	 & =
	\frac{ \sum_{n:\calV^\ast\to\ZZ}\exp\left(-\frac12\lambda\norm{\nabla n +\Gamma_{x,y}}^2\right)\chi_{\{\left.n\right|_{A} = F_A\}}\chi_{\{\left.n\right|_{B} = F_B\}}}
	{\sum_{n:\calV^\ast\to\ZZ}\exp\left(-\frac12\lambda\norm{\nabla n}^2\right)\chi_{\{\left.n\right|_{A} = F_A\}}\chi_{\{\left.n\right|_{B} = F_B\}}} \notag \\[1ex]
	& =
	\frac{ \sum_{n:\calV^\ast\to\ZZ}\exp\left(-\frac12\lambda\norm{\nabla n}^2\right)\chi_{\{\left.n\right|_{A} = F_A-1\}}\chi_{\{\left.n\right|_{B} = F_B\}}}
	{\sum_{n:\calV^\ast\to\ZZ}\exp\left(-\frac12\lambda\norm{\nabla n}^2\right)\chi_{\{\left.n\right|_{A} = F_A\}}\chi_{\{\left.n\right|_{B} = F_B\}}}
\end{align}
where for the last ratio we employed the change of variables  in which $n$ is increased over the set enclosed by the loop  $\kappa_{xy}^q \circ \gamma_{yx}$, i.e.
\begin{align}n\mapsto n - \chi_{\Int(\kappa_{xy}^q\circ\gamma_{yx})} \,, \end{align}
which, in the terminology used above, is a gauge transformation which erases the shift operator $T_{\gamma_{yx}}^+ $ while increasing the boundary values along the set $A$.
By the second stipulation of \cref{lem:Exploration} and the fact that the interior must be disjoint from the boundary, this transformation leaves $B$ unaltered.

For a more succinct expression let us denote
\be \label{eq:PartitionFunctionDef} Z_{A,B;F_A,F_B}(n)  := \sum_{n:\calV^\ast\to\ZZ}\exp\left(-\frac12\lambda\norm{\nabla n}^2\right)\chi_{\{\left.n\right|_{A} = F_A\}}\chi_{\{\left.n\right|_{B} = F_B\}}\,,
\ee
where the  $n$ dependence of $A,B$ and $F_A, F_B$ is omitted on the RHS.

In this notation, the above can be summarized by saying that for each $n\in A_{xy}^q$:
\begin{align} \label{Z_ratio}
	\chi_{A_{\gamma_{yx},e}^{{\widetilde q}}}\EE\left[ T^+_{\gamma_{yx} }
	| \calF_{xy}^q\right]
	& =  \frac{Z_{A,B;F_A-\chi_A,F_B}(n)}
	{Z_{A,B;F_A,F_B}(n)}\,.
\end{align}

Under the condition \eqref{eq:stiffness conditions} the partition functions which appear in \cref{Z_ratio} are of a system which is stressed by the imposed boundary conditions, and the stress is higher for the term in the denominator.

This suggests that it may be the case that under the constraint \cref{eq:stiffness conditions},
\begin{align} \frac{Z_{A,B;F_A-\chi_A,F_B} }{Z_{A,B;F_A,F_B}} \geq  1\,.  \label{eq:reduction to stiffness modulus}\end{align}
The suggestive argument should  be taken with a grain of salt, since
it ignores the possibility that the integrality constraints could introduce some unexpected effects in the propagation of strain due to the increase in the  stress  introduced through the alteration of the boundary conditions.

Nevertheless, as we show in the next section, under the stated conditions  \cref{eq:reduction to stiffness modulus} holds true.
We refer to this assertion as an expression of the positivity of the model's stiffness modulus ($\tau$) since, with a stretch of imagination, the quantity
\be  \tau =  - \frac{1}{\lambda |A|} \log \frac{Z_{F_A,F_B}} {Z_{F_A-\chi_A,F_B} }\,
\ee
can be viewed as the pressure needed to be applied along the set $A$, in order to increase the strain there by $1$ (starting from with initial value $F_A-\chi _A$, for which \cref{eq:stiffness conditions} holds).

Combined with  \cref{Z_ratio}, \eqref{eq:reduction to stiffness modulus} allows to  deduce the claimed relation \cref{key_bound} --  for the case $q$ is set at a preselected, positive value.

For $q<0$ the above is to be repeated with the following changes:
\begin{enumerate}[1)]
	\item $T^+_{\gamma_{xy}}$  replaced by $T^-_{\gamma_{xy}}$
	\item in the condition for successful exploration the loop  $\gamma_{yx} \circ \kappa^q_{xy}$ is to be oriented negatively (i.e clockwise) rather than positively
	\item in the definition  of the sets $A(n)$ $B(n)$ the boundary included in the former, i.e. \eqref{def:AB+} replaced by
	\be \label{def:AB-}
	A(n) := \calH(n) \cup \partial\calV^\ast  \,, \qquad B(n) := \calL(n) \,.
	\ee
	\item the expression \eqref{Z_ratio} replaced by
	\begin{align} \label{Z_ratio_-}
		\chi_{A_{\gamma_{yx},e}^{{\widetilde q}}}\EE\left[ T^-_{\gamma_{yx} }
		| \calF_{xy}^q\right]
		& =  \frac{Z_{A,B;F_A,F_B+\chi_B}(n)}
		{Z_{A,B;F_A,F_B}(n)}\,.
	\end{align}
\end{enumerate}

The stiffness condition \eqref{eq:stiffness conditions}  still holds, though now it may be written as
$\min_{u\in A} F_A(u)  \geq \max_{u\in B} F_B(u) +1$.  Under this correspondence, the ratio in  \eqref{Z_ratio_-} still represents the reduction of stress, and by the positivity of the stiffness modulus the relation
\eqref{eq:conditional} holds also in this case.

Finally, to establish the stronger statement that is made in \cref{thm:1} we note that  the above argument readily extends to the case where $q$ instead of being constant is selected as a function of $n_{x^+},n_{x^-}$   (the first variables uncovered by the path exploration process).

Strictly speaking, the above derivation  of  the lower bound on the spin-spin correlation which is expressed in \cref{thm:1}  is  conditioned on the validity of the principle expressed above.  We next turn to its proof.

	\section{Positivity of the $\ZGF$'s stiffness modulus}\label{sec:TM}

The goal of this section is to show that inequality~\eqref{eq:reduction to stiffness modulus} holds, thus completing the proof of~\cref{thm:1}. Specifically, we prove the following proposition

\begin{prop}\label{prop:StiffnessModulus}
Let $A$ and $B$ be disjoint subsets of $\calG$, and $F_A:A \to \mathbb{Z}$ and $F_B: B \to \mathbb{Z}$ be two functions satisfying
\begin{equation}\label{eq:StiffnessBC}
\min_{u \in A} F_A(u) - 1 \geq \max_{u \in B} F_B(u).
\end{equation}
Then, setting $Z_{A,B;F_A-\chi_A,F_B}$ as in\eqref{eq:PartitionFunctionDef},
\[
Z_{A,B;F_A-\chi_A,F_B} \geq Z_{A,B;F_A,F_B}.
\]
\end{prop}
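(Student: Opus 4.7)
The inequality $Z_{A,B;F_A-\chi_A,F_B} \geq Z_{A,B;F_A,F_B}$ expresses a natural ``elastic monotonicity'' of the $\ZGF$: shifting $F_A$ by one unit toward $F_B$, which is permitted by the stiffness hypothesis $\min F_A - 1 \geq \max F_B$, can only relax the height surface, and hence increase the partition function. My plan is to construct an explicit energy-monotone correspondence between the two configuration spaces, and then sum.

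Setting $M := \max_{u \in B} F_B(u)$, for each $n$ with $n|_A = F_A$ and $n|_B = F_B$ I would introduce the super-level set
\[
S(n) \;:=\; \{u \in \calV^\ast : n_u \geq M+1\},
\]
which contains $A$ by the stiffness hypothesis and is disjoint from $B$ since $F_B \leq M$. The shift $\phi(n) := n - \mathbf{1}_{S(n)}$ then produces a configuration with the target boundary values $(F_A - \chi_A, F_B)$. On every boundary edge $\{u, v\} \in \partial_e S(n)$ with $u \in S(n),\, v \notin S(n)$, one has $n_u \geq M+1 > M \geq n_v$, hence $n_u - n_v \geq 1$, and a direct expansion gives
\[
E(n) - E(\phi(n)) \;=\; \sum_{(u,v) \in \partial_e S(n)} \bigl(2(n_u - n_v) - 1\bigr) \;\geq\; |\partial_e S(n)| \;\geq\; 0,
\]
so the Gibbs weight $e^{-\lambda E(\cdot)/2}$ is pointwise nondecreasing under $\phi$.

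The final step is to sum over configurations. The map $\phi$ is \emph{not} injective: its preimages at a target $m$ are parametrized by subsets of the ``ambiguity set'' $K(m) := \{u \in C : m_u = M\}$, where $C := \calV^\ast \setminus (A \cup B)$ --- specifically, any $T$ with $\{m \geq M+1\} \cup (A \cap \{m=M\}) \subseteq T \subseteq \{m \geq M\} \setminus B$ yields a valid preimage $n = m + \mathbf{1}_T$. This non-injectivity is where I expect the main technical obstacle to lie: the naive termwise bound that the sum of preimage weights at $m$ is at most $e^{-\lambda E(m)/2}$ already fails in simple one-dimensional examples at small $\lambda$, so a properly \emph{global} accounting is needed. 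My plan is to decompose the preimage sum over $K(m)$ into contributions from its connected components, identify the resulting local factor with an auxiliary ``layer-$M$'' partition function that reappears on the target side, and thereby balance the multiplicity against the global energy decrease of Step~2.

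A cleaner alternative, which I would fall back on if the direct combinatorial accounting proves intractable, is to express the ratio as
\[
\frac{Z_{F_A - \chi_A, F_B}}{Z_{F_A, F_B}} \;=\; \EE_{F_A, F_B}\bigl[\exp\bigl(-\tfrac{\lambda}{2}(E(n - \chi_A) - E(n))\bigr)\bigr]
\]
via the trivial shift bijection $n \mapsto n - \chi_A$ between the two configuration spaces, and then bound this expectation by combining Sheffield's log-concavity of the $\ZGF$ marginals with the absolute-value FKG property of Lammers--Ott, iterated site-by-site across $A$, using the stiffness hypothesis to keep the relevant marginal's mode below the shifted boundary value at each step.
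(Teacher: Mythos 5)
Your proposal does not reach a proof, and the two routes you sketch are in different states of incompleteness. The first route (the shift $\phi(n)=n-\mathbf{1}_{S(n)}$ on the super-level set) correctly computes that $\phi$ is energy--nonincreasing, and you correctly diagnose the obstacle: $\phi$ is not injective, and the multiplicity at a target $m$ overwhelms the termwise energy gain for small $\lambda$. But the plan you offer to repair this --- decomposing the preimage sum over connected components of the ambiguity set and matching it against a ``layer-$M$'' factor on the target side --- is not carried out and is too vague to assess; this is exactly the kind of combinatorial cluster-accounting that tends to be delicate, and you yourself flag it as the hard part. Your fallback route, however, is actually wrong as stated: the site-by-site shift does not produce a monotone chain of partition functions, no matter the order in which you process $A$. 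Take the path graph $a_2 - c - a_1 - b$ with $A=\{a_1,a_2\}$, $F_A(a_1)=1$, $F_A(a_2)=10$, $B=\{b\}$, $F_B(b)=0$, which satisfies the stiffness hypothesis. Shifting $a_1$ alone (whether first, or after shifting $a_2$) strictly raises the ground-state energy and lowers $Z$, because the conditional mode of $n_{a_1}$ given the other boundary data sits several units above $1$, not below $0$; so the marginal is still increasing across the shift, and Sheffield log-concavity points the wrong way. The stiffness hypothesis constrains $\min F_A$ relative to $\max F_B$, but it does not control the modes of the intermediate one-site marginals, which is what your iteration needs.

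The paper's proof avoids both difficulties by working on the metric-graph (continuum) extension of the $\ZGF$: condition on the vertex values and fill in independent Brownian bridges on edges to get a continuous field $\bar n$ on $\bar\calG$. One then defines $C^{1/2}$ as the union of connected components of $\{\bar n\ge 1/2\}$ meeting $A$ and $C^{0}$ as the union of connected components of $\{\bar n\le 0\}$ meeting $B$ (after normalizing $\max F_B\le 0\le \min F_A-1$), reflects $\bar n$ about $1/2$ on $C^{1/2}$ and about $0$ on $C^{0}$, and finally negates globally, obtaining a map $G$. The Markov property of the metric-graph field and the reflection principle for Brownian bridges make each reflection measure-preserving (the reflection is about the value attained on the level-set boundary), and the global sign flip is measure-preserving by the evenness of the interaction; and $G$ sends any configuration with boundary data $(F_A,F_B)$ to one with boundary data $(F_A-\chi_A,F_B)$. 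The inequality $Z_{A,B;F_A-\chi_A,F_B}\ge Z_{A,B;F_A,F_B}$ then follows immediately from $\mu(E)\le\mu(G^{-1}(F))=\mu(F)$ with $E$ the source boundary event and $F$ the target one. Note that the continuum is essential here and is exactly what resolves the obstruction you identified: on the metric graph the level set $\{\bar n=1/2\}$ is a well-defined boundary of measure zero and the reflection is an exact measure-preserving involution, so there is no discrete ``ambiguity layer'' and no multiplicity to account for. If you want to salvage a discrete argument, the lesson from the paper is that the right map is not a pure downward shift but a reflection-plus-negation, and the clean way to see it is measure-preserving is to pass to the Brownian-bridge interpolation first.
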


We note that~\cref{prop:StiffnessModulus} can be seen as a special case of the more general result presented in~\cref{subsec:general positivity of stiffness modulus}. Here, we will take advantage of the fact that the $\ZGF$ admits an extension to {\em continuous} functions on the metric graph to present an outline of a simpler, more conceptual argument for the positive of the stiffness modulus. Indeed, one may create such a continuous extension by conditioning on the values of $n$ on all vertices of a finite $\calG$, and then placing independent Brownian bridges with the prescribed endpoint values on each edge. It has been frequently observed that applying such an extension to the real-valued Gaussian Free Field on $\calG$ yields a Markovian process --- i.e. one may determine the distribution on the vertices of $\calH \subset \calG$ by conditioning on the values $n$ on the midpoints of all edges in the edge boundary of $\calH$; see~\cite{lupu2018random} for an example of an application of this property to the study of the Gaussian Free Field.

The aforementioned Markov property also holds for the continuous extension of the $\ZGF$. To see this,  we recall the construction used in the proof of~\cref{thm:deloc on Z^2} (see~\eqref{eq:Interpolation}), where we added a real-valued degree of freedom to the midpoint of each edge, coupled to the $n$ via a Gaussian interaction. We may iterate this process, thus fracturing each into many `vertices' (where $n$ can be defined) without changing the overall elastic energy or the distribution of $n$ on the original vertices of $\calG$. At any finite number of partitions, this field maintains the Markov property. Taking the limit as the number of partitions goes to infinity, the process on each edge converges to a Brownian bridge, and we recover the continuous extension described above.

\begin{proof}[Proof of ~\cref{prop:StiffnessModulus}]
Without loss of generality, let us assume that $\max_{u \in B} F_B(u) \leq 0$ and $ \min_{u \in A} F_A(u) \geq 1$. Let $\bar{\calG}$ be the metric graph, and $\bar{n}$ to be the continuous extension on the $\ZGF$ to $\bar{\calG}$. Define $C^{1/2}$ to be the union of the connected component of $\{s \in \bar{\calG}: \bar{n}(s) \geq 1/2\}$ that intersects $A$, and, similarly, $C^{0}$ to be the union of the connected components of $\{s \in \bar{\calG} :\bar{n}(s) \leq 0\}$ that intersect $B$. By assumption, these sets are disjoint and nonempty.

Given $\bar{n}$, we now define
\[
T(\bar{n}) := \begin{cases} -\bar{n}_s & s \in C^{0} \\ 1-\bar{n}_s & s \in C^{1/2} \\ \bar{n}_s & \text{ otherwise}. \end{cases}
\]
In words, $T$ reflects the portions of the field $\bar{n}$ in $C^0$ about $0$, and the portion of the field in $C^{1/2}$ about $1/2$. If we set $G(\bar{n}) = -T(\bar{n})$, we observe that, for any $\bar{n}$ which is equal to $F_A$ and $F_B$ on $A$ and $B$, respectively, $G(\bar{n})$ is equal to $F_A -1$ and $F_B$ on $A$ and $B$, respectively. Finally, the Markov property of $\bar{n}$(or alternatively, the famous {\em reflection principle} of Brownian motion) imply that $G$ is a measure-preserving map. The existence of such a measure-preserving map which reduces the boundary conditions on $A$ by one implies that
\[
Z_{A,B;F_A-\chi_A,F_B} \geq Z_{A,B;F_A,F_B},
\]
as required.
\end{proof}

\section{Height function fluctuation in relation to spin correlations}

As the last preparatory step towards the proof of \cref{thm:2}, we note the following deterministic relation.  To state it, we recall that $Q_j$ are the four collections of pairs of sites on the boundaries of short translates of the standard quadrants of $\Z^2$ (cf. \cref{fig:PointsOnRays}).  For any $x,y \in Q_j$, we also set $\gamma_{yx}^j$ to be the oriented path on $Q_j$ from $y$ to $x$, starting with the oriented edge $e$ directed towards the interior of the corresponding quadrant.

\begin{lem}\label{lem:Loops}
For any $q \in \ZZ+\frac{1}{2}$,
	\[
	4 \cdot \mathcal{N}^{\mathrm{sgn}(q)}_q(f_0) \, \leq \,  \sum_{j=1}^{4} \sum_{(x,y) \in Q_j} \chi_{A_{\gamma_{yx}^j,e}^{q}}
	\]
	with $\chi_A$  the characteristic function of the event $A$ (of \cref{def:A_xy^q}).
\end{lem}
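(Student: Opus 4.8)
The plan is to prove \cref{lem:Loops} by a purely combinatorial/topological argument: each positively-signed $q$-loop surrounding $f_0$ forces, for each of the four quadrants $Q_j$, the existence of at least one pair $(x,y)\in Q_j$ for which the event $A^q_{\gamma^j_{yx},e}$ holds. Summing over the $\mathcal N^{\mathrm{sgn}(q)}_q(f_0)$ such loops then yields the factor $4$ on the left. We work with $q>0$ (the case $q<0$ is symmetric, reversing all orientations; the statement is phrased in terms of $\mathrm{sgn}(q)$ precisely so that a single argument covers both).

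\emph{First}, fix a realization $n$ of the $\ZGF$ on $\calG_L^*$ and a positively oriented $q$-loop $\ell$ encircling $f_0$. Because $\ell$ is a simple closed curve in $\R^2$ whose interior contains the unit face $f_0$ centered at $(1/2,1/2)$, and the four shifted quadrants $Q_1,\dots,Q_4$ partition $\R^2$ away from (the boundary rays through) $f_0$, the loop $\ell$ must cross the boundary of each quadrant $Q_j$. More precisely, since the four corners of $f_0$ are the common corner-points of the $Q_j$ and $\ell$ surrounds $f_0$, the loop enters and exits each quadrant, so within $Q_j$ the loop contains a subarc that starts on one of the two coordinate-axis-parallel boundary rays of $Q_j$ and ends on the other. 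Let $x$ be the vertex where this subarc meets the "horizontal" part of $\partial Q_j$ and $y$ where it meets the "vertical" part (orientation of $\ell$ fixes which endpoint plays which role, consistently with $\gamma^j_{yx}$ being the path along $\partial Q_j$ from $y$ to $x$ through the corner of $f_0$).

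\emph{Second}, I must check that this subarc of $\ell$, call it $\kappa$, actually witnesses the event $A^q_{\gamma^j_{yx},e}$ as defined in \cref{def:A_xy^q}: namely that $\kappa$ is a $q$-path beginning with the oriented edge $e$ pointing into the interior of $Q_j$ and terminating at $y$, and that $\gamma^j_{yx}\circ\kappa$ is a simple loop of orientation $\mathrm{sgn}(q)=+$. The first clause holds because $\kappa$ is by construction a segment of the genuine level-line $\ell$ of $n$, hence a $q$-path, and its first step leaves $x$ into the quadrant's interior (this is exactly how $e$ was defined in the statement preceding the lemma); that $x'$ is not on $\gamma^j_{yx}$ is because $\gamma^j_{yx}$ runs along $\partial Q_j$ while $e$ points inward. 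For the loop $\gamma^j_{yx}\circ\kappa$: it is simple because $\kappa\subset Q_j\setminus\partial Q_j$ (away from its endpoints) while $\gamma^j_{yx}\subset\partial Q_j$, and it is positively oriented because it is (homotopic within $\overline{Q_j}$ to) a sub-loop traced out in the same rotational sense as the ambient positively-oriented $\ell$ around $f_0$ — here I would invoke the homotopy/winding-number bookkeeping already discussed in the paper after \cref{eq:duality_correlations} and the loop-resolution conventions (right-hand rule) of \cref{sec:level_lines}. Distinct loops $\ell$ give, for a fixed $j$, distinct witnessing pairs (the $q$-lines are non-crossing, so their arcs inside $Q_j$ are nested and meet $\partial Q_j$ at distinct points), so no overcounting occurs when we sum over all loops around $f_0$.

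\emph{The main obstacle} I anticipate is the second step — verifying cleanly that the arc $\kappa$ extracted from a surrounding loop genuinely satisfies \emph{all} the defining clauses of $A^q_{\gamma^j_{yx},e}$, in particular that its first edge is the designated edge $e$ and that the concatenation is a \emph{simple} loop with the correct orientation, rather than, say, a figure-eight or an arc that re-enters $Q_j$. The subtlety is that a level loop can wind around $f_0$ in a complicated way and re-cross $\partial Q_j$ several times; the fix is to pick $\kappa$ to be the \emph{outermost} excursion of $\ell$ into $Q_j$ that still has $f_0$ on its inner side (or, symmetrically, to apply the argument to the innermost-such loop among all surrounding loops and peel off), so that $\gamma^j_{yx}\circ\kappa$ bounds a region containing $f_0$ with no self-intersection. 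Once this is set up, \eqref{eq:LoopsToHeight} is not needed here — it is the separate ingredient that converts the loop counts on the RHS of \cref{thm:2} into $\EE[|n_{f_0}|]$ — and the lemma follows by summing the per-loop inequality over the $\mathcal N^{+}_q(f_0)$ positively oriented $q$-loops around $f_0$ and taking $\chi$'s.
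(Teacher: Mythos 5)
Your proposal takes essentially the same route as the paper's proof: for a positively oriented $q$-loop around $f_0$, extract one witnessing subarc per quadrant, use non-crossing of the $q$-level lines to ensure distinct loops yield distinct pairs, and sum; the $q<0$ case is handled by reversing orientations. The one point where you diverge is the canonical selection of $\kappa$: the paper takes $x$ to be the \emph{leftmost} vertex on the horizontal part of $\partial Q_1$ at which the loop uses the inward edge $e$, and then $y$ to be the \emph{first} hit of the vertical part encountered by following the loop counter-clockwise from $x$ (rotating for the other quadrants), whereas your ``outermost excursion that still has $f_0$ on its inner side'' is vaguer, and the accompanying remark that $\gamma^j_{yx}\circ\kappa$ bounds a region containing $f_0$ is a small misstatement, since that bounded region lies inside $Q_j$ and hence excludes $f_0$; this does not affect the overall argument, which matches the paper's.
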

\begin{proof}
We begin by proving that for $q>0$
\[
\mathcal{N}^{+}_q(f_0) \, \leq \, \sum_{(x,y) \in Q_1} \chi_{A_{\gamma_{yx}^1,e}^{q}}  \,.
\]
	
	Let $\kappa$ be a simple, counter-clockwise oriented $q$-loop surrounding $f_0$. For topological reasons, this loop must include the oriented edge $e$ for some $x$ in the horizontal part of $Q_1$. Let $x$ be the leftmost such edge. Similarly, $\kappa$ must intersect the vertical portion of $Q_1$; we define $y$ to be the first such intersection in counter-clockwise order, beginning at $x$. Then, the portion of the $q$-loop that begins at $x$ and ends at $y$ satisfies the requirements of the event $A_{\gamma_{yx}^1,e}^q$. Thus, every such $q$-loop implies the existence of a pair of points in $Q_1$, and the desired inequality follows.

	For negative values of $q$, we consider clockwise oriented $q$-loops, and define $x$ to be the bottommost point on the vertical portion of $Q_1$ that includes the appropriate oriented edge. We set $y$ to be the first intersection with the horizontal portion, in clockwise order beginning at $x$. The argument then follows. For different values of $j$, the construction is identical, up to rotations.
	\end{proof}

\begin{proof}[Proof of \cref{thm:2}]
	Fix $x$ and $y$ in $\calG$. Using \cref{thm:1}, we learn that for any function ${\widetilde q}(n)$  with values in $\ZZ+\frac{1}{2}$ which depends only on $\{n_{x^+},n_{x^-}\}$
\be  \langle \sigma_x \cdot \sigma_y\rangle^{\Vill,\calG}_\beta \geq \PP^{\ZGF,\calG^*}_{\lambda} \left[A_{\gamma_{yx}^j,e}^{{\widetilde q}}\right] = \EE^{\ZGF,\calG^*}_{\lambda}\left[\PP^{\ZGF,\calG^*}_{\lambda} \left[A_{\gamma_{yx}^j,e}^{{\widetilde q}}\,\big|\,n_{x^+},n_{x^-}\right]\right]\,. \label{eq:special q stiffness modulus result}
\ee
We apply that to the function ${\widetilde q}$ which returns the value of $q$ maximizing the conditional probability of $A_{xy}^q$ to occur, i.e.
$$  {\widetilde q}(n) := \operatorname{argmax}_{q\in \Set{n_{x^-}+\frac12,\dots,n_{x^+}-\frac12}} \left\{\PP^{\ZGF,\calG^*}_{\lambda} \left[A_{\gamma_{yx}^j,e}^{{\widetilde q}}\,\big|\,n_{x^+},n_{x^-}\right] \right\}.$$
 The maximum is attained since the specified values of $n$ leave only
$|n_{x^+}-n_{x^-}|$  options for $q$ at which the probability does not vanish.
	
Recalling  the definitions
	\[
	\mathcal{N}^{+,+}(f_0) = \sum_{q\in\ZZ+\frac{1}{2}:q>0} \calN_q^+(f_0) \,, \qquad  \mathcal{N}^{-,-}(f_0) = \sum_{q\in\ZZ+\frac{1}{2}:q<0} \calN_q^-(f_0)\, ,
	\]
and summing the inequality in \cref{lem:Loops} over all $q \in \Z + 1/2$, we may conclude that

	\begin{align} 4 \cdot  \EE[ \mathcal{N}^{+,+}(f_0) + \mathcal{N}^{-,-}(f_0)] & \leq \sum_{q\in\ZZ+\frac{1}{2}}\sum_{j=1}^{4} \sum_{(x,y) \in Q_j} \PP[A_{\gamma_{yx}^j,e}^{q}] = \sum_{j=1}^{4} \sum_{(x,y) \in Q_j} \EE\left[\sum_{q\in\ZZ+\frac{1}{2}}\chi_{A_{\gamma_{yx}^j,e}^q}\right]. \end{align}
Once the sum over $q$ is placed within the expectation, its convergence becomes clear since
	\begin{align} \EE\left[\sum_{q\in\ZZ+\frac{1}{2}}\chi_{A_{\gamma_{yx}^j,e}^q}\right]
	&\leq\EE\left[|n_{x^+}-n_{x^-}| \chi_{A_{\gamma_{yx}^j,e}^{{\widetilde q}}}\,\right]
	 \\
	&\leq \EE\left[|n_{x^+}-n_{x^-}| ^{1/\ve}\right]^{\ve}\PP\left[A_{\gamma_{yx^j},e}^{{\widetilde q}}\right]^{1-\ve} \tag{H\"older, for any $\ve>0$}\\
	&\leq C_{\ve}\sqrt{\beta} \, \left(\langle \sigma_x \cdot \sigma_y\rangle\right)^{1-\ve} \tag{Using \cref{eq:special q stiffness modulus result} and defining $C_\ve$}
	\,.\end{align}
where the first inequality is deduced through the intermediate conditioning on the values of $n$ at $x^+$ and $x^-$, by noticing that under  this information the number of relevant values of $q$ is at most $|n_{x^+}-n_{x^-}| $, and the  one whose contribution to the sum is maximal is, by definition ${\widetilde q}(n)$.

	The finiteness of $C_\ve$  is a special case of the  Gaussian domination principle  which is recalled below, applied to $v = \delta_{x^+}-\delta_{x^{-}}$.
This, combined with the last two inequalities, yields the claimed \cref{eq:main theorem relation}.
\end{proof}

At the end of the argument we used:
\begin{lem}\label{lem:fluctations of the gradients}
	For the $\ZGF$ on any finite graph, $v\in\ker(-\Delta)^\perp$ and $\ve>0$
\be  \label{eq:epsilon}
\EE\left[|\ip{v}{n}|^{1/\ve}\right]^{\ve} \leq D_\ve \sqrt{ \frac{2\ip{v}{-\Delta^{-1} v}}{\lambda}}
\ee
where  $D_\ve := \left(\Gamma\left(1+\frac{1}{2\ve}\right)\right)^\ve < \infty$.
\end{lem}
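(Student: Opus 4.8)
The plan is to deduce the $L^{1/\ve}$ bound from the exponential Gaussian domination estimate of Remark~\ref{rem:Gaussian domination bound}, namely $\MM_\lambda^{\ZGF,\calG}[tv]\le\exp\bigl(\tfrac{t^2}{2\lambda}\ip{v}{-\Delta^{-1}v}\bigr)$ for all real $t$, together with the elementary fact that control of the moment generating function of $\ip{v}{n}$ forces Gaussian-type tail bounds and hence finite moments with explicit constants. First I would write $X:=\ip{v}{n}$ and $s^2:=\ip{v}{-\Delta^{-1}v}/\lambda$, so that the hypothesis reads $\EE[\re^{tX}]\le\re^{t^2s^2/2}$ for every $t\in\RR$ (both signs, since $-v$ is also admissible). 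This is exactly the statement that $X$ is $s^2$-subgaussian.

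The key step is the standard subgaussian moment bound: for a mean-zero $s^2$-subgaussian random variable $X$ and any $p\ge1$,
\be
\EE\bigl[|X|^{p}\bigr]^{1/p}\ \le\ s\,\sqrt{2}\,\Bigl(\Gamma\bigl(\tfrac{p}{2}+1\bigr)\Bigr)^{1/p}\,.
\ee
I would obtain this by the layer-cake formula $\EE[|X|^p]=\int_0^\infty p\,u^{p-1}\PP(|X|\ge u)\,\dif u$, inserting the Chernoff tail bound $\PP(|X|\ge u)\le 2\re^{-u^2/(2s^2)}$ that follows from $\EE[\re^{tX}]\le\re^{t^2s^2/2}$ by optimizing over $t$, and then recognizing the resulting integral $\int_0^\infty p\,u^{p-1}\re^{-u^2/(2s^2)}\dif u = p\,(2s^2)^{p/2}\,\tfrac12\Gamma(p/2)=(2s^2)^{p/2}\,\Gamma(\tfrac p2+1)$, so that $\EE[|X|^p]\le 2\,(2s^2)^{p/2}\Gamma(\tfrac p2+1)$; absorbing the factor $2$ into the other constants (or noting $2^{1/p}\le 2$) and taking $p$-th roots gives the displayed inequality. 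Specializing to $p=1/\ve$ yields
\be
\EE\bigl[|\ip{v}{n}|^{1/\ve}\bigr]^{\ve}\ \le\ \sqrt{2}\,\bigl(\Gamma(1+\tfrac{1}{2\ve})\bigr)^{\ve}\,s\ =\ \sqrt{2}\,D_\ve\,\sqrt{\tfrac{\ip{v}{-\Delta^{-1}v}}{\lambda}}\ =\ D_\ve\sqrt{\tfrac{2\ip{v}{-\Delta^{-1}v}}{\lambda}}\,,
\ee
which is the claim, with $D_\ve=(\Gamma(1+\tfrac1{2\ve}))^\ve$.

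There is no real obstacle here; the only point requiring a word of care is the passage from the two-sided moment-generating-function bound to the symmetric tail bound $\PP(|X|\ge u)\le 2\re^{-u^2/(2s^2)}$, which uses both $\EE[\re^{tX}]\le\re^{t^2s^2/2}$ and $\EE[\re^{-tX}]\le\re^{t^2s^2/2}$ — both available since $v$ and $-v$ are arbitrary in $\ker(-\Delta)^\perp$. One should also note that $\ip{v}{n}$ has mean zero under the $\ZGF$ with Dirichlet boundary conditions by the $n\mapsto-n$ symmetry of the measure, so no centering correction is needed. The finiteness of $D_\ve$ for every $\ve>0$ is immediate from finiteness of the Gamma function on $(0,\infty)$.
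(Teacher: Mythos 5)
Your proof is correct and follows essentially the same route as the paper: use the Gaussian domination bound on the moment generating function, apply the exponential Chebyshev (Chernoff) bound to obtain the sub-Gaussian tail estimate $\PP(|\ip{v}{n}|\geq u)\leq 2\re^{-\lambda u^2/(2\ip{v}{-\Delta^{-1}v})}$, and conclude by the layer-cake representation. You do spell out the intermediate Gamma-function computation and note explicitly the residual $2^{1/p}=2^{\ve}$ factor from the union bound (which neither you nor the paper carries into the stated $D_\ve$, though this is immaterial since only finiteness of $D_\ve$ is used), but the core argument is identical to the paper's.
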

\begin{proof}
The  Gaussian domination bound on the generating function \cref{eq:Gaussian domination bound} states
\be  \EE^{\ZGF,\calG^*}_{\lambda}\left[\mathrm{e}^{\langle n,v\rangle}\right]\leq\exp\left(+\frac{1}{2\lambda}\langle v,-\Delta^{-1} v\rangle\right)\qquad (v \perp \ker -\Delta )\,.
\ee
This, through the exponential Chebyshev stratagem yields the estimate
\be
\PP\left[\left\{|\ip{v}{n}|\geq t\right\}\right]\leq 2 \exp\left(-\frac {\lambda}{2}   t^2/ \ip{v}{-\Delta^{-1} v} \right)\,.
\ee
	 The claim then follows through the expectation value's layer-cake representation \,  $ \EE\left[X\right] = \int_0^\infty \PP\left[\Set{X \geq t}\right]\dif{t}$\,.
	\end{proof}

\section{A broader class of models}

	The results presented above for the Villain model and the $\ZGF$ are extended below (in \cref{sec:extension}) to a class of other dual pairs.
Of natural interest is the $XY$ model and its dual, which for reasons  explained below we call the integer-valued Bessel field ($\ZBF$).   However the proofs will be cast in more general terms, covering  $O(2)$ spin models whose dual height functions  can be presented as distributed under annealed Gaussian interactions.   Following are the definitions of the relevant terms.
		
	Throughout this section, $\G=(\calV, \calE)$ is a finite truncation of $\ZZ^2$. In \cref{sec:minorization for general graphs} the construction is extended to more general doubly-periodic planar graphs. 
		
	\subsection{Annealed  Gaussian interactions} \label{sec:annealed_etc}

	\begin{defn}[Annealed Gaussian interactions]\label{def:comp-mon-height-function}	
		A potential function $U: \Z\to \R$  is said to be an \emph{annealed Gaussian interaction} if there exists a finite non-negative Borel measure $\mu_U$ on $\left[0,\infty\right)$ such that for all $q\in \Z$
\begin{align} \exp\left(- U\left(q\right)\right) = \int_{\lambda \in \left[0,\infty\right)}\exp\left(-\frac12\lambda q^2\right)\dif{\mu_U(\lambda)}  \,.\label{eq:annealed_Gaussian}\end{align}
	\end{defn}

To explain the terminology let us note that  inserting \eqref{eq:annealed_Gaussian}  in \eqref{eq:GeneralPotential} right below one finds that the corresponding height function is governed by random Gaussian couplings.  However, while their initial distribution is that of independent random variables with the distribution $\mu_U$,
in the resulting joint distribution of $(n)$ and $(\lambda)$  their probability
is affected by the relative weight of the  partition function at the joint values of
$\{\lambda_{b}\}_{b\in \mathcal E^\ast}$.  In this sense the role played here by the random couplings fits the  statistic-mechanical notion of  an \emph{annealed disorder}.


 \begin{defn}[$\ZUF$ random height function] Given a  function
$U:\Z \to \R$ we denote by  $\ZUF$ the family of integer-valued random height functions associated with the finite graph $\G^\ast$ and the function $U$, with probability measure
	\begin{equation} \label{eq:GeneralPotential}  \PP^{\ZUF,\G^\ast}(n) = \frac{1}{Z^{\ZUF,\G^\ast}} \prod_{\Set{x,y}\in\calE^\ast}\exp\left(- U\left(n_x-n_y\right)\right) \,  \1[\left.n\right|_{\partial \calV^\ast}= 0] \,,
	\end{equation}
where $Z^{\ZUF,\G^\ast}$ is the normalizing factor (the finite graph's partition function, with the Dirichlet boundary conditions).
\end{defn}

To every height function of the $\ZUF$ form is associated a dual $O(2)$ spin model.  The two are related through  Fourier transform, as in the example spelled in \eqref{eq:MBF def} below.
	\begin{defn}[The dual spin model]
Given a $\ZUF$ on a graph $\G^\ast$, its dual spin model is a random spin function $\sigma_x=\mathrm{e}^{i\theta_x}$ defined over the vertex set of $\G$ with the probability distribution
\be \label{dual_O2}   \dif{\PP^{O(2),U,\calG}(\theta)} = \frac{1}{Z^{O(2),U,\calG}}\left(\prod_{\Set{x,y}\in\calE}G_U(\theta_x-\theta_y)\right)\dif{\theta} \,,
\ee
 where $\dif{\theta}$ is the Lebesgue measure on $\left[-\pi,\pi\right)^{|\calV|}$ (free boundary conditions)
and
\be \notag  G_U\left(\vf\right) := \sum_{m\in\ZZ}\exp\left(-\ii m \vf-U\left(m\right)\right)\,.
\ee 		
	\end{defn}

It should be noted that if $U$ is an annealed Gaussian interaction then $G_U$ is  strictly positive. Furthermore, under this correspondence also the spin-spin correlation duality relation  \eqref{eq:duality_correlations} persists. It should be noted that under this duality, $G_U$ corresponds to a "physical" spin model only if it is non-negative.

The class of annealed Gaussian interactions includes the afore-studied integer-restricted Gaussian field $\ZGF$, for which $\mu_U=\delta_\lambda$.   In this special case
under the duality relation  the spin parameter $\beta$  reappears as the inverse of the coupling strength $\lambda$  of the dual $\Z$GF.   In general the correspondence is less explicit.  	
	It is however still true that large spin-$\beta$ implies large fluctuations for the $\ZUF$, in the sense that the  corresponding probability distribution of $n_x$, conditioned on its neighbors,  flattens as $\beta\to\infty$.

\subsection{Divisibility and convexity}

In addition to the above characterization of the interaction  we shall be assuming $U$ is convex. For \cref{thm:gen_deloc on Z^2}, we would require also the following condition
\begin{defn}[divisibility] \label{def:divisibility} An interaction $U$ is said be divisible iff $e^{-U}$ is decomposable   through a continuous or discrete convolution, as either of the two following conditions:
\begin{subequations}	
\begin{align}
\exp\left(-U(a-b)\right) &=  \int_{c\in\RR} \exp(-\widetilde{U}(a-c)) \, \exp(-\widetilde{U}(c-b))\, \dif{c}\qquad(a,b\in\R), 
\label{U_conv_cont_a} \\
\exp\left(-U(a-b)\right) &=   \sum_{c\in\ZZ}\exp(-\widetilde{U}(a-c))  \, \exp(-\widetilde{U}(c-b))\qquad(a,b\in\ZZ)\,,
\label{U_conv_cont}
\end{align}
\end{subequations}
with $\widetilde{U}$   a convex annealed Gaussian interaction.   \\

\end{defn}

\begin{rem}
For  graphs of degree higher than $4$  we shall require a higher degree of divisibility.  For that purpose we  say that the interaction is \emph{r-fold divisible} if
\be
\label{eq:U r-fold conv}
\exp(-U) = [\exp(-\widetilde U) ] ^{*r}
\ee
where $*$ is either the continuum~\cref{U_conv_cont_a} or the discrete convolution operator as in~\cref{U_conv_cont}. \cref{def:divisibility} corresponds then to $r=2$ divisibility.
\end{rem}

\subsection{The $XY$  model and its dual, the $\ZBF$}

The conditions listed above are all applicable to the $XY$ model.
Its  partition function, which was mentioned already in \eqref{Z_XY}, is
\be  \label{Z_XY_2} Z^{XY,\calG}_\beta \equiv \int_{\theta:\calV\to[-\pi,\pi)}\dif{\theta}\prod_{\Set{x,y}\in\calE}\exp\left(\beta\cos\left(\theta_x-\theta_y\right)\right)
\ee
where integration is w.r.t. the Lebesgue measure on $[-\pi,\pi)^{|\calV|}$.

	The model's dual is obtained through the Fourier transform, by which
on each edge the function of $\vf = \theta_x-\theta_y$  is presentable as
		\begin{align} \exp\left(\beta\cos\left(\vf\right)\right)  &=
		\sum_{m\in \Z} \exp\left(-\ii\vf m\right) I_m(\beta)\,, \notag \\
		 I_m(\beta) &:=
		 \frac{1}{2\pi}\int_{\vf=-\pi}^{\pi}\exp\left(\beta\cos(\vf)+\ii\vf m\right)\dif{\vf}\,,\label{eq:MBF def}
\end{align}
$I_m(\beta)$ being the modified Bessel function.
Inserting this in \eqref{Z_XY_2} and integrating over the angles yields the dual representation of the partition function in terms of a sum over height functions $n:\calV^\ast\to\ZZ$ with the Bessel weights
 \begin{align}
  (2\pi)^{-|\mathcal E|}  Z^{\ZBF,\calG^\ast}_\beta =   \sum_{n:
\calV^\ast\to\ZZ:\left.n\right|_{\partial \calV^\ast} = 0}\prod_{\Set{x,y}\in\calE^\ast}I_{n_x-n_y}\left(\beta\right)
 \label{eq:partition function for the ZBF}\end{align}
We shall refer to $n:\calV^\ast\to\ZZ$ with the corresponding normalized probability measure as the integer-valued Bessel field, and denote it by $\ZBF$. \\

The next statement rests on the  observation, which was presented to the third author by A. Raoufi~\cite{Rao},  
that the Gibbs equilibrium measure of the $XY$ model
can be presented as the annealed distribution of a spin model with Villain interactions of random strengths, whose initial distribution is given by an i.i.d. process.  For the completeness of the presentation, we reproduce here Raoufi's argument.

	\begin{lem}\label{lem:ZBF is a mixture of Gaussians} The integer-valued Bessel field (the dual of the $XY$ model) has an annealed Gaussian interaction.
	\end{lem}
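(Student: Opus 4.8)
\emph{The plan is to} exhibit the modified Bessel function $I_m(\beta)$ explicitly as a superposition of Gaussian weights $\exp(-\tfrac12\lambda m^2)$ over a nonnegative measure $\mu$ on $[0,\infty)$, thereby verifying \cref{def:comp-mon-height-function} directly. The cleanest route is through the integral representation underlying Raoufi's observation: write the $XY$ edge weight $\exp(\beta\cos\vf)$ itself as an average of Villain edge weights. Concretely, one seeks a probability measure $\nu_\beta$ on $(0,\infty)$ (in the parameter $\lambda$) and a normalization such that
\be
\exp\left(\beta\cos\vf\right) \;=\; c(\beta)\int_0^\infty \left(\sum_{m\in\ZZ}\exp\left(-\tfrac{\lambda}{2}(\vf+2\pi m)^2\right)\right)\dif\nu_\beta(\lambda)\,,
\ee
valid for all $\vf\in\RR$. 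Since the Villain weight in parentheses is, by Poisson summation, proportional to $\sum_{m\in\ZZ}\exp(-m^2/(2\lambda))\exp(\ii m\vf)$, matching Fourier coefficients in $\vf$ against $\exp(\beta\cos\vf)=\sum_m I_m(\beta)\exp(\ii m\vf)$ reduces the identity to the scalar statement
\be \label{eq:raoufi-scalar}
I_m(\beta) \;=\; \int_0^\infty \exp\left(-\tfrac12\lambda^{-1}\, 0\right)\dots
\ee
— more precisely, after the change of variable $\lambda\mapsto 1/\lambda$, to expressing $I_m(\beta)$ as $\int_0^\infty \exp(-\tfrac12\lambda m^2)\dif\mu_U(\lambda)$ for a suitable finite nonnegative $\mu_U$. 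This is exactly the form demanded by \cref{eq:annealed_Gaussian}.

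\emph{The key steps, in order, are:} (i) recall the elementary Fourier identity $\exp(\beta\cos\vf)=\sum_{m}I_m(\beta)e^{\ii m\vf}$ from \cref{eq:MBF def}; (ii) recall the Jacobi theta / Poisson-summation identity expressing the Villain weight $\sum_m \exp(-\tfrac\beta2(\vf+2\pi m)^2)$ as a constant times $\sum_m \exp(-m^2/(2\beta))e^{\ii m\vf}$, so that a Villain interaction of stiffness $\lambda$ has $m$-th Fourier coefficient proportional to $e^{-m^2/(2\lambda)}$; (iii) therefore, writing the $XY$ weight as a $\nu$-average of Villain weights is equivalent, coefficient by coefficient, to writing $I_m(\beta)$ (up to an $m$-independent constant) as $\int e^{-m^2/(2\lambda)}\dif\nu(\lambda)$; (iv) substitute $t=1/\lambda$ to land on the template $\exp(-U(m)) = \int_0^\infty e^{-\tfrac12 t m^2}\dif\mu_U(t)$, and check that the resulting $\mu_U$ is a genuine finite nonnegative Borel measure on $[0,\infty)$. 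Step (iv) is where one invokes the classical fact that the sequence $\big(I_m(\beta)/I_0(\beta)\big)_{m\ge 0}$ is a moment sequence of the required Gaussian-mixture type — equivalently, that $\varphi(s):=I_{\sqrt{s}\,}(\beta)$ (suitably interpreted) is completely monotone, or directly that the known integral representation $I_m(\beta)=\frac{1}{\pi}\int_0^\pi e^{\beta\cos\theta}\cos(m\theta)\,d\theta$ can be massaged into Gaussian-mixture form. Alternatively, and perhaps most transparently, one can present Raoufi's probabilistic argument verbatim: a rate-$\beta$ Brownian motion on the circle run for unit time has transition density given by the Villain weight with stiffness $1/t$ after time $t$; randomizing the elapsed (local) time according to the appropriate law and projecting yields the $XY$ Gibbs weight, and the law of that elapsed time is precisely $\mu_U$.

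\emph{The main obstacle} is establishing nonnegativity of the mixing measure $\mu_U$ — i.e. that the formal inversion producing $\mu_U$ from the $XY$ weight really yields a \emph{positive} measure and not a signed one. Pointwise positivity of $G_U$ or of $e^{-U}$ is automatic and not enough; what is needed is the stronger structural statement that $e^{-U}$ is a Gaussian mixture. I expect this to follow cleanly from Raoufi's Brownian-motion construction (which manifestly produces a probability measure as the law of a random time), so the write-up should lean on that argument rather than on an abstract complete-monotonicity verification; the remaining checks (finiteness of $\mu_U$, measurability, and that the normalization matches) are then routine. A secondary, purely bookkeeping point is to confirm that the constant $c(\beta)$ and the Dirichlet-boundary normalizations are consistent so that the dual of the $XY$ model defined by \cref{eq:partition function for the ZBF} coincides with the $\ZUF$ of \cref{eq:GeneralPotential} for this $U$; this is immediate once the edge-weight identity is in place.
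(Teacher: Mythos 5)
Your outline correctly identifies both the reduction (via Fourier/Poisson summation, that writing the edge weight $e^{\beta\cos\vf}$ as an average of Villain weights is the same as writing $I_m(\beta)$ as a mixture of $e^{-\frac12\lambda m^2}$) and the genuine difficulty (nonnegativity of the mixing measure), and you correctly point to Raoufi's probabilistic construction as the way to settle it. In that sense the strategy matches the paper's.

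However, your paraphrase of Raoufi's argument omits its actual content, and what you do say is somewhat off. The starting observation is not about Brownian motion \emph{on the circle}: it is that $e^{\beta\cos\vf}$ is proportional to the transition density of Brownian motion \emph{in the plane} at time $1/\beta$, between two points on the unit circle (since $|e^{i\vf_u}-e^{i\vf_v}|^2 = 2 - 2\cos(\vf_u-\vf_v)$). Writing that planar Brownian motion in polar coordinates $\gamma(t) = B(t)\,e^{i\vf_\gamma(t)}$, the radius $B$ is a $2$D Bessel process, and — this is the heart of the argument — conditionally on the radial path, the cumulative angle $\vf_\gamma$ is a Brownian motion time-changed by $\eta(t) = \int_0^t B(s)^{-2}\,\mathrm{d}s$. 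Conditioning further on $B(1/\beta)=1$, the angle modulo $2\pi$ is therefore exactly a Villain random variable with random stiffness governed by $\eta(1/\beta)$, and the law of $\eta(1/\beta)$ under the bridge conditioning \emph{is} the mixing measure. Your phrase ``randomizing the elapsed (local) time according to the appropriate law'' elides precisely this source of the randomization; without the Bessel-process decomposition there is no reason such a law should exist or be a genuine finite nonnegative measure. So while the proposal names the right approach and correctly isolates the key step, it does not actually contain the proof of that step — you would need to fill in the polar decomposition and the resulting random time change, which is exactly what the paper's proof does. (Your alternative, complete monotonicity of $s\mapsto I_{\sqrt s}(\beta)$, is a real classical fact — it is the Hartman--Watson distribution — but it is not obviously so without essentially the same probabilistic input, and you rightly recommend not leaning on it.)
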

	\begin{proof}
	
The proof starts with the observation that the $XY$ Gibbs factor $\exp\left(\beta\cos\left(\vf_v-\vf_u \right)\right) $
for a pair of spins with values  $\mathrm{e}^{\ii\vf_u}$ and $\mathrm{e}^{\ii\vf_v}$,
coincides with the transition amplitude of a Brownian motion in $\R^2$, between a pair of points which happen to be on the unit circle.
Since in this case the transition amplitude depends only on the angular difference, it is natural to  track  the Browning motion
$\left(\gamma(t)\right)_{t\geq0}$  in polar  coordinates, as 
 $\gamma(t) = B(t) \mathrm{e}^{\ii\vf_\gamma(t)}\,. $
The modulus  $B(t) := \norm{\gamma(t)}$ has the distribution of the two-dimensional Bessel process starting at $B(0)=1$ and conditioned on
 $B(1/\beta)=1$.
Conditioned on the process $B(t)$, the accumulated angle $\vf_\gamma(t)$ forms a continuous martingale, whose variance
grows at the rate
\be  \label{random_lambda}
\eta(t) := \int_0^{t}\frac{1}{B(t')^2}\dif{t'}\,.
\ee
(cf. \cite[Corollary 18.7]{Kal02}).

Hence, denoting $\eta(1/\beta) = 1/\lambda$,
\be \label{Bessel_lambda}
\exp\left(\beta\cos\left(\vf\right)\right) =\mathrm{e}^\beta   \int_0^\infty {\sqrt {\frac {\lambda}{2\pi}}}
\sum_{m\in \Z} \exp\left( -\frac {1}{2\lambda} (\vf-2\pi m)^2 \right) \, \mu(d\lambda)
\ee
with $\mu $ the  probability distribution of $\eta(1/\beta)^{-1}$ conditioned on $B(1/\beta) =1$.

The assertion made for the  $\ZBF$ random height function follows from \eqref{Bessel_lambda}  through the
Fourier transform by which the two models are related.
\end{proof}


\begin{lem}
The potential corresponding to the modified Bessel function, i.e.
$$  U(m) = -\log\left(\frac{I_m(\beta)}{I_0(\beta)}\right)\,,  $$
is convex and divisible.
\end{lem}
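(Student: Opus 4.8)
The plan is to split the statement into its two halves and reduce both to a single classical input, the log-concavity of the sequence $m\mapsto I_m(\beta)$. First note that $U$ is an even function on $\ZZ$, since $I_{-m}(\beta)=I_m(\beta)$; hence convexity of $U$ on $\ZZ$ amounts to the second-difference inequality $2U(m)\le U(m-1)+U(m+1)$ for every $m\ge 1$ together with the single inequality $U(1)\ge U(0)=0$. The latter is immediate: $I_1(\beta)=\frac1\pi\int_0^\pi \mathrm{e}^{\beta\cos\theta}\cos\theta\,\mathrm{d}\theta<\frac1\pi\int_0^\pi \mathrm{e}^{\beta\cos\theta}\,\mathrm{d}\theta=I_0(\beta)$. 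The former, after exponentiating, is precisely the Tur\'an-type inequality $I_{m-1}(\beta)\,I_{m+1}(\beta)\le I_m(\beta)^2$ for $m\ge 1$, i.e.\ log-concavity of $(I_m(\beta))_{m\ge 0}$.

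To obtain this log-concavity I would use the generating-function identity $\sum_{m\in\ZZ}I_m(\beta)z^m=\mathrm{e}^{\frac\beta2(z+z^{-1})}=\mathrm{e}^{\frac\beta2 z}\,\mathrm{e}^{\frac\beta2 z^{-1}}$ (valid for $z\neq 0$ and consistent with the definition of $I_m$ in~\eqref{eq:MBF def}). The coefficient sequence of $\mathrm{e}^{\frac\beta2 z}$, namely $\big((\beta/2)^n/n!\big)_{n\ge 0}$ extended by $0$ to $n<0$, is a P\'olya frequency sequence, as is its reflection, the coefficient sequence of $\mathrm{e}^{\frac\beta2 z^{-1}}$; since the coefficient sequence of a product of two such generating functions is the discrete convolution of the two sequences, and convolution preserves the P\'olya frequency property, $(I_m(\beta))_{m\in\ZZ}$ is itself a P\'olya frequency sequence. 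In particular its bi-infinite Toeplitz matrix is totally positive, and the $2\times 2$ minors on consecutive rows and columns give $I_m(\beta)^2-I_{m-1}(\beta)I_{m+1}(\beta)\ge 0$ for all $m$; equivalently, one may simply invoke the classical Tur\'an inequality for modified Bessel functions. This proves that $U$ is convex, and the same argument with $\beta$ replaced by an arbitrary $\beta'>0$ shows that $U_{\beta'}(m):=-\log\!\big(I_m(\beta')/I_0(\beta')\big)$ is convex for every $\beta'>0$.

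For divisibility I would combine this with \cref{lem:ZBF is a mixture of Gaussians}, which guarantees that $U_{\beta'}$ is an annealed Gaussian interaction for every $\beta'>0$. Fix an integer $r\ge 2$ and set $\widetilde U:=U_{\beta/r}+\big(\tfrac1r\log I_0(\beta)-\log I_0(\beta/r)\big)$, so that $\mathrm{e}^{-\widetilde U(m)}=I_m(\beta/r)\,I_0(\beta)^{-1/r}$. Adding a constant changes neither convexity nor the annealed Gaussian property (it only multiplies the measure $\mu_{U_{\beta/r}}$ by a positive constant), so $\widetilde U$ is a convex annealed Gaussian interaction. Raising the generating-function identity to the $r$-th power, $\big(\mathrm{e}^{\frac{\beta/r}{2}(z+z^{-1})}\big)^{r}=\mathrm{e}^{\frac\beta2(z+z^{-1})}$, and comparing coefficients of $z^k$ yields the addition theorem $\sum_{j_1+\cdots+j_r=k}I_{j_1}(\beta/r)\cdots I_{j_r}(\beta/r)=I_k(\beta)$; hence the $r$-fold discrete convolution of $\mathrm{e}^{-\widetilde U}$ with itself equals $I_0(\beta)^{-1}\sum_{j_1+\cdots+j_r=k}I_{j_1}(\beta/r)\cdots I_{j_r}(\beta/r)=I_k(\beta)/I_0(\beta)=\mathrm{e}^{-U(k)}$. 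This is exactly $r$-fold divisibility via the discrete convolution operator; taking $r=2$ gives divisibility in the sense of \cref{def:divisibility}.

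The only non-formal ingredient in this scheme is the log-concavity (Tur\'an-type inequality) of the modified Bessel functions; everything else is bookkeeping with the single identity $\sum_m I_m(\beta)z^m=\mathrm{e}^{\frac\beta2(z+z^{-1})}$ and the annealed-Gaussian property already supplied by \cref{lem:ZBF is a mixture of Gaussians}. Accordingly I expect the main (and only mildly delicate) point to be the verification that $\big((\beta/2)^n/n!\big)_n$ is a P\'olya frequency sequence and that this property survives convolution — or, if one prefers, locating a clean reference for the inequality $I_{m-1}(\beta)I_{m+1}(\beta)\le I_m(\beta)^2$.
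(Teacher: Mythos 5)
Your proposal is correct and rests on the same two ingredients as the paper's proof: convexity of $U$ is equivalent to the Tur\'an-type log-concavity $I_{m-1}(\beta)I_{m+1}(\beta)\le I_m(\beta)^2$ (together with $I_1\le I_0$, which is itself the $m=0$ instance), and divisibility is the Bessel addition theorem read off the generating-function factorization $\mathrm{e}^{\frac\beta2(z+z^{-1})}=\bigl(\mathrm{e}^{\frac{\beta/r}{2}(z+z^{-1})}\bigr)^r$. The only real difference is presentational: the paper simply cites the Tur\'an inequality from \cite[(1.9)]{Thir51}, whereas you give a self-contained derivation of it via P\'olya frequency sequences; the divisibility argument (including the bookkeeping that $\widetilde U$ is convex and annealed Gaussian, via \cref{lem:ZBF is a mixture of Gaussians}) matches the paper's.
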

		\begin{proof}
Convexity of $U$ is established in \cite[(1.9)]{Thir51}).
Divisibility in the sense of \eqref{U_conv_cont} is  valid in this case since
			\begin{align}\label{eq:MBF id} I_{n-m}(\beta_1+\beta_2)=\sum_{l\in\ZZ}I_{n-l}(\beta_1)I_{l-m}(\beta_2)\qquad(n,m\in\ZZ;\beta_1,\beta_2>0)\,.  \end{align}
which follows from the convolution theorem  for the Fourier series and the factorization
 $$ \mathrm{e}^{(\beta_1+\beta_2)\cos(\vf)} = \mathrm{e}^{\beta_1\cos(\vf)}\mathrm{e}^{\beta_2\cos(\vf)}\qquad(\vf\in\RR)\,.$$
		\end{proof}

\subsection{Power-law  interactions}

Let us mention in passing that within the  class of annealed Gaussian interactions are also
\be
 U_\alpha(q) = \lambda |q|^\alpha
\ee
with $\alpha\in(0,2)$.

The proof, based on
Bernstein's theorem on monotone functions, is presented in \cref{App_power_potentials}.
One may note that these interactions are convex only for $\alpha\geq1$, and
satisfy \cref{def:divisibility} only for $\alpha> 1$.

\section{The general version of our main results}  \label{sec:extension}

Using the above terminology, we now extend the  three results which were presented initially in the context of the Villain-- $\ZGF$ dual pair to the more general class of models, including the $XY$--$\ZBF$  pair.
The extensions are presented in the order of their earlier versions.

		\subsection{Depinning in general $\Z$UF} \label{sec:gen_depinning}
		
\begin{thm}[Generalization of \cref{thm:deloc on Z^2}]\label{thm:gen_deloc on Z^2}
Let $\calG^\ast_\infty$ be a doubly-periodic tame planar graph of degree $d$,
with an exhausting sequence of finite subgraphs $\Set{\calG_L^\ast}_L$.
Let   $U$ be a convex annealed Gaussian interaction for which $e^{-U}$ is $r$-fold divisible into convolutions of $e^{-\widetilde U}$,  in the sense of   \cref{eq:U r-fold conv},  with
 $	r := 2\ceil{\log_2(d)}$.
If $\widetilde{U}$ satisfies
 \be \label{gen_deloc_cond}
\exp\left(-\frac{d}{2}\, \left(\widetilde{U}(1) -  \widetilde{U}(0)]\right)\right) \geq \frac{1}{2}\,,
\ee
then the $\ZUF$ depins, in the  sense that
\be\label{deepen ZUF}
\lim_{L\to \infty} \PP^{\ZUF,\calG_L}(|n_x|\le t)= 0
\ee
for each $x\in \mathcal V_\infty$ and $t<\infty$.
\end{thm}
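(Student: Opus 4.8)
\textbf{Proof plan for \cref{thm:gen_deloc on Z^2}.}
The strategy mirrors the proof of \cref{thm:deloc on Z^2}, replacing the specific Gaussian interaction by a general convex annealed Gaussian interaction and tracking how the surgeries interact with the $r$-fold divisibility hypothesis. First I would recall that, by Jensen's inequality applied to the representation \eqref{eq:annealed_Gaussian}, a convex annealed Gaussian interaction is automatically such that the $\ZUF$ satisfies a Gaussian-domination estimate for its moment-generating function, exactly as in \cref{rem:Gaussian domination bound}: writing $\exp(-U(q)) = \int \exp(-\tfrac12\lambda q^2)\,\dif\mu_U(\lambda)$, the annealed field is a mixture of genuine lattice Gaussian fields, and the monotonicity theory of Regev and Stephens-Davidowitz (\cref{sub_latt_monotonicity} and \cref{prop:monotonicity in covariance matrix}) continues to apply to each component and hence, after averaging, to the mixture. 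This gives the analogue of the key tool used throughout Section 2: passing to a sublattice of the configuration space, or increasing the quadratic form, can only decrease the fluctuations $\EE[\ip{v}{n}^2]$.

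Next I would run the surgery exactly as in the $\Z^2$ case, but iterated. The basic move is: split each edge of $\calG^\ast$ by inserting a mediating vertex, using the divisibility identity \eqref{U_conv_cont_a} with $\widetilde U$ in place of $U$; this does not change the law of $n$ on the original vertices. Then constrain the new mid-edge variable to $\ZZ$ (or a lattice $\ve\ZZ$ and take $\ve\to 0$ as in Step 2 of the proof of \cref{thm:deloc on Z^2}), which by \cref{sub_latt_monotonicity} only lowers the fluctuations. Each such round replaces interaction $U$ by $\widetilde U$ and roughly halves the degree by merging appropriate pairs of the newly created vertices (Step 3); after $\ceil{\log_2 d}$ rounds of splitting-and-merging we reach a graph of maximal degree $3$, and since each round consumes one factor in a convolution decomposition but we split every edge into two at each merging step, we need $e^{-U}$ to be $2\ceil{\log_2 d}$-fold divisible into convolutions of $e^{-\widetilde U}$ — which is precisely the hypothesis $r = 2\ceil{\log_2 d}$. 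The upshot is a $\Z\widetilde U\mathrm F$ model on a doubly-periodic planar cubic graph whose height fluctuations at the origin are a lower bound for those of the original $\ZUF$ on $\calG^\ast_\infty$.

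It then remains to apply Lammers' delocalization theorem \cite{Lam21} to the resulting cubic-graph model. Lammers' criterion for delocalization is an inequality on the single-edge increment of the interaction; in the present normalization it reads precisely $\exp(-\tfrac{d}{2}(\widetilde U(1) - \widetilde U(0))) \geq \tfrac12$ for a degree-$d$ graph, where here $d$ is the degree of the \emph{original} graph since the merging step multiplies the effective coupling. (Convexity of $\widetilde U$ is exactly what is needed for Lammers' hypotheses, and it is inherited because convexity is preserved under the convolutions in \cref{def:divisibility}; this is why convexity of $U$ alone is not enough and convexity of $\widetilde U$ is assumed.) Thus \eqref{gen_deloc_cond} guarantees that the cubic-graph model delocalizes, hence so does the original $\ZUF$: by Sheffield's log-concavity \cite[Section 8.2]{Sheff05} — which holds for any such height function with a convex potential — divergence of the variance upgrades to the support-drift statement \eqref{deepen ZUF}, and the finite-volume monotonicity furnished by the RSD inequalities lets us pass to the infinite-volume exhaustion $\Set{\calG_L}_L$.

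\textbf{Main obstacle.} The delicate point is bookkeeping the coupling constants and degrees through the iterated surgery on a general doubly-periodic planar graph of degree $d$ (rather than the clean $\Z^2$ case), so that after $\ceil{\log_2 d}$ rounds one lands on a genuinely cubic doubly-periodic planar graph with the effective interaction $\widetilde U$ and an effective degree-$d$ Lammers parameter; getting the exact constant $r = 2\ceil{\log_2 d}$ and the exact exponent $d/2$ in \eqref{gen_deloc_cond} right is where the care is needed. This is carried out in detail in \cref{sec:minorization for general graphs}.
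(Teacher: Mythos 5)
Your overall architecture — iterated split-and-merge surgeries, monotonicity of fluctuations under sublattice restriction, and Lammers' cubic-graph delocalization criterion — matches the paper's proof. However, there is a genuine gap at the critical step where you write that the RSD monotonicity ``continues to apply to each component and hence, after averaging, to the mixture.'' This is exactly the naive argument that does \emph{not} work in the annealed setting, and the paper treats this as the one new ingredient requiring careful proof (see \cref{lem:MU} in \cref{App:depinning_ZUF}). The difficulty is that when you restrict from $\calL$ to a sublattice $\calM$, the mixture weights $Z_{A_\lambda,\calM}\,\dif{\mu_U^{\otimes\calE^\ast}}(\lambda)$ are \emph{not} the same as $Z_{A_\lambda,\calL}\,\dif{\mu_U^{\otimes\calE^\ast}}(\lambda)$: the posterior distribution on the random couplings $\lambda$ changes with the lattice. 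So even though $\MM_{A_\lambda,\calM}[v] \le \MM_{A_\lambda,\calL}[v]$ holds pointwise in $\lambda$, integrating against two \emph{different} normalized weights does not give $\MM^{\ZUF,\calM}[v] \le \MM^{\ZUF,\calL}[v]$ by a one-line averaging. The paper's fix is to introduce the tilted disorder measure $\dif{\widetilde{\PP}^{\ZUF,\calG^\ast,\calM}}(\lambda) \propto Z_{A_\lambda,\calM}\,\dif{\mu_U^{\otimes\calE^\ast}}(\lambda)$, prove that this measure satisfies FKG (via a second-order test in the style of Cassandro--Chayes), show that both $\lambda \mapsto Z_{A_\lambda,\calL}/Z_{A_\lambda,\calM}$ and $\lambda \mapsto \MM_{A_\lambda,\calM}[v]$ are decreasing (the former via \cite[Prop.~4.2]{RegDav17}, the latter via \cref{prop:monotonicity in covariance matrix}), and then conclude by positive association. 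Your plan needs this lemma or something equivalent; without it the surgery steps do not yield the claimed monotonicity of fluctuations for $\ZUF$.

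A secondary omission: Lammers' theorem is stated as non-existence of an infinite-volume gradient Gibbs measure on the minorizing cubic graph, not directly as variance divergence along a chosen exhaustion. For $\ZGF$ the bridge from ``pinned'' to ``infinite-volume Gibbs measure exists'' is classical via the absolute-value FKG property, but for general convex annealed $U$ that route is unavailable, and the paper supplies a separate argument (the Proposition in \cref{sec:Pinning}, which again leans on \cref{lem:MU} and on the uniform gradient-moment bound of \cref{lem:large_grad_bound}, plus log-concavity and Carleman's condition) to show that boundedness of the variances along the exhaustion forces convergence to an automorphism-invariant infinite-volume measure. Your invocation of Sheffield's log-concavity handles the upgrade from variance divergence to the support-drift statement \eqref{deepen ZUF}, but you still need the Gibbs-measure bridge to extract variance divergence from Lammers' theorem in the first place.
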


Like  the corresponding statement for \ZGFs/, the results follows from the combination of Lammers's theorem for graphs of maximal degree $3$ and the construction and proof of minorization by degree 3 graphs described in \cref{sec:minorization for general graphs}. The  explicit condition \eqref{gen_deloc_cond},  stated here mainly for the purpose of concreteness, can be improved by employing  uneven decompositions of the coupling, as was done for the bound on $\lambda_c(\ZZ^2)$ stated in \cref{thm:deloc on Z^2}.

The relevant minorization statement which yields \cref{thm:gen_deloc on Z^2} is:

\begin{lem}
\label{thm:depinning for ZUF}
Under the assumptions of \cref{thm:gen_deloc on Z^2}
	there exists a doubly-periodic tame planar \emph{multi} graph  $\mathcal{F}^\ast$ of maximal degree three, with vertex set containing $\calV^\ast_\infty$, and an exhausting sequence $\Set{\calF_L^\ast}_L$ such that for each $L$
\be
\EE^{U,\calG_L^\ast}\left[\left(n_x-n_y\right)^2\right] \geq \EE^{\widetilde{U},\mathcal{F}_L^\ast}\left[\left(n_x-n_y\right)^2\right]\qquad(x,y\in\calV^\ast_\infty)\,.
\ee
\end{lem}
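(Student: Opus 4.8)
\emph{Strategy.} The plan is to run, at the level of a general annealed Gaussian interaction, the same three-step reduction used in the proof of~\cref{thm:deloc on Z^2}: (i) subdivide each edge into a path carrying the interaction $\widetilde{U}$; (ii) convert the newly inserted real-valued variables (if any) to $\ZZ$-valued ones; (iii) force prescribed groups of the inserted variables to coincide, bringing the maximal degree down to three. Step~(i) will be arranged so as to leave the law of $n$ on $\calV_\infty^\ast$ unchanged — the original vertices are never merged, hence persist in $\calF^\ast$ — while Steps~(ii) and~(iii) are passages to sublattices of the configuration lattice and therefore, by the Regev--Stephens-Davidowitz monotonicity (\cref{sub_latt_monotonicity}), can only decrease $\EE\left[(n_x-n_y)^2\right]$. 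Carrying out the whole procedure on each $\calG_L^\ast$ in a translation-covariant fashion produces the exhausting sequence $\{\calF_L^\ast\}_L$ of subgraphs of the degree-three multigraph $\calF^\ast$.

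\emph{Step (i): subdivision and the value of $r$.} By the $r$-fold divisibility $\exp(-U)=[\exp(-\widetilde{U})]^{*r}$ with $r=2\ceil{\log_2 d}$, replace each edge $\{x,y\}\in\calE_L^\ast$ by a path of $r$ edges, each carrying $\widetilde{U}$, through $r-1$ intermediate vertices; marginalising these out restores $\exp(-U(n_x-n_y))$ exactly, so the joint law of $n$ on $\calV_\infty^\ast$ — in particular $\EE\left[(n_x-n_y)^2\right]$ — is untouched. If $*$ is the continuum convolution~\eqref{U_conv_cont_a} the intermediates are $\RR$-valued; if it is the discrete one~\eqref{U_conv_cont} they are $\ZZ$-valued and Step~(ii) is vacuous. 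The value $r=2\ceil{\log_2 d}$ is the right one because it is exactly the length needed to thread the subdivided edge through a balanced binary tree of degree-$\le 3$ vertices, of depth $\ceil{\log_2 d}$, planted at each of its two endpoints; specifying these trees, the identification pattern, and an embedding compatible with the $\ZZ^2$-action and with the planarity of $\calG_\infty^\ast$ is the content of~\cref{sec:minorization for general graphs}, and yields an output $\calF^\ast$ that is again doubly-periodic, tame and planar, with maximal degree three.

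\emph{Steps (ii) and (iii): integerisation and merging.} In the continuum case one writes each $\RR$-valued intermediate as the $\ve\to0$ limit of an $\ve\ZZ$-valued one and, exactly as in Step~2 of the proof of~\cref{thm:deloc on Z^2}, applies~\cref{sub_latt_monotonicity} along the nested lattices obtained for $\ve_k=2^{-k}$; this shows that passing to $\ZZ$-valued intermediates lowers $\EE\left[(n_x-n_y)^2\right]$. The merging step forces, at each vertex of $\calG_L^\ast$, the prescribed groups of intermediate variables to a common value — once more a restriction to a sublattice of the configuration lattice — so~\cref{sub_latt_monotonicity} applies again. As in the hexagonal reduction of $\ZZ^2$ in~\cref{thm:deloc on Z^2}, parallel $\widetilde{U}$-edges created by such identifications are retained as genuine multi-edges (in the Gaussian representation two parallel couplings simply add), so that the resulting model is precisely the integer-valued height function with interaction $\widetilde{U}$ on the degree-three multigraph $\calF_L^\ast$. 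Chaining the resulting (in)equalities gives $\EE^{U,\calG_L^\ast}\left[(n_x-n_y)^2\right]\geq\EE^{\widetilde{U},\calF_L^\ast}\left[(n_x-n_y)^2\right]$ for $x,y\in\calV_\infty^\ast$.

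\emph{Main obstacle.} The genuinely new point, relative to the $\ZGF$ case, is that $\widetilde{U}$ (like $U$) is an \emph{annealed} Gaussian interaction — a mixture of Gaussians — whereas \cref{sub_latt_monotonicity,prop:monotonicity in covariance matrix} concern a fixed quadratic form. I would resolve this by conditioning on the edge couplings $\{\lambda_b\}$ distributed according to $\mu_{\widetilde{U}}$: conditionally the field is an honest lattice Gaussian, so the RSD inequalities give, for each $\lambda$, the comparison $\MM_\calM[v]\le\MM_\calL[v]$ of conditional moment-generating functions; what remains is to control the change in the \emph{annealed} law of $\{\lambda_b\}$ caused by replacing the lattice $\calL$ by a sublattice $\calM$. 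The annealed coupling density is proportional to $Z_\calL(\lambda)\,d\mu_{\widetilde{U}}(\lambda)$, and $\log Z_\calL$ is supermodular in $\lambda$ because $\partial_{\lambda_b}\partial_{\lambda_{b'}}\log Z_\calL=\tfrac14\operatorname{Cov}\!\big((\nabla n)_b^2,(\nabla n)_{b'}^2\big)=\tfrac12\operatorname{Cov}\!\big((\nabla n)_b,(\nabla n)_{b'}\big)^2\geq 0$ for the Gaussian field; hence this density satisfies the FKG lattice condition. Since $\lambda\mapsto\MM_\calL[v]$ is coordinatewise decreasing (by~\cref{prop:monotonicity in covariance matrix}) while $\lambda\mapsto Z_\calM(\lambda)/Z_\calL(\lambda)$ is coordinatewise increasing (itself an instance of~\cref{sub_latt_monotonicity} in gradient directions), FKG yields that the sublattice restriction still lowers the annealed moment-generating function, and hence the second moment of $(n_x-n_y)$. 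Alternatively, one verifies directly that the proof of the RSD inequalities recalled in~\cref{sec:RSD monotonicity theory} survives averaging over $\lambda$. I expect the integerisation/merging analysis to be routine given~\cref{thm:deloc on Z^2}, so the real work is this annealing point together with the purely combinatorial-topological bookkeeping of the binary-tree surgery — making it periodic, planar, and of uniform edge-length $r$ — which is where $r=2\ceil{\log_2 d}$ is pinned down and which is deferred to~\cref{sec:minorization for general graphs}.
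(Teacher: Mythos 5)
Your overall strategy matches the paper's: subdivide each edge into an $r$-step $\widetilde U$-path using $r$-fold divisibility (arranged so the original vertices of $\calV_\infty^\ast$ persist and the marginal law on them is unchanged), then pass to sublattices of the configuration lattice — first to discretize any $\RR$-valued intermediates, then to merge groups of them — using the Regev--Stephens-Davidowitz sublattice monotonicity at each step. This is the construction in \cref{sec:minorization for general graphs}, and you correctly identify the one non-routine point: extending \cref{sub_latt_monotonicity} from a fixed quadratic form to an \emph{annealed} Gaussian interaction, which the paper handles by \cref{lem:MU} (\cref{App:depinning_ZUF}).

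Your resolution of that point, however, has a concrete gap. To establish FKG for the annealed coupling density you write
\begin{equation*}
\partial_{\lambda_b}\partial_{\lambda_{b'}}\log Z_\calL \;=\; \tfrac14\operatorname{Cov}\!\big((\nabla n)_b^2,(\nabla n)_{b'}^2\big)\;=\;\tfrac12\operatorname{Cov}\!\big((\nabla n)_b,(\nabla n)_{b'}\big)^2\;\ge 0 \,,
\end{equation*}
invoking the Wick-type identity $\operatorname{Cov}(X^2,Y^2)=2\operatorname{Cov}(X,Y)^2$. That identity holds for jointly Gaussian random variables in the usual (continuum, $\RR^k$-supported) sense, but the conditional field here is a \emph{lattice} Gaussian, supported on $\calL=L\ZZ^k$, and Wick's theorem fails for it — already in one variable the discrete Gaussian has $\EE[n^4]\neq 3\,\EE[n^2]^2$ in general. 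So the middle equality is false, and you have not actually proved the nonnegativity of $\operatorname{Cov}((\nabla n)_b^2,(\nabla n)_{b'}^2)$, which is precisely the FKG lattice (supermodularity) condition needed. This is where the work is: the paper proves the requisite positive-association property in \cref{lem:MU} following \cite{CaCha98}, and it is not a corollary of the Gaussian Wick calculus. Your two other monotonicity claims — $\lambda\mapsto\MM_{A_\lambda,\calL}[v]$ decreasing from \cref{prop:monotonicity in covariance matrix}, and $\lambda\mapsto Z_\calM/Z_\calL$ increasing from \cref{sub_latt_monotonicity} applied to gradients — are correct. Your parenthetical alternative ("verify directly that the RSD proof survives averaging over $\lambda$") is not carried out and, as you yourself note, the annealed weights $Z_\calL(\lambda)\,d\mu(\lambda)$ and $Z_\calM(\lambda)\,d\mu(\lambda)$ are genuinely different measures, so the fixed-$\lambda$ inequalities cannot simply be integrated; either cite \cref{lem:MU} or supply an honest proof of the gradient-square covariance positivity for the discrete Gaussian.
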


For $\ZZ^2$, the procedure by which  we obtain $\mathcal{F}_L^\ast$  out of $\calG_L^\ast$ follows the process described in \cref{sec:depinning beyond degree 3}.
Its basic step is the reduction by one of the degree of a vertex $x$ though the replacement of two of its incident edges by a single one
which links it to a new site of degree $3$.  The interaction between $x$ and the new site is set to  $\exp(-2\widetilde{U}(\cdot))$, and that of the new sites with its two other neighbors is set to $\exp(-\widetilde{U}(\cdot))$.   The operation is  local, and in a proper iteration of this construction the
doubly-periodicity of the original graph is inherited by the resulting one.
(Unlike in the simpler case of \ZGFs/, we omit here the  improvement which may be attainable through uneven splitting of $U$ in the first step.)  The description of the procedure for  more general graphs is presented in   \cref{sec:minorization for general graphs}.

The essential point here is that
in each step of the construction the expectation value of $(n_x-n_y)^2$ can only decrease.
Except for one ingredient, the proof follows step by step the arguments laid in the proof of the special case presented in \cref{thm:deloc on Z^2}.   The one change which needs to be made is to
adapt the Regev Stephens-Davidowitz monotonicity of the moment generating function in its dependence on  the lattice structure (\cref{sub_latt_monotonicity}), to systems with \emph{annealed}  Gaussian interactions.
The corresponding statement is \cref{lem:MU}, stated and proved in \cref{App:depinning_ZUF}.

We mention a technical point, which yields an additional complication: Lammers's theorem shows that, under the above condition, there cannot exist an infinite-volume Gibbs measure for the height $n$ on the minorizing graph $\mathcal{F}^\ast$. In the context of the $\ZGF$, which satisfies an absolute-value FKG property, the fact that pinning implies the existence of such a measure is standard. We delay the proof of this implication for the $\Z$UF to \cref{sec:Pinning} below.

	\subsection{The level-line lower bound on the spin correlation function}\label{subsec:general positivity of stiffness modulus}
	
	The terms used  in \cref{sec:level_lines} for the  description of the  $\ZGF$ level lines are applicable more generally to the other height functions with an annealed Gaussian interaction. Also the relation between the spin's two-point function and the insertion of a defect line for the height-function \eqref{eq:duality_correlations} remains valid replacing the square with $\calV$.  Continuing with these terms, we have the following  extension of \cref{thm:1}.
	
	\begin{thm}	\label{thm:stochastic geometric bound for mixtures of Gaussians}
Let $U$ be an annealed Gaussian interaction, to which is associated the random height function $n:\calV^\ast\to\ZZ$ distributed as $\PP^{\ZUF,\calG^\ast}$ and its dual $O(2)$ spin field $\theta:\calV\to[-\pi,\pi)$ distributed as $\PP^{O(2),U,\calG}$ (defined by \eqref{dual_O2}).

Then for any preselected path $\gamma_{yx}$ linking a pair of sites $x,y\in \calV$, any oriented edge $e= (x,x')$ which does not intersect it,  and any $q\in \Z+\frac{1}{2}$,
		\be \label{gen_key_bound}
		\E^{O(2),U,\calG}\left[\cos\left(\theta_x-\theta_y\right)\right] \,  \geq \,
		\PP^{\ZUF,\calG^\ast}\left[ A_{\gamma_{yx},e}^q \right]\,,
		\ee
	where $A_{\gamma_{yx},e}^q $ is the event defined in \cref{def:A_xy^q}.
		Furthermore, \cref{gen_key_bound} holds also for $q$ replaced by ${\widetilde q}(n)$ which is  determined by  the values of $n$ on the pair of faces sharing the edge $e$ (i.e. by $\{n(x^+), n(x^-)\}$).
	\end{thm}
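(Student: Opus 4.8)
The plan is to reproduce the proof of \cref{thm:1} almost word for word, changing only two of its inputs. First I would record the $\ZUF$ form of the duality \eqref{eq:duality_correlations}: since $e^{-U}$ is a mixture of Gaussians (cf.\ \eqref{eq:annealed_Gaussian}) the potential $U$ is even and $G_U>0$, so the Fourier-expansion-plus-Poisson-summation computation behind \eqref{eq:duality_correlations} applies verbatim to the dual pair $(\PP^{\ZUF,\calG^\ast},\PP^{O(2),U,\calG})$ and gives $\E^{O(2),U,\calG}[\cos(\theta_x-\theta_y)] = \EE^{\ZUF,\calG^\ast}[T^{U,\pm}_{\gamma_{yx}}]$ with $T^{U,\pm}_{\gamma_{yx}}(n):=\prod_{\{u,v\}\in\calE^\ast}\exp\!\big(U(n_u-n_v)-U(n_u-n_v\pm\Gamma_{yx}(u,v))\big)$. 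Restricting this expectation to the event $A_{\gamma_{yx},e}^{\widetilde q}$ (using the $T^{U,+}$ form for $\widetilde q>0$ and $T^{U,-}$ for $\widetilde q<0$) reduces the theorem to $\EE^{\ZUF,\calG^\ast}[T^{U,\pm}_{\gamma_{yx}}\,\chi_{A_{\gamma_{yx},e}^{\widetilde q}}]\ge\PP^{\ZUF,\calG^\ast}[A_{\gamma_{yx},e}^{\widetilde q}]$.

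Next I would observe that the level-line formalism of \cref{sec:level_lines}, the exploration process, and \cref{lem:Exploration} all refer to the height field $n$ only and therefore carry over to the $\ZUF$ unchanged. Conditioning on the $\sigma$-algebra $\calF^{q}_{xy}$ generated by $n$ on $\calL\cup\calH$ and running the same gauge change of summation variable $n\mapsto n-\chi_{\Int(\kappa^{q}_{xy}\circ\gamma_{yx})}$ used in the proof of \cref{thm:1} — which is only a relabelling of $n$ and is thus insensitive to the shape of $U$ — I would arrive, on $A_{\gamma_{yx},e}^{q}$ and with $A=\calH$, $B=\calL\cup\partial\calV^\ast$, $F_A=n|_A$, $F_B=n|_B$ (the stress condition $\min_{A}F_A-1\ge\max_{B}F_B$ of \eqref{eq:stiffness conditions} in force), at $\chi_{A_{\gamma_{yx},e}^{q}}\,\EE^{\ZUF,\calG^\ast}[T^{U,+}_{\gamma_{yx}}\mid\calF^{q}_{xy}] = \chi_{A_{\gamma_{yx},e}^{q}}\,Z^{\ZUF}_{A,B;F_A-\chi_A,F_B}/Z^{\ZUF}_{A,B;F_A,F_B}$, where $Z^{\ZUF}_{A,B;\cdot,\cdot}$ is \eqref{eq:PartitionFunctionDef} with $\exp(-\tfrac12\lambda\|\nabla n\|^2)$ replaced by $\prod_{\{u,v\}\in\calE^\ast}e^{-U(n_u-n_v)}$. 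The cases $q<0$ and $\widetilde q=\widetilde q(n_{x^+},n_{x^-})$ need only the same cosmetic adjustments as in \cref{thm:1} (exchange $T^{U,+}\!\leftrightarrow T^{U,-}$, reverse the loop orientation, and place $\partial\calV^\ast$ in $A$ rather than $B$). At this stage the theorem is reduced to an annealed version of \cref{prop:StiffnessModulus}: whenever $\min_{A}F_A-1\ge\max_{B}F_B$ one has $Z^{\ZUF}_{A,B;F_A-\chi_A,F_B}\ge Z^{\ZUF}_{A,B;F_A,F_B}$.

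This annealed stiffness inequality is the one genuinely new ingredient, and I would prove it by conditioning on the disorder. Inserting $e^{-U(q)}=\int_0^\infty e^{-\lambda q^2/2}\,\dif\mu_U(\lambda)$ on each dual edge and using Tonelli (all summands are non-negative), $Z^{\ZUF}_{A,B;\cdot,\cdot}=\int_{[0,\infty)^{\calE^\ast}}Z^{\ZGF,\vec\lambda}_{A,B;\cdot,\cdot}\prod_{b\in\calE^\ast}\dif\mu_U(\lambda_b)$, where $Z^{\ZGF,\vec\lambda}$ is the partition function of the inhomogeneous-coupling $\ZGF$. For $\mu_U^{\otimes\calE^\ast}$-a.e.\ $\vec\lambda$ this is finite, and for such $\vec\lambda$ I would invoke (the proof of) \cref{prop:StiffnessModulus}: its metric-graph continuous-extension argument together with the reflection principle goes through with no change for inhomogeneous strictly positive couplings — an edge of coupling $\lambda_b$ is realised as a Brownian bridge run for time $1/\lambda_b$, and neither the Markov property of the extension nor the reflection principle sees the time parameter — while any edge with $\lambda_b=0$ simply decouples and may be deleted from the graph before applying the proposition. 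Thus $Z^{\ZGF,\vec\lambda}_{A,B;F_A-\chi_A,F_B}\ge Z^{\ZGF,\vec\lambda}_{A,B;F_A,F_B}$ for a.e.\ $\vec\lambda$, and integrating against the non-negative product measure yields the claimed inequality, hence the theorem.

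The main obstacle is precisely this last step: one must verify that the metric-graph/reflection proof of positivity of the stiffness modulus is genuinely robust to inhomogeneous — and possibly degenerate — couplings, and that conditioning on $\vec\lambda$ does not disturb the earlier structure; the latter is automatic because $A_{\gamma_{yx},e}^{q}$ and the sets $\calL,\calH$ are measurable with respect to $n$ alone. A conceivable alternative, staying on the metric graph without conditioning, would be obstructed by the fact that an annealed interaction does not obviously lift to a Brownian-type bridge on a single edge before one passes to its Gaussian mixture, so the route through the disorder is the cleaner one. Everything else — the duality, the exploration process, the gauge change of variables, and the combinatorics of level lines — is interaction-agnostic and is inherited from the Villain--$\ZGF$ case without change.
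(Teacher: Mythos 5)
Your proposal is correct and follows essentially the same route as the paper: reduce to the annealed stiffness inequality by observing that the duality, exploration process, and gauge change of variables are interaction-agnostic, and then prove that inequality by expanding $e^{-U}$ into a Gaussian mixture on each dual edge and applying the $\ZGF$ stiffness-modulus argument with inhomogeneous couplings $\vec\lambda$, integrating at the end. The only thing you add beyond what the paper spells out is the extra care around degenerate couplings ($\lambda_b=0$) and the explicit remark that the Brownian-bridge/reflection argument behind \cref{prop:StiffnessModulus} is indifferent to the per-edge time rescaling — the paper simply asserts that "the argument \dots goes through" — but this is a refinement of the same proof, not a different one.
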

	
\begin{proof}

		All the steps in the  derivation of \cref{thm:1}, which this generalizes, apply just as stated there,  provided one also has the suitable extension of the positivity of the stiffness modulus, i.e.  \cref{prop:StiffnessModulus} (which was proven above for the $\ZGF$).
To that end, we restate and verify its extended version.
		
		The requisite extension of \cref{prop:StiffnessModulus} should say that  if $A,B\subseteq\calV^\ast$ are two disjoint sets and $f:A\sqcup B\to\ZZ$ is given such that $$ \min_{x\in A} f(x)-1\geq \max_{x\in B}f(x) $$ and $$ Z^{\ZUF,\calG^\ast}_f \equiv \sum_{n:\calV^\ast\to\ZZ\,:\,\left.n\right|_{A\sqcup B}=f} \quad\prod_{\Set{x,y}\in\calE^\ast}\exp\left(-U\left( n_x-n_y\right)\right) $$ then \begin{align} Z^{\ZUF,\calG^\ast}_{f-\chi_A} \geq Z^{\ZUF,\calG^\ast}_{f}\,.\label{eq:general positivity of modulus} \end{align}
		
		Under the stated assumption on $U$  the Gibbs factor for each edge is presentable as $$ \exp\left(-U\left(n_x-n_y\right)\right) = \int_{\lambda_{xy}\in\left[0,\infty\right)}\exp\left(-\frac12\lambda_{xy}\left(n_x-n_y\right)^2\right)\dif{\mu_U\left(\lambda_{xy}\right)}\,. $$
		
		Plugging this expansion into $Z^{\ZUF,\calG^\ast}_f$ we find \begin{align} Z^{\ZUF,\calG^\ast}_f = \int_{\lambda:\calE^\ast\to\left[0,\infty\right)}Z^{\ZGF,\calG^\ast}_{\lambda,f}\dif{\mu_U^{\otimes \calE^\ast}(\lambda)} \label{eq:partition function with disordered couplings}\end{align} where we used the disordered-$\ZGF$ partition function, with the annealed couplings $\lambda:\calE^\ast\to\left[0,\infty\right)$: \begin{align}\label{eq:ZGF with random couplings partition function} Z^{\ZGF,\calG^\ast}_{\lambda,f}\equiv\sum_{n:\calV^\ast\to\ZZ\,:\,\left.n\right|_{A\sqcup B}=f} \prod_{\Set{x,y}\in\calE^\ast}\exp\left(-\frac12\lambda_{xy}\left(n_x-n_y\right)^2\right)\,. \end{align}
		
		But now, fixing $\lambda$ which is allowed to vary from edge to edge, the argument that was laid out in the proof above of \cref{prop:StiffnessModulus} goes through. We find $$ Z^{\ZGF,\calG^\ast}_{\lambda,f-\chi_A} \geq Z^{\ZGF,\calG^\ast}_{\lambda,f} $$ which implies \cref{eq:general positivity of modulus} via \cref{eq:partition function with disordered couplings}.
	\end{proof}
	
	\subsection{Height-function delocalization implies slow decay of correlations}
	
		The preceding theorem establishes a stochastic-geometric  lower bound on the spin model's two-point function.  The next is a generalization of \cref{thm:2}
		by which we proved that delocalization of the $\ZGF$, rules out exponential decay for the Villain model. As before, our next statement holds true for more general doubly-periodic planar graphs tamely imbedded in $\R^2$, but to keep the notation simple the statement is phrased here in the context of $\ZZ^2$.
		
\begin{thm}[Delocalization implies non-summability]\label{thm:Generalization of thm 2}
For any  annealed Gaussian interaction $U$ the  number of level loops of the random height function $\ZUF$ encircling the origin   bears the following relation with the spin-spin correlations in the dual spin model
 \begin{align}
 \notag
		\sum_{j=1}^4 \sum_{(x,y)\in \widetilde Q_j} \left(\EE^{O(2),U,\calG_L}\left[\cos\left(\theta_x-\theta_y\right)\right]\right)^{1-\ve} \, &\geq \, C_{\ve,U}\,  \,\EE^{U,\calG_L^*}\left[\mathcal{N}^{+,+}(f_0) + \mathcal{N}^{-,-}(f_0)\right]\, \\
	&\geq \, 	C_{\ve,U}\,  \,\EE^{U,\calG_L^*}\left[|n(f_0)|\right]
		\label{eq:non-summable decay for XY}
		\end{align}
with $C_{\ve,U}<\infty$ for any $\ve >0$ (and $\mathcal{N}^{\pm,\pm}$ defined in \eqref{N+-}).
	\end{thm}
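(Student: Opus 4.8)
The plan is to transplant, essentially word for word, the two-step proof of \cref{thm:2}, the only change being that the two model-specific inputs of that proof are replaced by their already-available generalizations. The first input is the level-line lower bound on the two-point function: \cref{thm:stochastic geometric bound for mixtures of Gaussians} gives, for any annealed Gaussian interaction $U$, any preselected path $\gamma_{yx}$ and edge $e=(x,x')$ disjoint from it, and any $q\in\Z+\tfrac12$ (or any ${\widetilde q}$ depending on $n$ only through $n_{x^+},n_{x^-}$), the bound $\EE^{O(2),U,\calG}[\cos(\theta_x-\theta_y)]\ge\PP^{\ZUF,\calG^\ast}[A^{q}_{\gamma_{yx},e}]$. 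The second input is the deterministic inequality $4\,\mathcal{N}^{\sgn(q)}_q(f_0)\le\sum_{j=1}^4\sum_{(x,y)\in Q_j}\chi_{A^{q}_{\gamma^j_{yx},e}}$ of \cref{lem:Loops}; its proof uses only the non-crossing / right-hand-rule combinatorics of the level lines of an integer-valued height function, so it holds for the $\ZUF$ verbatim, as does the pointwise inequality $\mathcal{N}^{+,+}(f_0)+\mathcal{N}^{-,-}(f_0)\ge|n(f_0)|$ of \eqref{eq:LoopsToHeight}, which is exactly the second inequality in \eqref{eq:non-summable decay for XY}.

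Granting these, I would run the argument of the proof of \cref{thm:2} unchanged. Fix $(x,y)\in\widetilde Q_j$, set ${\widetilde q}(n):=\operatorname{argmax}_{q\in\{n_{x^-}+\frac12,\dots,n_{x^+}-\frac12\}}\PP^{\ZUF,\calG_L^\ast}[A^{q}_{\gamma^j_{yx},e}\mid n_{x^+},n_{x^-}]$, and apply the level-line bound with this ${\widetilde q}$. Summing \cref{lem:Loops} over $q\in\Z+\tfrac12$ and taking $\ZUF$-expectations,
\[
4\,\EE^{\ZUF,\calG_L^\ast}\!\left[\mathcal{N}^{+,+}(f_0)+\mathcal{N}^{-,-}(f_0)\right]\ \le\ \sum_{j=1}^4\sum_{(x,y)\in Q_j}\EE^{\ZUF,\calG_L^\ast}\!\left[\sum_{q\in\Z+\frac12}\chi_{A^{q}_{\gamma^j_{yx},e}}\right]\,,
\]
and conditioning on $(n_{x^+},n_{x^-})$ (at most $|n_{x^+}-n_{x^-}|$ values of $q$ contribute, the maximal being ${\widetilde q}$) and then applying Hölder with exponents $\ve$ and $1-\ve$, the inner expectation is at most $\EE^{\ZUF,\calG_L^\ast}[|n_{x^+}-n_{x^-}|^{1/\ve}]^{\ve}\,\PP^{\ZUF,\calG_L^\ast}[A^{{\widetilde q}}_{\gamma^j_{yx},e}]^{1-\ve}\le\EE^{\ZUF,\calG_L^\ast}[|n_{x^+}-n_{x^-}|^{1/\ve}]^{\ve}\,(\EE^{O(2),U,\calG_L}[\cos(\theta_x-\theta_y)])^{1-\ve}$. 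This is precisely the chain of estimates in the proof of \cref{thm:2}, and it yields the first inequality of \eqref{eq:non-summable decay for XY} with $C_{\ve,U}$ equal to the supremum, over $L$ and over nearest-neighbour dual faces $\{x^+,x^-\}$, of $\EE^{\ZUF,\calG_L^\ast}[|n_{x^+}-n_{x^-}|^{1/\ve}]^{\ve}$.

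The one genuinely new point — and the only place I expect difficulty — is the finiteness of this supremum, which is no longer supplied by Gaussian domination (as \cref{lem:fluctations of the gradients} supplied it for the $\ZGF$): an annealed Gaussian interaction need not obey a quadratic Gaussian-domination bound once $\mu_U$ charges neighbourhoods of $0$. The plan is to obtain instead a uniform-in-$L$ tail bound on a single-edge gradient directly. Factoring the edge-$e$ Boltzmann weight out of the partition function gives
\[
\PP^{\ZUF,\calG_L^\ast}\!\left[n_{x^+}-n_{x^-}=u\right]\ \le\ e^{-U(u)}\,\frac{Z^{\ZUF,\calG_L^\ast\setminus e}}{Z^{\ZUF,\calG_L^\ast}}\ \le\ C_U\,e^{-U(u)}\,,
\]
where the ratio is bounded uniformly in $L$ and in $e$ because the graphs $\calG_L^\ast$ are $2$-edge-connected, so deleting $e$ leaves a connected graph with Dirichlet boundary and a detour between $x^+$ and $x^-$ of bounded length, making the two partition functions comparable. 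Hence $\EE^{\ZUF,\calG_L^\ast}[|n_{x^+}-n_{x^-}|^{1/\ve}]\le C_U\sum_{u\in\Z}|u|^{1/\ve}e^{-U(u)}$, which is finite for every $\ve>0$ whenever $U$ grows at least linearly at infinity — in particular for every \emph{convex} annealed Gaussian interaction, covering the Villain and $XY$ duals (the Bessel potential having $U(m)\sim m\log m$). Combined with the reused inequality $\mathcal{N}^{+,+}(f_0)+\mathcal{N}^{-,-}(f_0)\ge|n(f_0)|$, this gives both inequalities of \eqref{eq:non-summable decay for XY}; the infinite-volume consequence then follows, as after \cref{thm:2}, from monotonicity of the correlations in the volume, the right-hand side diverging in the depinned phase.
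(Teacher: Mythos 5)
Your overall plan is exactly the paper's: re-run the proof of \cref{thm:2} with \cref{thm:stochastic geometric bound for mixtures of Gaussians} replacing \cref{thm:1}, re-use \cref{lem:Loops} and \eqref{eq:LoopsToHeight} verbatim (both are deterministic statements about level lines of any $\ZZ$-valued height function, so this is fine), and isolate the one ingredient that genuinely changes, namely the uniform-in-$L$ moment bound on a single-edge gradient replacing \cref{lem:fluctations of the gradients}. The paper does this too, and states the replacement as \cref{lem:large_grad_bound}. You also correctly note that the bound needs $U$ convex (or at least linear growth at infinity), which is the same implicit restriction the paper makes.

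Where you diverge, and where there is a real gap, is in the proof of that gradient bound. Your bound $\PP^{\ZUF,\calG_L^\ast}[n_{x^+}-n_{x^-}=u]\le e^{-U(u)}\,Z^{\ZUF,\calG_L^\ast\setminus e}/Z^{\ZUF,\calG_L^\ast}$ is correct, but the claim $Z^{\ZUF,\calG_L^\ast\setminus e}/Z^{\ZUF,\calG_L^\ast}\le C_U$ uniformly in $L$ and $e$ is asserted, not proved. Rewriting the reciprocal ratio as $\EE^{\ZUF,\calG_L^\ast\setminus e}[\exp(-U(n_{x^+}-n_{x^-}))]$, the needed lower bound is equivalent to a uniform tightness statement for the single-edge gradient $n_{x^+}-n_{x^-}$ under the measure on $\calG_L^\ast\setminus e$; but uniform tightness of a single-edge gradient (whether along $e$ or along the detour) is exactly the content of the lemma one is trying to prove, so the argument is circular as written. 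The existence of a short detour by itself does not furnish the lower bound: one would still need control of the law of the height differences along that detour, uniformly in $L$, which is again a gradient bound. For the $\ZGF$ one could bootstrap from Gaussian domination on $\calG_L^\ast\setminus e$; for a general annealed Gaussian interaction no such uniform bound is available a priori (the annealed measure re-weights $\mu_U$ in a graph-dependent way, so even if $\mu_U$ has a density near $0$ one cannot simply average the $\ZGF$ bound). The paper sidesteps this cleanly: it passes to the $1$-form (gradient) variables with the zero-divergence constraint, shifts $m_{uv}$ by $q(t)=\lfloor t\rfloor+1-t_0$ at a fixed cost $e^{-c_U q(t)}$ per unit shift (using the linear growth supplied by convexity), and then recognizes the resulting ratio of constrained partition functions as the dual spin correlation $\EE^{O(2),U,\calG}[e^{\pm i q(\theta_u-\theta_v)}]$, which is trivially $\le 1$. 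This duality identity supplies exactly the uniform-in-$L$ bound on the shifted partition-function ratio that your argument is missing, at the price of requiring planarity; your route, if it could be completed, would avoid planarity, but as it stands the comparability of $Z^{\calG\setminus e}$ and $Z^{\calG}$ is an unproven (and not elementary) intermediate claim.
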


	\begin{proof}[Proof summary]
		All steps of the  proof of the corresponding statement for the $\ZGF$ (\cref{thm:2}) go through with the exception of the large gradient bound, \cref{lem:fluctations of the gradients}.   Its suitable generalization to potentials considered here is  given below in \cref{lem:large_grad_bound}.   However, as this is just a minor technicality,  we postpone its derivation to first convey the  implication of \eqref{eq:non-summable decay for XY}.
	\end{proof}

	It should be appreciated that in the collection of  pairs of sites $(x,y)$ summed over in \eqref{eq:non-summable decay for XY} no two pairs are shift equivalent.   Hence:
\begin{cor}
\label{corollary_sum} If in the infinite-volume limit on a doubly-periodic graph the height function $\ZUF$ depins, in the sense that
$\EE^{U,\calG_L^*}\left[|n(f_0)|\right] \to \infty $,  while the spin-spin correlations converge pointwise to a shift invariant limit, then this limit satisfies:
\be \label{cor_decay}
\lim_{L\to \infty} \sum_{u\in \mathcal V} \left(\EE^{O(2),U,\calG_L}\left[\cos\left(\theta_0-\theta_u\right)\right]\right)^{1-\ve}  = \infty
\ee
\end{cor}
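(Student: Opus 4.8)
\textit{Proof proposal.} The plan is to read \eqref{cor_decay} off from \cref{thm:Generalization of thm 2}, with two further inputs: the hypothesized pointwise convergence of the spin correlations to a shift-invariant limit $\langle\,\cdot\,\rangle := \lim_{L\to\infty}\EE^{O(2),U,\calG_L}[\,\cdot\,]$, and the monotonicity of the finite-volume two-point function in the volume (for the Villain and $XY$ models this is the Ginibre inequality recalled above, and, for the models of interest, it is the mechanism behind the assumed convergence). Write $a_w := \big(\langle\cos(\theta_0-\theta_w)\rangle\big)^{1-\ve}\geq 0$ for $w\in\calV$; by shift invariance of the limit, $\langle\cos(\theta_x-\theta_y)\rangle = \langle\cos(\theta_0-\theta_{y-x})\rangle$ for all $x,y$.

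First I would apply \cref{thm:Generalization of thm 2} at a fixed finite $L$ and bound each summand on its left-hand side, using \emph{volume-monotonicity} and the fact that $z\mapsto z^{1-\ve}$ is non-decreasing on $[0,\infty)$, by $\big(\langle\cos(\theta_x-\theta_y)\rangle\big)^{1-\ve}=a_{y-x}$. The combinatorial remark recorded just above the statement --- that no two of the pairs in $\bigcup_{j=1}^4\widetilde Q_j$ are shift-equivalent, i.e.\ $(x,y)\mapsto y-x$ is injective there --- then gives
\[
C_{\ve,U}\,\EE^{U,\calG_L^*}\big[|n(f_0)|\big]\ \leq\ \sum_{j=1}^4\sum_{(x,y)\in\widetilde Q_j}\big(\EE^{O(2),U,\calG_L}[\cos(\theta_x-\theta_y)]\big)^{1-\ve}\ \leq\ \sum_{j=1}^4\sum_{(x,y)\in\widetilde Q_j}a_{y-x}\ \leq\ \sum_{w\in\calV}a_w .
\]
Letting $L\to\infty$ and invoking the depinning hypothesis $\EE^{U,\calG_L^*}[|n(f_0)|]\to\infty$ forces $\sum_{w\in\calV}a_w=\infty$. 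To conclude with the limit outside the sum as stated, I would note that for each fixed $u$ one has $\big(\EE^{O(2),U,\calG_L}[\cos(\theta_0-\theta_u)]\big)^{1-\ve}\uparrow a_u$ while $\calV_L\uparrow\calV$, so the monotone convergence theorem yields $\lim_{L\to\infty}\sum_{u\in\calV}\big(\EE^{O(2),U,\calG_L}[\cos(\theta_0-\theta_u)]\big)^{1-\ve}=\sum_{u\in\calV}a_u=\infty$, which is \eqref{cor_decay}.

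The step I expect to need the most care is the passage from the finite-volume sum supplied by \cref{thm:Generalization of thm 2} to an honest lower bound on the limiting sum: a Fatou-type argument using only pointwise convergence would bound the limiting object from below by a $\liminf$, not from above, so without some uniform domination mass could in principle escape to spatial infinity. The volume-monotonicity of the two-point function is exactly what rules this out --- it dominates each finite-volume correlation by its infinite-volume value --- and it is the one genuinely model-dependent ingredient; in the general $\ZUF$ framework it should be understood as implicit in the standing hypothesis that the correlations converge to a shift-invariant limit.
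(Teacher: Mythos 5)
Your proof is correct and follows essentially the same route the paper takes (implicitly via the lead-in ``Hence:'' and explicitly in the Villain prototype \eqref{eq:Lower bound on sum of two point function}): pass to the infinite-volume limit of \cref{thm:Generalization of thm 2} using volume-monotonicity of the two-point function, then invoke shift-invariance of the limit and the injectivity of $(x,y)\mapsto y-x$ on $\bigcup_j\widetilde Q_j$. Your closing remark is also on target: the paper, like you, relies on volume monotonicity (Ginibre for $XY$, or FKG of the dual height function for convex $U$) rather than bare pointwise convergence, and this is the ingredient that prevents mass from escaping to infinity when passing from the finite-volume bound of the theorem to the limiting sum.
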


The assumption of the limit's translation invariance holds true  for the dual pair of the $\ZBF$ and the XY spin model, and other annealed Gaussian interactions $U$ which are also convex.  In the $\ZBF$-XY  case the  invariance of the limit for the spin model follows by known arguments from either the Ginibre inequalities for the $XY$ model or through the FKG properties of the $\ZBF$ which follow from the convexity of the Bessel interaction.

In addition, as is explained in further detail in \hyperref[sec:Introduction]{the Introduction} and in \cref{sec:dichotomy}, for the $XY$ model the  dichotomy in the decay of correlation functions which we proved for the Villain model is a known feature of the $XY$ model (proven in the works of Lieb~\cite{Lie80} and Rivasseau~\cite{Riv80}, and Aizenman-Simon~\cite{AizSim80B}).   Hence in that case
\eqref{cor_decay} can be replaced by the more explicit bound, in the form it was stated for the Villain model in   \eqref{box_bound}.

The following completes the proof of 	\cref{thm:Generalization of thm 2}.

\begin{lem} \label{lem:large_grad_bound}
	Assume $\calG^\ast$ is planar. If $U$ is convex and annealed Gaussian then for any $\alpha>0$ and any $\Set{x,y}\in\calE^\ast$
	\be\label{eq:finite gradient expectation}  \EE^{\ZUF,\calG^\ast}\left[|n_x-n_y|^\alpha\right] \leq D_{\alpha,U}\,
	\ee
with a $\calG^\ast$-independent constant $D_{\alpha,U}\in(0,\infty)$.
\end{lem}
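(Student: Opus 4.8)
The plan is to reduce the assertion to a single \emph{pointwise} bound on the law of the gradient across the edge, uniform over the graph: namely, that there is a constant $C_U<\infty$ with
\[
\PP^{\ZUF,\calG^\ast}\!\left[\,n_x-n_y=k\,\right]\ \le\ C_U\,e^{-U(k)}\qquad(k\in\ZZ),
\]
uniformly over finite graphs $\calG^\ast$ and over edges $\{x,y\}\in\calE^\ast$. Granting this, \cref{eq:finite gradient expectation} is immediate: since $U$ is a convex annealed Gaussian interaction, $e^{-U}$ is summable (so the model is well defined) and $U$ is even; a convex function on $\ZZ$ tending to $+\infty$ has eventually-nonnegative increments bounded away from $0$, so $U(k)\ge U(0)+\epsilon_U|k|$ for some $\epsilon_U>0$ and all large $|k|$. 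Hence $\sum_k|k|^\alpha e^{-U(k)}<\infty$, and
$\EE^{\ZUF,\calG^\ast}[|n_x-n_y|^\alpha]=\sum_k|k|^\alpha\,\PP^{\ZUF,\calG^\ast}[n_x-n_y=k]\le C_U\sum_k|k|^\alpha e^{-U(k)}=:D_{\alpha,U}<\infty$.

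To prove the pointwise bound, write $b_1:=\{x,y\}$ and factor out the weight of this one edge. Since $U$ is even, for every $k$
\[
\PP^{\ZUF,\calG^\ast}[n_x-n_y=k]=\frac{e^{-U(k)}\,\widetilde W(k)}{\sum_{j\in\ZZ}e^{-U(j)}\,\widetilde W(j)},\qquad
\widetilde W(k):=\sum_{\substack{n:\calV^\ast\to\ZZ\\ n|_{\partial\calV^\ast}=0,\ n_x-n_y=k}}\ \prod_{b\in\calE^\ast\setminus b_1}e^{-U(\nabla_b n)}.
\]
Up to normalization, $\widetilde W(k)$ is the probability of $\{n_x-n_y=k\}$ for the $\ZUF$ on the graph $\calG^\ast\setminus b_1$ with the inherited Dirichlet conditions. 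The crux is the monotonicity $\widetilde W(k)\le\widetilde W(0)$ for all $k$, i.e. that the law of the gradient $n_x-n_y$ under $\PP^{\ZUF,\calG^\ast\setminus b_1}$ is unimodal about $0$; this is the standard consequence of the positivity of the stiffness modulus, obtained by applying \cref{prop:StiffnessModulus} (in its $\ZUF$ form, \cref{eq:general positivity of modulus}) to $\calG^\ast\setminus b_1$ with $A=\{x\}$, $B=\{y\}$, together with the $n\mapsto -n$ symmetry of the model. (If $b_1$ is a bridge, $\calG^\ast\setminus b_1$ splits into pieces $C_x\ni x$ and $C_y\ni y$ and $\widetilde W(k)=\sum_a h_x(a+k)\,h_y(a)=(h_x*\check h_y)(k)$, where $h_x(p):=\sum_{n|_{C_x\setminus x}=\ast,\ n_x=p}\prod_{b\in C_x}e^{-U(\nabla_b n)}$ and likewise $h_y$; by the same stiffness argument each of $h_x,h_y$ is symmetric and unimodal about $0$ — or identically constant, if the piece contains no boundary vertex by translation invariance of the interaction — so $\widetilde W$ is again symmetric and unimodal about $0$.) Bounding the denominator from below by its $j=0$ term and using $\widetilde W(k)\le\widetilde W(0)$ gives
\[
\PP^{\ZUF,\calG^\ast}[n_x-n_y=k]\ \le\ \frac{e^{-U(k)}\,\widetilde W(0)}{e^{-U(0)}\,\widetilde W(0)}\ =\ e^{U(0)-U(k)},
\]
which is the claimed bound with $C_U=e^{U(0)}$; here $U(0)=\min_k U(k)$ because $U$ is even and convex. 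This concludes the proof.

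The only non-routine ingredient is the inequality $\widetilde W(k)\le\widetilde W(0)$, which I expect to be the main (though mild) obstacle; it is precisely the statement that, after deleting the edge $\{x,y\}$, forcing $n_x=n_y$ maximizes the partition function among all prescribed values of $n_x-n_y$ — exactly the form of stiffness-modulus positivity established in \cref{prop:StiffnessModulus}, so the work has effectively already been done. The one bit of bookkeeping is the bridge case above, where $\calG^\ast\setminus b_1$ ceases to be a single finite graph with Dirichlet data; it is dispatched by the displayed product factorization together with translation invariance of the free interaction. Note that planarity of $\calG^\ast$ plays no role in this argument; it is retained in the statement only for uniformity with the rest of \cref{sec:extension}.
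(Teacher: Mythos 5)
Your overall strategy is genuinely different from the paper's: you factor out the Gibbs weight of the single edge $\{x,y\}$ and reduce the problem to the pointwise bound $\PP[n_x-n_y=k]\le e^{U(0)-U(k)}$, whereas the paper switches to the gradient (1-form) picture, uses the convex-increment inequality $U(m+q)\ge U(m)+c_U q$ to estimate a tail, and then recognizes a ratio of constrained partition functions as the dual spin observable $\EE^{O(2),U,\calG}\bigl[e^{\pm\ii q(\theta_u-\theta_v)}\bigr]$, which it bounds by $1$. If your argument were complete it would be cleaner and, as you note, would dispense with planarity. The reduction to the pointwise bound, and the derivation of $U(k)\ge U(0)+\epsilon_U|k|$ from convexity plus summability of $e^{-U}$, are both correct.

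However, the key claim $\widetilde W(k)\le\widetilde W(0)$ is \emph{not} a consequence of \cref{prop:StiffnessModulus} (nor of \cref{eq:general positivity of modulus}) with $A=\{x\}$, $B=\{y\}$, as you assert. That proposition compares partition functions with \emph{fixed individual values} $n_x=F_A$, $n_y=F_B$ (together with the Dirichlet data absorbed into $A$ or $B$), under the ordering hypothesis $\min_A F_A -1\ge\max_B F_B$; it says nothing directly about the quantity $\widetilde W(k)=\sum_a Z(n_x=a+k,n_y=a,n|_\partial=0)$, which sums over the common offset $a$. Attempting a term-by-term pairing runs into a genuine obstruction when $0$ lies strictly between $a$ and $a+k$: for such $a$, no way of splitting $\{x\}\cup\{y\}\cup\partial\calV^\ast$ into $A$ and $B$ satisfies the hypothesis while producing a shift that keeps the boundary at $0$. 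Concretely, for $k=1$ the best such pairing gives only $\widetilde W(1)\le\widetilde W(0)+g(0,0)$ (the term $a=-1$ and the term $a=0$ both get sent to $g(0,0)$), not $\widetilde W(1)\le\widetilde W(0)$. The inequality $\widetilde W(k)\le\widetilde W(0)$ is in fact true, but the correct mechanism is the lattice-Gaussian coset inequality $Z_{A_\lambda,\calM+w}\le Z_{A_\lambda,\calM}$ (Poisson summation and positivity of the Gaussian Fourier transform), applied with $\calM=\{n:n|_{\partial\calV^\ast}=0,\,n_x=n_y\}$ and any $w$ with $w_x-w_y=k$, $w|_{\partial\calV^\ast}=0$, and then annealed over $\lambda$ as in \cref{lem:MU}. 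This is a standard Regev--Stephens-Davidowitz-type fact but it is not \cref{prop:StiffnessModulus}, and it is not invoked anywhere in the paper in quite this form; you would need to state and prove it (or cite it) to close the argument. The bridge case is handled correctly once the non-bridge case is in place.
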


While the statement is cast in terms of the height function,  the proof given below uses a short cut, enabled by referring to the dual $O(2)$ spin model.   This is the only role planarity, which can actually be avoided,  plays in the proof.  The relevant part of the annealed Gaussian property of $U$ is the positivity of the dual spin Gibbs factor  $G_U$ (of \eqref{dual_O2}). In addition, the relevant part of the convexity assumption is the fact that the growth rate of $U$ at infinity is at least linear.

\begin{proof}
	It is useful to partially perform the dual transformation.  Replacing the sum over $n:\calV^\ast\to\ZZ$ (which in the present context can be viewed as a 2-form) by a sum over
$m_{zw}=n_z-n_w$ (a 1-form on the set of oriented edges), where $zw$ is the oriented edge in $\calE^\ast$ obtained by rotating $uv$ clockwise by 90 degrees, the partition function takes the form
\be
 Z^{\ZUF,\calG^\ast} = \sum_{m:\calE\to\ZZ}\exp\left(-\sum_{b\in\calE}U(m_b)\right)\prod_{u\in\calV}\chi_{\Set{(\dif{}^\ast m)_u=0}}  \ee
with
 \be
(\dif{}^\ast m)_u\equiv\sum_{v\in\calV:\Set{u,v}\in\calE}m_{uv} \qquad(u\in\calV)\,.
\ee

    The fact that $U$ is even, convex and non-constant implies that
    \begin{equation}
      \liminf_{m\to\infty} U(m+1) - U(m) >0.
    \end{equation}
    Let $t_0>0$ be an integer and let $c_U>0$ be such that $U(m+1)-U(m)\ge c_U$ for all integer $m\ge t_0$. The following estimate holds for all $t\ge t_0$ and $f:\Z\to[0,\infty)$:
    \begin{align*}
		Q^+(t) :=& \sum_{m_{uv}>t}\mathrm{e}^{-U(m_{uv})}f(m_{uv})
		= \sum_{m_{uv}=t_0}^\infty\mathrm{e}^{-U(m_{uv}+\lfloor t\rfloor + 1 - t_0)}f(m_{uv}+\lfloor t\rfloor + 1 - t_0)\\
		&\le\mathrm{e}^{-c_U(\lfloor t\rfloor + 1 - t_0)}\sum_{m_{uv}=t_0}^\infty\mathrm{e}^{-U(m_{uv})}f(m_{uv}+\lfloor t\rfloor + 1 - t_0)\\
        &\le\mathrm{e}^{-c_U(\lfloor t\rfloor + 1 - t_0)}\sum_{m_{uv}=-\infty}^\infty\mathrm{e}^{-U(m_{uv})}f(m_{uv}+\lfloor t\rfloor + 1 - t_0)\,.
	\end{align*}
    Analogously,
    \begin{equation*}
		Q^-(t) := \sum_{m_{uv}<-t}\mathrm{e}^{-U(m_{uv})}f(m_{uv})
		\le\mathrm{e}^{-c_U(\lfloor t\rfloor + 1 - t_0)}\sum_{m_{uv}=-\infty}^{\infty}\mathrm{e}^{-U(m_{uv})}f(m_{uv}-(\lfloor t\rfloor + 1 - t_0))\,.
	\end{equation*}	
	The estimates on $Q^\pm(t)$ imply that
	$$ \PP^{\ZUF,\calG^\ast}\left[|n_x-n_y|> t\right] \leq 2\mathrm{e}^{-c_Uq(t)}\frac{\sum_{m:\calE\to\ZZ}\exp\left(-\sum_{b\in\calE}U(m_b)\right)\chi_{\Set{\dif{}^\ast m=q(t)\delta_u-q(t)\delta_v}}}{\sum_{m:\calE\to\ZZ}\exp\left(-\sum_{b\in\calE}U(m_b)\right)\chi_{\Set{\dif{}^\ast m=0}}}\, $$
with $q(t):=\lfloor t\rfloor + 1 - t_0$ and where $uv$ is the dual edge to $xy$.
	
	We rely on the equality
$$ \frac{\sum_{m:\calE\to\ZZ}\exp\left(-\sum_{b\in\calE}U(m_b)\right)\chi_{\Set{\dif{}^\ast m=q\delta_u-q\delta_v}}}{\sum_{m:\calE\to\ZZ}\exp\left(-\sum_{b\in\calE}U(m_b)\right)\chi_{\Set{\dif{}^\ast m=0}}} = \EE^{O(2),U,\calG}\left[\mathrm{e}^{\pm\ii q (\theta_u-\theta_v)}\right]\,, $$
which is proved similarly to the relation~\eqref{eq:duality_correlations}. As the right-hand side is bounded from above by $1$, one finds that $$ \PP^{\ZUF,\calG^\ast}\left[|n_x-n_y|> t\right] \leq 2\mathrm{e}^{-c_Uq(t)} = 2e^{-c_U(\lfloor t\rfloor + 1 - t_0)}\qquad(t\ge t_0)\,. $$
The exponential decay of the right-hand side implies~\eqref{eq:finite gradient expectation}.
\end{proof}

\subsection{Pinning implies the existence of infinite-volume translation invariant measure}\label{sec:Pinning}
We close by proving the technical result mentioned above --- pinning implies the existence of an infinite-volume Gibbs measure. The argument presented bypasses the absolute-value FKG property, and uses annealed RSD sublattice monotonicity (\cref{lem:MU}) in its place.
\begin{prop}
	Let $\calG_{\infty}^\ast$ be a doubly-periodic tame planar graph with an exhausting sequence of finite subgraphs $\Set{\calG_L^\ast}_L$, and $U$ a convex, annealed Gaussian interaction. Assume that, for some vertex $x \in \mathcal{V}_{\infty}$, $\sup_{L} \mathbb{E}^{\ZUF,\calG_L^\ast}[n_x^2] < \infty$.  Then the sequence $\left\{\mathbb{P}^{\ZUF,\calG_L^\ast} \right\}_L$ converges to an infinite-volume Gibbs measure. This limit is independent of the choice of the exhausting sequence $\Set{\calG_L^\ast}_L$, and is invariant under the automorphisms of $\calG_{\infty}^\ast$.
\end{prop}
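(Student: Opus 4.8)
The plan is to prove convergence of the finite-volume Laplace transforms $v\mapsto\MM^{\ZUF,\calG_L^\ast}[v]:=\EE^{\ZUF,\calG_L^\ast}[e^{\ip{v}{n}}]$, for $v$ ranging over a neighbourhood of $0$ among the finitely supported real functions on $\calV_\infty^\ast$, and to extract everything else from this. First I would record \emph{tightness}: for any $z\in\calV_\infty^\ast$ and $L$ large, $n_z-n_x$ is a sum of at most $d_{\calG_\infty^\ast}(x,z)$ nearest-neighbour gradients, so by \cref{lem:large_grad_bound} (with $\alpha=2$) and Cauchy--Schwarz $\EE^{\ZUF,\calG_L^\ast}[(n_z-n_x)^2]\le d_{\calG_\infty^\ast}(x,z)^2\,D_{2,U}$, which together with the hypothesis $C_0:=\sup_L\EE^{\ZUF,\calG_L^\ast}[n_x^2]<\infty$ gives $\sup_L\EE^{\ZUF,\calG_L^\ast}[n_z^2]<\infty$ for every $z$. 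Hence each coordinate marginal is tight, and since $\calV_\infty^\ast$ is countable, $\{\PP^{\ZUF,\calG_L^\ast}\}_L$ is tight in the product topology on $\ZZ^{\calV_\infty^\ast}$. Every subsequential limit is then a probability measure, invariant under $n\mapsto-n$, and (since the finite-volume measures are Gibbsian and the $\ZUF$ specification is Feller continuous and of range one) satisfies the $\ZUF$ DLR equations; so the only thing left is to show the subsequential limit is \emph{unique}.

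For uniqueness the plan is to use the annealed Regev--Stephens-Davidowitz sublattice monotonicity, \cref{lem:MU}. Taking the exhausting sequence nested with each $\calG_L^\ast$ an induced subgraph, the $\ZUF$ configuration space on $\calG_L^\ast$ with Dirichlet boundary conditions embeds, by extension by zero, as a sublattice $\calM_L$ of the configuration space $\ZZ^{(\calV_{L+1}^\ast)^\circ}$ of $\calG_{L+1}^\ast$; a configuration supported on the interior of $\calG_L^\ast$ has vanishing gradient on every edge of $\calG_{L+1}^\ast$ not in $\calE_L^\ast$, so its $\calG_{L+1}^\ast$-energy equals its $\calG_L^\ast$-energy plus the constant $(|\calE_{L+1}^\ast|-|\calE_L^\ast|)U(0)$, and the annealed-Gaussian model on $\calM_L$ with the $\calG_{L+1}^\ast$-potentials is precisely $\PP^{\ZUF,\calG_L^\ast}$. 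Thus \cref{lem:MU} yields $\MM^{\ZUF,\calG_L^\ast}[v]\le\MM^{\ZUF,\calG_{L+1}^\ast}[v]$ for every finitely supported $v$, so $L\mapsto\MM^{\ZUF,\calG_L^\ast}[v]$ is non-decreasing with limit $M(v)\in(0,\infty]$.

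The hard part will be to show $M(v)<\infty$ for $v$ near $0$ --- i.e.\ to upgrade the second-moment pinning hypothesis to a \emph{uniform-in-$L$} exponential-moment bound, which is exactly where the absolute-value FKG comparison is used in the $\ZGF$ case. For this I would invoke the log-concavity of the law of $n_x$ (Sheffield \cite{Sheff05}): a symmetric log-concave law on $\ZZ$ of variance at most $C_0$ has tails $\PP[|n_x|\ge t]\le C e^{-ct/\sqrt{C_0}}$ with universal $c,C$, uniformly in $L$, whence $\sup_L\EE^{\ZUF,\calG_L^\ast}[e^{sn_x}]<\infty$ for $|s|<c/\sqrt{C_0}$; combining this with the uniform exponential tails of gradients from \cref{lem:large_grad_bound}, writing $\ip{v}{n}$ as a finite linear combination of $n_x$ and of gradients along fixed paths and applying Hölder, one gets $\sup_L\MM^{\ZUF,\calG_L^\ast}[v]<\infty$ for all finitely supported $v$ with $\norm{v}$ small. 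Together with the monotonicity this gives $\MM^{\ZUF,\calG_L^\ast}[v]\uparrow M(v)<\infty$ for such $v$; applying the same bound to $(1+\ve)v$ shows $e^{\ip{v}{n}}$ is uniformly integrable along the sequence, so by Fatou and monotonicity every subsequential limit $\PP^\infty$ satisfies $\EE^{\PP^\infty}[e^{\ip{v}{n}}]=M(v)$ for all such $v$. Since a probability measure is determined by its Laplace transform on a neighbourhood of the origin, the subsequential limit is unique; combined with tightness this gives $\PP^{\ZUF,\calG_L^\ast}\Rightarrow\PP^{\ZUF}$, an infinite-volume Gibbs measure by the first paragraph.

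Finally, to get independence of the exhausting sequence I would interleave any two nested exhausting sequences into a single nested one; the monotonicity of the Laplace transforms along it forces the two interleaved subsequences, hence the two original sequences, to the same limit. Applying this to $\{g\calG_L^\ast\}_L$ for $g\in\Automorphisms{\calG_\infty^\ast}$, and using $\PP^{\ZUF,g\calG_L^\ast}=g_\ast\PP^{\ZUF,\calG_L^\ast}$ together with the continuity of $g_\ast$ on $\ZZ^{\calV_\infty^\ast}$, yields $g_\ast\PP^{\ZUF}=\PP^{\ZUF}$. Every ingredient except the uniform exponential-moment bound of the third paragraph is a rearrangement of the proof of \cref{thm:deloc on Z^2} plus soft compactness; that bound --- resting on Sheffield's log-concavity and \cref{lem:large_grad_bound} --- is the new input replacing the role of absolute-value FKG.
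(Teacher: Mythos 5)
Your proposal follows essentially the same route as the paper's proof: it uses \cref{lem:MU} for monotonicity of the moment generating functions, \cref{lem:large_grad_bound} to propagate the pinning bound across vertices, Sheffield's log-concavity to upgrade the uniform second-moment bound to uniform exponential moments, uniform integrability to identify the limit of the Laplace transforms, determinacy of the law from the Laplace transform (the paper phrases this via Carleman's condition), and interleaving plus automorphism-translated exhausting sequences for the final assertions. This is the paper's argument in all essentials, with only cosmetic differences in the ordering of steps.
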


\noindent {\bf Remark.}  The statement actually holds more broadly, with  $\mathcal{G}^\ast_{\infty}$  allowed to be a general quasi-transitive graph. Planarity enters only through the proof of \cref{lem:large_grad_bound}, for which it can  be avoided -- at the cost of a  somewhat more involved argument.

\begin{proof}
	We may assume that $\Set{\calG_L^\ast}_L$ is an increasing sequence of sets, in the sense of inclusion. \cref{lem:MU} implies that, for any $x \in \calV^\ast_{\infty}$, $\{\mathbb{E}^{\ZUF,\calG_L^\ast}[n_x^2]\}_L$ is an increasing sequence, and hence has a limit (which may be infinite, but is finite for at least one $x$). This limit does not depend on the choice of exhausting sequence: to see this, consider two exhausting sequences $\Set{\calG_L^{1,\ast}}_L$ and $\Set{\calG_L^{2,\ast}}_L$. We construct a new exhausting sequence $\Set{\calG_L^{3,\ast}}_L$ by interlacing the two sequences, setting $\calG_1^{3,\ast} = \calG_1^{1,\ast}$ and choosing the $(L+1)$st element to be the smallest set containing $\calG_L^{3,\ast}$ from the other exhausting sequence. Since $\lim_L\mathbb{E}^{\ZUF,\calG_L^{3,\ast}}[n_x^2]$ exists, it is the same as the subsequential limits along the odd or even elements, which give the limit over $\calG_L^{1,\ast}$ and $\calG_L^{2,\ast}$, respectively.

By translating the exhausting sequences, we deduce that $\lim_{L}\mathbb{E}^{\ZUF,\calG_L^\ast}[n_x^2]$ is the same for all vertices in the same orbit under the action of translations on $\calG^\ast_{\infty}$. Since $\calG^\ast_{\infty}$ is doubly periodic (and thus has finitely many orbits), we conclude that $\Set{\lim_{L}\mathbb{E}^{\ZUF,\calG_L^\ast}[n_x^2]}_{x \in \calG_\infty}$ takes on finitely many values, at least one of which is finite by assumption. Using \cref{eq:finite gradient expectation} with $\alpha =2$ we conclude that $\lim_{L}\mathbb{E}^{\ZUF,\calG_L^\ast}\left[(n_x - n_y)^2\right] < \infty$ for any two neighbors $x$ and $y$. We may iterate this bound finitely many times to conclude that all the variances are finite, i.e. $K := \max_{x\in \calG_\infty} \lim_{L}\mathbb{E}^{\ZUF,\calG_L^\ast}[n_x^2]$ is finite. In particular, the sequence $\left\{\mathbb{P}^{\ZUF,\calG_L^\ast} \right\}_L$ is tight, and has subsequential distributional limits.

	We may repeat the same argument as above, replacing $n_x^2$ with $\exp(a n_x)$ for any $a > 0$, and conclude that $\lim_L \mathbb{E}^{\ZUF,\calG_L^\ast}[\exp(a n_x)]$ exists (though it may be infinite), and is independent of the choice of exhausting sequence. Since $U$ is convex and symmetric about $0$, any finite-volume law of $n_x$ is log-concave (see~\cite[Section 8.2]{Sheff05}). This implies that there exists $C >0$, depending on $\mathbb{E}^{\ZUF, \calG_L^\ast}[n_x^2]$ alone, such that $\mathbb{P}[|n_x| > t] \leq \exp(-Ct)$. From the uniform bound on the variances, we can conclude that there exists $A = A(K)$ such that $\lim_L  \mathbb{E}^{\ZUF,\calG_L^\ast}[\exp(a n_x)] < \infty$ for all $|a| < A$ and all $x  \in \calG^\ast_{\infty}$. By H\"{o}lder's inequality, we bootstrap that bound, and find that, for any finite $S \subset  \calG^\ast_{\infty}$ and $\{a_x\}_{x \in S} \subset [-A',A']$ with $A' := A'(K,|S|)$, $\lim_L  \mathbb{E}^{\ZUF,\calG_L^\ast}[\exp(\sum_{x \in S}  a_x n_x)] < \infty$.

	Let us take a subsequence of $\{\calG_L^\ast\}_L$, and extract a further subsequence $\{\calG_{L_j}^{\ast}\}_j$ so that $\mathbb{P}^{\ZUF,\calG^\ast_{L_j}}$ converges to some $\tilde{\mathbb{P}}$. If we take $|a_x| < A'/2$ for all $x\in S$, then the random variable $\exp(\sum_{x \in S}  a_x n_x)$ is uniformly integrable with respect to the sequence of measures $\left\{\mathbb{P}^{\ZUF,\calG_{L_j}^\ast}\right\}$. Hence,
	\begin{equation}\label{eq:SubSeqExpMoment}
		\lim_j \mathbb{E}^{\ZUF,\calG_{L_j}^\ast} \left[\exp\left(\sum_{x \in S}  a_x n_x\right)\right]  = \tilde{\mathbb{E}}\left[\exp\left(\sum_{x \in S}  a_x n_x\right)\right].
	\end{equation}
	In particular, these limits are finite, which implies that $m_{2k}^{a}$, the $2k$th moment of $\sum_{x \in S}  a_x n_x$ under $\tilde{\mathbb{P}}$, is bounded above by $(2k)! \cdot c^{2k}$ for some finite $c$ (and for all $a_x$'s small enough). Therefore, $\sum_{k} (m_{2k}^{a})^{-1/2k}$ is infinite, and by Carleman's condition, the marginal of $\tilde{\mathbb{P}}$ on $\sum_{x \in S}  a_x n_x$ is the only distribution with the given moment generating function. However, the lefthand limit in~\eqref{eq:SubSeqExpMoment} is independent of   the choice of subsequence $\calG_{L_j}^{\ast}$; therefore, all subsequenetial limits of $\mathbb{P}^{\ZUF,\calG_L^\ast}$ must have the same cylinder set moment generating function. Thus, all subsequential limits are the same, and $\mathbb{P}^{\ZUF,\calG_L^\ast} $ converges to an infinite-volume Gibbs measure, which is independent of the choice of exahusting sequence. Automorphism invariance follows from the independence of exhausting sequence by applying the automorphism to the exhausting sequence.
\end{proof}

\appendix

\section{A Lieb-Rivasseau type inequality for the Villain model}
\label{sec:LR for Villain}

The practical goal of this section is to establish that the Villain model's spin-spin correlation functions  obey the stated  inequality, from which follows the dichotomy that was stated in \cref{lem_dichotomy}.  To get there we present two observations which are of independent interest.  The first is that the Villain model admits a natural extension to the metric graph.  The second is that this extended model can be regarded as the $1D$ scaling limit of $XY$ models.   The desired inequality follows then by continuity from the one which is known to hold for general $XY$ spin systems.

While the focus in the main body of the paper is on models of constant coupling strength, in this appendix we allow the coupling constants  to vary over the edges, but assume that  $J_{u,v} >0$ for all $\{u,v\}\in \mathcal E$.

\subsection{The Villain model's extension to the metric graph}

Given a finite planar graph $\G$, we denote by $\mathcal M(\G)$ the metric graph obtained by metrizing the edges of $\mathcal E(\G)$,  assigning to each length $1$, and by $\widetilde \calE$ the corresponding set of oriented edges.  For each face
$F$ of $\G$, we denote by $\partial F$  the edges  bounding  $F$.

Let $W$ be the `white noise' process on $\widetilde \calE$, that associates additive Gaussian random variables $W([a,b])$ to intervals along the metrized oriented edges, which are: i) antisymmetric under orientation flip, ii) additive in the natural sense, iii) independent for any finite collection of disjoint intervals,  and  iv) of  variance
\be
\E_\beta \big(  W([a,b])^2\big) = \beta J_{u,v} |a-b| \,.
\ee
  with $\{u,v\}$ the edge containing the interval $[a,b]$.

A specified realization $W$ is compatible with a  spin configuration $\sigma = \{\mathrm{e}^{i \theta_x}\}_{x\in \mathcal V}$  iff for any pair of neighboring sites $\{u,v\}\in \mathcal V$
 \be \label{W1}
\mathrm{e}^{i \theta_v} \  \mathrm{e}^{-i \theta_v}  \ = \ \mathrm{e}^{i W([u,v])}
 \ee
A necessary and sufficient condition on $W$ for such compatibility for some $\sigma$ is that
\be \label{consistency}
 \mathrm{e}^{i W[\partial F]} = 1  \qquad \mbox{(for all faces $F$ of $\G$)} \,.
\ee
And under this condition,
  the spin function admits a natural extension to the metric graph through the relation
\be
\sigma(x) = \sigma(x_0) \cdot \mathrm{e}^{i \int_{x_0}^x W(du)} \,.
\ee
where $x$ assume all values along the edges of $\mathcal M(\G)$, and the integral can be evaluated (consistently) along any continuous  path from $x_0$ to $x$.
Thus, the angle function $\theta: \mathcal V \to [0,2\pi)$ can be seen as providing a fibration of the set of spin-consistent $W$, as  defined by the condition \eqref{consistency}.

Since  the event defined by \eqref{consistency}  is of zero probability, it is appropriate to comment on the relevant interpretation of the corresponding conditional distribution.
On a finite graph, the regular probability density of this event  is defined here to be  the  $\varepsilon \to 0$ limit of the probability that the linear discrepancies in \eqref{W1} are all bounded by $\varepsilon$, divided by $\varepsilon$.  Likewise, the corresponding \emph{conditional probability measure} conditioned on \eqref{consistency}  is the  $\varepsilon \to 0$ limit of the similarly mollified event.
And, since we are dealing here with events of continuous probability densities, the above can also be expressed through the insertion of the product of the appropriate $\delta$-functions, regarded as Schwartz distributions.

By the independence and the Gaussian structure of $W$,
for any  finite graph (not necessarily planar) and any configuration of angles $\{\theta_x\}_{x\in \mathcal V}$
\begin{multline}
\left( 2\pi\beta\right)^{|\mathcal E|/2} \,
\E_\beta\left(  \prod_{\{x,y\}\in \mathcal E} \delta \left( \big|\mathrm{e}^{i\theta_v} \mathrm{e}^{-i\theta_u} - \mathrm{e}^{i W([u,v])} \big|  \right) \right)  \, = \\
 =  \,
\prod_{\{u,v\}\in   {\calE}}
 \left(
 \sum_{n_{u,v}\in \Z}  \exp \left[-\frac{\beta J{u,v}}{2} (\theta_v-\theta_u + 2\pi n_{u,v})^2 \right]
  \right) \,.
\end{multline}
where the product is over unoriented edges,  but then each taken with an arbitrarily selected orientation (which does not affect the result).

A quick comparison with the partition function formula \eqref{Villain_Z1} reveals the following.

\begin{thm}
The Gibbs state of the Villain model can be viewed as the restriction to the vertex set $\mathcal V$ of a random continuous function $\theta$, defined over the edges of the metric graph,  whose probability distribution is induced by   the white noise measure described above,  conditioned on the event \eqref{consistency} (in the sense explained above).  Furthermore, conditioned on the spin values at the vertices, the  distribution of $\theta$ along the edges is given by a collection of independent Brownian bridges on the circle $\mathbb{S}^1$, each conditioned to reach the specified values at the edge boundary.
\end{thm}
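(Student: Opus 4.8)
The plan is to read off both assertions directly from the displayed identity preceding the theorem, using only its comparison with the Villain partition function \eqref{Villain_Z1}. Fix the finite planar graph $\G$. I would disintegrate the white-noise law in two stages --- first conditioning on the consistency event \eqref{consistency}, then further on the vertex angles $\{\theta_x\}_{x\in\mathcal V}$ --- so that the first stage produces the first sentence of the theorem and the second stage the identification with independent circle-valued Brownian bridges.

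\emph{First stage.} Fix a vertex configuration $\theta=\{\theta_x\}_{x\in\mathcal V}\in[-\pi,\pi)^{\mathcal V}$ and let $\rho(\theta)$ denote the regularized density of the event \eqref{consistency} at $\theta$ in the mollified sense introduced above, i.e.
\[
\rho(\theta)\ =\ \E_\beta\Big(\prod_{\{u,v\}\in\mathcal E}\delta\big(\,\big|\mathrm e^{i\theta_v}\mathrm e^{-i\theta_u}-\mathrm e^{iW([u,v])}\big|\,\big)\Big).
\]
By the displayed identity, $\rho(\theta)$ is proportional (with a $\theta$-independent constant) to $\prod_{\{u,v\}\in\mathcal E}\sum_{m\in\Z}\exp[-\tfrac{\beta J_{u,v}}{2}(\theta_u-\theta_v+2\pi m)^2]$, which is exactly the integrand of $Z^{\Vill,\G}_\beta$ in \eqref{Villain_Z1}. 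Normalizing $\rho$ over $[-\pi,\pi)^{\mathcal V}$ therefore identifies the marginal on $\mathcal V$ of the white-noise measure conditioned on \eqref{consistency} with the Villain Gibbs measure, which is the first assertion.

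\emph{Second stage.} Now condition additionally on the vertex angles $\{\theta_x\}$. On an edge $e=\{u,v\}$ let $B^{(e)}_s$ be the Brownian motion obtained by accumulating $W$ from $u$ to the point at arclength $s\in[0,1]$ along $e$, so that the extended spin there is $\mathrm e^{i(\theta_u+B^{(e)}_s)}$ and $B^{(e)}_1=W([u,v])$. Given the vertex angles, the event \eqref{consistency} is equivalent to the requirement that $W([u,v])\in\theta_v-\theta_u+2\pi\Z$ on every edge: the face constraints in \eqref{consistency} are then automatically met because $\sum_{e\in\partial F}\pm(\theta_v-\theta_u)$ telescopes to $0$ around each face, so no coupling between distinct edges survives the conditioning. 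Since $W$ is independent across disjoint edges, the conditioned edge processes are independent; and on a single edge, conditioning on $W([u,v])\in\theta_v-\theta_u+2\pi\Z$ amounts to drawing the winding $n_e\in\Z$ with probability proportional to the Gaussian density of $W([u,v])$ at $\theta_v-\theta_u+2\pi n_e$, and then, by the standard decomposition of Brownian motion into its endpoint value and an independent bridge, running a Brownian bridge from $0$ to $\theta_v-\theta_u+2\pi n_e$. Projecting $\theta(s):=\theta_u+B^{(e)}_s$ to $\mathbb S^1$ turns this into the Brownian bridge on $\mathbb S^1$ from $\theta_u$ to $\theta_v$, which is the second assertion. (As an internal check that the two stages are compatible, integrating these edge bridges over the interior arclengths reproduces $\rho(\theta)$ via the Chapman--Kolmogorov semigroup identity for Brownian motion on $\mathbb S^1$, together with the wrapping formula identifying the time-$1$ circle heat kernel with the $\sum_m$ on the right of the displayed identity.)

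The genuinely delicate point is not any of the Gaussian computations, which are routine, but the rigorous handling of the conditioning on the probability-zero event \eqref{consistency}: one must check that the $\varepsilon$-mollified conditional laws converge, that the limit is a bona fide probability measure on continuous functions over the metric graph, and that it disintegrates over the vertex angles exactly as in the two stages above. On a finite graph this is manageable, since every density in sight is continuous and strictly positive, so the mollified ratios converge uniformly on compacta and the interchange of the $\varepsilon\to0$ limit with the Gaussian conditional expectations follows by dominated convergence; I expect this bookkeeping, rather than any new idea, to be the main effort.
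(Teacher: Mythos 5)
Your proposal is correct and takes essentially the same approach as the paper, which proves the theorem by the displayed multiline identity for $\E_\beta\bigl(\prod_{\{u,v\}}\delta(\,|\mathrm e^{i\theta_v}\mathrm e^{-i\theta_u}-\mathrm e^{iW([u,v])}|\,)\bigr)$ together with ``a quick comparison'' with \eqref{Villain_Z1}; your two-stage disintegration (first conditioning on \eqref{consistency}, then on the vertex angles, with the telescoping of face constraints and the Gaussian endpoint decomposition on each edge) simply makes that comparison explicit. The concluding remarks on the $\varepsilon$-mollified conditioning are consistent with the regularization the paper introduces just before the theorem and are the right bookkeeping to flag.
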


To summarize:  the values of $\theta$ along edges of the metric graph $\mathcal M(\G)$
provide a continuous extension of the Villain model, with the characteristics  stated above.   Under this extension the distribution of the original spin variables remains unmodified.

\subsection{The metric graph Villain model as the $1D$ scaling limit of $XY$}

In the construction presented next  the above continuum Villain model over $\mathcal M(\G)$  emerges as the  limit of a sequence of $XY$ models formulated over refinements of the original graph.  However, in contrast to the above,  in the $XY$ case the distribution  of the spin variables along the original vertex set keeps  being modified (which is unavoidable, since the two discrete models do not coincide).

The $XY$ model's  refinements  are obtained by splitting each edge of $\G$  through the insertion  of $(N-1)$ new sites $\{x_j\}$, and associating to those  spin variables $\mathrm{e}^{i\theta_{x_j}}$, with the interaction between neighboring spins increased $N$ fold (in comparison to the original lattice model's $J_{u,v}$).

The refined $XY$ model's Gibbs factor for an edge $\{u,v\} \in \mathcal E(\G)$ is thus given by the integral over the intermediate sites:
\be \label{F_edge}
\mathrm{e}^{\mathcal {F}^{XY}_N(\theta_u,\theta_{v})} =
\int_{[0,2\pi]^{N-1} } \exp\left( - \frac{\beta J_{u,v} N}2 \sum_{j=1}^N \left|  \mathrm{e}^{i\theta_{x_j}} - \mathrm{e}^{i\theta_{x_{j-1}}} \right|^2 \right) \prod_{j=1}^{N-1} d \theta_{x_j}
 \ee
 where it is to be understood that $x_0\equiv u$, and $x_N\equiv v$.    From these, the extended $XY$ model's partition function is built as
 \be \label{Z_XY_N}
 Z_{\beta,N}^{XY, \mathcal M(\G)} := \int_{[0,2\pi]^{\mathcal V(\G)}} \prod_{\{u,v\}\in \mathcal E(\G)}\mathrm{e}^{\mathcal {F}^{XY}_N(\theta_u,\theta_{v})} \prod_{q\in \mathcal V(\G)} d \theta_q
 \ee
 The corresponding Gibbs equilibrium states are expectation values of spin functions with respect to the  probability measures consisting of the normalized version of those over which the integrals are taken in the above integrals.    It is convenient for our purpose to extend the definition of $\sigma_x$ to the full metric graph even for $N<\infty$, and we do that through the linear interpolation of $\{\theta_{x_j}\}$.

 Following are the two main results of relevance for our discussion.

 \begin{thm}   \label{thm:XY_to_Vill}
 For any finite graph $\G$ the  correlation functions of the extended $XY$ model  described above converge  to those of the corresponding Villain model on the metric graph $\mathcal M(\G)$:
\be
\lim_{N\to \infty}
\langle \sigma_x \cdot \sigma_y\rangle^{XY,\mathcal M(\G)}_{\beta,N}  = \langle \sigma_x \cdot \sigma_y\rangle^{\Vill,\mathcal M(\G)}_\beta
\ee
where the convergence is uniform in $x,y$, ranging over the edges of   $\mathcal M(\G)$.
\end{thm}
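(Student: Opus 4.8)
The plan is to establish the convergence $\langle \sigma_x\cdot\sigma_y\rangle^{XY,\mathcal M(\G)}_{\beta,N}\to\langle \sigma_x\cdot\sigma_y\rangle^{\Vill,\mathcal M(\G)}_\beta$ in three stages: (i) rewrite the refined $XY$ edge factor $\mathcal F^{XY}_N$ in a form that manifestly converges to the Villain edge weight after the Poisson summation formula is applied; (ii) pass from edge-factor convergence to convergence of the full partition function and of the (unnormalized) correlation numerators by a dominated-convergence argument, using the compactness of $(\mathbb S^1)^{\mathcal V(\G)}$; and (iii) upgrade pointwise convergence in $x,y$ to uniform convergence by an equicontinuity estimate on the extended correlation functions. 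First I would exploit the identity already available in this paper: when the $N-1$ interior sites on an edge are integrated out, the edge factor $\exp(\mathcal F^{XY}_N(\theta_u,\theta_v))$ equals, up to a $\theta$-independent normalization, the heat-kernel-on-the-circle at time $1/(\beta J_{u,v})$ evaluated at the endpoint difference $\theta_v-\theta_u$, \emph{in the $N\to\infty$ limit}. Indeed the integral in \eqref{F_edge} is the $N$-step discrete approximation (Euler–Maruyama / Chapman–Kolmogorov) of the circle-valued Brownian bridge transition density. Since $\bigl|\mathrm e^{i\alpha}-\mathrm e^{i\alpha'}\bigr|^2 = 2-2\cos(\alpha-\alpha') = (\alpha-\alpha')^2 + O((\alpha-\alpha')^4)$, the refined $XY$ chain on the circle converges, as $N\to\infty$ with the coupling scaled by $N$, to Brownian motion on $\mathbb S^1$; by Chapman–Kolmogorov the edge factor converges to $\sum_{m\in\Z}\exp\bigl(-\tfrac{\beta J_{u,v}}{2}(\theta_v-\theta_u+2\pi m)^2\bigr)$ up to a constant, which by the Theorem of the previous subsection is exactly the Villain edge weight. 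Quantitatively one can make this a statement about the heat kernel: the $N$-fold convolution of a smooth approximate identity on $\mathbb S^1$ converges uniformly to the heat kernel, with an explicit rate.

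With edge-factor convergence in hand, stage (ii) is routine. The correlation function is a ratio
\[
\langle \sigma_x\cdot\sigma_y\rangle^{XY,\mathcal M(\G)}_{\beta,N}
= \frac{\displaystyle\int_{[0,2\pi]^{\mathcal V(\G)}}\!\!\cos(\theta_x-\theta_y)\prod_{\{u,v\}}\mathrm e^{\mathcal F^{XY}_N(\theta_u,\theta_v)}\prod_q d\theta_q}
{\displaystyle\int_{[0,2\pi]^{\mathcal V(\G)}}\!\!\prod_{\{u,v\}}\mathrm e^{\mathcal F^{XY}_N(\theta_u,\theta_v)}\prod_q d\theta_q},
\]
for $x,y$ on the vertex set, and for $x,y$ on the interiors of edges one first integrates out the relevant interior variables conditionally. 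Since the integration domain $[0,2\pi]^{\mathcal V(\G)}$ is compact, all edge factors are uniformly bounded above (each $\exp(\mathcal F^{XY}_N)$ is a probability kernel up to a normalization constant that converges), and bounded below away from zero on the compact domain, dominated convergence gives convergence of both numerator and denominator, and the denominator limit $Z^{\Vill,\mathcal M(\G)}_\beta$ is strictly positive. Hence the ratio converges to the Villain correlation, pointwise in $x,y$ on the vertex set; the case of $x,y$ in edge interiors is handled identically after integrating out the finitely many interior variables between $x$ (resp.\ $y$) and its two flanking vertices, which again converges to the corresponding Brownian-bridge conditional expectation, i.e.\ the metric-graph Villain correlation.

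For stage (iii), uniformity, I would use that both the $XY_N$-extended spin function and the Villain metric-graph spin function are, conditionally on the vertex values, interpolations (linear for $XY_N$, Brownian-bridge for Villain) of the endpoint spins, so that $|\sigma_x - \sigma_{x'}|$ is controlled by the distance $d_{\mathcal M(\G)}(x,x')$ along the metric graph, uniformly in $N$. Concretely, for $x,x'$ on the same edge, $\E|\sigma_x-\sigma_{x'}|^2 \le C\,\beta J_{u,v}\, d_{\mathcal M(\G)}(x,x')$ for the Villain bridge, and a matching bound (with the same $C$ up to $1+o(1)$) holds for the $XY_N$ chain by the quadratic variation of the refined walk; combined with $|\langle\sigma_x\cdot\sigma_y\rangle - \langle\sigma_{x'}\cdot\sigma_y\rangle| \le \E|\sigma_x-\sigma_{x'}|$, this gives a modulus of continuity for the correlation functions that is uniform in $N$. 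A uniformly equicontinuous family that converges pointwise on the (dense, indeed full) vertex set and whose limit is continuous converges uniformly, which yields the claim. \emph{The main obstacle} is stage (i) made quantitative enough to survive the ratio: one must confirm that the $\theta$-independent normalization constant in $\exp(\mathcal F^{XY}_N)$ — which diverges like a power of $N$ — factors out cleanly from numerator and denominator (it does, since it is the same on every edge and independent of all the $\theta$'s), and that the convergence of the normalized edge kernel to the circle heat kernel is uniform on $\mathbb S^1$; this is a standard local central limit theorem on the torus for the triangular array of increments with the $4$th-order correction $2-2\cos\vf - \vf^2 = O(\vf^4)$ controlled, but it is the one place where genuine estimation, rather than bookkeeping, is needed.
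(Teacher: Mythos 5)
Your proposal shares the technical core of the paper's argument: both hinge on identifying the normalized refined-$XY$ edge factor, after integrating out the $N-1$ interior sites, with a discrete approximation to the heat kernel on $\mathbb S^1$, and then invoking a local central limit theorem (on the torus) to obtain convergence to the Villain/Brownian-bridge edge weight, with the $4$th-order correction $2-2\cos\vf-\vf^2=O(\vf^4)$ controlled on a high-probability truncation set. The paper carries this out via an explicit cutoff on increments, Taylor expansion, rescaling to i.i.d.-like variables $\eta_j$ conditioned on their sum, and then appeals to the LCLT (and you correctly flag this step as the place where genuine estimation is needed). So stage (i) is the same in both.

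Where you diverge is in how the edge-factor convergence is promoted to convergence of the correlation functions on the full metric graph. The paper's route is to invoke Donsker's theorem to get convergence in distribution of the entire interpolated spin path in the optimal-coupling sup-norm metric, from which uniform convergence of $\langle\sigma_x\cdot\sigma_y\rangle$ over $x,y\in\mathcal M(\G)$ falls out at once. Your route is more modular: dominated convergence of numerator and denominator on the compact domain $[0,2\pi]^{\calV(\G)}$ (with the diverging but $\theta$-independent normalization cancelling in the ratio) gives pointwise convergence at vertices, conditional integration extends this to interior points, and an equicontinuity estimate uniform in $N$ upgrades pointwise to uniform. Both are valid. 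The Donsker route is more economical in that it gives path-level convergence in one stroke, which is strictly stronger than what the statement requires; your DCT-plus-equicontinuity route is more elementary and self-contained. One small slip worth flagging: the paper's stated variance for the white-noise process is $\beta J_{u,v}|a-b|$, but matching the Villain Gibbs factor $\exp(-\tfrac{\beta J_{u,v}}{2}(\theta_u-\theta_v+2\pi m)^2)$ requires variance $|a-b|/(\beta J_{u,v})$; the paper appears to have a typo there, and your equicontinuity bound $\E|\sigma_x-\sigma_{x'}|^2\le C\beta J_{u,v}\,d(x,x')$ inherits the wrong scaling from it. Since the equicontinuity argument only requires a modulus of continuity $C'\sqrt{d(x,x')}$ uniform in $N$, this does not affect the validity of the proof, but the constant should read $C\,d(x,x')/(\beta J_{u,v})$.
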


\begin{proof}
To simplify the expressions under consideration, let us note that
 \begin{eqnarray}  \left|  \mathrm{e}^{i\theta_j} - \mathrm{e}^{i\theta_{j-1}} \right|^2  &= & 4   \left(\sin\left(\frac{\theta_j - \theta_{j-1}}2 \right)\right)^2
 \notag \\[1ex]
&= &
 \left( \theta_j - \theta_{j-1}\right)^2  -\frac{1}{12} \left( \theta_j - \theta_{j-1}\right)^4 +
 O\left(\left( \theta_j - \theta_{j-1}\right)^6\right)
 \,,
 \end{eqnarray}
From the leading order it is not difficult to see that for each $\alpha >1$ the integral in \eqref{F_edge} is asymptotically supported on the set of configurations
\be \label{max_estimate}
\mathcal {L}^{\theta_u,\theta_v}_\alpha =
\left\{ \theta \in [-\pi, \pi)^{N-1} : \max_{j=1,...,N}  |\theta_{x_j}-  \theta_{x_{j-1}} |  \leq \alpha \sqrt{\frac{2\ln N}{\beta J_{u,v} N}} \right\} \,.
\ee
 More explicitly, at any $\varepsilon >0$, for large $N$ the contribution to the integral of configurations at which the above is violated is bounded by\footnote{The stated estimate is most natural in case the integral is with free boundary conditions at one of the edge's ends.  In the restricted case, as here, the proof includes an elementary redistribution step.  Still,  this estimate corresponds to the more elementary part of  L\'evy's theorem on the Brownian motion's short time increments.}
 $$
 N\,  \mathrm{e}^{ \beta N  \alpha^2    \ln N /(\beta N) }= \frac{\mathrm{e}^{\varepsilon }}{N^{\alpha^2-1}}
 $$

 Under the condition \eqref{max_estimate}, the last term in \eqref{F_edge} makes only a negligible contribution to the sum in \eqref{F_edge} -- of the order of $ 4\alpha^6\beta^{-1}\,   O\left( \frac{(\ln N)^3}{N}   \right)$.   Thus:
 \begin{multline} \label{F_2}
\mathrm{e}^{\mathcal {F}^{XY}_N(\theta_u,\theta_v)} =  \\
\mathrm{e}^{o(1)}
\int_{\mathcal {L}^{\theta_u,\theta_v}_\alpha } \exp\left\{
- \frac{\beta J_{u,v}  N}2 \sum_{j=1}^{N-1}
\left[ \left( \theta_j - \theta_{j-1}\right)^2  -\frac{1}{12} \left( \theta_j - \theta_{j-1}\right)^4
\right] \right\}
\prod_{j=1}^{N-1} d \theta_{x_j}
 \end{multline}
Rewriting this expression in terms of the rescaled variables
\be
\eta_j :=  \sqrt{\beta J_{u,v}  N} (\theta_j - \theta_{j-1})
\ee
one gets
  \begin{multline} \label{F_3}
\mathrm{e}^{\mathcal {F}^{XY}_N(\theta_u,\theta_v)} = \mathrm{e}^{o(1)}  C_{\beta,N}
\int ...\int_{|\eta_j|\leq \sqrt{2\ln N}}   \sum_{k\in \Z} \delta \left(\frac{1}{\sqrt {N} } \sum_{j=1}^N\eta_j - \sqrt{\beta J_{u,v}}[(\theta_v-\theta_u)  + 2\pi k ]\right)     \times \hfill
\\
\exp\left\{
- \frac{1}2 \sum_{j=1}^{N-1}
\left[ \eta_j^2  -(12\beta J_{u,v}  N)^{-1} \eta_j^4
\right] \right\}
\prod_{j=1}^{N} d \eta_j
 \end{multline}
 with  $C_{\beta,N}$ 
which does not depend on $(\theta_v-\theta_u)$. Any such factor has no effect on the probability distribution of $\theta$.

It should  be noticed that except for the effect of the conditioning on the value of their overall sum,
$\eta_j$ are distributed as i.i.d. variables with finite moments (close to those of the normal distribution).
By the general \emph{local central limit theorem}
(cf. \cite{DobTir77,Bor17})
if follows that  the dependence of resulting Gibbs factor $\mathrm{e}^{\mathcal {F}^{XY}_N(\theta_u,\theta_v)} $  on the angles $\{\theta_u,\theta_v\}$  is asymptotically the same as it would have been had $\frac{1}{\sqrt {N} } \sum_{j=1}^N\eta_j$ been replaced by a normal random variable of the same variance, which for the above   iid variables is
\begin{align}
\rm{Var} &=    \int_{|\eta_j|\leq \sqrt{2\ln N}}   \eta^2
\exp\left\{
- \frac{1}2
\left[ \eta^2  -(12\beta J_{u,v}  N)^{-1} \eta^4
\right] \right\}
 d \eta / \rm{Norm.}
 \notag \\
& = 1+ \frac{1}{\beta J_{u,v} N} + o\left(\frac{1}{N}\right)
\end{align}
More explicitly,
\be
\mathrm{e}^{\mathcal {F}^{XY}_N(\theta_u,\theta_v)} = \mathrm{e}^{o(1)} \widetilde{C}_{\beta,N} \sum_{k\in \Z } \mathrm{e}^{  - \frac{1}{2} \beta J_{u,v} [(\theta_v-\theta_u)  + 2 \pi k ]^2}
\ee

The above implies that  in the limit $N\to \infty$ the distribution of  the extended $XY$ model's spins
along the vertices of $\G$ converges to that of the standard Villain model.

Furthermore, applying Donsker's theorem \cite{Don51,Dud99} on the convergence of the paths of random walks to those of Brownian motion, we conclude that the distribution of the full spin function $\sigma_x$ along the edges of $\mathcal M(\G)$, converges to that of the extended Villain model.  The convergence is in the sense of the distribution of the $L^{\infty}$ distance in the optimal coupling between a pair of random continuous functions.
The stated convergence of the correlation functions readily follows.

Let us stress that in the above analysis the limit $N\to \infty$ is taken first, before the graph's infinite-volume limit.  That is however sufficient for our purpose.   \end{proof}

\noindent {\bf Remarks:}  It should be noted that the proof of convergence given above applies more generally, including to spin systems with arbitrary rotation invariant (bounded)  finite-range interactions.  In this sense the extended Villain model's distribution along an edge emerges as a universal $1D$ ultraviolet limit of models with $O(2)$ symmetric, and asymptotically local, interactions.   
While on a second thought this should not be viewed as surprising, given the theory around the Central Limit Theorem, this observation carries some less immediate implications, such as \cref{cor:Vill}.

\subsection{Useful implications}

As a direct implication of \cref{thm:XY_to_Vill} we get
 the following extended version of \cref{XY_LR}: 

\begin{cor} \label{cor:Vill} For any finite graph $\G$, the ``Lieb-Rivasseau type'' inequality \eqref{XY_LR} holds for both the Villain model on $\G$ and for its corresponding metric graph extension on $\mathcal M(\G)$.
\end{cor}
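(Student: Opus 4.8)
The plan is to obtain \eqref{XY_LR} for the Villain model by transferring it, via the limit of \cref{thm:XY_to_Vill}, from the $XY$ model, for which \eqref{XY_LR} is provided — for arbitrary ferromagnetic couplings on an arbitrary finite graph — by \cref{lem_corr_ineq}. Fix the finite planar graph $\G$ and, for each $N$, let $\G_N$ be the refinement in which every edge $\{u,v\}\in\calE(\G)$ is subdivided by $N-1$ new sites carrying coupling $N\beta J_{u,v}$ on each of the $N$ resulting pieces; this is exactly the graph underlying the extended $XY$ model $Z^{XY,\mathcal M(\G)}_{\beta,N}$ of \cref{thm:XY_to_Vill}.

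For the Villain model on $\G$ itself: given $\Lambda\subseteq\calV(\G)$, a site $x\in\Lambda$ and $y\in\Lambda^c$, set $B:=\partial_s\Lambda$ and let $\Lambda_N\subseteq\calV(\G_N)$ be the union of $\Lambda$ with all subdivision sites lying on edges of $\G$ both of whose endpoints are in $\Lambda$. A short check shows $\partial_s\Lambda_N=B$ and that $\G_N\cap\Lambda_N$ is the $N$-fold refinement of $\G\cap\Lambda$. Applying \eqref{XY_LR} in $\G_N$ with this $\Lambda_N$,
\be
\langle\sigma_x\cdot\sigma_y\rangle^{XY,\G_N}_\beta\ \le\ \sum_{u\in B}\langle\sigma_x\cdot\sigma_u\rangle^{XY,\G_N\cap\Lambda_N}_\beta\,\langle\sigma_u\cdot\sigma_y\rangle^{XY,\G_N}_\beta\ \le\ \sum_{u\in B}\langle\sigma_x\cdot\sigma_u\rangle^{XY,\G_N}_\beta\,\langle\sigma_u\cdot\sigma_y\rangle^{XY,\G_N}_\beta .
\ee
Since $x$, $y$ and every $u\in B$ are vertices of $\G$, hence of $\G_N$, \cref{thm:XY_to_Vill} applied to $\G$ and to $\G\cap\Lambda$ shows that every correlation function here converges as $N\to\infty$ to its Villain counterpart, which gives \eqref{XY_LR} for the Villain model on $\G$.

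For the metric-graph extension the same scheme works, with one extra approximation step. If $x,y\in\mathcal M(\G)$ and the finite separating set $B$ consists of vertices of $\G$ together with points at positions in $\tfrac{1}{N_0}\ZZ$ along edges, then choosing $N$ divisible by $N_0$ turns $x,y,B$ into genuine vertices of $\G_N$ and the argument of Step 1 applies unchanged (for the first inequality in \eqref{XY_LR} one also invokes the trivial extension of \cref{thm:XY_to_Vill} to metric graphs with rational edge-lengths, needed because $\mathcal M(\G)\cap\Lambda$ may have truncated edges). For arbitrary $x\ne y$ and arbitrary finite separating $B\subset\mathcal M(\G)\setminus\{x,y\}$ one approximates: sliding the points of $B$ slightly along their edges and $x,y$ along theirs, while leaving vertex-points at vertices, alters neither the connected component of $\mathcal M(\G)\setminus B$ containing $x$ nor the one containing $y$, so there exist $x',y',B'$ with rational positions, $|B'|=|B|$ and $B'$ separating $x'$ from $y'$, arbitrarily close to $x,y,B$; applying the rational case to $(x',y',B')$ and letting the mesh tend to $0$, the claim follows from continuity of the metric-graph Villain correlations in the positions of their arguments, which is immediate from the Brownian-bridge description established above. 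The only non-mechanical ingredient in the whole proof is this final approximation — perturbing $x$, $y$ and $B$ at once while preserving both the separation property and $|B|$; everything else is bookkeeping around \cref{thm:XY_to_Vill}.
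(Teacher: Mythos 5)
Your proposal is correct and follows essentially the same strategy as the paper, which dispatches the corollary in a single sentence ("follows by continuity from the Lieb-Rivasseau proof of the inequality for the $XY$ model on arbitrary finite graphs"); you are simply spelling out what "by continuity" means: the identification $\partial_s\Lambda_N=\partial_s\Lambda$, the matching of $\G_N\cap\Lambda_N$ with the refinement of $\G\cap\Lambda$, and the limit via \cref{thm:XY_to_Vill}. Your added treatment of the metric-graph case (rational positions first, then perturbing $x,y,B$ while preserving separation and $|B|$, using continuity from the Brownian-bridge description) fills a detail the paper leaves implicit and is a sound completion of the argument.
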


This statement follows by continuity from the Lieb-Rivasseau proof of  the inequality for the $XY$ model on arbitrary finite graphs.

Another implication of interest, which is obtained by combining \cref{thm:XY_to_Vill} with the known $XY$ Ginibre inequality \cite{Gin70} is:

\begin{cor} \label{cor:InfLim}  The spin-spin two-point function of the Villain model is pointwise monotone in the coupling constants along each edge, and hence in the volume of the system. Consequently, the infinite-volume limit of the two-point function exists.
\end{cor}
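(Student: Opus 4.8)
The plan is to obtain the two monotonicity assertions from the Ginibre inequalities for the ferromagnetic $XY$ model~\cite{Gin70} together with the scaling limit of \cref{thm:XY_to_Vill}, and then to derive convergence of the infinite-volume two-point function from monotonicity and boundedness. Recall that for the $XY$ (plane rotator) model on an arbitrary finite graph with ferromagnetic couplings $J_e\ge 0$, the Ginibre inequalities give $\langle\sigma_x\cdot\sigma_y\rangle\ge 0$ and that $\langle\sigma_x\cdot\sigma_y\rangle$ is non-decreasing in each $J_e$. I would apply this to the refined graph underlying the model $Z^{XY,\mathcal M(\G)}_{\beta,N}$ of \cref{Z_XY_N}, which is literally an $XY$ model with ferromagnetic couplings $\beta J_{u,v}N$ on the sub-edges of each $\{u,v\}\in\calE(\G)$. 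Increasing $J_{u,v}$ in the original Villain model corresponds to increasing, simultaneously, the coupling on all $N$ sub-edges of $\{u,v\}$; by the Ginibre monotonicity this only increases $\langle\sigma_x\cdot\sigma_y\rangle^{XY,\mathcal M(\G)}_{\beta,N}$, for every fixed $N$.

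Next I would pass to the limit $N\to\infty$. By \cref{thm:XY_to_Vill}, for any fixed finite $\G$ and any two points $x,y$ on $\mathcal M(\G)$ --- in particular for $x,y\in\calV(\G)$, where the metric-graph Villain two-point function coincides with that of the ordinary Villain model on $\G$ by the metric-graph representation of \cref{sec:LR for Villain} --- one has $\langle\sigma_x\cdot\sigma_y\rangle^{XY,\mathcal M(\G)}_{\beta,N}\to\langle\sigma_x\cdot\sigma_y\rangle^{\Vill,\G}_\beta$ as $N\to\infty$, uniformly in $x,y$. A pointwise limit of functions each non-decreasing in a parameter is again non-decreasing in that parameter, so the Villain two-point function is non-decreasing in each coupling $J_e$, $e\in\calE(\G)$. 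Monotonicity in the volume is then a special case: the Villain model on $\Lambda_L$ with free boundary conditions is obtained from the model on any $\Lambda_{L'}\supset\Lambda_L$ by sending to $0$ the couplings on the edges not internal to $\Lambda_L$. It is cleanest to carry out this comparison already on the $XY$ side --- where setting an edge coupling to $0$ is harmless and Ginibre applies verbatim --- and only afterwards take $N\to\infty$; alternatively, one may argue directly on the Villain side using that, after division by the $\vf$-independent prefactor $\sqrt{2\pi/(\beta J_e)}$, the edge weight $\sum_{m\in\ZZ}\exp(-\tfrac{\beta J_e}{2}(\vf+2\pi m)^2)$ tends to $1$ uniformly in $\vf$ as $J_e\downarrow 0$, so that the Gibbs measure decouples onto $\Lambda_L$ in the limit.

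Finally, for $\G=\Lambda_L$ exhausting $\Z^2$ (or any doubly-periodic tame graph), the sequence $L\mapsto\langle\sigma_x\cdot\sigma_y\rangle^{\Vill,\Lambda_L}_\beta$ is non-decreasing by the above and bounded above by $1$ (since $\sigma_x\cdot\sigma_y=\cos(\theta_x-\theta_y)\le 1$), hence it converges; this is precisely the claimed existence of the infinite-volume two-point function. I expect the only real friction to be the bookkeeping around the $J_e=0$ degeneracy of the Villain interaction in the volume step, which, as noted, is avoided entirely by passing through the refined $XY$ systems. No input is needed beyond \cref{thm:XY_to_Vill}, the metric-graph representation of \cref{sec:LR for Villain}, and the Ginibre inequalities~\cite{Gin70}.
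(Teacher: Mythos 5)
Your proof is correct and follows exactly the route the paper indicates: the paper states the corollary "is obtained by combining \cref{thm:XY_to_Vill} with the known $XY$ Ginibre inequality," and your argument is precisely that combination, with the useful additional remark that the $J_e\to 0$ degeneracy of the Villain weight is most cleanly handled on the refined $XY$ side before taking $N\to\infty$.
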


\section{Minorization for general graphs}
\label{sec:minorization for general graphs}

The purpose of this section is to present the
minorization construction for graphs of degree greater than $4$.

\begin{proof}[Proof of \cref{thm:depinning for ZUF}]
Since the main idea is sufficiently conveyed by considering the case of homogeneous graphs, i.e. planar doubly-periodic graphs whose unit cell consists of just one vertex, we focus on this case.  The construction described below is easily adapted to graphs of larger, but  finite, periodicity cells.

Splitting each edge through the addition of a mid point, the graph is converted into one which is tiled by copies of a  star graph,  $S_d$, each rooted in a vertex of the original graphs $\G$ (see \cref{fig:triangular graph one vertex}). The idea is to replace each of the star graphs by a tree whose root is at the center of $S_d$, whose leaves are the external vertices of $S_d$, and whose maximal degree is $3$.  Each original edge of $S_d$ may be identified with a path from a leaf of the new tree back to its root.

Eventually all vertices are replaced in this manner and we obtain the new graph which is of maximal degree $3$, see \cref{fig:triangular graph many vertices}.

\begin{figure}[t]
	\begin{subfigure}{.2\textwidth}
	\centering
	\includegraphics[width=\textwidth]{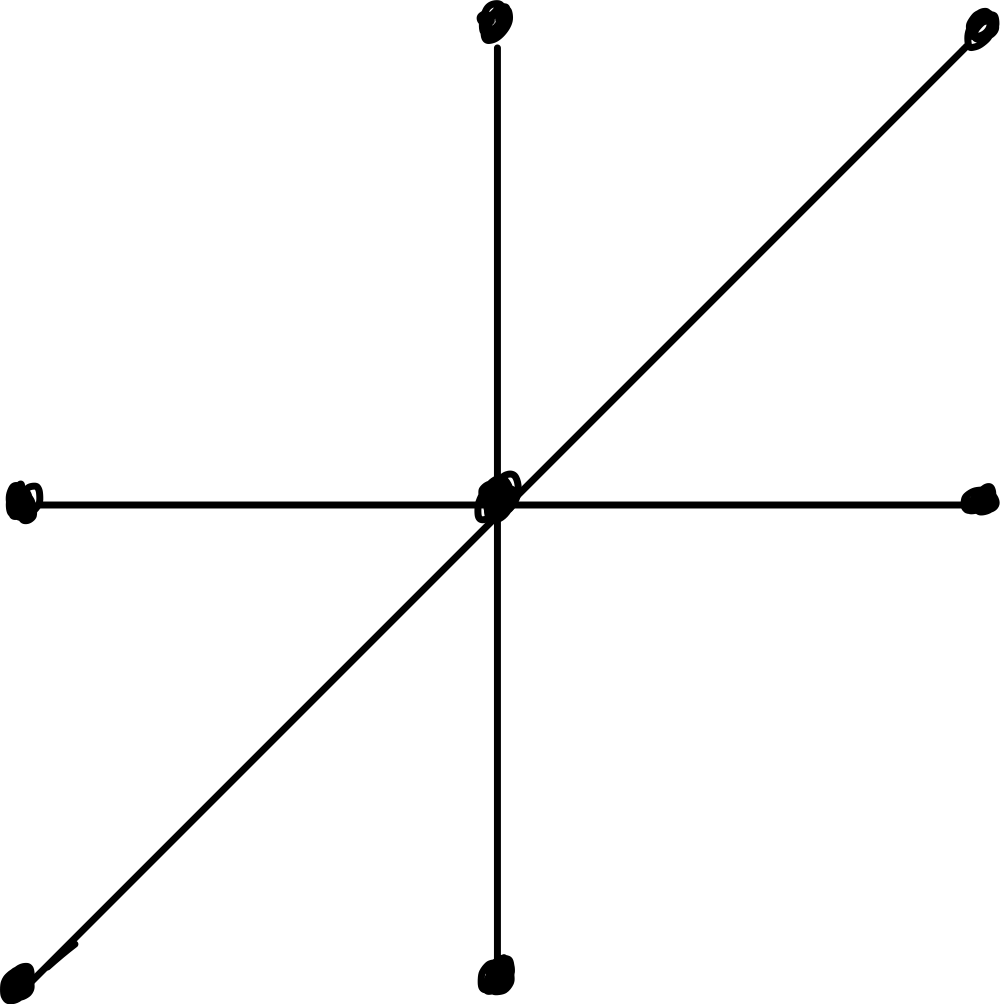}
	\caption{A star within the original triangular graph.}
\end{subfigure}
\qquad
\begin{subfigure}{.2\textwidth}
	\centering
	\includegraphics[width=\textwidth]{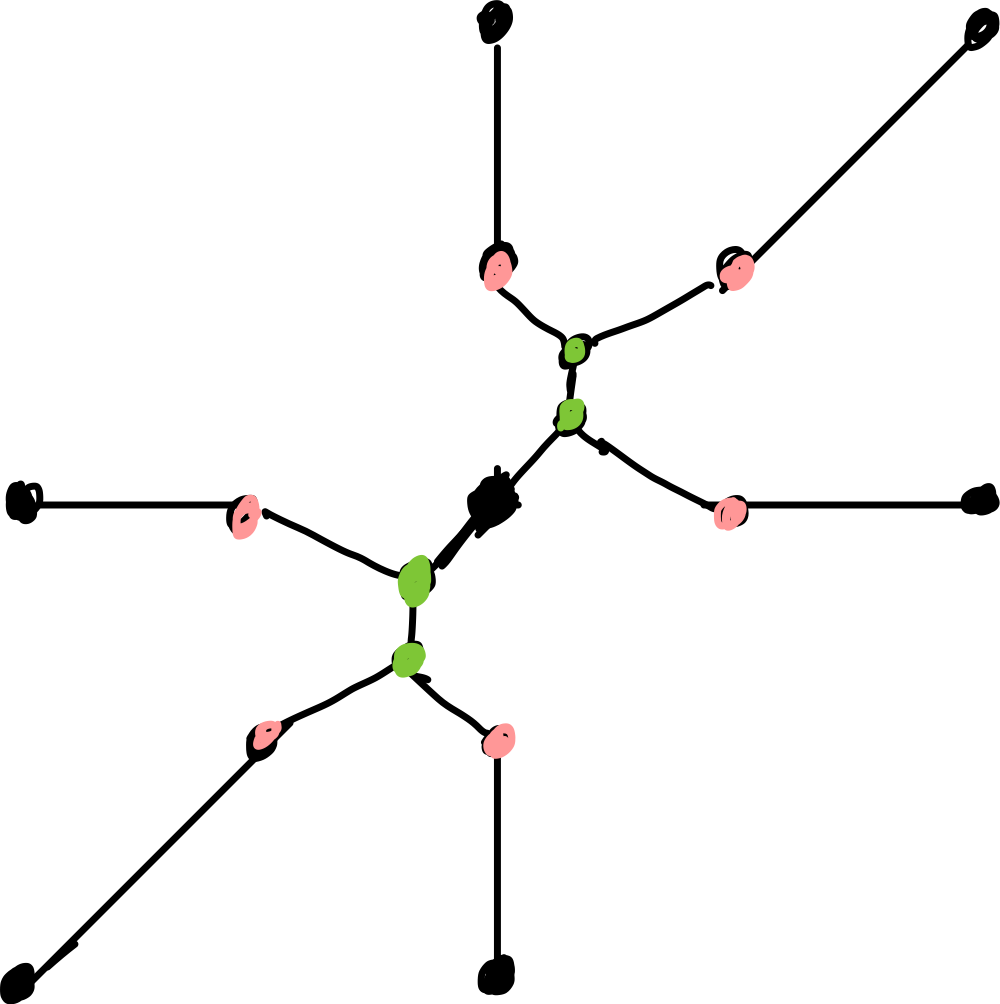}
	\caption{The surgery which happens at each original (black) vertex: green and pink are newly introduced vertices.}
\end{subfigure}
\qquad
\begin{subfigure}{.1\textwidth}
	\centering
	\includegraphics[width=\textwidth]{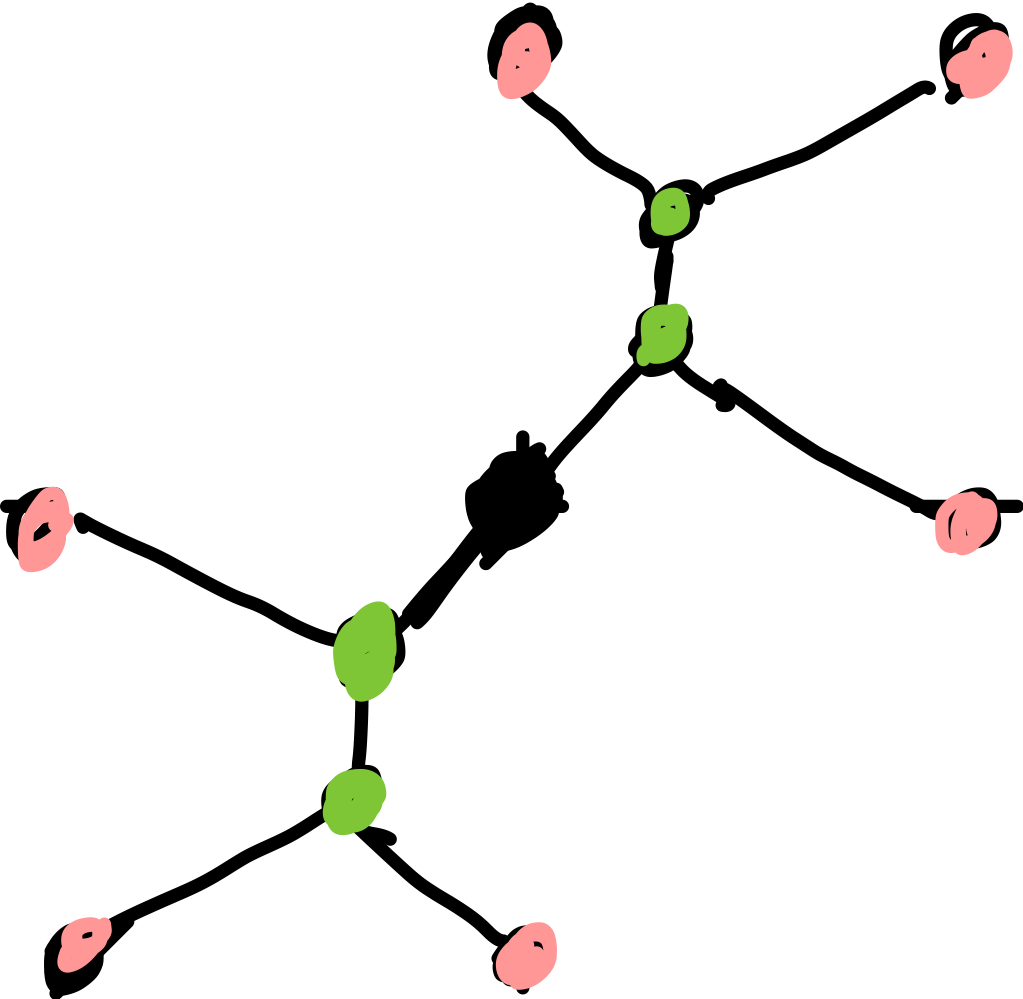}
	\caption{The whole graph is now stitched along the pink vertices.}
\end{subfigure}
	\caption{Example of the procedure at each vertex for the triangular graph.}\label{fig:triangular graph one vertex}
\end{figure}

\begin{figure}[t]
	\centering
	\includegraphics[width=0.5\textwidth]{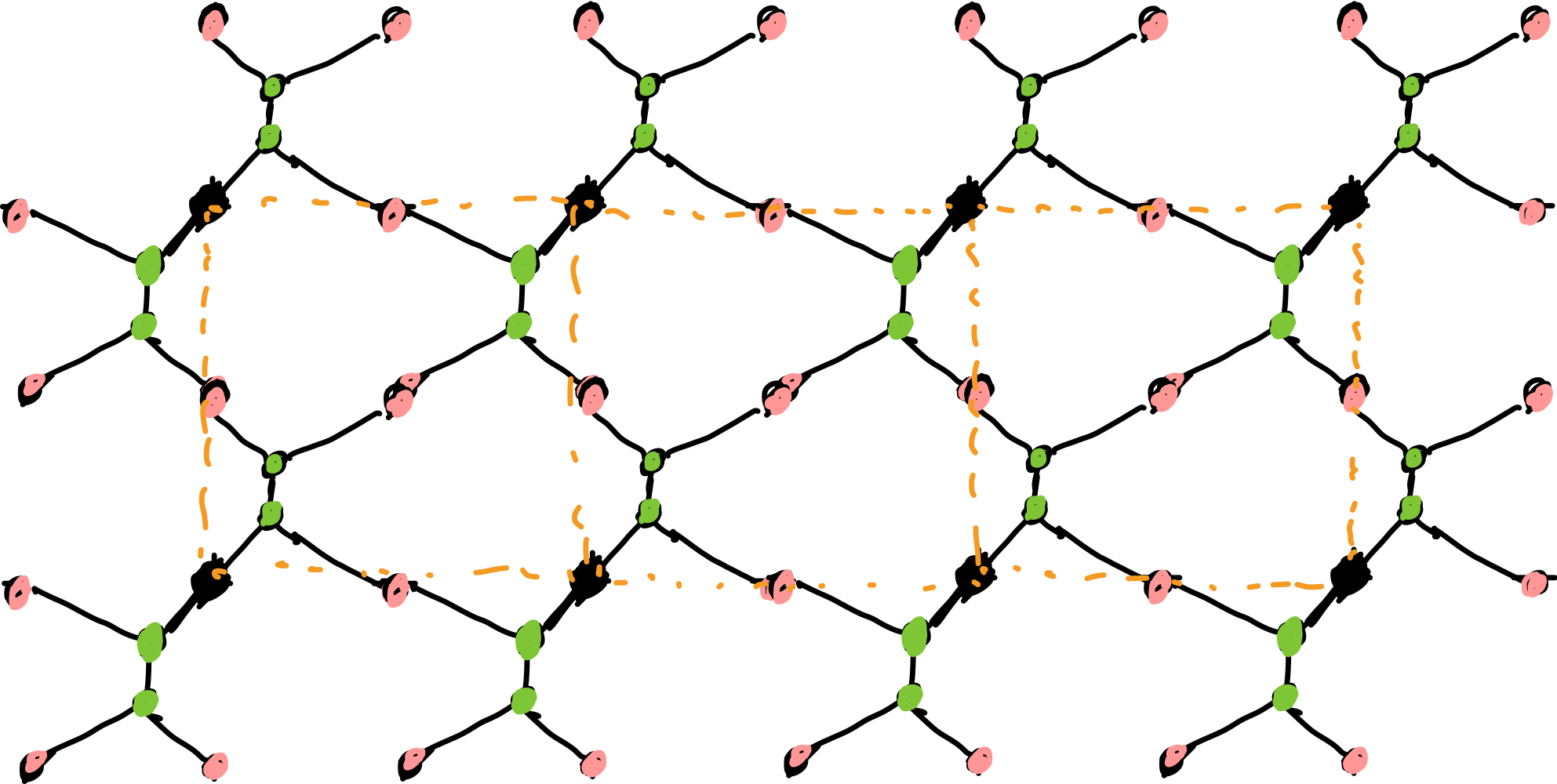}
	\caption{Example of the procedure for a window of the triangular graph. The dashed orange line traces the original unit cell. The maximal degree of this new graph is 3.}\label{fig:triangular graph many vertices}
\end{figure}

	Let $\ell := \ceil{\log_2(d)}$, with $d$  the degree of $\calG$. Divide each edge of the original graph $\calG$ into $r=2\ell$ edges, which is possible since \cref{eq:U r-fold conv} is assumed with multiplicity $2\ell$. Of these $2\ell$ edges, $\ell$ will be used for the surgery of each vertex.
	
	Now, thanks to \cref{lem:MU}, restricting to a sub-lattice will only lower the fluctuations. Hence, restrict to the sub-lattice such that various of the newly-added vertices take the same value so as to replace each star of $d$ edges at each vertex with a tree of $d$ leaves and maximal degree $3$ at each vertex. This is accomplished as follows:
	\begin{itemize}
		\item All new vertices closest to each original vertex (the root of the tree) are divided into two, and we restrict to a sublattice so that each group takes one common value of the field.
		\item The set of next closest vertices are split into two groups, each of which takes the same common value of the field.
		\item This process is repeated $\ell-1$ times (possibly avoiding some restrictions, if $d < 2^\ell$), yielding the desired tree.
	\end{itemize}
	
	Since with each restriction to a sub-lattice the fluctuations go down, we obtain the requisite monotonicity. The new graph clearly has the same periodic structure as the original one, albeit with more complicated unit cell, but with maximal degree three.
\end{proof}
\begin{rem}
	We note that this process did not optimize at all for the best estimate on the critical coupling constant. However, it is clear that since some (new) edges have higher multiplicity than others, one could further optimize the procedure.
\end{rem}

\section{The Regev Stephens-Davidowitz monotonicity theory}\label{sec:RSD monotonicity theory}
	
	For the completeness of presentation, we summarize below the key points of the Regev Stephens-Davidowitz monotonicity theory \cite{RegDav17}, and state their relation to the case of interest here.
	
	A lattice $\calL$ in $\RR^k$ is a set of the form $\calL := L\ZZ^k$ for some $n\in\NN$ and  $L\in\Mat_k(\RR)$.  Given  a  positive-definite matrix $A:\RR^k\to\RR^k$,
a random field $\psi\in\calL$ associated with $\left(A,\calL\right)$ has as its partition function
\be  Z_{A,\calL} := \sum_{\psi\in\calL}\exp(-\frac12\ip{\psi}{A\psi})\,,
\ee
and the  probability distribution of its configurations is given by their  normalized contributions to the sum.

The corresponding moment generating function  is
	\begin{align} \MM_{A,\calL}\left[v\right] := \EE_{A,\calL}\left[\exp\left(\ip{v}{\psi}\right)\right]\qquad(v\in\RR^k)\,.
	\end{align}
	
	The following  result is an important  tool for the derivation of monotonicity estimates.
	\begin{thm}
		For any lattice $\calL$  and a positive matrix $A\geq0$
		\begin{align}\label{Pythagoras_ineq}
			\MM_{A,\calL}\left[u\right]\MM_{A,\calL}\left[v\right] \leq \sqrt{\MM_{A,\calL}\left[u+v\right]\MM_{A,\calL}\left[u-v\right]}\qquad(u,v\in\RR^k)\,.  \end{align}
	\end{thm}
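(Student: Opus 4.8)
The inequality \eqref{Pythagoras_ineq} is a purely algebraic identity about theta-like sums, so the plan is to prove it by direct manipulation of the lattice sums, with a Cauchy--Schwarz step as the crux. Writing $g_A(\psi) := \exp(-\tfrac12\ip{\psi}{A\psi})$, we have $Z_{A,\calL}\MM_{A,\calL}[w] = \sum_{\psi\in\calL} g_A(\psi)\Oe^{\ip{w}{\psi}}$. Completing the square, $g_A(\psi)\Oe^{\ip{w}{\psi}} = \Oe^{\frac12\ip{w}{A^{-1}w}} g_A(\psi - A^{-1}w)$ whenever $w$ lies in the span of $A$ (the degenerate directions are handled by restricting to that span throughout, since $\psi$ ranges over $\calL$ which lies in $\operatorname{span}(L)$ and $A>0$ there). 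Hence, up to the common prefactor $\Oe^{\frac12\ip{w}{A^{-1}w}}$, the quantity $Z_{A,\calL}\MM_{A,\calL}[w]$ equals the shifted theta sum $\Theta(w) := \sum_{\psi\in\calL} g_A(\psi - A^{-1}w)$.

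The first key step is to record the parallelogram-type identity for the quadratic form: for any $\psi$ and any $u,v$,
\[
\ip{\psi - A^{-1}u}{A(\psi - A^{-1}u)} + \ip{\psi - A^{-1}v}{A(\psi - A^{-1}v)}
= \ip{\psi - A^{-1}(u{+}v)}{A(\psi - A^{-1}(u{+}v))}/?
\]
— more precisely the clean statement is $g_A(\psi - A^{-1}u)\,g_A(\psi - A^{-1}v) = g_A(\psi - A^{-1}u - A^{-1}v)\,g_A(\psi)\,\Oe^{\ip{u}{A^{-1}v}}$, which one checks by expanding both sides. Summing this over $\psi\in\calL$ and using the shift-invariance of the lattice sum is what will convert a product $\Theta(u)\Theta(v)$ into something comparable to $\Theta(u+v)\Theta(u-v)$. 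The second key step is Cauchy--Schwarz in the form
\[
\Theta(u)\,\Theta(v) = \Big(\sum_{\psi}g_A\big(\psi - \tfrac{A^{-1}u + A^{-1}v}{2} - \tfrac{A^{-1}u - A^{-1}v}{2}\big)^{1/2}g_A\big(\psi - \tfrac{A^{-1}u + A^{-1}v}{2} + \tfrac{A^{-1}u-A^{-1}v}{2}\big)^{1/2}\Big)^2
\]
after reindexing, bounded above by $\Theta(u+v)\,\Theta(u-v)$ once the Gaussian exponents are split symmetrically; the exponential prefactors $\Oe^{\frac12\ip{w}{A^{-1}w}}$ combine via $\ip{u}{A^{-1}u} + \ip{v}{A^{-1}v} = \tfrac12\ip{u{+}v}{A^{-1}(u{+}v)} + \tfrac12\ip{u{-}v}{A^{-1}(u{-}v)}$, so they match exactly and drop out after one divides through by $Z_{A,\calL}^2$. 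Taking square roots yields \eqref{Pythagoras_ineq}.

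The main obstacle I anticipate is bookkeeping rather than conceptual: one must be careful that the shift $\psi \mapsto \psi - \tfrac12 A^{-1}(u+v)$ need not map $\calL$ to itself, so the Cauchy--Schwarz step cannot be applied naively term-by-term on $\calL$; instead one writes $\Theta(u) = \sum_{\psi\in\calL} g_A\big((\psi - \tfrac12 A^{-1}(u{+}v)) - \tfrac12 A^{-1}(u{-}v)\big)$ and similarly for $\Theta(v)$ with a $+$ sign, so that the \emph{same} translated lattice $\calL - \tfrac12 A^{-1}(u+v)$ indexes both sums, and Cauchy--Schwarz is legitimately applied over that common index set. A cleaner route, which I would actually adopt to avoid this, is the Fourier/Poisson-summation viewpoint: by Poisson summation $\Theta(w)$ is (up to constants depending only on $\calL$ and $A$) equal to $\sum_{\xi\in\calL^{*}} \widehat{g_A}(\xi)\,\Oe^{2\pi\ii\ip{A^{-1}w}{\xi}}$ with $\widehat{g_A}$ again Gaussian and nonnegative; then $\Theta(u)\Theta(v) \le \Theta(u+v)\Theta(u-v)$ becomes a statement about the nonnegative measure $\sum_\xi \widehat{g_A}(\xi)\delta_\xi$ and its Fourier transform being a positive-definite-type function, which follows from Cauchy--Schwarz applied to characters. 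Either way the inequality reduces to Cauchy--Schwarz plus the exact cancellation of Gaussian normalizations, and I would present whichever of the two is shorter to write out in full.
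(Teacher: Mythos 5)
There is a genuine gap in your Cauchy--Schwarz step, and it is conceptual rather than bookkeeping. After passing to the shifted theta sums $\Theta(w)=\sum_{\psi\in\calL}g_A(\psi-A^{-1}w)$, the quantity you need to bound from above is the \emph{product of two sums} $\Theta(u)\Theta(v)=\bigl(\sum_\psi g_A(\psi-A^{-1}u)\bigr)\bigl(\sum_{\psi'}g_A(\psi'-A^{-1}v)\bigr)$, i.e.\ a double sum over $(\psi,\psi')\in\calL\times\calL$. The expression you equate it to, $\bigl(\sum_\psi g_A(\psi-A^{-1}u)^{1/2}g_A(\psi-A^{-1}v)^{1/2}\bigr)^2$, is the square of a \emph{single} sum; these are not equal, and Cauchy--Schwarz relates them the wrong way: $(\sum_\psi a_\psi b_\psi)^2\le(\sum_\psi a_\psi^2)(\sum_\psi b_\psi^2)=\Theta(u)\Theta(v)$, a \emph{lower} bound on $\Theta(u)\Theta(v)$. (Indeed, by your own parallelogram identity the squared single sum simplifies to $\Theta(\tfrac12(u+v))^2\,\Oe^{-\frac14\ip{u-v}{A^{-1}(u-v)}}$, which is not what is wanted.) Re-indexing over the common translated lattice $\calL-\tfrac12A^{-1}(u+v)$ does not change this: Cauchy--Schwarz over $\psi\in\calL$ simply points in the wrong direction. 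The change of variables $(\psi,\psi')\mapsto(\psi+\psi',\psi-\psi')$ that would realign the shifts does \emph{not} map $\calL\times\calL$ onto itself, and that failure is exactly where the real work lives.

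The paper resolves this by passing to the doubled lattice $\calL^2$ and observing that the image under $T=\begin{bmatrix}\Id&\Id\\\Id&-\Id\end{bmatrix}$ decomposes as a disjoint union of shifted products, $T\calL^2=\bigsqcup_{w\in\calL/2\calL}(2\calL+w)^2$. This converts $\MM[u]\MM[v]$ into a sum over the \emph{finite} quotient $\calL/2\calL$ of products $S_w(u+v)S_w(u-v)$, and Cauchy--Schwarz is then applied over $w\in\calL/2\calL$ (not over $\psi\in\calL$), with the $v=0$ specialization providing the re-identification $\sum_w S_w(s)^2=Z^2\MM[s]$. That is the key idea your argument is missing.

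Your alternative Poisson-summation route also does not close the gap as stated. Writing $\Theta(w)$ as the Fourier transform of the positive measure $\sum_{\xi\in\calL^*}\widehat{g_A}(\xi)\delta_\xi$ and asserting that the inequality ``follows from Cauchy--Schwarz applied to characters'' would prove the statement for \emph{every} positive measure; but the inequality $\hat\mu(\alpha)\hat\mu(\gamma)\le\hat\mu(0)\sqrt{\hat\mu(\alpha+\gamma)\hat\mu(\alpha-\gamma)}$ is false for general positive $\mu$ (the right side can fail to be real). Poisson summation merely replaces the theta function on $\calL$ by a theta function on $\calL^*$ with the roles of position and amplitude swapped, so the same structural difficulty reappears on the dual side.
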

	One may note that in the Gaussian case, where the moment generating function is the exponential of a quadratic form, the relation holds as equality (due to the Pythagorean theorem).
	
	\begin{proof}
		Rewriting the LHS in term of the Cartesian product of lattices $\calL^2 = \calL\oplus\calL$:
\be  \label{double_lattice}
\MM_{A,\calL}\left[u\right]\MM_{A,\calL}\left[v\right] = Z_{A,\calL}^{-2}\sum_{\Psi\in\calL^2}\exp\left(-\frac12\ip{\Psi}{A\oplus A\Psi}+\ip{\begin{bmatrix}	u \\ v
		\end{bmatrix}}{\Psi}\right)\,.
	\ee
		
Under the linear transformation by  $ T = \begin{bmatrix}
			\Id_k & \Id_k \\ \Id_k & -\Id_k
		\end{bmatrix}$,
the lattice $T \calL^2$ is the disjoint union of translates of $(2 \calL)^2$:
\be
 T\calL^2 = \bigsqcup_{w\in\calL/2\calL} (2\calL+w)^2 \,.
\ee

The matrix $\frac{1}{\sqrt 2} T$  is unitary and  $[T,M\oplus M]=0$ for any $n\times n$ matrix $M$ and.  Hence for any $\psi\in\calL^2$
\be-\frac12\ip{ \Psi}{A\oplus A\Psi}+\ip{\begin{bmatrix}
				u \\ v
		\end{bmatrix}}{\Psi} = -\frac14\ip{T\Psi}{A\oplus A T\Psi}+\frac12\ip{\begin{bmatrix}
				u+v \\ u-v
		\end{bmatrix}}{T\Psi} \,.
\ee
Substituting this in \cref{double_lattice} one gets
		\begin{align}\label{eq:interim equation for monotonicity}\begin{split}
				\MM_{A,\calL}\left[u\right]\MM_{A,\calL}\left[v\right] = Z_{A,\calL}^{-2}\sum_{w\in\calL/2\calL}\left(\sum_{\psi\in\calL+w}\exp\left(-\frac14\ip{\psi}{A\psi}+\frac12\ip{u+v}{\psi}\right)\right)\times\\\times\left(\sum_{\psi\in\calL+w}\exp\left(-\frac14\ip{\psi}{A\psi}+\frac12\ip{u-v}{\psi}\right)\right)\,.\end{split} \end{align}
		
In particular, with $v=0$ the last equation yields
		\begin{align}
			\MM_{A,\calL}\left[u\right] = Z_{A,\calL}^{-2}\sum_{w\in\calL/2\calL}\left(\sum_{\psi\in\calL+w}\exp\left(-\frac14\ip{\psi}{A\psi}+\frac12\ip{u}{\psi}\right)\right)^2\,. \label{eq:rewriting of MGF in weird way}\end{align}
		
		Applying now the Cauchy-Schwarz inequality on the RHS of \cref{eq:interim equation for monotonicity} w.r.t. the sum $\sum_{w\in\calL/2\calL}$ and using \cref{eq:rewriting of MGF in weird way} to re-identify $ \MM_{A,\calL}\left[u+v\right],\MM_{A,\calL}\left[u-v\right] $ yields the statement.
	\end{proof}
	
	As was pointed out in \cite{RegDav17}, from the inequality \cref{Pythagoras_ineq} one may deduce
	\emph{sub-lattice monotonicity} of the moment generating function that was stated above as:
	
\noindent{\bf\cref{sub_latt_monotonicity}.}
		 $$ \MM_{A,\calM}\left[v\right]\leq \MM_{A,\calL}\left[v\right] \qquad(v\in\RR^k,A\geq0)\,.$$
for any sub-lattice $\calM\subseteq\calL$.

	\begin{proof} 
		Since $\calM\subseteq\calL$ is a sub-lattice, we have the disjoint decomposition $\calL= \bigsqcup_{w\in\calL/\calM} \calM+w$. Hence \begin{align*}
			Z_{A,\calL}Z_{A,\calM}\MM_{A,\calM}\left[v\right] &=   \left(\sum_{w\in\calL/\calM}\sum_{\psi\in\calM+w}\exp\left(-\frac12\ip{\psi}{A\psi}\right)\right)Z_{A,\calM}\MM_{A,\calM}\left[v\right] \\
			&= \sum_{w\in\calL/\calM}\exp(-\frac12\ip{w}{A w})Z_{A,\calM}^2\MM_{A,\calM}\left[-Aw\right]\MM_{A,\calM}\left[v\right]\,.
		\end{align*}
		Applying now \cref{Pythagoras_ineq} together with $\sqrt{ab}\leq\frac12(a+b)$ on the last two factors of $\MM$ above yields \begin{align*}
			Z_{A,\calL}Z_{A,\calM}\MM_{A,\calM}\left[v\right] &\leq \sum_{w\in\calL/\calM}\exp(-\frac12\ip{w}{A w})Z_{A,\calM}^2\frac12\left(\MM_{A,\calM}\left[-Aw+v\right]+\MM_{A,\calM}\left[-Aw-v\right]\right)\\&= \sum_{w\in\calL/\calM}\exp(-\frac12\ip{w}{A w})Z_{A,\calM}^2\MM_{A,\calM}\left[-Aw+v\right]\tag{$\calM\mapsto-\calM$ symmetry}\\
			&=Z_{A,\calL}Z_{A,\calM}\MM_{A,\calL}\left[v\right] \tag{Use $\calL= \bigsqcup_{w\in\calL/\calM} \calM+w$ again}\,.
		\end{align*}
	\end{proof}
	While it is not used here, it is of interest to note also the following result of \cite{RegDav17} (Theorem 5.1).
		\begin{thm}
		If $\calN,\calM\subseteq\calL$ are sub-lattices then $$ Z_{A,\calM} Z_{A,\calN}  \leq Z_{A,\calL}Z_{A,\calM\cap\calN}\qquad(A\geq0)\,.$$
	\end{thm}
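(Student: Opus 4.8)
The plan is to obtain the inequality from \eqref{Pythagoras_ineq} via an elementary reorganization of a double lattice sum. First I would reduce to the sharper statement in which $\calL$ is replaced by $\calS:=\calM+\calN$: since $\calS$ is a subgroup of $\calL$ and every summand of $Z_{A,\calL}$ is positive, $Z_{A,\calS}\le Z_{A,\calL}$, so it suffices to show $Z_{A,\calM}Z_{A,\calN}\le Z_{A,\calS}Z_{A,\calM\cap\calN}$. Write $\mathcal{K}:=\calM\cap\calN$. The main ingredient is the coset identity already implicit in the proof of \cref{sub_latt_monotonicity}: for any lattice $\calL'$ and sublattice $\mathcal{K}\subseteq\calL'$,
\[
Z_{A,\calL'}\ =\ Z_{A,\mathcal{K}}\sum_{u\in\calL'/\mathcal{K}}g(u),\qquad g(u):=\exp\!\left(-\tfrac12\ip{u}{Au}\right)\MM_{A,\mathcal{K}}[-Au].
\]
A short computation --- expanding $\MM_{A,\mathcal{K}}[-A(u+\kappa)]$ for $\kappa\in\mathcal{K}$ and shifting the summation variable by $\kappa$ --- shows that $g(u)$ depends only on the coset $u+\mathcal{K}$, so the sum is well defined; I would apply this with $\calL'=\calM$, $\calN$ and $\calS$.

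Multiplying the identities for $\calM$ and $\calN$ gives
\[
Z_{A,\calM}Z_{A,\calN}\ =\ Z_{A,\mathcal{K}}^2\sum_{w'\in\calM/\mathcal{K}}\ \sum_{w\in\calN/\mathcal{K}}g(w')\,g(w).
\]
For a fixed pair $(w',w)$ I would combine \eqref{Pythagoras_ineq}, applied to the lattice $\mathcal{K}$ with $u=-Aw'$ and $v=-Aw$, with the parallelogram identity $\ip{w'+w}{A(w'+w)}+\ip{w'-w}{A(w'-w)}=2\ip{w'}{Aw'}+2\ip{w}{Aw}$ for the bilinear form $\ip{\cdot}{A\cdot}$, to obtain the term-by-term bound $g(w')\,g(w)\le\bigl(g(w'+w)\,g(w'-w)\bigr)^{1/2}$. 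Summing over $(w',w)$ and applying Cauchy--Schwarz then yields
\[
\sum_{w',w}g(w')\,g(w)\ \le\ \Bigl(\sum_{w',w}g(w'+w)\Bigr)^{1/2}\Bigl(\sum_{w',w}g(w'-w)\Bigr)^{1/2}.
\]

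To close the argument I would use that $(w',w)\mapsto w'+w$ and $(w',w)\mapsto w'-w$ are both bijections from $(\calM/\mathcal{K})\times(\calN/\mathcal{K})$ onto $\calS/\mathcal{K}$: the sum map is a surjective group homomorphism because $\calS=\calM+\calN$, and it is injective precisely because $\mathcal{K}=\calM\cap\calN$ (the \emph{diamond} isomorphism $\calM/\mathcal{K}\times\calN/\mathcal{K}\cong\calS/\mathcal{K}$), while the difference map differs from it only through the automorphism $w\mapsto-w$ of $\calN/\mathcal{K}$. Hence both right-hand sums equal $\sum_{u\in\calS/\mathcal{K}}g(u)=Z_{A,\calS}/Z_{A,\mathcal{K}}$, and assembling the three displays gives $Z_{A,\calM}Z_{A,\calN}\le Z_{A,\mathcal{K}}^2\cdot Z_{A,\calS}/Z_{A,\mathcal{K}}=Z_{A,\calS}Z_{A,\mathcal{K}}\le Z_{A,\calL}Z_{A,\calM\cap\calN}$, as claimed. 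I expect the only delicate points to be the coset-independence of $g$ and the two bijections; both are elementary, but the entire proof hinges on them, since they are what turns the \emph{wrong} double sum $\sum_{w',w}g(w')g(w)$ into two copies of the \emph{correct} single sum $\sum_u g(u)$. Everything else is a direct application of \eqref{Pythagoras_ineq}, the parallelogram law, and Cauchy--Schwarz, in the same spirit as the proofs of \cref{sub_latt_monotonicity} and of \eqref{Pythagoras_ineq} above.
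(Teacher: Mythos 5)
Your proof is correct. There is nothing in the paper to compare against: the statement is cited from \cite{RegDav17} (Theorem 5.1) and explicitly noted as "not used here," with no proof given. Your derivation is a valid self-contained route from the paper's own ingredients — the Pythagoras-type inequality \eqref{Pythagoras_ineq} and the coset identity $Z_{A,\calL'}=Z_{A,\mathcal{K}}\sum_{u\in\calL'/\mathcal{K}}g(u)$ that is already implicit in the first display of the paper's proof of \cref{sub_latt_monotonicity}. The two points you flag both hold: $Z_{A,\mathcal{K}}\,g(u)=\sum_{\kappa\in\mathcal{K}}\exp\!\left(-\tfrac12\ip{u+\kappa}{A(u+\kappa)}\right)$ is manifestly a function of the coset $u+\mathcal{K}$; and the map $(\bar m,\bar n)\mapsto\overline{m+n}$ from $(\calM/\mathcal{K})\times(\calN/\mathcal{K})$ to $\calS/\mathcal{K}$ is injective precisely because $m+n\in\mathcal{K}=\calM\cap\calN$ (with $m\in\calM$, $n\in\calN$) forces $n=(m+n)-m\in\calM\cap\calN$ and likewise $m\in\mathcal{K}$, while the difference map is obtained by composing with $n\mapsto -n$. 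The parallelogram law for the form $\ip{\cdot}{A\cdot}$ together with \eqref{Pythagoras_ineq} applied to the lattice $\mathcal{K}$ at $u=-Aw'$, $v=-Aw$ gives the pointwise bound $g(w')g(w)\le\sqrt{g(w'+w)\,g(w'-w)}$, and Cauchy--Schwarz closes the argument. A small bonus of your reduction to $\calS=\calM+\calN$ is the formally sharper estimate $Z_{A,\calM}Z_{A,\calN}\le Z_{A,\calM+\calN}Z_{A,\calM\cap\calN}$, of which the stated bound is the special case obtained by enlarging $\calM+\calN$ to $\calL$.
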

This yields new correlation inequalities for general $\ZGF$ models, for instance:
\be
\PP_\lambda^{\ZGF}\left[\Set{n_x=n_y}\cap\Set{n_u=n_v}\right] \geq \PP_\lambda^{\ZGF}\left[\Set{n_x=n_y}\right] \PP_\lambda^{\ZGF}\left[\Set{n_u=n_v}\right]\,.
\ee

The other property of interest is the operator monotonicity of the moment generating function, i.e.  the monotonicity of
$\MM_{A,\calL}\left[v\right]$ in the matrix $A$ which specifies the interaction.
Its derivation starts with a lower bound on the  Hessian of the moment generating function,
$ \left(\HH \MM\left[u\right]\right)_{ij} \equiv \partial_{u_i}\partial_{u_j} \MM\left[u\right] $.

\begin{lem} For any $\calL$ and $A \geq 0$, as above,  the moment generating function  $\MM[v]=\MM_{A,\calL}\left[v\right]$
satisfies
\be  \label{Hessian_ineq}
\frac{1}{ \MM\left[u\right]}\, \HH \MM\left[u\right] \geq \left.\left(\HH \MM\left[u\right]\right)\right|_{u=0} + \frac{1}{ \MM\left[u\right]^2}\,\nabla \MM\left[u\right]\otimes\left(\nabla \MM\left[u\right]\right)\qquad(u\in\RR^k)\,.   \ee
\end{lem}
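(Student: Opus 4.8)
The plan is to obtain \eqref{Hessian_ineq} from the Pythagoras-type inequality \eqref{Pythagoras_ineq} by taking logarithms and examining the second-order behaviour in one variable near the origin. First I would record the routine analytic facts: under the standing positivity hypothesis on $A$ the defining sum for $\MM[u]=\EE_{A,\calL}\left[\exp(\ip{u}{\psi})\right]$ converges locally uniformly together with all its $u$-derivatives, so $u\mapsto\MM[u]$ is smooth and strictly positive, with $\MM[0]=1$ and $\nabla\MM[0]=\EE_{A,\calL}[\psi]=0$, the last equality because the lattice Gaussian weight is invariant under $\psi\mapsto-\psi$. Setting $f:=\log\MM$, the chain rule then gives
\be
\HH f[u]=\frac{1}{\MM[u]}\HH\MM[u]-\frac{1}{\MM[u]^2}\,\nabla\MM[u]\otimes\nabla\MM[u]\,,\qquad \HH f[0]=\left(\HH\MM[u]\right)\big|_{u=0}\,,
\ee
so that the desired \eqref{Hessian_ineq} is precisely the assertion $\HH f[u]\ge\HH f[0]$ in the positive-semidefinite order, for every $u\in\RR^k$. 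Observe that the identification of the right-hand side of \eqref{Hessian_ineq} with $\HH f[0]$, rather than with the full matrix $\HH\MM[0]$, is exactly the place where $\nabla\MM[0]=0$ is used.

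Next I would take logarithms in \eqref{Pythagoras_ineq} to obtain, for all $u,v\in\RR^k$,
\be
f(u)+f(v)\le\tfrac12 f(u+v)+\tfrac12 f(u-v)\,.
\ee
Fixing $u$ and introducing $g(v):=\tfrac12 f(u+v)+\tfrac12 f(u-v)-f(u)-f(v)$, this says $g\ge0$ everywhere; moreover $g(0)=f(u)-f(u)-f(0)=0$ and $\nabla g(0)=\tfrac12\nabla f(u)-\tfrac12\nabla f(u)-\nabla f(0)=0$. Hence $v=0$ is a global minimiser of the smooth function $g$, so $\HH g(0)\ge0$; and since $\HH g(0)=\tfrac12\HH f(u)+\tfrac12\HH f(u)-\HH f(0)=\HH f(u)-\HH f(0)$, this is exactly $\HH f[u]\ge\HH f[0]$, i.e.\ \eqref{Hessian_ineq}. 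Equivalently, one may simply Taylor-expand the displayed logarithmic inequality to second order in $v$ about $v=0$: the zeroth- and first-order contributions cancel (using $f(0)=0$, $\nabla f(0)=0$), and what remains is $\tfrac12\ip{v}{\HH f(0)v}\le\tfrac12\ip{v}{\HH f(u)v}$ for all $v$.

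I do not expect a genuine obstacle here: the mathematical content is entirely carried by \eqref{Pythagoras_ineq}, which has already been proved, and the rest is the bookkeeping above. The only two points deserving a line of care are the smoothness-and-differentiation-under-the-sum claim for $\MM$, which is immediate from the Gaussian decay of the summand, and the symmetry fact $\nabla\MM[0]=0$, which is what makes the constant term on the right of \eqref{Hessian_ineq} come out as $\HH f[0]$ and not $\HH\MM[0]$.
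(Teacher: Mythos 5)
Your argument is correct and is essentially the proof the paper gives: both Taylor-expand the Pythagoras inequality \eqref{Pythagoras_ineq} to second order in $v$ about $v=0$ for fixed $u$, using $\MM[0]=1$ and $\nabla\MM[0]=0$ to kill the zeroth- and first-order terms. The only (cosmetic) difference is that you pass to $f=\log\MM$ so the chain rule produces the combination $\frac{1}{\MM}\HH\MM-\frac{1}{\MM^2}\nabla\MM\otimes\nabla\MM$ automatically, whereas the paper works multiplicatively with $F(u,v)=\MM[u+v]\MM[u-v]-\MM[u]^2\MM[v]^2$ and computes $\HH_vF(u,0)$ directly.
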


In particular,  since
\begin{align} \HH \MM\left[u\right] = \EE\left[\exp\left(\ip{u}{\psi}\right)\psi\otimes\psi\right]  \,; \qquad \nabla \MM\left[u\right] = \EE\left[\exp\left(\ip{u}{\psi}\right)\psi\right]\label{eq:Hessian and gradient of MGF}\end{align} and  $\tr(MN)\geq0$ for any $M,N\geq 0$, \cref{Hessian_ineq} implies
 \begin{align} \EE\left[\exp\left(\ip{u}{\psi}\right)\ip{\psi}{M\psi}\right] \geq \MM\left[u\right]\EE\left[\ip{\psi}{M\psi}\right]\qquad(M\in\Mat_k(\RR):M\geq0,u\in\RR^k)\,.\label{eq:corr ineq for lattice Gaussian fields}\end{align}

	\begin{proof}
		Consider the function $F$ of $u,v\in\RR^k$, $$ F(u,v) := \MM\left[u+v\right]\MM\left[u-v\right]-\MM\left[u\right]^2\MM\left[v\right]^2 \geq 0 $$ where positivity is equivalent to \cref{Pythagoras_ineq}. For any fixed $u$, $F(u,0) = 0$ and hence $v=0$ is a local minimum for each $u$. Hence, $\left(\HH F\right)(u,0)\geq 0$ (the Hessian is w.r.t. $v$).
	A calculation in which it is also used that $\EE\left[\psi\right] = 0$ due to the symmetry $\calL \mapsto -\calL$,
	 shows that $$ \left(\HH F\right)(u,0) = 2\MM\left[u\right]\EE\left[\exp\left(\ip{u}{\psi}\right)\psi\otimes\psi\right]-2\EE\left[\exp\left(\ip{u}{\psi}\right)\psi\right]\otimes\EE\left[\exp\left(\ip{u}{\psi}\right)\psi\right]-2\MM\left[u\right]^2\EE\left[\psi\otimes\psi\right]. $$  The result then follows using \cref{eq:Hessian and gradient of MGF}.
	\end{proof}
This lemma is used in the proof of the matrix-monotonicity which was stated above as:

\noindent{\bf\cref{prop:monotonicity in covariance matrix}.}
		For any lattice $\calL$, and pair of  matrices $A,B$ such that $A\geq B\geq0$
 $$ \MM_{A,\calL}\left[v\right] \leq \MM_{B,\calL}\left[v\right]\qquad(v\in\RR^k)\,.$$
	\begin{proof} 
		Define the homotopy of covariance matrices $$ [0,1]\ni t \mapsto t A + (1-t)B =: A_t\,. $$
		Suffice then to show that $t\mapsto \MM_{A_t,\calL}\left[v\right]$ is monotone decreasing, hence suffice to show $ \partial_t \MM_{A_t,\calL}\left[v\right]~\leq~0$. But $$ 2\partial_t \MM_{A_t,\calL}\left[v\right] = \EE_{A_t,\calL}\left[\ip{\psi}{(A-B)\psi}\right]\MM_{A_t,\calL}\left[v\right]-\EE_{A_t,\calL}\left[\ip{\psi}{(A-B)\psi}\exp(\ip{u}{\psi})\right]\leq 0 $$ the last inequality is \cref{eq:corr ineq for lattice Gaussian fields} with $M=A-B\geq0$.
	\end{proof}
	\begin{cor}\label{cor:monotonicity of ZGF fluctuations w.r.t. coupling}
	For the $\ZGF$ on any graph, the variances $$ \EE_{A,\lambda}^{\ZGF}\left[(n_x-n_y)^2\right] $$ are monotone decreasing in the coupling matrix $A$, and in the overall coupling constant $\lambda$.
	\end{cor}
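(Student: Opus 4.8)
The plan is to deduce \cref{cor:monotonicity of ZGF fluctuations w.r.t. coupling} directly from \cref{prop:monotonicity in covariance matrix}, using the elementary principle — already noted in the main text — that monotonicity of the moment generating function along a ray forces monotonicity of the corresponding second moment. First I would invoke the identification, spelled out above, of the finite-volume $\ZGF$ with Dirichlet boundary conditions as the lattice Gaussian field associated with a pair $(A,\calL)$: here $\calL=L\ZZ^k$ with $L$ the orthogonal projection onto the functions on $\calV$ vanishing on the Dirichlet boundary, and $A$ is the coupling matrix, namely $A=-\lambda\Delta$ in the homogeneous case and $\ip{f}{Af}=\sum_{\Set{u,v}\in\calE}\lambda_{uv}(f_u-f_v)^2$ for general edge couplings. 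Fixing $x,y\in\calV$, I would set $w:=\delta_x-\delta_y$, which lies in the span of $L$, so that $\ip{w}{n}=n_x-n_y$.

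Next I would expand $t\mapsto\MM_{A,\calL}[tw]=\EE_{A,\calL}[\exp(t(n_x-n_y))]$ in powers of $t$. This function is finite and real-analytic for all $t\in\RR$, since on the span of $L$ one has $\ip{\psi}{A\psi}\ge c\norm{\psi}^2$ with $c>0$ (equivalently, by the Gaussian domination bound~\eqref{eq:Gaussian domination bound}), so the Gaussian weight dominates the exponential tilt. By the symmetry $\calL\mapsto-\calL$ all odd moments of $\ip{w}{\psi}$ vanish, whence
\[
\MM_{A,\calL}[tw]=1+\tfrac{t^2}{2}\,\EE_{A,\calL}\big[(n_x-n_y)^2\big]+O(t^4)\qquad(t\to0).
\]

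Finally, given two coupling matrices with $A\ge B>0$ on the span of $L$ — the leading instances being $A=\lambda A_0\ge\lambda' A_0=B$ for $\lambda\ge\lambda'>0$ with $A_0$ the Dirichlet graph Laplacian, and two weighted Laplacians ordered coupling-by-coupling — \cref{prop:monotonicity in covariance matrix} gives $\MM_{A,\calL}[tw]\le\MM_{B,\calL}[tw]$ for every $t$. Subtracting $1$, dividing by $t^2$, and letting $t\to0$ yields $\EE_{A,\calL}[(n_x-n_y)^2]\le\EE_{B,\calL}[(n_x-n_y)^2]$, which is the claim in finite volume; the infinite-volume statement follows by passing to the limit, the limits existing by the tightness of the gradient variables discussed in \cref{sec:depinning beyond degree 3}. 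I do not expect a real obstacle here: the only points needing care are verifying that $-\Delta$ (respectively the weighted Laplacian) is positive on the relevant span so that \cref{prop:monotonicity in covariance matrix} applies, and justifying the passage $t\to0$ termwise, both of which are immediate from the Gaussian tail estimate.
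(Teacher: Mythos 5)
Your proposal is correct and follows the route the paper clearly intends: Corollary~\ref{cor:monotonicity of ZGF fluctuations w.r.t. coupling} is stated without an explicit proof, immediately after \cref{prop:monotonicity in covariance matrix}, and the paper has already flagged in Section~2 that ``monotonicity of the moment-generating function implies monotonicity of the second moment of $\ip{v}{\psi}$.'' Your write-up simply fills in the details of that observation — identifying the $\ZGF$ with the lattice Gaussian pair $(A,\calL)$, using the $\calL\mapsto-\calL$ symmetry to kill odd moments, expanding $\MM_{A,\calL}[tw]$ to order $t^2$, and comparing coefficients — which is precisely the paper's intended (implicit) argument.
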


\section{An annealed version of the RSD monotonicity theory} \label{App:depinning_ZUF}

In the proof of depinning presented in \cref{sec:gen_depinning} (more precisely, proof of \cref{thm:depinning for ZUF})
we invoked  an extension of  the monotonicity principle \cref{sub_latt_monotonicity}
to random height functions with the \emph{annealed} Gaussian interactions such as in  $\PP^{\ZUF,\calG^\ast}$.

Our purpose here is to state and prove this mild extension of the RSD theory.
 For that, in the spirit of \cref{sec:RSD monotonicity theory}, we denote for general lattices $\calL$
	$$Z^{\ZUF,\calG^\ast,\calL} := \sum_{n\in\calL}\prod_{\Set{x,y}\in\calE^\ast}\exp\left(-U\left(n_x-n_y\right)\right)\,.$$
Of interest to us is the case $$\calL \equiv \calB\ZZ^k$$ with $\calB\in\Mat_k(\RR)$ a general matrix, and $k=|\calV^\ast|$. To implement the Dirichlet boundary conditions, $\left.n\right|_{\partial\calV^\ast}=0$, $\calB$ should be chosen such that in its kernel are the field configurations $n$ not obeying these conditions. As for $\calE^\ast$, one may consider it as defining the adjacency structure on $\calL$.
		
		The moment generating function of the corresponding systems is  \begin{align}\MM^{\ZUF,\calG^\ast,\calL}[v] &:= \EE^{\ZUF,\calG^\ast,\calL}\left[\exp\left(\ip{v}{n}\right)\right]\qquad(v\in\RR^k)\,.
		\end{align}
In the case  $U$ is an annealed Gaussian
$$ \MM^{\ZUF,\calG^\ast,\calL}[v] = \frac{1}{Z^{\ZUF,\calG^\ast,\calL}}\int_{\lambda \in (0,\infty)^{\calE^\ast}}\dif{\mu_U^{\otimes \calE^\ast}(\lambda)}Z_{A_\lambda,\calL} \MM_{A_\lambda,\calL}\left[v\right] $$
with
$$ Z_{A_\lambda,\calL} \equiv \sum_{n\in\calL}\exp\left(-\frac12\sum_{\Set{x,y}\in\calE^\ast}\lambda_{xy}\left(n_x-n_y\right)^2\right)=\sum_{n\in\calL}\exp\left(-\frac12\ip{n}{A_\lambda n}_{\RR^k}\right) $$
where $A_\lambda \geq 0$ is minus the discrete Laplacian on the weighted-graph, weighted by $\lambda$.
		
The required extension of \cref{sub_latt_monotonicity} is:
		\begin{lem}\label{lem:MU} Let $\calM\subset\calL$ be a sub-lattice, and $U$ an annealed Gaussian interaction. Then \be  \MM^{\ZUF,\calG^\ast,\calL}[v] \geq \MM^{\ZUF,\calG^\ast,\calM}[v]\qquad(v\in\RR^k)\,.
		\ee
		\end{lem}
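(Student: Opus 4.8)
## Proof Proposal for Lemma \ref{lem:MU}

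The plan is to reduce the annealed statement to the quenched one (\cref{sub_latt_monotonicity}) by conditioning on the realization of the disordered couplings $\lambda:\calE^\ast\to(0,\infty)$, and then handling the subtlety that the annealed measure reweights these couplings by the (lattice-dependent) partition function $Z_{A_\lambda,\calL}$. First I would write out, for a fixed sub-lattice $\calM\subseteq\calL$ and a fixed coupling configuration $\lambda$, the quenched comparison supplied by RSD: by \cref{sub_latt_monotonicity} applied to the matrix $A_\lambda\geq 0$,
\[
\MM_{A_\lambda,\calL}\left[v\right]\;\geq\;\MM_{A_\lambda,\calM}\left[v\right]\qquad(v\in\RR^k)\,.
\]
Multiplying both sides by the nonnegative quantity $Z_{A_\lambda,\calL}$ and recalling $Z_{A_\lambda,\calL}\geq Z_{A_\lambda,\calM}$ (which also follows from sub-lattice monotonicity with $v=0$, since $\MM_{A_\lambda,\calL}[0]=1$ and $Z_{A_\lambda,\calL}=\sum_{n\in\calL}\exp(-\tfrac12\ip{n}{A_\lambda n})$ is itself increasing under enlargement of the lattice), one gets the key pointwise-in-$\lambda$ inequality
\[
Z_{A_\lambda,\calL}\,\MM_{A_\lambda,\calL}\left[v\right]\;\geq\;Z_{A_\lambda,\calM}\,\MM_{A_\lambda,\calM}\left[v\right]\,.
\]

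Next I would integrate this inequality against the (nonnegative, product) measure $\mu_U^{\otimes\calE^\ast}(\dif\lambda)$ on $(0,\infty)^{\calE^\ast}$. Using the identity recorded just above the statement of the lemma, namely
\[
\MM^{\ZUF,\calG^\ast,\calL}[v]\,Z^{\ZUF,\calG^\ast,\calL}\;=\;\int_{\lambda\in(0,\infty)^{\calE^\ast}}Z_{A_\lambda,\calL}\,\MM_{A_\lambda,\calL}\left[v\right]\,\dif{\mu_U^{\otimes\calE^\ast}(\lambda)}\,,
\]
and the analogous identity for $\calM$, the integrated inequality reads
\[
\MM^{\ZUF,\calG^\ast,\calL}[v]\,Z^{\ZUF,\calG^\ast,\calL}\;\geq\;\MM^{\ZUF,\calG^\ast,\calM}[v]\,Z^{\ZUF,\calG^\ast,\calM}\,.
\]
Finally, taking $v=0$ in the same integrated comparison gives $Z^{\ZUF,\calG^\ast,\calL}\geq Z^{\ZUF,\calG^\ast,\calM}$; but I need to divide by the \emph{larger} normalization $Z^{\ZUF,\calG^\ast,\calL}$, not the smaller one, so a direct division does not immediately yield $\MM^{\ZUF,\calG^\ast,\calL}[v]\geq\MM^{\ZUF,\calG^\ast,\calM}[v]$.

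This last point is the main obstacle, and the clean way around it — exactly as in the RSD proof of \cref{sub_latt_monotonicity} itself — is to redo the disjoint-coset decomposition $\calL=\bigsqcup_{w\in\calL/\calM}(\calM+w)$ at the annealed level rather than comparing normalizations after the fact. Concretely, for each fixed $\lambda$ one expands $Z_{A_\lambda,\calL}\MM_{A_\lambda,\calL}[v]$ over cosets and applies the Cauchy--Schwarz step from the proof of \cref{sub_latt_monotonicity} (via the Pythagoras-type inequality \eqref{Pythagoras_ineq}, which holds for every $A_\lambda\geq 0$), obtaining the inequality $Z_{A_\lambda,\calL}\,\MM_{A_\lambda,\calL}[v]\,Z^{-1}_{A_\lambda,\calL}\geq Z_{A_\lambda,\calM}\,\MM_{A_\lambda,\calM}[v]\,Z^{-1}_{A_\lambda,\calM}$ in the form actually needed; then integrating $\dif\mu_U^{\otimes\calE^\ast}$ and recognizing the mixture representation of $\MM^{\ZUF}$ — now with the coset sum weighted so that the $\calL$-normalization appears correctly — yields $\MM^{\ZUF,\calG^\ast,\calL}[v]\geq\MM^{\ZUF,\calG^\ast,\calM}[v]$ directly. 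Equivalently and more transparently, since \cref{sub_latt_monotonicity} is already proven as a black box for each $A_\lambda$, I would instead run the coset argument once in the annealed partition function itself: write $\calM+w$ for $w\in\calL/\calM$, use that $\mu_U^{\otimes\calE^\ast}$ does not depend on the lattice, and apply \eqref{Pythagoras_ineq} coset-by-coset with the Cauchy--Schwarz bound $\sqrt{ab}\le\tfrac12(a+b)$ together with the $\calM\mapsto-\calM$ symmetry — reproducing verbatim the three-line computation in the proof of \cref{sub_latt_monotonicity} but with each summand $\exp(-\tfrac12\ip{\psi}{A\psi})$ replaced by $\prod_{\{x,y\}\in\calE^\ast}\exp(-U(\psi_x-\psi_y))=\int\prod\exp(-\tfrac12\lambda_{xy}(\psi_x-\psi_y)^2)\dif\mu_U^{\otimes\calE^\ast}$. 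The interchange of the $\lambda$-integral with the finite coset sum is trivially justified by nonnegativity (Tonelli), so no real analytic difficulty arises; the only thing to get right is the bookkeeping of which partition function normalizes which moment generating function, which is precisely why I would mirror the original RSD proof step for step rather than quote it as a lemma and divide.
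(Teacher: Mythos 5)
You correctly diagnose the central obstacle: integrating the quenched RSD comparison $\MM_{A_\lambda,\calL}[v]\ge\MM_{A_\lambda,\calM}[v]$ against $\mu_U^{\otimes\calE^\ast}$ produces the two annealed moment generating functions with mismatched normalizations, and since $Z^{\ZUF,\calG^\ast,\calL}\ge Z^{\ZUF,\calG^\ast,\calM}$ the naive division goes the wrong way. The paper's proof confronts exactly this point, and does so with tools that your sketch does not supply.

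The proposed fix --- ``reproduce verbatim the three-line computation in the proof of \cref{sub_latt_monotonicity} with $\exp(-\tfrac12\ip{\psi}{A\psi})$ replaced by $\prod\exp(-U(\psi_x-\psi_y))$'' --- does not go through, for two independent reasons. First, the coset-expansion step of the RSD proof relies on the Gaussian identity
\begin{equation*}
\sum_{n\in\calM+w}\exp\left(-\tfrac12\ip{n}{An}\right)=\exp\left(-\tfrac12\ip{w}{Aw}\right)Z_{A,\calM}\,\MM_{A,\calM}[-Aw]\,,
\end{equation*}
i.e.\ on the fact that a quadratic form factorizes under shifts by $w$ so that the $w$-dependence resurfaces as an evaluation of the moment generating function at $-Aw$. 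There is no analogue of this factorization for a non-quadratic $U$. Second, the Pythagoras-type inequality \eqref{Pythagoras_ineq} that you invoke ``coset-by-coset'' is proved via the linear transformation $T=\begin{bmatrix}\Id&\Id\\ \Id&-\Id\end{bmatrix}$ and the commutation $[T,A\oplus A]=0$, again a Gaussian-specific device; it does not follow for the annealed density by averaging the quenched inequality over $\lambda$ (the two copies of $\psi$ would have to share the \emph{same} $\lambda$ for the $T$-trick to factorize, whereas the annealed weight $W(\psi_1)W(\psi_2)$ integrates over \emph{independent} couplings on the two copies). So neither ingredient of the RSD argument survives the substitution, and your attempted reduction to \cref{sub_latt_monotonicity} as a black box with a ``corrected'' normalization is the same integrated quenched inequality already seen not to suffice.

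What is actually required --- and what the paper supplies --- is a correlation-inequality argument on the space of random couplings $\lambda\in(0,\infty)^{\calE^\ast}$. One writes $\MM^{\ZUF,\calG^\ast,\calL}[v]\ge \widetilde{\EE}^{\ZUF,\calG^\ast,\calM}\bigl[\tfrac{Z_{A_\lambda,\calL}}{Z_{A_\lambda,\calM}}\MM_{A_\lambda,\calM}[v]\bigr]\big/\widetilde{\EE}^{\ZUF,\calG^\ast,\calM}\bigl[\tfrac{Z_{A_\lambda,\calL}}{Z_{A_\lambda,\calM}}\bigr]$ where $\widetilde{\PP}^{\ZUF,\calG^\ast,\calM}$ is the $Z_{A_\lambda,\calM}$-tilted coupling measure; then one checks that this tilted measure has the FKG lattice property (verified \`a la Cartier--Chassaing through a second-order infinitesimal check at each pair of edges), and that both $\lambda\mapsto Z_{A_\lambda,\calL}/Z_{A_\lambda,\calM}$ and $\lambda\mapsto\MM_{A_\lambda,\calM}[v]$ are decreasing in $\lambda$ (the former by RSD's ratio-monotonicity, the latter by \cref{prop:monotonicity in covariance matrix}). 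Positive association of two decreasing functions then yields $\widetilde{\EE}^{\calM}[\tfrac{Z_\calL}{Z_\calM}\MM_\calM]\ge\widetilde{\EE}^{\calM}[\tfrac{Z_\calL}{Z_\calM}]\,\widetilde{\EE}^{\calM}[\MM_\calM]$, which closes the argument. None of these ingredients appears in your proposal, and the ``bookkeeping'' you flag as the only issue is in fact the entire difficulty, resolved here by a genuinely new FKG step rather than by a replay of the RSD coset computation.
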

		\begin{proof}
At specified couplings, the basic considerations of the RSD monotonicity theory presented in \cref{sec:RSD monotonicity theory} apply to the disordered Laplacian operator $-A_\lambda$ as they do to $-\Delta$; the only necessary ingredient being the positivity $A_\lambda \geq 0$.
			
			Whence, using the monotonicity  \cref{sub_latt_monotonicity},
			\begin{align}\label{eq:naive usage of RSD} Z^{\ZUF,\calG^\ast,\calL}\MM^{\ZUF,\calG^\ast,\calL}[v] &= \int_{\lambda \in (0,\infty)^{\calE^\ast}}\dif{\mu_U^{\otimes \calE^\ast}(\lambda)}Z_{A_\lambda,\calL} \MM_{A_\lambda,\calL}\left[v\right] \notag \\
			& \geq \int_{\lambda \in (0,\infty)^{\calE^\ast}}\dif{\mu_U^{\otimes \calE^\ast}(\lambda)}Z_{A_\lambda,\calM}\frac{Z_{A_\lambda,\calL}}{Z_{A_\lambda,\calM}} \MM_{A_\lambda,\calM}\left[v\right]\,. \end{align}
			
			We define the measure $\widetilde{\PP}^{\ZUF,\calG^\ast,\calM}$ on the disordered couplings $\lambda$ as \begin{align*} \dif{\widetilde{\PP}^{\ZUF,\calG^\ast,\calM}\left(\lambda\right)} \equiv \frac{Z_{A_\lambda,\calM}\prod_{\Set{x,y}\in\calE^\ast}\dif{\mu_U(\lambda_{xy})}}{\int_{\lambda':\calE^\ast\to\left[0,\infty\right)}Z_{A_{\lambda'},\calM}\prod_{\Set{x,y}\in\calE^\ast}\dif{\mu_U(\lambda'_{xy})}}\,. \end{align*} We may rewrite \cref{eq:naive usage of RSD} as  \begin{align*}\MM^{\ZUF,\calG^\ast,\calL}[v]= \widetilde{\EE}^{\ZUF,\calG^\ast,\calL}\left[ \MM_{A_\lambda,\calL}\left[v\right]\right]  &\geq \frac{\widetilde{\EE}^{\ZUF,\calG^\ast,\calM}\left[\frac{Z_{A_\lambda,\calL}}{Z_{A_\lambda,\calM}} \MM_{A_\lambda,\calM}\left[v\right]\right]}{\widetilde{\EE}^{\ZUF,\calG^\ast,\calM}\left[\frac{Z_{A_\lambda,\calL}}{Z_{A_\lambda,\calM}}\right]}\,.
			\end{align*}
			
			Now, the random field $\lambda \in (0,\infty)^{\calE^\ast}$ with probability measure $\widetilde{\PP}^{\ZUF,\calG^\ast,\calM}$ has the positive association property: we consider the FKG lattice structure on the continuous-variable random coupling field $\lambda$ itself, following closely the scheme laid out in \cite[Lemma 2]{CaCha98}. Define on $\lambda$ pointwise monotonicity lattice structure and $$ (\lambda \land \lambda')_b = \min(\Set{\lambda_b,\lambda'_b}) \,;\qquad(\lambda \lor \lambda')_b = \max(\Set{\lambda_b,\lambda'_b})\qquad(b\in \calE^\ast)\,. $$
			
			As in \cite[Lemma 2]{CaCha98}, to establish the FKG property for $\widetilde{\PP}^{\ZUF,\calG^\ast,\calM}$, one verifies that to first order in $\ve_1,\ve_2>0$, and for any two edges $b_1,b_2\in\calE^\ast$, \begin{align}
				\label{eq:CaCha formulation of FKG} Z_{A_\lambda,\calM}Z_{A_{\lambda \lor (\lambda+\ve_1 \delta_{b_1})\lor(\lambda+\ve_2 \delta_{b_2})},\calM} \geq Z_{A_{\lambda \lor (\lambda+\ve_1 \delta_{b_1})},\calM}Z_{A_{\lambda \lor (\lambda+\ve_2 \delta_{b_2})},\calM}\,. \end{align}
			
			To first order in $\ve_1,\ve_2$, denoting $b_1=\Set{x,y},b_2=\Set{u,v}$, both sides of \cref{eq:CaCha formulation of FKG} equal $$ \EE_{A_\lambda,\calM}\left[1-\frac12\ve_1(n_x-n_y)^2-\frac12\ve_2(n_u-n_v)^2\right] $$ and hence the FKG property is established.
			
			Next we turn to the fact that $$\lambda \mapsto \frac{Z_{A_\lambda,\calL}}{Z_{A_\lambda,\calM}} ;\,\qquad\lambda\to\MM_{A_\lambda,\calM}\left[v\right]$$ are monotone decreasing functions. For the moment generating function, this is precisely an application of \cref{prop:monotonicity in covariance matrix}. For the first function, since $\calM\subseteq\calL$, one may rewrite $$ \frac{Z_{A_\lambda,\calL}}{Z_{A_\lambda,\calM}} = \sum_{w\in\calL/\calM}\frac{\sum_{n\in\calM+w}\exp(-\frac{1}{2}\ip{n}{A_\lambda n})}{\sum_{n\in\calM}\exp(-\frac{1}{2}\ip{n}{A_\lambda n})}\,. $$
			
			Now, the fraction $$\lambda\mapsto\frac{\sum_{n\in\calM+w}\exp(-\frac{1}{2}\ip{n}{A_\lambda n})}{\sum_{n\in\calM}\exp(-\frac{1}{2}\ip{n}{A_\lambda n})} =: f_{A_\lambda,\calM}(w) $$ is monotone decreasing thanks to \cite[Prop. 4.2]{RegDav17}; it is a parallel of \cref{prop:monotonicity in covariance matrix}, stated not for the moment generating function but rather for the ratio of shifted partition function by the unshifted partition function, i.e., $f$. The two monotonicity theorems however are equivalent (by completing the square on the moment generating function).
			
			Applying the FKG property, one may conclude the claimed relation
			\begin{align}\MM^{\ZUF,\calG^\ast,\calL}[v] &\geq \widetilde{\EE}^{\ZUF,\calG^\ast,\calM}\left[ \MM_{A_\lambda,\calM}\left[v\right]\right] = \MM^{\ZUF,\calG^\ast,\calM}[v]\,. \end{align}.
		\end{proof}
\section{Gaussian decomposition of power-law interactions}   \label{App_power_potentials}

As a supplement to the discussion of annealed Gaussian interactions (\cref{sec:annealed_etc}), let us add the following example.  Its proof serves to highlight the notion's relation with Bernstein's  theorem.

	\begin{lem}
		The potential  functions    \quad $ U_\alpha(q) = \lambda |q|^\alpha   $
 with $\alpha\in(0,2)$ are presentable as annealed Gaussian interactions (in the sense of \cref{def:comp-mon-height-function}).
	\end{lem}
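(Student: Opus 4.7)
The plan is to invoke Bernstein's theorem on completely monotone functions. Recall that a $C^\infty$ function $f:[0,\infty)\to[0,\infty)$ is completely monotone (CM) iff $(-1)^n f^{(n)}(x)\geq 0$ for all $n\geq 0$, and Bernstein's theorem asserts this is equivalent to the existence of a finite non-negative Borel measure $\nu$ on $[0,\infty)$ with
\[
f(x)=\int_{0}^{\infty}\mathrm{e}^{-xt}\,\dif\nu(t)\qquad(x\geq 0).
\]
The strategy is to show that the function $f_\alpha(x):=\exp(-\lambda x^{\alpha/2})$ is CM on $[0,\infty)$ for every $\alpha\in(0,2)$, then obtain the claimed representation by the substitution $x=q^2$.

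First, I would recall that a Bernstein function is a non-negative $C^\infty$ function $g:[0,\infty)\to[0,\infty)$ whose derivative is CM. A standard composition principle (one of the classical closure properties of the CM class) states: \emph{if $f$ is CM and $g$ is Bernstein, then $f\circ g$ is CM.} Apply this with $g(x)=x^{\alpha/2}$ and $f(y)=\mathrm{e}^{-\lambda y}$. For $\alpha/2\in(0,1)$ one has
\[
g'(x)=\tfrac{\alpha}{2}\,x^{\alpha/2-1},
\]
which is a non-negative, decreasing, convex function of $x>0$ with alternating signs of higher derivatives, i.e.\ it is CM; hence $g$ is Bernstein. Since $f(y)=\mathrm{e}^{-\lambda y}$ is CM (one of the defining examples), the composition yields that $f_\alpha(x)=\exp(-\lambda x^{\alpha/2})$ is CM on $[0,\infty)$.

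Applying Bernstein's theorem to $f_\alpha$, there exists a finite non-negative Borel measure $\nu_\alpha$ on $[0,\infty)$ with $f_\alpha(x)=\int_0^\infty \mathrm{e}^{-xt}\dif\nu_\alpha(t)$. Substituting $x=q^2$ (valid for any $q\in\R$ since the left-hand side depends on $|q|$), and changing variables $s:=2t$, we obtain
\[
\exp(-\lambda |q|^\alpha)=\int_{0}^{\infty}\exp\!\left(-\tfrac12 s\, q^2\right)\dif\mu_\alpha(s)
\]
for the pushforward measure $\mu_\alpha$ of $\nu_\alpha$ under $t\mapsto 2t$. This is exactly the representation required by \cref{def:comp-mon-height-function}, and in particular $\mu_\alpha$ is a finite non-negative Borel measure on $[0,\infty)$.

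The only step that requires any care is verifying the composition property (CM $\circ$ Bernstein $=$ CM), or equivalently verifying directly that $x\mapsto x^{\alpha/2}$ for $\alpha/2\in(0,1)$ is Bernstein; both are classical and can be cited from any standard reference on Laplace transforms (e.g., a reference to Schilling--Song--Vondra\v{c}ek's monograph on Bernstein functions would suffice). No intricate computation is needed beyond that, and the endpoint cases $\alpha=0$ (trivial, $\mu_U=\delta_0$ up to a constant) and $\alpha=2$ (the $\ZGF$ case, $\mu_U=\delta_{2\lambda}$) bracket the asserted range.
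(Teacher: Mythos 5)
Your proof is correct and follows the same overall strategy as the paper: reduce the claim to Bernstein's theorem applied to $F_\alpha(t) = \exp(-\lambda t^{\alpha/2})$, then substitute $t = q^2$ to recover the required form $\exp(-U_\alpha(q)) = \int_0^\infty \exp(-\tfrac12 \lambda q^2)\,\dif\mu_\alpha(\lambda)$. Where you diverge is in how complete monotonicity of $F_\alpha$ is verified. The paper does it from scratch, proving by induction on $k$ that
\[
F_\alpha^{(k)}(t) = (-1)^k F_\alpha(t)\sum_{r=1}^{M_k} a_r\, t^{p_r}
\]
with $a_r \geq 0$ and $p_r < 0$, and reading off the sign of $F_\alpha^{(k)}$. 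You instead invoke the classical closure property that a completely monotone function composed with a Bernstein function is completely monotone, and check that $x \mapsto x^{\alpha/2}$ (for $\alpha/2 \in (0,1)$) is Bernstein since its derivative $\tfrac{\alpha}{2} x^{\alpha/2 - 1}$ has derivatives of alternating sign. Both verifications are standard and give the same conclusion; your route is shorter at the cost of citing the composition lemma as a black box, while the paper's inductive computation keeps the argument self-contained (and, as you note, the composition lemma itself is typically proved by a similar Fa\`a di Bruno-type induction, so the underlying computation is essentially the same). One small bonus your route makes transparent: since $F_\alpha(0)=1$, the representing measure is in fact a probability measure.
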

	\begin{proof}
The decomposability into a superposition of Gaussians is related to
Bernstein's theorem on completely monotone functions.  The latter states that
\be  \exp\left(-U\left(\sqrt{t}\right)\right) = \int_{\lambda=0}^{\infty}\exp\left(-\frac{1}{2}\lambda t\right)\dif{\mu_U(\lambda)}\qquad(t>0)
\ee
 for some probability measure $\mu_U$ on $[0,\infty)$ iff the function
$  F(t) = \exp\left(-U\left(\sqrt{t}\right)\right)   $
 is "totally monotone", i.e.,   is continuous on $\left[0,\infty\right)$, smooth on $\left(0,\infty\right)$, and satisfies \begin{align} (-1)^k F^{(k)}(t) \geq 0 \qquad (k\in\NN_{\geq0},t>0)\,. \label{eq:alternating signs for totally monotonic Bernstein function}\end{align}
		
		To prove the present claim we apply the Bernstein's criterion to the function
				$$ F_\alpha(t) = \exp(-\lambda t^{\alpha/2})\,. $$
Its  regularity properties are clear.  To establish \cref{eq:alternating signs for totally monotonic Bernstein function}, we claim that for each $k\in\NN_{\geq0}$, \begin{align} F_\alpha^{(k)}(t) = F_\alpha(t)(-1)^k\sum_{r=1}^{M_k}a_r t^{p_r} \label{eq:Bernstein induction claim for Lipschitz}\end{align} with $a_r\geq 0$ and $p_r < 0$ for all $r=1,\dots,M_k$,  and $M_k < \infty$.  
		
		The proof is by induction on $k$.   At $k=0$,  \cref{eq:Bernstein induction claim for Lipschitz} is trivially true, and for $k=1$, $$ F_\alpha'(t) = -\lambda\, \frac{\alpha}{2}F_\alpha(t)\, t^{\alpha/2-1} < 0\,. $$
		
		Assuming the induction hypothesis,
		\begin{align*} F_\alpha^{(k+1)}(t) &= \partial F(t)(-1)^k\sum_{r=1}^{M_k}a_r t^{p_r} \\  &= -\lambda\, \frac{\alpha}{2}\, F_\alpha(t)t^{\alpha/2-1}\, (-1)^k\, \sum_{r=1}^{M_k}a_r t^{p_r} + F_\alpha(t)(-1)^k\sum_{r=1}^{M_k}a_r p_r t^{p_r-1}\\
		&=F_\alpha(t)(-1)^{k+1}\sum_{r=1}^{M_k}a_r\left(\lambda\frac{\alpha}{2} \, t^{p_r+\alpha/2-1}+|p_r|t^{p_r-1}\right)
	\end{align*}
		where in the last expression we applied  the condition $p_r < 0$.
		Since also  $\alpha/2-1<0$, the assumed structure extends also to $k+1$.		
	\end{proof}	
	
	\noindent\textbf{Acknowledgements:}
Work on this project was supported in parts by the following grants:
	M.H. was partially supported by the European  Research  Council starting  grant 678520 (LocalOrder).
The research of R.P. is supported by the Israel Science Foundation grant 1971/19 and by the European Research Council starting grant 678520 (LocalOrder).
	J.S.  received support from the Swiss National Science Foundation (grant number P2EZP2\_184228), and the Princeton-Geneva Univ.  collaborative travel funds.
	M.A. thanks Drs. Enrique and Maria Rodriguez Boulan for their warm hospitality at Quogue NY, where some of his work was done, and gratefully acknowledges the support of the Weston Visiting Professorship at The Weizmann Institute of Science (Rehovot, Israel).  
		
\bibliographystyle{plain}
\bibliography{KT}

\end{document}